\newcommand{\black}{\black}
\theoremstyle{plain}
\newtheorem{theorem}{Theorem}[section]
\newtheorem{assumptions}[theorem] {Assumptions}
\newtheorem{corollary}[theorem]{Corollary}
\newtheorem{definition}[theorem]{Definition}
\newtheorem{example}[theorem]{Example}
\newtheorem{lemma}[theorem]{Lemma}
\newtheorem{proposition}[theorem]{Proposition}
\newtheorem{remark}[theorem]{Remark}
\numberwithin{equation}{section}
\tikzset{ dot/.style={ circle, inner sep=0pt, minimum size=1.5mm,
    fill=black } }
\newcommand{\mylabel}[2]{#2\def\@currentlabel{#2}\label{#1}}
\newcommand{\Ccal}{\mathcal{C}}
\newcommand{\Dcal}{\mathcal{D}}
\newcommand{\Fcal}{\mathcal{F}}
\newcommand{\Lcal}{\mathcal{L}}
\newcommand{\NN}{\mathbb{N}}
\newcommand{\RR}{\mathbb{R}}
\newcommand{\ZZ}{\mathbb{Z}}
\newcommand{\1}{\mathbbm{1}}
\newcommand{\indic}[1]{\1_{\{#1\}}}
\newcommand{\mc}[1]{\mathcal{#1}}
\newcommand{\V}[1]{{\boldsymbol #1 }}
\newcommand{\sss}[1]{\scriptscriptstyle{#1}}
\newcommand{\co}[1]{\left[#1\right )}
\newcommand{\oc}[1]{\left(#1\right ]}
\newcommand{\ob}[1]{\left(#1\right )}
\newcommand{\cb}[1]{\left[#1\right ]} 
\newcommand{\abs}[1]{\left\vert#1\right\vert} 
\newcommand{\norm}[1]{\|#1\|} 
\newcommand{\mnorm}[1]{{\left\vert\kern-0.25ex\left\vert\kern-0.25ex\left\vert #1 
    \right\vert\kern-0.25ex\right\vert\kern-0.25ex\right\vert}}
\newcommand{\pair}[1]{\left\langle#1\right\rangle}
\newcommand{\Pa}{P_a} 
\newcommand{\Ea}{E_a} 
\newcommand{\EEa}{E}
\newcommand{\Pprod}{\mathbb{P}} 
\newcommand{\Eprod}{\mathbb{E}}
\newcommand{\pf}{\cpl |\varphi|^4}
\newcommand{\ra}{\rightarrow}
\newcommand{\tb}{\V{t}} 
\newcommand{\varphib}{\V{\varphi}} 
\newcommand{\nn}{\nonumber}
\newcommand{\sssL}{\sss(\Lambda)}
\newcommand{\sssI}{\sss(\infty)}
\newcommand{\ctwsaw}{lattice Edwards model\ } 
\newcommand{\ctwsawn}{lattice Edwards model}  
\newcommand{\edwards}{Edwards model}
\newcommand{\Ledwards}{(lattice) Edwards model}
\newcommand{\bydef}{\coloneqq}
\newcommand{\Z}[1]{Z_{\V{\tau}_{#1}}}
\newcommand{\rrptn}{Y}     
\newcommand{\rptn}{\mathcal{Y}}
\newcommand{\rptnL}{\rptn^{\sssL}}
\newcommand{\lgrptn}{\log \rptn}
\newcommand{\brptn}{\bar{\rptn}}
\newcommand{\vertex}[1]{r_{#1}}
\newcommand{\barvert}[1]{\bar{r}_{#1}}
\newcommand{\vertexL}[1]{r^{\sssL}_{#1}}
\newcommand{\barvertL}[1]{\bar{r}^{\sssL}_{#1}}
\newcommand{\ZI}{Z^{\sssL}}
\newcommand{\cpl}{g} 
\newcommand{\cv}{\cpl} 
\newcommand{\cirb}{K} 
\newcommand{\laces}{\Lcal} 
\newcommand{\lace}{L} 
\newcommand{\weight}{w}
\newcommand{\filter}{\mathcal{I}} 
\newcommand{\ngn}{\Gamma} 
\newcommand{\xb}{\V{x}}
\newcommand{\tp}{P}
\newcommand{\Pweight}{\tp}
\newcommand{\ti}{J}
\newcommand{\selfloop}[1]{L_{#1}}
\newcommand{\decon}{D}
\newcommand{\PiL}{\Pi^{\sssL}}
\newcommand{\PiI}{\Pi^{\sssI}}
\newcommand{\aPi}{\Psi}
\newcommand{\aPiL}{\aPi^{\sssL}}
\newcommand{\aPiI}{\aPi^{\sssI}}
\newcommand{\weightL}{\weight^{\sssL}}
\newcommand{\selfloopL}[1]{L^{\sssL}_{#1}}
\newcommand{\selfloopI}[1]{L^{\sssI}_{#1}}
\newcommand{\www}{w}
\newcommand{\parto}{\partial_1}
\newcommand{\partt}{\partial_2}
\newcommand{\G}[1]{G_{\V{\tau}_{#1}}}
\newcommand{\greens}{S}
\newcommand{\Jump}{J}
\newcommand{\hJ}{\hat{\Jump}}
\newcommand{\LX}{X^{\sssL}}
\newcommand{\ZX}{X^{\sssI}} 
\newcommand{\Ltau}{\tau^{\sssL}}
\newcommand{\TL}{T^{\sssL}} 
\newcommand{\dgreens}{\tilde{\greens}}
\newcommand{\DL}{\Delta^{\sssL}}
\newcommand{\DI}{\Delta^{\sssI}} 
\newcommand{\GL}{\greens^{\sssL}}
\newcommand{\GI}{\greens}
\newcommand{\GrL}{G^{\sssL}}
\newcommand{\GrI}{G^{\sssI}}  
\newcommand{\jumprate}{\Jump_+}
\newcommand{\HaraJ}{Q}
\title{The continuous-time  lace expansion}
\author{David C. Brydges\thanks{University of British Columbia,
    Department of Mathematics.} 
  \and
  Tyler Helmuth\thanks{Durham University, School of
    Mathematical Sciences} 
  \and Mark
    Holmes\thanks{University of Melbourne, School of Mathematics and
      Statistics.} }
\date{September 1, 2021}
\begin{document}                   
\maketitle   

\begin{abstract}
  We derive a continuous-time lace expansion for a broad class of
  self-interacting continuous-time random walks. Our expansion applies
  when the self-interaction is a sufficiently nice function of the
  local time of a continuous-time random walk.  As a special case we
  obtain a continuous-time lace expansion for a class of spin systems
  that admit continuous-time random walk representations.

  We apply our lace expansion to the $n$-component $\pf$ model on
  $\ZZ^{d}$ when $n=1,2$, and prove that the critical Green's function
  $G_{\nu_{c}}(x)$ is asymptotically a multiple of $\abs{x}^{2-d}$
  when $d\geq 5$ at weak coupling. As another application of our method we establish
  the analogous result for the \ctwsaw at weak coupling.
\end{abstract}

\section{Introduction}
\label{sec:intro}

Many lattice spin systems are expected to exhibit mean-field behaviour
on $\ZZ^{d}$ when $d>d_{c}=4$. Results of this type have been proven
by making use of \emph{random walk
  representations}~\cite{Aiz81,Aiz82,Frohlich,FFS92} and recently
\cite{AD2020} these methods have been extended to $d=4$ by taking into
account logarithmic corrections to mean field theory.  In this paper
we extend this program by analyzing the critical Green's function of
$n$-component lattice spin models for $n=1,2$ at weak coupling for
$d>4$. We use the random walk representation originating in the work
of Symanzik~\cite{Symanzik69} and developed in~\cite{BFS82,Dyn83}. For
recent developments regarding this representation
see~\cite{1903.04045,Sz11}, and for alternative random walk
representations see~\cite{Aiz82,HS94b}.

To be more precise, we determine the asymptotics of the infinite
volume critical two-point function
$\pair{\varphi_{a}\cdot\varphi_{b}}$. Here $\pair{\cdot}$ denotes the
expectation of an $O(n)$-invariant $\pf$ spin model; the spins
$\varphi$ take values in $\RR^{n}$. The definitions of these models
and what it means to be critical are given in
\Cref{sec:greens-fn}. Let $\abs{x}$ and $\abs{\varphi}$ denote the
Euclidean norms of $x\in \ZZ^{d}$ and $\varphi \in \RR^{n}$.

\begin{theorem}
  \label{thm:main-intro}
  Let $d>d_{c}=4$ and $n\in\{1,2\}$. Let $\pair{\cdot}$ denote
  expectation with respect to the critical $n$-component $\pf$
  model. For $\cpl>0$ sufficiently small there is a constant $C
  >0$ such that
  \begin{equation}
    \label{eq:Intro-thm}
    \pair{\varphi_{a}\cdot \varphi_{b}} \sim \frac{C}{|b-a|^{d-2}}, 
  \qquad \textrm{ as $|b-a| \ra \infty$}.
  \end{equation}
\end{theorem}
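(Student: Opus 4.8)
The plan is to combine the continuous-time lace expansion developed in this paper with the Symanzik--BFS--Dynkin random walk representation of the $\pf$ model and a Fourier-analytic deconvolution in the spirit of Hara. First I would use the random walk representation to rewrite the critical two-point function $\pair{\varphi_{a}\cdot\varphi_{b}}$ as the critical Green's function $G_{\nu_{c}}(b-a)$ of a continuous-time random walk on $\ZZ^{d}$ whose self-interaction is a convex function of the local-time field; this is precisely the class of self-interacting walks to which our lace expansion applies, and the restriction $n\in\{1,2\}$ is what makes the induced interaction of the required form and supplies the Griffiths/Ginibre correlation inequalities. The lace expansion then yields a convolution identity $G_{\nu}=\delta+J*G_{\nu}+\Pi_{\nu}*G_{\nu}$ for $\nu>\nu_{c}$, where $J$ is the continuous-time step kernel and $\Pi_{\nu}$ the lace-expansion coefficient; equivalently $\hat G_{\nu}(k)=\bigl(1-\hat J(k)-\hat\Pi_{\nu}(k)\bigr)^{-1}$.

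Next I would run a bootstrap argument to show that, for $\cpl>0$ sufficiently small, the lace expansion converges and $\Pi_{\nu}$ is small and well localized, uniformly as $\nu\downarrow\nu_{c}$. One compares $G_{\nu}$ with the simple random walk Green's function $\Ccal$, which satisfies $\Ccal(x)\asymp\abs{x}^{2-d}$; since $d>4$ the bubble $\sum_{x}\Ccal(x)^{2}$ is finite, so the standard $x$-space convolution estimates bound every lace-expansion diagram. The bootstrap (which may be seeded with the reflection-positivity infrared bound $\hat G_{\nu_{c}}(k)\le\const\,\abs{k}^{-2}$) delivers $\sum_{x}\abs{\Pi_{\nu}(x)}=O(\cpl)$, the moment bound $\sum_{x}\abs{x}^{2+\varepsilon}\abs{\Pi_{\nu}(x)}<\infty$, and a bound on a discrete second difference of $\hat\Pi_{\nu}$, all uniform in $\nu\ge\nu_{c}$ and vanishing with $\cpl$. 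This is the step I expect to be the main obstacle: one must choose the bootstrap functions so that they close while remaining compatible with the continuous-time bookkeeping (the local-time interaction and the laced resummation of self-intersections of continuous-time paths), and one must extract enough decay from $G_{\nu}$ to give $\Pi_{\nu}$ its $2+\varepsilon$ moments.

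Third I would pass to criticality. By definition $\nu_{c}$ is the value for which $\hat G_{\nu_{c}}(0)^{-1}=1-\hat J(0)-\hat\Pi_{\nu_{c}}(0)=0$, so for $k$ near the origin
\[
\hat G_{\nu_{c}}(k)^{-1}=\bigl[\hat J(0)-\hat J(k)\bigr]+\bigl[\hat\Pi_{\nu_{c}}(0)-\hat\Pi_{\nu_{c}}(k)\bigr].
\]
The first bracket is $\sim\sigma^{2}\abs{k}^{2}$ with $\sigma^{2}>0$ the finite variance of $J$, and the second is $O(\cpl\abs{k}^{2})$ by the moment bound on $\Pi_{\nu_{c}}$; hence $\hat G_{\nu_{c}}(k)\sim (A\abs{k}^{2})^{-1}$ as $k\to0$ with $A>0$ once $\cpl$ is small.

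Finally I would invert the Fourier transform. Writing $G_{\nu_{c}}(x)=\int_{[-\pi,\pi]^{d}}e^{-ik\cdot x}\hat G_{\nu_{c}}(k)\,dk$ and subtracting off the model singularity $(A\abs{k}^{2})^{-1}$ cut off near $k=0$, the singular part contributes $\tfrac{\const}{\abs{x}^{d-2}}(1+o(1))$ because the Fourier transform of $\abs{x}^{2-d}$ on $\RR^{d}$ is a multiple of $\abs{k}^{-2}$, while the remainder decays faster than $\abs{x}^{2-d}$ thanks to the extra regularity of $\hat G_{\nu_{c}}$ away from the origin together with the bound on its second difference --- this is exactly where the $\sum_{x}\abs{x}^{2+\varepsilon}\abs{\Pi_{\nu_{c}}(x)}$ estimate is used, via Hara's deconvolution lemma for random-walk-type kernels. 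Collecting constants gives \eqref{eq:Intro-thm} with $C=\const/A>0$, positivity being confirmed by the correlation-inequality lower bound available for $n\in\{1,2\}$.
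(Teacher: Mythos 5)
Your plan is essentially the route the paper takes: BFS--Dynkin representation to pass from $\pair{\varphi_a\cdot\varphi_b}$ to the Green's function of a self-interacting continuous-time walk (Theorem~\ref{thm:just}), a continuous-time lace expansion giving the convolution identity~\eqref{eq:Int-Conv}, a bootstrap to an infrared bound for small $\cpl$ (Proposition~\ref{prop:infrared}, using Lebowitz/GKS/Ginibre inequalities, which is where $n\in\{1,2\}$ enters), and then Hara's $x$-space deconvolution theorem~\cite{Hara2008} to extract $\abs{x}^{2-d}$ asymptotics, checking its hypotheses (H1)--(H4) from $\ZZ^d$-symmetry and the decay $\abs{\aPi_{\cpl,\nu_c}(x)}\lesssim g\mnorm{x}^{-3(d-2)}$.

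The one place where you genuinely diverge --- and where your plan, as written, would run into trouble --- is the seed for the bootstrap. You propose seeding it with the reflection-positivity infrared bound $\hat G_{\nu_c}(k)\leq \const\,\abs{k}^{-2}$. The lace-expansion machinery (all the diagrammatic convolution estimates, \Cref{lem:HHS-1,lem:HHS-2,lem:Conv-E}) runs in $x$-space and needs the pointwise bound $G_{\cpl,\nu}(x)\leq K\greens(x)$, and as the paper points out after \Cref{thm:Main-Pre-models}, deducing the $x$-space bound from the Fourier bound is not trivial (Sokal's argument is model-specific and nontrivial to port). Instead, the paper seeds the bootstrap from the innocuous large-$\nu$ bound $G_{\cpl,\cpl}\leq 2\greens$ (Assumption~\ref{as:G5}) and extends to $\nu_c$ via the one-is-forbidden continuity argument of \Cref{lem:BS} together with \Cref{lem:Fcontinuous}; this also has the advantage of being model-agnostic, so the same argument covers the Edwards model, which has no reflection positivity. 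Apart from this, the only other small slippage is that the moment bound $\sum_x\abs{x}^{2+\varepsilon}\abs{\Pi_{\nu}(x)}<\infty$ alone is not enough for Hara's theorem: one also needs the pointwise decay $\abs{\Pi(x)}\lesssim\mnorm{x}^{-(d+2+\rho)}$ (hypothesis (H2) of \Cref{thm:gen-main}), which the diagrammatic estimate \Cref{cor:aPi-bound} supplies in the much stronger form $\mnorm{x}^{-3(d-2)}$.
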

The relation $\sim$ in~\eqref{eq:Intro-thm} means the ratio of the
left-hand and right-hand sides tends to one in the designated
limit. Our theorem exhibits mean-field behaviour in the sense that the
exponent $d-2$ in~\eqref{eq:Intro-thm} is the exponent predicted by
Landau's extension of mean-field theory
\cite[Chapter~2]{Kopietz-et-al2010}.  The right hand side of
\eqref{eq:Intro-thm} is Euclidean invariant, so for weak coupling the
conclusion strengthens the triviality results \cite{Aiz81, Frohlich}
by showing that the scaling limit of the two-point function of this
model is Euclidean invariant and equals the massless free field
two-point function. When $n=1$ \Cref{thm:main-intro} was already
proven by Sakai~\cite{Sakai2015}. The case $n=2$ is new.  For
$d=d_{c}=4$ the asymptotics in \eqref{eq:Intro-thm} have been
established by a rigorous renormalization group technique for the
$n$-component $\pf$ model for all $n\in\NN$~\cite{SladeTomberg}.  For
$n=1$ and $d=4$ the analysis of the Green's function by rigorous
renormalization group techniques began with \cite[(8.32)]{GK86} and
\cite[Theorem I.2]{FMRS87}.

Sakai's proof of the $n=1$ case of \Cref{thm:main-intro} made use of
the {\em lace expansion}, a technique originally introduced to prove
mean-field behaviour for discrete-time weakly self-avoiding
walk~\cite{BrSp85a}.  The lace expansion has since been reformulated
in many different settings: unoriented and oriented percolation
\cite{HS90perc,NY91,HS03op}, the contact process \cite{HofSak04},
lattice trees and animals \cite{HS90trees}, Ising and $\pf$ models
\cite{Sakai07,Sakai2015}, the random connection model~\cite{RCMlace},
and various self-interacting random walk models
\cite{HH12interacting,HammondHelmuthASAW,Helmuth,Ueltschi}.  Within
these settings the lace expansion has been applied to a variety of
problems, ranging from proofs of weak convergence on path space for
branching particle systems \cite{DS98,HHP, Hol16} to proofs of
monotonicity properties of self-interacting random
walks~\cite{HH10,H12,HS12}. In each case the expansion is based on a
discrete parameter that plays the role of time.

To prove \Cref{thm:main-intro} we introduce a lace expansion in
continuous time. Our methods naturally apply to a broader class of
problems than $\pf$ models, and to illustrate this we also analyze the
\ctwsawn. A precise formulation of our main results is given in
\Cref{sec:greens-fn}, after the introduction of the basic objects of
our paper.

\section{Random walk and local times} 
\label{sec:random-walk}

To fix notation and assumptions, we define continuous-time random walk
started at a point $a$ in $\ZZ^{d}$ and killed outside of a finite
subset $\Lambda$ of $\ZZ^{d}$.  These stochastic processes are central
to the rest of the paper.

\subsection{Infinite volume}
\label{sec:ctrw-inf}

We begin by defining the class of jump distributions that we will
allow.  Recall that a one-to-one map $T$ from the vertex set of
$\ZZ^{d}$ onto itself such that edges $\{x,y\}$ of $\ZZ^{d}$ are
mapped to edges $\{Tx,Ty\}$ of $\ZZ^{d}$ is called an
\emph{automorphism}. Let $\text{Aut}_{0}(\ZZ^{d})$ denote the subgroup
of automorphisms that fix the origin $0$. For example, reflections in
lattice planes containing the origin are in
$\text{Aut}_{0}(\ZZ^{d})$. An automorphism in
$\text{Aut}_{0}(\ZZ^{d})$ permutes the nearest neighbors in $\ZZ^{d}$
of the origin and this permutation determines the automorphism.  A
function $f$ on $\ZZ^{d}$ is \emph{$\ZZ^{d}$-symmetric} if
$f(Tx)=f(x)$ for $x\in\ZZ^{d}$ and $T\in \text{Aut}_{0}(\ZZ^{d})$.  A
function $f(x,y)$ of two variables is \emph{$\ZZ^{d}$-symmetric} if
$f (Tx,Ty)=f (x,y)$ for all automorphisms $T$ of $\ZZ^{d}$. The
condition of being $\ZZ^{d}$-symmetric includes translation invariance
($f(x,y)=f(0,y-x)$) and hence is equivalent to the function
$g(x):=f(0,x)$ of one variable being $\ZZ^{d}$-symmetric.
\begin{assumptions}
  \label{hyp:Jnew}
  Assume $\Jump\colon\ZZ^d\ra \RR$ satisfies
  \begin{enumerate}
  \item[\mylabel{as:J1}{(J1)}] $\Jump(x)\geq 0$ for $x\neq 0$, and
    $\Jump(0)\bydef -\sum_{x\neq 0}\Jump(x)$ is finite,
  \item[\mylabel{as:J2}{(J2)}] the set
    $\{x\in\ZZ^{d}\mid \Jump(x)>0\}$ is a generating set for
    $\ZZ^{d}$,
  \item[\mylabel{as:J3}{(J3)}] $\Jump$ is $\ZZ^{d}$-symmetric, 
  \item[\mylabel{as:J4n}{(J4)}] $\Jump$ has finite range $R>0$, i.e.,
    $\Jump(x)=0$ if $|x|\geq R$.
  \end{enumerate}
\end{assumptions}
A condition that $\Jump$ decays like $|x|^{-3(d-2)}$ might serve
instead of \ref{as:J4n}; to avoid having to extend standard cited
results such as ~\cite{SimonIneq,Lieb,Rivasseau} we work with a fixed
choice of $\Jump$ satisfying \ref{as:J1}--\ref{as:J4n}.  Let
$\DI\colon\ZZ^{d}\times\ZZ^{d}\ra \RR$ be the infinite matrix with
entries
\begin{equation}
    \label{e:Delta-infty-matrix-def}
    \DI_{x,y}\bydef\Jump(y-x) .
\end{equation}

By~\ref{as:J3}, $\DI$ is symmetric, and \ref{as:J1} implies that $\DI$
has non-negative off-diagonal elements and that its row sums are all
equal to zero.  This implies $\DI$ is the generator of a
continuous-time random walk $\ZX$ on $\ZZ^{d}$. The assumption
\ref{as:J2} ensures this walk is irreducible. Let
\begin{equation}
  \label{e:hJ-def}
  \hJ \bydef -\DI_{x,x} = -J(0) ,
  \qquad
  \jumprate(y) \bydef \Jump(y) \indic{y\neq 0}.
\end{equation}
By \ref{as:J1}, $\hJ$ is finite. The walk $\ZX$ has a mean $\hJ^{-1}$
exponential holding time at each $x$, and jumps from $x$ to $y\ne x$
with probability $\jumprate(y-x)/\hJ$. We write $\Pa$ for a
probability measure under which $\ZX$ is a continuous-time random walk
on $\ZZ^d$ started at $a\in\ZZ^{d}$, and $\Ea$ for the corresponding
expectation.

\begin{example}
  \label{ex:latlap}
  The most important example is when
  \begin{equation}
    \Jump(x) = \indic{\abs{x}=1} - 2d\indic{x=0}.
  \end{equation}
In this case $\DI$ is called the \emph{lattice Laplacian}, and $\ZX$
is a continuous-time nearest-neighbour random walk on $\ZZ^{d}$.
\end{example}

\subsection{Finite volume}
\label{sec:ctrw-fin}

Let $\Lambda$ be a finite subset of $\ZZ^{d}$ and let $\TL$ be the
first time $\ZX$ exits  $\Lambda$:
\begin{equation}
  \label{eq:TL}
  \TL\bydef \inf\{t\geq 0:\ZX_t\notin \Lambda\}.
\end{equation}
Let $\ast\notin \ZZ^d$ be an additional ``cemetery'' state, and define
$\LX_{t}$ by
\begin{equation}
   \label{e:X-def}
   \LX_{t} 
   = 
   \begin{cases}
   \ZX_{t},& t < \TL,\\
   \ast,& t \ge \TL.
   \end{cases}
\end{equation}
For each finite $\Lambda$ the process $\LX=(\LX_t)_{t\ge 0}$ is a
continuous-time Markov chain on $\Lambda\cup \{\ast\}$ with absorbing
state~$\ast$. Note that this construction defines the processes $\LX$
on the same probability space for all finite $\Lambda\subset\ZZ^{d}$.

The generator $\DL_{\ast}$ of $\LX$ is a
$(|\Lambda|+1)\times (|\Lambda|+1)$ matrix with a row of zeros
since~$\ast$ is an absorbing state.  We will let
$\DL\colon\Lambda \times \Lambda \ra \RR$ denote the matrix obtained
by removing the row and column corresponding to transition rates to
and from~$\ast$, i.e., $(\DL)_{x,y}\bydef(\DI)_{x,y}$ for
$x,y \in \Lambda$.

\subsection{Local time and free Green's functions}
\label{sec:ctrw-loc}
For $x \in \Lambda$ and a Borel set $I \subset [0,\infty)$ the
\emph{local time of $\LX$ at $x$ during $I$} is
\begin{align}
    \label{e:tauI-def}  
    \Ltau_{I,x}
    \bydef    
    \int_{I} d\ell\,\indic{\LX_{\ell}=x} ,
\end{align}
where $d\ell$ is Lebesgue measure. Let
$\V{\tau}^{\sss(\Lambda)}_{I} \bydef (\Ltau_{I,x})_{x\in\Lambda}$
denote the vector of all local times, and let $\Ltau_{x}$ denote
$\Ltau_{[0,\infty),x}$. We will often omit the superscript $\Lambda$
when there is no risk of confusion.  For $a,b\in \Lambda$ we define
the \emph{free Green's function in $\Lambda$} by
\begin{equation}
  \label{e:gabdef}
  \GL (a,b)\bydef \Ea[\Ltau_{b}]
  =
  \int_0^\infty d\ell\, \Pa(\LX_{\ell}=b). 
\end{equation}
Note that $\GL (a,b)<\infty$ since the expected time for $\LX$ to exit
$\Lambda$ is finite. The next lemma, proved in
Appendix~\ref{appendix}, explains why $\GL (a,b)$ is
called the free Green's function.

\begin{lemma}\label{lem:free-green}
$\DL$ is invertible, and for $a,b\in\Lambda$
\begin{equation}
  \label{e:free-green-1}
    \GL (a,b)
    = (- \DL)^{-1}_{a,b}.
\end{equation}
\end{lemma}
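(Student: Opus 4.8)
The plan is to identify $\GL(a,b) = \Ea[\Ltau_b]$ with a matrix resolvent by exploiting the Markov property of $\LX$ and the structure of its generator. First I would argue that $\DL$ is invertible. Since $\DI$ has zero row sums, non-negative off-diagonal entries, and is the generator of an irreducible walk, the submatrix $\DL$ obtained by restricting to the finite set $\Lambda$ is strictly diagonally dominant in a weak sense, with strict dominance in at least one row (namely any row corresponding to a vertex $x\in\Lambda$ having a neighbour outside $\Lambda$, which exists by \ref{as:J2} since $\Lambda$ is finite). By irreducibility this defect propagates: $-\DL$ is an irreducible M-matrix, hence invertible with $(-\DL)^{-1}$ entrywise non-negative. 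Alternatively, and more in the spirit of what follows, one can observe that $\Ea[\TL]<\infty$ for all starting points $a$ forces the semigroup $e^{t\DL}$ to decay, so that $\int_0^\infty e^{t\DL}\,dt$ converges and equals $(-\DL)^{-1}$; in particular $\DL$ has no zero eigenvalue.

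Next I would establish the probabilistic identity. Write $P^{\sssL}_t(a,b) \bydef \Pa(\LX_t = b)$ for $a,b\in\Lambda$; this is the $(a,b)$ entry of the sub-Markov semigroup on $\Lambda$ generated by $\DL$, i.e. $P^{\sssL}_t = e^{t\DL}$, which follows from the construction of $\LX$ as the walk $\ZX$ killed on exiting $\Lambda$ together with the fact that $(\DL)_{x,y} = (\DI)_{x,y}$ for $x,y\in\Lambda$. Then from \eqref{e:gabdef},
\begin{equation}
  \GL(a,b) = \int_0^\infty d\ell\, P^{\sssL}_\ell(a,b) = \int_0^\infty \ob{e^{\ell \DL}}_{a,b}\, d\ell = \ob{\int_0^\infty e^{\ell\DL}\,d\ell}_{a,b}.
\end{equation}
Because $\GL(a,b)<\infty$ for all $a,b$ (the expected exit time is finite, as noted after \eqref{e:gabdef}), the matrix integral converges absolutely entrywise, and the standard identity $\int_0^\infty e^{\ell A}\,d\ell = (-A)^{-1}$ — valid precisely when this integral converges, which then also re-proves invertibility of $A = \DL$ — yields $\GL(a,b) = (-\DL)^{-1}_{a,b}$.

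The one step needing care, and the main obstacle, is justifying $P^{\sssL}_t = e^{t\DL}$ rigorously: one must check that removing the cemetery row and column from the full generator $\DL_\ast$ really does produce the semigroup of the killed process, i.e. that $\Pa(\LX_t = b) = \Pa(\ZX_t = b,\, \TL > t)$ solves the Kolmogorov equations $\frac{d}{dt}P^{\sssL}_t = \DL P^{\sssL}_t$ with $P^{\sssL}_0 = I$ on $\Lambda$. This is a routine consequence of the forward equations for the finite-state chain $\LX$ on $\Lambda\cup\{\ast\}$ restricted to the transient block, but it is the place where the construction in \Cref{sec:ctrw-fin} is actually used; everything else is linear algebra and Fubini. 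Since the lemma is stated as proved in Appendix~\ref{appendix}, I expect the appendix to carry out exactly this verification, possibly phrasing it via the resolvent $\Ea[\int_0^\infty e^{-\lambda \ell}\indic{\LX_\ell = b}\,d\ell]$ and letting $\lambda\downarrow 0$ to avoid convergence subtleties, then invoking finiteness of $\Ea[\TL]$ at the end.
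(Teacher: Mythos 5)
Your plan is correct and the second half (the identification $\GL(a,b)=\int_0^\infty (e^{t\DL})_{a,b}\,dt=(-\DL)^{-1}_{a,b}$) is essentially what the paper does. The one step you flag as "needing care" — that the killed semigroup is $e^{t\DL}$ — is handled in the appendix not by solving Kolmogorov equations but by a one-line block-triangularity observation: since the cemetery row of $\DL_\ast$ is zero, $(\Delta_\ast^k)_{a,b}=((\DL)^k)_{a,b}$ for $a,b\in\Lambda$, so the restriction commutes with taking matrix exponentials. Your Kolmogorov-equation route would also work but is heavier.

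Where you genuinely diverge is invertibility. You propose an irreducible M-matrix argument (or, alternatively, convergence of $\int_0^\infty e^{t\DL}\,dt$ forced by $\Ea[\TL]<\infty$). Both are correct. The paper instead computes the Dirichlet form
\begin{equation}
(f,-\DL f)=\tfrac12\sum_{x\neq y\in\ZZ^{d}}\Jump(x-y)\,|\tilde f_x-\tilde f_y|^2,
\end{equation}
with $\tilde f$ the extension of $f$ by zero, and argues strict positivity via the fact that from any $v\in\Lambda$ there is a path of positive-rate steps exiting $\Lambda$. This buys more than invertibility: it establishes that $-\DL$ is a \emph{positive definite} symmetric matrix, which the paper needs later at \eqref{posdef-Delta} to define the Gaussian measure $dP$ in \eqref{e:dP-def}, and it is also what makes the final diagonalize-and-integrate step immediate. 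Your M-matrix route proves invertibility but not positive definiteness, so you would need a separate argument (e.g., symmetry plus non-negativity of $(-\DL)^{-1}$, or the Dirichlet form computation anyway) when you reach Section~\ref{sec:pf-models}. Your alternative route via $\Ea[\TL]<\infty$ is clean but requires a small extra lemma (that convergence of $\int_0^\infty e^{tA}\,dt$ with non-negative monotone integrand forces $e^{tA}\to 0$ and hence $A$ invertible); it is correct but not the most economical choice given what the paper needs downstream.
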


There is also an infinite volume version of \Cref{lem:free-green}, and
it is nicer because the infinite volume limit restores translation
invariance. For $d\geq 3$, define $\GI(a,b)$ to be the expected time
spent at $b$ by the random walk $\ZX$ started from $a$, i.e.,
\begin{equation}
\label{e:Sinf} 
    \GI(a,b)\bydef E_a[\tau_b^{\sss(\infty)}],
\end{equation}
where $\tau_b^{\sss(\infty)}$ is defined as in \eqref{e:tauI-def} but
with $\LX_{\ell}$ replaced by $\ZX_{\ell}$. Since
$\TL\uparrow \infty$ as $\Lambda \uparrow \ZZ^d$ a.s., monotone
convergence implies that
\begin{equation}
  \label{e:GsrwIV}
  \GL (a,b)
  =
  E_a[\tau_{[0,\TL],b}^{\sss(\infty)}]
  \uparrow
  E_a[\tau_{[0,\infty),b}^{\sss(\infty)}]
  =
  \GI(a,b)<\infty.
\end{equation}
where the finiteness holds by transience. The translation invariance
of the infinite volume random walk implies
\begin{equation}
  \label{S1def}
  \GI(x)\bydef\GI(a,a+x),
  \qquad
  x,a\in \ZZ^{d}
\end{equation}
is well-defined, i.e., independent of $a\in\ZZ^{d}$. Recall, see
\cite[Theorem 4.3.5]{lawler_limic_2010}, that there is a $C_{\Jump}$
depending on $\Jump$ such that
  \begin{equation}
  \label{SRWG}
  \GI(x)\sim \frac{C_{\Jump}}{|x|^{d-2}}.
\end{equation}
Since $\GI(x)$ is positive for all $x$ this implies that there is
a constant $c_{\Jump}$ depending on $\Jump$ such that $\GI(x)\ge
\frac{c_{\Jump}}{|x|^{d-2}}$ for $x\ne 0$.

Recall that the discrete convolution $f\ast h$ of functions $f$ and
$h$ on $\ZZ^{d}$ is defined by
$f\ast h(x)\bydef\sum_{y\in\ZZ^d} f(y)h(x-y)$.  For $n\in\NN$ let
$f^{\ast n}$ denote the $n$-fold convolution of $f$ with itself, and
let $f^{\ast 0}(x)=\indic{x=0}$. The next lemma is proved in
Appendix~\ref{appendix}.
\begin{lemma}
  \label{lem:free-green-inf}
  Suppose $d\geq 3$. For $\GI$ defined by \eqref{S1def} and
  $x \in \ZZ^{d}$,
  \begin{equation}
    \label{e:free-green-inf-1}
    \Jump \ast \GI (x)
    =
    \GI \ast \Jump(x)
    =
   - \indic{x=0} . 
  \end{equation}
  Moreover, recalling $\hJ$ and $\jumprate$ from \eqref{e:hJ-def},
  \begin{equation}
    \label{e:free-green-inf-2}
    \GI (x)
    =
    \sum_{n \ge 0} \hJ^{- (n+1)} \, \jumprate^{\ast n} (x),
  \end{equation}
  where the right-hand side is a convergent sum of positive terms.
\end{lemma}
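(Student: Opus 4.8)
The plan is to establish the series formula \eqref{e:free-green-inf-2} first, directly from the jump-chain description of $\ZX$, and then to deduce the convolution identity \eqref{e:free-green-inf-1} from it by an elementary manipulation of the series.

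For \eqref{e:free-green-inf-2}: by \eqref{e:hJ-def} the walk $\ZX$, started at $a$, holds at each visited site for an independent exponential time of rate $\hJ$ and then jumps by $y$ with probability $p(y)\bydef\jumprate(y)/\hJ$; since $\hJ=\sum_{y\ne 0}\Jump(y)$ by \ref{as:J1}, $p$ is a probability distribution on $\ZZ^{d}$ and hence $p^{\ast n}(x)=\hJ^{-n}\jumprate^{\ast n}(x)$. Writing $J_0=a,J_1,J_2,\dots$ for the successive positions of $\ZX$ and $H_0,H_1,\dots$ for its successive holding times, the time spent at $b$ is $\Itau_{b}=\sum_{n\ge 0}H_n\indic{J_n=b}$. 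Taking $\Ea$ and using that $(H_n)_{n\ge0}$ is independent of the jump chain $(J_n)_{n\ge 0}$, Tonelli's theorem applies since all terms are nonnegative and gives
\[
  \GI(a,b)=\Ea[\Itau_{b}]=\sum_{n\ge 0}\Ea[H_n]\,\Pa(J_n=b)=\sum_{n\ge0}\hJ^{-1}p^{\ast n}(b-a)=\sum_{n\ge0}\hJ^{-(n+1)}\jumprate^{\ast n}(b-a).
\]
Translation invariance turns the last expression into the right-hand side of \eqref{e:free-green-inf-2}, and the sum is a convergent sum of positive terms because its value $\GI(a,b)=\GI(b-a)$ is finite by transience, as recorded in \eqref{e:GsrwIV}. (Equivalently one can expand the transition semigroup $e^{t\DI}$, legitimate because $\DI$ is a bounded operator by \ref{as:J4n}: writing $\DI=-\hJ I+Q$ with $Q_{x,y}=\jumprate(y-x)$, and noting $-\hJ I$ and $Q$ commute, integrating $e^{t\DI}=e^{-\hJ t}\sum_n\tfrac{t^n}{n!}Q^n$ over $t\in[0,\infty)$ term by term yields the same series.)

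For \eqref{e:free-green-inf-1}: as functions on $\ZZ^{d}$ we have $\Jump=\jumprate-\hJ\indic{\cdot=0}$, which agrees with \eqref{e:hJ-def} both at $0$ and away from $0$. Since $\Jump$ and $\jumprate$ are finitely supported (by \ref{as:J4n}) and $\GI$ is finite everywhere, all sums below are finite and may be freely rearranged; using \eqref{e:free-green-inf-2},
\[
  \jumprate\ast\GI(x)=\sum_{n\ge0}\hJ^{-(n+1)}\jumprate^{\ast(n+1)}(x)=\hJ\Bigl(\GI(x)-\hJ^{-1}\indic{x=0}\Bigr)=\hJ\,\GI(x)-\indic{x=0},
\]
so that $\Jump\ast\GI(x)=\jumprate\ast\GI(x)-\hJ\,\GI(x)=-\indic{x=0}$. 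Finally $\GI\ast\Jump=\Jump\ast\GI$ by commutativity of convolution (valid here since $\Jump$ has finite support), completing the proof.

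No step is genuinely difficult; the only points that call for a word of care are the interchange of $\Ea$ with the infinite sum, which rests on nonnegativity together with the a priori finiteness of $\GI$ from transience, and the identification of the $n$-step transition probabilities of $\ZX$ with $p^{\ast n}$, which is the standard jump-chain description of a continuous-time Markov chain with bounded generator. If one wants \eqref{e:free-green-inf-1} alone, there is an even shorter route that bypasses the series: a first-jump decomposition (condition on $H_0$ and $J_1$, apply the strong Markov property and translation invariance) gives $\GI(x)=\hJ^{-1}\indic{x=0}+\hJ^{-1}\sum_{y}\jumprate(y)\GI(x-y)$, which rearranges at once to $-\Jump\ast\GI(x)=\indic{x=0}$.
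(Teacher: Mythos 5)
Your proposal is correct. The derivation of the series formula \eqref{e:free-green-inf-2} is essentially identical to the paper's: both factor the local time at $b$ into a sum over jumps of (holding time)$\times$(indicator that the embedded chain is at $b$), invoke independence of the exponential holding times from the jump chain, and use Tonelli to interchange expectation with the nonnegative sum. Where you genuinely diverge is in the proof of \eqref{e:free-green-inf-1}: you deduce it by a direct algebraic reindexing of the series already in hand (peel off the $n=0$ term, shift the index, and cancel), together with the observation that the sum over $y$ in the convolution is finite by \ref{as:J4n} and hence may be swapped with the series over $n$ by nonnegativity. The paper instead proves \eqref{e:free-green-inf-1} via the strong Markov property: restarting $\ZX$ at the time of its first jump gives the recursion $\dgreens(x)=\indic{x=0}+\sum_{y}\hJ^{-1}\jumprate(y)\,\dgreens(x-y)$, which rearranges into the desired convolution identity. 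Your algebraic route has the minor advantage of being purely combinatorial once \eqref{e:free-green-inf-2} is known, with no further probabilistic input; the paper's route is self-contained at the level of the Green's function and does not require the series to be established first. You in fact note the paper's first-jump decomposition as an alternative at the end of your write-up, so you are aware of both.
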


\subsection{A convenient technical choice}
\label{sec:conv-techn-choice}

In this section we make a specific choice for the measurable space
$(\Omega_{1},\Fcal)$ on which $\ZX$ is defined so that the paths of
$\ZX$ have desirable regularity properties. This reduces
the number of statements that have to be qualified as holding almost
surely (a.s.). Let
\begin{equation}
    \label{e:Omega-def}
    \Omega_1
    =
    \big\{\ZX\colon [0,\infty) \rightarrow \ZZ^{d}
   \, \big|\,\text{$\ZX$ is c\`{a}dl\`{a}g} \big\} ,
\end{equation}
where we recall a function is c\`{a}dl\`{a}g if it is right continuous
with left limits. Let $(\mc{F}_t)_{t\ge 0}$ denote the natural
filtration of $\ZX$, i.e.,~$\mc{F}_t$ is the smallest $\sigma$-algebra
on $\Omega_1$ such that $\{\ZX\,|\,\ZX_{s}=y\}\in \mc{F}_t$ for each
$s \in [0,t]$ and $y \in \ZZ^d$, and let $\mc{F}$ denote the smallest
$\sigma$-algebra on $\Omega_1$ containing $\cup_{t\ge 0}\mc{F}_t$.
Henceforth we let $\Pa$ denote the probability measure on
$(\Omega_1,\mc{F})$ under which $\ZX$ is a continuous-time random walk
on $\ZZ^d$ started at $a\in\ZZ^{d}$.

\section{The Green's function}
\label{sec:greens-fn}

In this section we define the object $\GrL_{\V{t}}(a,b)$ at the center
of our results and we refer to it as the \emph{Green's function}. It
involves a self-interacting walk starting at $a$ and ending at $b$. We
have included a parameter $\V{t}$ that specifies the additional
interaction that arises when conditioning on an initial segment of the
walk.  It would be more standard to reserve the name ``Green's
function'' for the case $\V{t}=\V{0}$.

We have two motivations for studying this Green's function. The first
is that two-point correlations of lattice spin models such as the
$n$-component $\pf$ model have representations in terms of
$\GrL_{\V{t}}(a,b)$; see Definition~\ref{def:Zpf} and
Theorem~\ref{thm:just}.  The second motivation is that it is a point
of departure for the study of random walks with self-interactions that
are functions of local time. Such models are of interest in chemistry,
physics, and probability; they include the lattice Edwards model which
we define in Definition~\ref{def:SAW}.  This is a canonical model of
self-avoiding walk.

Fix a finite set $\Lambda\subset\ZZ^{d}$.  For a set $A$,
$A^{\Lambda}$ denotes the set of sequences $(x_v)_{v\in \Lambda}$ with
each component $x_{v}$ in $A$.  Let
$Z\colon \co{0,\infty}^{\Lambda}\to\ob{0,\infty}$,
$\V{t} \mapsto Z_{\V{t}}$ be a bounded continuous positive function.
For a random variable $\V{\sigma}$ taking values in
$[0,\infty)^{\Lambda}$, $Z_{\V{\sigma}}$ denotes $Z$ evaluated at the
random point $\V{\sigma}$. For $\V{t},\V{s}\in [0,\infty)^{\Lambda}$
let
\begin{equation}
  \label{e:rptn}
  \rrptn_{\V{t},\V{s}}\bydef\frac{Z_{\V{t}+\V{s}}}{Z_{\V{t}}}. 
\end{equation}
For $a,b\in \Lambda$ and $\V{t} \in[0,\infty)^\Lambda$ define the
\emph{Green's function}
\begin{equation}
    \GrL _{\V{t}}(a,b)
    \bydef
    \int_{[0,\infty)} d\ell \,
    \;
    \Ea\,\Big[
    \rrptn_{\V{t},\V{\tau}^{\sss(\Lambda)}_{[0,\ell]}}
    \indic{\LX_{\ell} = b}\Big] .
    \label{e:GF}
\end{equation}
Note that $\GrL_{\V{t}}(a,b)>0$ since $Z_{\V{t}}$ is continuous and
positive. We extend the definition \eqref{e:GF} by setting $\GrL
_{\V{t}}(a,b)=0$ if $a$ or $b$ is the cemetery state $\ast$.

The free Green's function $\GL (a,b)$ is the special case of
$\GrL _{\V{t}}$ when $Z\bydef 1$, see~\eqref{e:gabdef}. For each
$\V{t}$ the function $\rrptn_{\V{t},\V{\tau}^{\sss(\Lambda)}_{[0,\ell]}}$ is bounded as a
function of $\omega \in \Omega_1$ and $\ell\in\co{0,\infty}$ because
$Z_{\V{t}}$ is bounded and positive. By \eqref{e:GsrwIV} this implies
\begin{equation}
  \label{eq:GF-finite}
  \GrL_{\V{t}}(a,b)<\infty, \qquad \V{t}\in[0,\infty)^{\Lambda}.
\end{equation}

Our primary interest in this paper is $\GrL_{\V{0}}(a,b)$ given by
\eqref{e:GF} when $\V{t}\mapsto Z_{\V{t}}$ is one of the choices
described in the next two sections. Both choices involve parameters
$g>0$ and $\nu\in \RR$ called {\em coupling constants}.

\subsection{The \edwards}
\label{sec:latt-edwards-model}

\begin{definition}
  \label{def:SAW} 
  Fix $\cpl>0$ and $\nu\in \RR$.
  The \emph{Green's function $\GrL_{\V{t}}(a,b)$ of the \Ledwards}
  is given by \eqref{e:GF} and \eqref{e:rptn} with the choice
\begin{equation}
  \label{eq:defSAW}
  Z_{\V{t}} \bydef \exp\left\{-\cpl\sum_{x \in \Lambda}
  t_{x}^{2}-\nu\sum_{x\in\Lambda}t_{x}\right\}.
\end{equation}
\end{definition}
To explain Definition~\ref{def:SAW} note that
\begin{equation}
    \nonumber
    \sum_{x} \tau_{[0,\ell],x}^{2} 
     =  
    \iint_{[0,\ell]^{2}}ds\,dr\, \indic{\LX_{s}=\LX_{r}}
\end{equation}
is the time $\LX$ spends intersecting itself up to time $\ell$.  Since
$g>0$, the choice of $Z_{\V{t}}$ in Definition~\ref{def:SAW} weights a
walk in \eqref{e:GF} by the exponential of minus its self-intersection
time: self-intersection is discouraged.  The parameter $\nu\in\RR$ is
called the \emph{chemical potential}, and it controls the expected
length of a walk. Thus the \edwards\ is a continuous time
self-avoiding walk.  See~\cite{Bolthausen2002} for further details and
background on this model.

\subsection{The \texorpdfstring{$\pf$}{phi4} models}
\label{sec:pf-models}

Our second choice of $Z_\V{t}$ requires some preparation.  Let
$\RR^{n\Lambda}\bydef (\RR^{n})^{\Lambda}$,
$\V{\varphi} \bydef (\varphi_{x})_{x\in \Lambda}$ be a point in
$\RR^{n\Lambda}$, and let $\varphi_{x}^{\sss[i]}$ denote the $i$th
component of $\varphi_{x}\in\RR^{n}$. Define a centered Gaussian
measure $P$ on the Borel sets of $\RR^{n\Lambda}$ in terms of a
density with respect to Lebesgue measure $d\varphib$ on
$\RR^{n\Lambda}$ by
\begin{equation}\label{e:dP-def}
    dP(\varphib)
    \bydef 
    C e^{\frac{1}{2} (\varphib,\DL \varphib)}\,d\varphib ,
\end{equation}
where $C$ normalises the measure to have total mass one and the
quadratic form $(\varphib,\DL \varphib)$ is defined by:
\begin{equation}
  \label{e:Lambda-dirichlet-form-n-comp}
  (\DL\varphib)_x^{\sss[i]}
  \bydef
  \sum_{y \in \Lambda}\DL_{x,y}\varphi_{y}^{\sss[i]}, 
  \qquad 
  (f,h)
  \bydef
  \sum_{x \in \Lambda}\sum_{i=1}^n f_{x}^{\sss[i]}h_{x}^{\sss[i]}.
\end{equation}
The covariance of $\varphib$ under $P$ is the
$n|\Lambda|\times n|\Lambda|$ positive definite matrix
$(-\DL)^{-1}_{x,y}\delta_{i,j}$; positive definiteness follows from
\eqref{posdef-Delta} in \Cref{appendix}. By Lemma~\ref{lem:free-green},
\begin{equation}
    \label{e:dP-cov}
    \int_{\RR^{n\Lambda}} dP(\varphib)\, \varphi_{x}^{\sss[i]}\varphi_{y}^{\sss[j]}
    =
    \GL (x,y)\delta_{i,j}.
\end{equation}

\begin{definition}
  \label{def:Zpf}
  Fix $\cpl>0$, $\nu\in \RR$, and $n\in\NN_{\geq 1}$.  The
  \emph{Green's function $\GrL_{\V{t}}( a,b)$ of the $n$-component
    $\pf$ model} is given by \eqref{e:GF} and \eqref{e:rptn} with the
  choice
  \begin{equation}
    \label{ZdefPF1}
    Z_{\V{t}}\bydef
    \int_{\RR^{n\Lambda}} dP(\varphib)\;
    \exp\left\{ -\sum_{x \in \Lambda} 
      \Big(
      \cpl(\frac{1}{2}|\varphi_{x}|^{2} + t_{x})^{2} + \nu (\frac{1}{2}|\varphi_{x}|^{2}+t_{x}) \Big)
    \right\} .
  \end{equation}
\end{definition}

The justification for Definition~\ref{def:Zpf} is given by the next
theorem. To state the theorem, define an expectation
$\pair{\cdot}^{\sssL}_{\cpl,\nu, \V{t}}$ by
  \begin{equation}
    \label{eq:pfdef}
    \pair{F}_{\cpl,\nu, \V{t}}^{\sssL}
    \bydef
    \frac{1}{Z_{\V{t}}}\int_{\RR^{n\Lambda}}F(\varphib) 
    e^{-\frac{1}{2} (\varphib, -\DL \varphib)}\;
    \prod_{x\in \Lambda}
    \left(e^{-V_{t_{x}} (\frac{1}{2}|\varphi_x|^{2})}d\varphi_{x} \right), 
  \end{equation}
  where for $\psi,s\in \RR$,
  $V_{s}(\psi)=\cpl(\psi+s)^{2}+\nu(\psi+s)$.  We abbreviate
  $\pair{F}^{\sssL}_{\cpl,\nu,\V{0}}$ to
  $\pair{F}^{\sssL}_{\cpl,\nu}$.
  
\begin{theorem}
  \label{thm:just}
  Let $\GrL_{\V{t}}(a,b)$ be given by \Cref{def:Zpf}.  Then
  \begin{equation}
    \label{e:just}
    \GrL_{\V{t}}(a,b) =
    \frac{1}{n}\pair{\varphi_{a}\cdot\varphi_{b}}_{\cpl,\nu,\V{t}}^{\sssL}. 
  \end{equation}
\end{theorem}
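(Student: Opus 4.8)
The plan is to unfold both sides of~\eqref{e:just}, expressing the random-walk side as a sum (integral) over walks, the spin side via the Symanzik--BFS--Dynkin representation, and check they coincide. First I would observe that the two-point function $\frac{1}{n}\pair{\varphi_a\cdot\varphi_b}^{\sssL}_{\cpl,\nu,\V{t}}$ is, up to the fixed normalization $Z_{\V{t}}$, the Gaussian expectation of $\varphi_a^{\sss[1]}\varphi_b^{\sss[1]}$ against the perturbation $\prod_x e^{-V_{t_x}(\frac12|\varphi_x|^2)}$ (using $O(n)$-invariance to replace $\varphi_a\cdot\varphi_b$ by $n\,\varphi_a^{\sss[1]}\varphi_b^{\sss[1]}$). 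The key tool is the Brydges--Fröhlich--Spencer/Dynkin isomorphism: for a centered Gaussian field with covariance $(-\DL)^{-1}=\GL$, one has
\begin{equation}
  \label{e:bfs-sketch}
  \int dP(\varphib)\, \varphi_a^{\sss[1]}\varphi_b^{\sss[1]}\,
  F\big((\tfrac12|\varphi_x|^2)_{x\in\Lambda}\big)
  =
  \int_0^\infty d\ell\;\Ea\Big[ F\big((\tfrac12|\varphi_x|^2 + \tau^{\sss(\Lambda)}_{[0,\ell],x})_{x}\big)\,\indic{\LX_\ell=b}\Big]
\end{equation}
after a further $dP$-integration on the right, where on the right $\varphib$ is an independent copy of the Gaussian field and the walk expectation is as in~\eqref{e:GF}. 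In words: inserting $\varphi_a\varphi_b$ and integrating the quartic/quadratic functional of $\frac12|\varphi|^2$ is the same as running a walk from $a$ to $b$ and adding its local-time field to $\frac12|\varphi|^2$ inside the functional.

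The concrete steps: (i) take $F(\psib)=\exp\{-\sum_x V_{t_x}(\psi_x)\}$ with $\psi_x=\frac12|\varphi_x|^2$, which is exactly the perturbation appearing in both $Z_{\V{t}}$ of \eqref{ZdefPF1} and in $\pair{\cdot}^{\sssL}_{\cpl,\nu,\V{t}}$ of \eqref{eq:pfdef}; (ii) apply the isomorphism \eqref{e:bfs-sketch} to the numerator of $\frac1n\pair{\varphi_a\cdot\varphi_b}^{\sssL}_{\cpl,\nu,\V{t}}$, producing $\int_0^\infty d\ell\,\Ea[\,\indic{\LX_\ell=b}\int dP(\varphib)\exp\{-\sum_x V_{t_x}(\frac12|\varphi_x|^2+\tau^{\sss(\Lambda)}_{[0,\ell],x})\}\,]$; (iii) recognize the inner $dP$-integral as $Z_{\V{t}+\V{\tau}^{\sss(\Lambda)}_{[0,\ell]}}$ by \eqref{ZdefPF1} with $\V{t}$ replaced by $\V{t}+\V{\tau}_{[0,\ell]}$; (iv) divide by the normalizer $Z_{\V{t}}$, so the integrand becomes $Z_{\V{t}+\V{\tau}^{\sss(\Lambda)}_{[0,\ell]}}/Z_{\V{t}} = \rrptn_{\V{t},\V{\tau}^{\sss(\Lambda)}_{[0,\ell]}}$ by \eqref{e:rptn}; (v) conclude the right side equals $\int_0^\infty d\ell\,\Ea[\rrptn_{\V{t},\V{\tau}^{\sss(\Lambda)}_{[0,\ell]}}\indic{\LX_\ell=b}] = \GrL_{\V{t}}(a,b)$ by \eqref{e:GF}. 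Throughout, Fubini is justified by the boundedness of $Z$ and \eqref{eq:GF-finite}.

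The main obstacle is establishing the isomorphism identity \eqref{e:bfs-sketch} itself in the form and generality needed here — in particular for the $n$-component field ($n\in\NN$, not just $n=1,2$) and with the extra shift by the deterministic vector $\V{t}$ baked into the functional. I expect to reduce this to the standard one-component BFS--Dynkin formula by a time-change/moment computation: expand $\varphi_a^{\sss[1]}\varphi_b^{\sss[1]}$ via the Gaussian integration-by-parts / Wick rule to get the free Green's function $\GL(a,b)=(-\DL)^{-1}_{a,b}$ and then a walk representation of $(-\DL)^{-1}$ as $\int_0^\infty d\ell\,\Pa(\LX_\ell=b)$ from \eqref{e:gabdef}; the additional $O(n)$ components contribute only through $|\varphi_x|^2$ and are handled by the rotational symmetry of $P$. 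Care is needed that the local-time field $\V{\tau}^{\sss(\Lambda)}_{[0,\ell]}$ enters additively and that the $\ell$-integral and the Gaussian integral may be interchanged; the boundedness and positivity hypotheses on $Z$, together with $\GL(a,b)<\infty$, supply the needed integrability. I would cite \cite{BFS82,Dyn83} (and \cite{1903.04045,Sz11} for the modern formulation) for the core isomorphism and present steps (i)--(v) above as the short computation deriving \eqref{e:just} from it.
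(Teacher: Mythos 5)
Your proposal is essentially the paper's own proof: both sides of~\eqref{e:just} are matched by citing a BFS--Dynkin isomorphism and applying it with the perturbation $F(\V{s})=\exp\{-\sum_{x\in\Lambda}V_{t_{x}}(s_{x})\}$, and both treat general $\V{t}$ by shifting the argument of $F$. The one bookkeeping difference worth noting: the paper invokes \cite[Theorem~11.2.3]{BBSbook}, whose Laplacian $\Delta_\beta$ has zero row sums on $\Lambda$ rather than being the Dirichlet restriction $\DL$; the two differ by the killing rates $\gamma_x=\sum_{y\notin\Lambda}J(y-x)$, so the paper compensates by including a factor $\exp\{\sum_{x}\gamma_{x}s_{x}\}$ in $F$. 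Your version of the isomorphism is phrased directly for the killed walk with covariance $(-\DL)^{-1}=\GL$, which silently absorbs this correction --- a valid alternative, but if you cite \cite[Theorem~11.2.3]{BBSbook} as written you would need the extra factor. One small caution on your closing remarks: the $n$-component identity has the walk's local time added to $\tfrac{1}{2}|\varphi_{x}|^{2}$ (all $n$ components), not to the square of a single component, so it does not trivially ``reduce to'' the $n=1$ formula; however \cite[Theorem~11.2.3]{BBSbook} and the references you list state the identity directly for general $n$, so this is presentational rather than a gap.
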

When $\V{t} = \V{0}$ the right hand side
$\frac{1}{n}\pair{ \varphi_{a}\cdot \varphi_{b}}^{\sssL}_{\cpl,\nu}$
in \eqref{e:just} is the standard
definition of the $n$-component $\pf$ two-point function, see, e.g.,
\cite[Section~1.6]{BBSbook}. Note this reference writes $\pair{
\varphi^{\sss[1]}_{a} \varphi^{\sss[1]}_{b}}_{\cpl,\nu}^{\sssL}$ in
place of $\frac{1}{n}\pair{ \varphi_{a}\cdot
\varphi_{b}}_{\cpl,\nu} ^{\sssL}$ which is the same by $O(n)$
invariance.

\begin{proof}[Proof of \Cref{thm:just}]
  We will use the BFS-Dynkin isomorphism as formulated in
  \cite[Theorem~11.2.3]{BBSbook}. To translate between the notation of
  the present article and~\cite{BBSbook} note that in the latter
  $\tau_x= \frac12 |\varphi_x|^2$ and $L_{T}$ is the vector of local
  times of the walk up to time $T$.  For $x\not=y\in\Lambda$ let
  $\beta_{x,y}=J(y-x)$ and for $x\in\Lambda$ let
  $\gamma_{x} = \sum_{y \not \in \Lambda}J(y-x)$. By
  \cite[(11.1.9)]{BBSbook} the Laplacian $\Delta_{\beta}$ in
  \cite[Theorem~11.2.3]{BBSbook} is a $\Lambda\times\Lambda$ matrix
  whose rows sum to zero with matrix elements
  $(\Delta_{\beta})_{x,y} = J(y-x) + \1_{x=y}\gamma_{x}$. By
  comparison with the matrix $\DL$ defined by the last line of
  \Cref{sec:ctrw-fin} and \eqref{e:Delta-infty-matrix-def} we obtain
  $(-\Delta_{\beta})_{x,y} = (-\DL)_{x,y} -
  \1_{x=y}\gamma_{x}$. Therefore, by \cite[Theorem~11.2.3]{BBSbook}
  with
  \begin{equation}
    F (\V{s})
    =
    \exp \big[\sum_{x\in \Lambda} \big(
    \gamma_{x} s_{x} - V (s_{x})
    \big)\big],
  \end{equation}
  we obtain
  $\pair{\varphi_{a}^{[1]}\varphi_{b}^{[1]}}_{\cpl,\nu,\V{0}}^{\sssL}
  = \GrL_{\V{0}}(a,b)$ which is the same as \eqref{e:just} with
  $\V{t}=0$. The desired \eqref{e:just} with $\V{t}$ not necessarily
  $\V{0}$ is obtained by replacing $F(\V{s})$ by $F(\V{s}+\V{t})$.

\end{proof}

\subsection{Main result} 
\label{sec:main-result}

Our main result Theorem~ \ref{thm:main} concerns the infinite volume
limit of the Green's function for the examples in the previous
sections.

\begin{lemma}[Proof in~\Cref{sec:verification}]
  \label{lem:Z-1-earlier}
  For $\Lambda \ni a,b$, $\GrL_{\V{0}}(a,b)$ is
  non-decreasing in $\Lambda$ for the $n=1,2$-component $\pf$ and
  Edwards models.
\end{lemma}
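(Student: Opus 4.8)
The plan is to treat the two families of models by different arguments, because their partition functions $Z_{\V{0}}$ depend on $\Lambda$ in quite different ways. For the \Ledwards{} I will read the monotonicity straight off the walk representation \eqref{e:GF}; for the $\pf$ models I will use \Cref{thm:just} to pass to the spin representation and then invoke the standard ferromagnetic correlation inequalities — Griffiths' second inequality when $n=1$ and the Ginibre inequalities when $n=2$ (see, e.g., \cite{FFS92}).

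For the \Ledwards, $Z_{\V{0}}=1$, so $\rrptn_{\V{0},\V{\sigma}}=Z_{\V{\sigma}}$. Fix $a,b\in\Lambda$. On the event $\{\LX_{\ell}=b\}$ the walk has not been killed, so $\ell<\TL$ and $\LX_{s}=\ZX_{s}\in\Lambda$ for all $s\in[0,\ell]$; hence $\Ltau_{[0,\ell],x}$ coincides with the infinite-volume local time $\tau^{\sss(\infty)}_{[0,\ell],x}$ for $x\in\Lambda$, this quantity vanishes for $x\notin\Lambda$, and $\sum_{x\in\Lambda}\Ltau_{[0,\ell],x}=\ell$. So on this event
\[
  Z_{\V{\tau}^{\sss(\Lambda)}_{[0,\ell]}}
  =
  \exp\Big\{-\cpl\iint_{[0,\ell]^{2}}ds\,dr\,\indic{\ZX_{s}=\ZX_{r}}-\nu\ell\Big\},
\]
a nonnegative functional of $(\ZX_{s})_{s\le\ell}$ alone, which I abbreviate to $W_{\ell}$, and therefore $\GrL_{\V{0}}(a,b)=\int_{0}^{\infty}d\ell\,\Ea\big[W_{\ell}\,\indic{\ZX_{\ell}=b}\,\indic{\ell<\TL}\big]$. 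Since $\TL\le T^{\sss(\Lambda')}$ pointwise whenever $\Lambda\subset\Lambda'$, enlarging $\Lambda$ can only increase the integrand, so integrating gives $\GrL_{\V{0}}(a,b)\le G^{\sss(\Lambda')}_{\V{0}}(a,b)$; both sides are finite by \eqref{eq:GF-finite}. I expect this step to be routine.

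For the $\pf$ models, \Cref{thm:just} reduces the claim to showing that $\pair{\varphi_{a}\cdot\varphi_{b}}^{\sssL}_{\cpl,\nu}$ is nondecreasing in $\Lambda$. I would rewrite \eqref{eq:pfdef} at $\V{t}=\V{0}$ by isolating the diagonal of $\DL$: since $\DL_{x,y}=\Jump(y-x)$ and $\hJ=-\Jump(0)$,
\[
  (\varphib,-\DL\varphib)
  =
  \hJ\sum_{x\in\Lambda}|\varphi_{x}|^{2}
  -\sum_{\substack{x,y\in\Lambda\\x\neq y}}\Jump(y-x)\,\varphi_{x}\cdot\varphi_{y},
\]
which presents $\pair{\cdot}^{\sssL}_{\cpl,\nu}$ as the expectation of a ferromagnetic spin system on $\Lambda$ with the $\Lambda$-independent single-spin measure $d\mu(\varphi)=\exp\{-\tfrac12\hJ|\varphi|^{2}-\cpl(\tfrac12|\varphi|^{2})^{2}-\nu\tfrac12|\varphi|^{2}\}\,d\varphi$ and with nonnegative pair couplings $\Jump(y-x)$. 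The measure $d\mu$ is even, $O(n)$-invariant, and finite because $\cpl>0$, and enlarging $\Lambda$ to $\Lambda'$ only adjoins new sites carrying the very same $d\mu$ together with new nonnegative couplings. Realising the $\Lambda$-model inside the $\Lambda'$-model by switching off every coupling that meets $\Lambda'\setminus\Lambda$ and then raising those couplings one after another to their true values, Griffiths' second inequality ($n=1$) shows each increment is $\ge 0$; for $n=2$ one first uses $O(2)$-invariance to replace $\varphi_{a}\cdot\varphi_{b}$ by $2\varphi_{a}^{\sss[1]}\varphi_{b}^{\sss[1]}$ and then applies the Ginibre inequalities componentwise, each increment changing $\pair{\varphi_{a}^{\sss[1]}\varphi_{b}^{\sss[1]}}$ by a nonnegative multiple of a truncated correlation $\pair{\varphi_{a}^{\sss[1]}\varphi_{b}^{\sss[1]};\varphi_{x}^{\sss[\alpha]}\varphi_{y}^{\sss[\alpha]}}\ge 0$.

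The routine case is the \Ledwards; the part that needs care is the $\pf$ case, and within that the $n=2$ subcase. The structural point the argument rests on is that the diagonal part $\hJ$ of $-\DL$ does not depend on $\Lambda$, so that all the $\Lambda$-dependence of $\pair{\cdot}^{\sssL}_{\cpl,\nu}$ is carried by which ferromagnetic pair couplings are switched on — precisely the setting of the correlation inequalities. What must be checked carefully is that the Ginibre inequalities genuinely cover the single-spin measure $d\mu$ above, in particular when $\nu<0$, where $d\mu$ is a rotationally symmetric double-well density, and that the truncated correlations $\pair{\varphi_{a}^{\sss[1]}\varphi_{b}^{\sss[1]};\varphi_{x}^{\sss[\alpha]}\varphi_{y}^{\sss[\alpha]}}$ have the stated sign for all the couplings being turned on.
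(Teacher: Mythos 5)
For the Edwards model your argument is essentially the paper's: on $\{\LX_\ell=b\}$ the integrand depends only on the infinite-volume path together with the indicator $\indic{\ell<\TL}$, so $\Lambda$-monotonicity is inherited from that of $\TL$; the paper makes exactly this observation in verifying \ref{as:G2}.

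For the $\pf$ models your route genuinely differs from the paper's. The paper (\Cref{lem:G-1}) first proves monotonicity in each $t_y$ using the Ginibre truncated correlation $\pair{\varphi_a\cdot\varphi_b;\abs{\varphi_y}^{2}}^{\sssL}\geq 0$, and then obtains $\Lambda$-monotonicity as a corollary because sending $t_y\to\infty$ pins $\varphi_y$ to $0$ and thereby excises $y$ from $\Lambda$. You instead use that the single-spin density and the diagonal coefficient $\hJ=-\Jump(0)$ of $-\DL$ are $\Lambda$-independent, and interpolate the off-diagonal couplings $\Jump(y-x)\geq 0$ directly. Both routes rest on Griffiths-II/Ginibre, so the interpolation route is a perfectly good alternative (it proves only $\Lambda$-monotonicity, while the paper's route also delivers \ref{as:G1}, which it needs anyway). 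However, your $n=2$ step as written has a genuine sign error: it is \emph{not} true that $\pair{\varphi_a^{\sss[1]}\varphi_b^{\sss[1]};\varphi_x^{\sss[\alpha]}\varphi_y^{\sss[\alpha]}}\geq 0$ for each $\alpha$ separately. The Ginibre cone for $O(2)$-invariant ferromagnets is generated by $\prod_j r_j^{n_j}\cos(\sum_i m_i\theta_i)$, and $\sin\theta_x\sin\theta_y=\tfrac12\bigl(\cos(\theta_x-\theta_y)-\cos(\theta_x+\theta_y)\bigr)$ is not in it; concretely, for a two-site rotator at coupling $J>0$ one computes $\pair{\varphi_x^{\sss[1]}\varphi_y^{\sss[1]};\varphi_x^{\sss[2]}\varphi_y^{\sss[2]}}=-\tfrac{3}{64}J^{2}+O(J^{4})<0$. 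Moreover, raising the couplings one spin-component at a time would pass through intermediate models that are not $O(2)$-invariant, so Ginibre would not even apply along the interpolation. The fix is simple and restores the argument: raise the full $O(2)$-invariant pair coupling $\Jump(y-x)\,\varphi_x\cdot\varphi_y$ in one step. Then each increment contributes $\Jump(y-x)\,\pair{\varphi_a^{\sss[1]}\varphi_b^{\sss[1]};\varphi_x\cdot\varphi_y}\geq 0$, because $r_a r_b\cos\theta_a\cos\theta_b$ and $r_x r_y\cos(\theta_x-\theta_y)$ both lie in the Ginibre cone, and the intermediate measures remain $O(2)$-invariant. Your other worry, that a rotationally symmetric double-well single-spin density for $\nu<0$ might fall outside the Ginibre class, is unfounded: the hypothesis in \cite[Lemmas~11.3--11.4]{FFS92} is only $O(n)$-invariance of the a priori measure, not unimodality.
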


By inspecting Definitions~\ref{def:SAW} and \ref{def:Zpf} we observe
that our examples satisfy $Z_{\V{t}+\V{s}}\le e^{-\nu s}Z_{\V{t}}$ for
all $\Lambda$. Therefore, by \eqref{e:rptn} and \eqref{e:GF},
$\sum_b\GrL _{\V{0}}(a,b)\le\int d\ell \,
\Ea\,\big[e^{-\nu\ell}1\big]$ $\le \frac{1}{\nu}$ for $\nu>
0$. Accordingly, \Cref{lem:Z-1-earlier} implies that
$\lim_{ \Lambda \uparrow \ZZ^d}\GrL _{\V{0}}(a,b)$ exists for our
examples, and is finite if $\nu>0$.  By a standard monotonicity
argument (\Cref{lem:Ginf}) the limit is $\ZZ^{d}$ invariant.  We
define
\begin{equation}
    \label{e:Ggnu-def}
    G_{\cpl,\nu} (x) = \GrI_{\cpl,\nu}(x)
    \bydef
    \lim_{ \Lambda \uparrow \ZZ^d}\GrL _{\V{0}}(a,a+x) .
\end{equation}
A related monotonicity property of our models is

\begin{lemma}[Proof in~\Cref{sec:verification}]
  \label{lem:Z-1-earlier-2}
  For each $x\in\ZZ^{d}$, $G_{\cpl,\nu}(x)$ is non-increasing in
  $\nu$ for the $n=1,2$-component $\pf$ and Edwards models. 
\end{lemma}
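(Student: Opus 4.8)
The plan is to reduce the claim to a finite-volume statement and then treat the two models separately: the \edwards{} by a direct manipulation of the walk representation, and the $\pf$ model by a correlation inequality on the spin side. By \Cref{lem:Z-1-earlier} and \eqref{e:Ggnu-def} we have $G_{\cpl,\nu}(x)=\sup_{\Lambda\uparrow\ZZ^{d}}\GrL_{\V{0}}(a,a+x)$, with each term finite by \eqref{eq:GF-finite}; since a pointwise supremum of non-increasing functions is non-increasing as a map into $(0,\infty]$, it suffices to prove that $\nu\mapsto\GrL_{\V{0}}(a,b)$ is non-increasing for every finite $\Lambda$ and all $a,b\in\Lambda$.

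For the \edwards{}, recall $Z_{\V{0}}=1$, so $\rrptn_{\V{0},\V{s}}=Z_{\V{s}}$ by \eqref{e:rptn}. On the event $\{\LX_{\ell}=b\}$ the walk has not yet left $\Lambda$ by time $\ell$, hence $\sum_{x\in\Lambda}\Ltau_{[0,\ell],x}=\int_{0}^{\ell}\indic{\LX_{s}\in\Lambda}\,ds=\ell$. Substituting \eqref{eq:defSAW} into \eqref{e:GF} therefore gives
\[
  \GrL_{\V{0}}(a,b)
  =
  \int_{0}^{\infty}d\ell\;e^{-\nu\ell}\,
  \Ea\!\left[\exp\!\left\{-\cpl\sum_{x\in\Lambda}\big(\Ltau_{[0,\ell],x}\big)^{2}\right\}\indic{\LX_{\ell}=b}\right] .
\]
For each $\ell$ the expectation is a non-negative number that does not depend on $\nu$, while $e^{-\nu\ell}$ is non-increasing in $\nu$; hence so is the integral.

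For the $n$-component $\pf$ model with $n\in\{1,2\}$, I would instead use \Cref{thm:just} to write $\GrL_{\V{0}}(a,b)=\tfrac1n\pair{\varphi_{a}\cdot\varphi_{b}}_{\cpl,\nu}^{\sssL}$ and differentiate in $\nu$. Writing $\pair{\cdot}$ for $\pair{\cdot}_{\cpl,\nu}^{\sssL}$ and using $V_{0}(\psi)=\cpl\psi^{2}+\nu\psi$ with $\psi=\tfrac12|\varphi_{x}|^{2}$ in \eqref{eq:pfdef}, differentiation under the integral --- justified by dominated convergence, the quartic term with $\cpl>0$ providing a uniform integrable bound on compact $\nu$-intervals --- yields
\[
  n\,\partial_{\nu}\GrL_{\V{0}}(a,b)
  =
  -\tfrac12\sum_{z\in\Lambda}\big(\pair{(\varphi_{a}\cdot\varphi_{b})\,|\varphi_{z}|^{2}}-\pair{\varphi_{a}\cdot\varphi_{b}}\,\pair{|\varphi_{z}|^{2}}\big) ,
\]
so it suffices to show each summand is non-negative. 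The measure \eqref{eq:pfdef} is ferromagnetic: the off-diagonal entries of $-\DL$ equal $-\Jump(y-x)\le0$, so in $e^{-\frac12(\varphib,-\DL\varphib)}$ the pair couplings $\Jump(y-x)\ge0$ are ferromagnetic, and the remaining on-site weight $e^{-\frac{\cpl}{4}|\varphi_{x}|^{4}-\frac12(\hJ+\nu)|\varphi_{x}|^{2}}$ is even and $O(n)$-invariant with $\cpl>0$. For $n=1$ the non-negativity of each summand is precisely Griffiths' second inequality applied with $A=\{a,b\}$ and $B=\{z,z\}$ (see, e.g., \cite{FFS92}). For $n=2$, passing to polar coordinates $\varphi_{x}=\rho_{x}(\cos\theta_{x},\sin\theta_{x})$ one has $\varphi_{a}\cdot\varphi_{b}=\rho_{a}\rho_{b}\cos(\theta_{a}-\theta_{b})$ and $|\varphi_{z}|^{2}=\rho_{z}^{2}$, both of which are observables of the type covered by Ginibre's inequalities for the ferromagnetic two-component model, which give $\pair{(\varphi_{a}\cdot\varphi_{b})\,\rho_{z}^{2}}\ge\pair{\varphi_{a}\cdot\varphi_{b}}\,\pair{\rho_{z}^{2}}$ (see, e.g., \cite{FFS92} and the references therein to Ginibre's inequalities). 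In all cases $\partial_{\nu}\GrL_{\V{0}}(a,b)\le0$, and integrating in $\nu$ completes the finite-volume claim.

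The hard part is the $\pf$ case. One must recognise that the desired $\nu$-monotonicity is visible on the \emph{spin} side through \Cref{thm:just} but not through a naive termwise estimate of the ratio $\rrptn_{\V{0},\V{\tau}}$ on the walk side, where the sign goes the wrong way; and one must invoke the correct correlation inequality --- Griffiths for $n=1$, Ginibre for $n=2$ --- the unavailability of which for $n\ge3$ is exactly why the statement is restricted to $n\in\{1,2\}$. A minor point is to state the conclusion so that it also covers $G_{\cpl,\nu}(x)=+\infty$, which may occur for $\nu\le0$.
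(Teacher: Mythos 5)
Your proof is correct and takes essentially the same route as the paper: for the Edwards model it reads the monotonicity off the explicit $e^{-\nu\sum_x t_x}$ factor, and for the $\pf$ model it passes to the spin side via \Cref{thm:just}, differentiates in $\nu$, and invokes Griffiths' second inequality ($n=1$) and Ginibre's inequality ($n=2$) — precisely the inequalities (\cite[Lemmas~11.3, 11.4]{FFS92}) used in the paper's \Cref{lem:G-1}. Your observation that the walk-side ratio $\rrptn_{\V{0},\V{\tau}}=Z_{\V{\tau}}/Z_{\V{0}}$ does not admit a naive termwise monotonicity argument for $\pf$ (since both numerator and denominator decrease in $\nu$) is a good motivation for why the spin representation is needed there, and is implicit in the paper's choice to argue via \Cref{thm:just}.
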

This lemma motivates defining the \emph{critical value} of $\nu$ by
\begin{equation}
  \label{e:critical}
  \nu_{c} \bydef \inf\big\{\nu\in\RR \mid
  \sum_{x\in\ZZ^{d}} G_{\cpl,\nu}(x)<\infty\big\}.
\end{equation}
Up to this point all we know is that $\nu_{c}\in [-\infty,0]$. We will
prove that $\nu_{c}\not = -\infty$; this is known
\cite{FrohlichSimonSpencer}, \cite[Corollary 3.2.6]{MadrasSlade} for
essentially the same models.  Since our models depend on $\cpl$,
$\nu_{c}$ is a function of $\cpl$ and, when necessary, we write
$\nu_{c}=\nu_{c} (\cpl)$.  When $\nu = \nu_{c}$ we say that the
Green's function is \emph{critical}.

Our main result is the following precise version of
Theorem~\ref{thm:main-intro} which now also includes the Edwards
model.

\begin{theorem}
  \label{thm:main} 
  Suppose $d\geq 5$ and $\Jump$ satisfies \ref{as:J1}--\ref{as:J4n}.   Consider
  the Edwards and the $\pf$ models given by
  Definition~\ref{def:SAW} and Definition~\ref{def:Zpf} with
  $n=1,2$. For both models there exists $\cpl_0 =\cpl_0(\Jump)>0$ such
  that, for $\cpl\in (0,\cpl_0)$, $\nu_{c}(\cpl)$ is finite,
  $G_{\cpl,\nu_{c}} (x)$ is finite for all $x\in\ZZ^{d}$, and there
  exists $C=C(\cpl,\Jump)>0$ such that
\begin{equation}
  \label{e:main}
  G_{\cpl,\nu_{c}} (x)
  \sim
  \frac{C}{|x|^{d-2}},
  \qquad \text{ as } |x|\ra \infty.
\end{equation}
\end{theorem}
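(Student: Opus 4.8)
The plan is to run the continuous-time lace expansion established in the body of the paper for the Green's function $\GrL_{\V{0}}(a,b)$ of the two models, and then apply a continuous-time analogue of the standard inductive/bootstrap analysis that converts a convergent lace expansion into $|x|^{2-d}$ asymptotics. First I would record the expansion in the form
\begin{equation}
  \GrL_{\V{0}}(x)
  =
  \GL(x)
  +
  (\PiL \ast \GrL_{\V{0}})(x)
  +
  \text{(boundary/self-loop terms)},
  \nonumber
\end{equation}
where $\PiL$ is the lace-expansion coefficient built from the connected ``diagrams'' in local time, and then pass to the infinite-volume limit using Lemmas~\ref{lem:Z-1-earlier} and~\ref{lem:Z-1-earlier-2} (monotonicity in $\Lambda$ and in $\nu$) together with the a priori bound $\sum_b\GrL_{\V{0}}(a,b)\le 1/\nu$ for $\nu>0$. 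This yields a convolution equation $\GrI_{\cpl,\nu} = \GI + \PiI_{\cpl,\nu}\ast\GrI_{\cpl,\nu}$ on $\ZZ^d$, valid for $\nu$ slightly above $\nu_c$, which after rearranging reads $\GrI = \big((-\DI) - \PiI\big)^{-1}$ in Fourier space: $\widehat{\GrI}(k) = \big(\widehat{(-\DI)}(k) - \widehat{\PiI}(k)\big)^{-1}$.

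The core estimate is a bound on $\PiI_{\cpl,\nu}(x)$ showing that, for $\cpl$ small, $\PiI$ is summable with $\sum_x|\PiI(x)|$ small and, more importantly, $\PiI(x) = O(|x|^{-3(d-2)})$ (or at least $o(|x|^{-(d-2)})$ with enough room), uniformly in $\nu\ge\nu_c$ and in $\Lambda$. This is where I would spend essentially all the work: the lace-expansion coefficients are sums over laces of products of local-time ``propagators'', and one bounds them by the usual diagrammatic technique --- Gaussian/BFS-Dynkin correlation bounds for the $\pf$ case and direct local-time moment estimates for the Edwards case --- reducing each diagram to a convolution of copies of $\GI$ evaluated at displacement $x$. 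The key analytic input is that convolutions of $\GI(y)\sim C_{\Jump}|y|^{2-d}$ are well-controlled when $d>4$: the ``bubble'' $\sum_y \GI(y)^2$ is finite, so each extra loop in a diagram costs a finite factor times a power of $\cpl$, giving smallness; and a triangle-type convolution bound gives the pointwise decay $|x|^{-3(d-2)}$. A bootstrap on a suitable norm of $\GrI$ (comparing it to $\GI$) closes the argument and simultaneously shows $\widehat{\GrI}(0)<\infty$ fails exactly at $\nu=\nu_c$, hence $\nu_c$ is finite and $G_{\cpl,\nu_c}(x)$ is finite for every $x$ by monotone convergence $\nu\downarrow\nu_c$.

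To extract the asymptotics \eqref{e:main} at $\nu=\nu_c$, I would show that at criticality $\widehat{\GrI}_{\cpl,\nu_c}(k)^{-1}$ vanishes at $k=0$ and behaves like $\frac{1}{2}\sigma^2|k|^2(1+o(1))$ as $k\to0$, where $\sigma^2 = -\widehat{(-\DI)}''(0) + \widehat{\PiI}''(0)$ is a finite, strictly positive renormalized diffusion constant (finiteness of the second moment of $\PiI$ again uses the $|x|^{-3(d-2)}$ decay and $d\ge5$, which is the reason $d=4$ is excluded). Given this, $\GrI_{\cpl,\nu_c}$ is, up to a remainder whose Fourier transform is smoother, a multiple of the massless lattice Green's function, so the Fourier/Tauberian asymptotics for $(\tfrac12\sigma^2|k|^2)^{-1}$ on $\ZZ^d$ --- i.e. the analogue of \eqref{SRWG} --- give $G_{\cpl,\nu_c}(x)\sim C|x|^{2-d}$ with $C = C_{\Jump}/\sigma^2 \cdot(\text{normalization})>0$; positivity of $C$ follows since $G_{\cpl,\nu_c}(x)>0$ and the leading term cannot vanish. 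The main obstacle is the diagrammatic bound on $\PiI$ in continuous time: one must set up local-time ``Feynman rules'' that faithfully encode the lace structure, verify the crucial $d>4$ smallness via the finite bubble, and obtain the pointwise $|x|^{-3(d-2)}$ decay uniformly down to $\nu_c$ --- including the boundary/self-loop terms peculiar to the continuous-time killed walk --- while keeping the bootstrap functions chosen so that the weak-coupling hypothesis $\cpl<\cpl_0(\Jump)$ suffices to close everything.
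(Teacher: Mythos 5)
Your plan matches the paper's structure quite closely: a finite-volume lace expansion producing the Dyson equation (Proposition~\ref{prop:lace-expansion}), uniform diagrammatic bounds on $\Pi^{\sssL}$ under a $\cirb$-infrared-bound hypothesis (Proposition~\ref{prop:Pi-DB} and Proposition~\ref{prop:Pi-bound}), passage to infinite volume (Proposition~\ref{prop:IVL*}), a forward bootstrap in $\nu$ down to $\nu_c$ using the continuity-and-connectedness argument (Lemma~\ref{lem:BS} in the proof of Proposition~\ref{prop:infrared}), and finally extraction of $|x|^{2-d}$ asymptotics from the resulting convolution equation. The one place your sketch materially understates the difficulty is the last step. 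Passing from $\hat G(k)\sim c|k|^{-2}$ at $\nu_c$ to the real-space asymptotics $G(x)\sim C|x|^{2-d}$ is not a matter of ``Fourier/Tauberian asymptotics $\ldots$ the analogue of \eqref{SRWG}.'' Equation \eqref{SRWG} is for the free Green's function $\greens=(-\Delta)^{-1}$, whereas $\GrI_{\cpl,\nu_c}$ is the convolution inverse of a perturbed kernel $\decon_{\cpl,\nu_c}=\decon^{\dgreens}_{w}+\tilde\aPi_{\cpl,\nu_c}$, not something visibly of the form $\greens*(\text{nice})$. The paper does not attempt this conversion directly but instead invokes a theorem of Hara \cite[Theorem~1.4]{Hara2008}, verifying its hypotheses \ref{as:H1}--\ref{as:H4} --- normalization $\hat\HaraJ(0)=1$, pointwise decay and a $(2+\rho)$-moment of the kernel $\HaraJ$, and the infrared lower bound $\hat\HaraJ(0)-\hat\HaraJ(k)\gtrsim|k|^2$ --- from the diagrammatic bound $|\aPi_{\cpl,\nu_c}(x)|=O(\cpl)\mnorm{x}^{-3(d-2)}$, the $\ZZ^d$-symmetry, and the finite-range structure of $\Jump$. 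If you intended to carry out the Fourier analysis by hand, you would essentially have to re-derive Hara's result: ``a remainder whose Fourier transform is smoother'' is not by itself sufficient, since one needs quantitative smoothness (continuity of enough $k$-derivatives) of $\hat\HaraJ$ so that the remainder's inverse transform decays strictly faster than $|x|^{2-d}$; this is exactly what the $|x|^{-3(d-2)}$ decay with $d\ge5$ supplies. Apart from this, your outline correctly identifies the diagrammatic bound on $\Pi$ (via the Lebowitz inequality in the $\pf$ case, and the explicit $\delta$-like vertex weight \eqref{e:WSAW-LV} in the Edwards case) as the main labor, and the $d>4$ bubble condition as the source of the $O(\cpl)$ smallness.
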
                                          
\Cref{thm:main} describes mean field asymptotics of the infinite
volume Green's function at the critical point, c.f.\ \eqref{SRWG}. The
restriction to $n=1,2$ for the $\pf$ models is necessary because our
proof uses the Griffiths II inequality, which is not known to hold for
$n>2$.  

The proof of \Cref{thm:main} occupies Sections~\ref{sec:Overview}
through~\ref{sec:verification}.  \Cref{sec:Overview} serves as an
overview of lace expansion methods and reduces a key step of our
argument to some auxiliary lemmas. The remainder of the argument is
comprised of three parts: the derivation of a lace expansion in finite
volume (\Cref{sec:laces,sec:LF}), establishing an infinite volume
expansion (\Cref{sec:Pi-DB-outer} through \ref{sec:nIVL}), the
analysis of this expansion (\Cref{sec:completion-proof}), and the
application of this analysis to our examples
(\Cref{sec:verification}). The contents of individual sections will be
discussed locally.

\subsection{Related lace expansion results}
\label{sec:related-work}

In~\cite{Sakai07} Sakai proved a similar result for the Green's
function of the Ising model. He applied the lace expansion for
percolation to the random current representation of the Ising model.
For his expansion to converge he required the dimension $d$ of the
lattice to be large or alternatively the range of the Ising coupling
to be large. In a second paper~\cite{Sakai2015} he extended his
results to the scalar $\pf$ model; he approximated the scalar $\pf$
model by Ising models using the Griffiths-Simon
trick~\cite{GriffithsSimon} and thereby derived a lace expansion for
$\pf$ that converges for weak coupling. His breakthrough inspired us
to find the expansion used in this paper.

\section{Infrared bound and overview}
\label{sec:Overview}

A key step in the proof of \Cref{thm:main} is to obtain the upper
bound on $G_{\cpl,\nu_{c}}(x)$ provided by
\Cref{thm:Main-Pre-models} below. This section begins the proof of
\Cref{thm:Main-Pre-models} by reducing it to lemmas which will be
proved in later sections.  Our reduction reviews the guiding ideas of
proofs by lace expansion, which are explained in more detail and
attribution in~\cite{Slade}. See also~\cite{HHS2003,BHK}.

Recall the definitions of the Edwards model and the $\pf$ model from
Sections~\ref{sec:latt-edwards-model} and \ref{sec:pf-models}. The
infinite volume Green's functions $G_{\cpl,\nu}(x)$ for these models
are given by \eqref{e:Ggnu-def}.  We are mainly interested in the case
where $\nu=\nu_{c}$, the critical value given by \eqref{e:critical}.
\emph{The hypotheses for results in this section include
  \Cref{hyp:Jnew}.}

\begin{theorem}
  \label{thm:Main-Pre-models}
  Suppose $d\geq 5$. For the $n=1,2$-component $\pf$ and Edwards
  models there are $\cpl_{0}=\cpl_{0}(d,\Jump)>0$ such that if
  $0<\cpl<\cpl_{0}$ then $\nu_{c}(\cpl)$ is finite and
    \begin{equation}\label{eq:Main-Pre00}
       G_{\cpl,\nu_{c}}(x) \le 2\greens (x)
       , \qquad x\in\ZZ^{d}.
  \end{equation}
\end{theorem}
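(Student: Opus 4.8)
The plan is to run the standard lace-expansion bootstrap argument, now driven by the continuous-time expansion rather than a discrete one. First I would invoke the finite-volume lace expansion (to be derived in \Cref{sec:laces,sec:LF}) and its infinite-volume counterpart (\Cref{sec:Pi-DB-outer}--\ref{sec:nIVL}) to write a convolution equation of the form $G_{\cpl,\nu} = \greens_\nu + \greens_\nu \ast (\aPiI_{\cpl,\nu} + \text{something}) \ast G_{\cpl,\nu}$, where $\greens_\nu$ is the free Green's function associated to the generator $\DI$ (shifted by $\nu$) and $\aPiI_{\cpl,\nu}$ is the lace-expansion coefficient. Taking Fourier transforms turns this into $\hat{G}_{\cpl,\nu}(k) = \frac{1}{\hat{\greens}_\nu(k)^{-1} - \hat\aPiI_{\cpl,\nu}(k)}$, so the whole game is to control $\hat\aPiI$ uniformly in $k$, in $\Lambda$, and (crucially) up to $\nu = \nu_c$.

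The control of $\aPiI$ proceeds by the bootstrap (or "forbidden-region") method. One introduces running bounds — typically on quantities like $\sup_x |x|^{d-2} G_{\cpl,\nu}(x)$ and the "bubble" $\sum_x G_{\cpl,\nu}(x)^2$ — packaged into a function $f(\cpl,\nu)$, and shows three things: (i) $f$ is continuous in $\nu$ on $[\nu_0, \infty)$ for suitable $\nu_0$; (ii) $f(\cpl,\nu) \le 3$ (say) for $\nu$ large, where the walk is very short and the interaction is a small perturbation; (iii) if $\cpl$ is small enough and $f(\cpl,\nu) \le 3$ then in fact $f(\cpl,\nu) \le 2$ — the key improvement step. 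The diagrammatic estimates on $\aPiI$ in terms of $\greens$ and the bubble, combined with the elementary random-walk bound $\GI(x) \ge c_{\Jump}|x|^{2-d}$ from \eqref{SRWG} and convolution bounds of the Gaussian/lattice type (here is where $d \ge 5$ enters, guaranteeing a finite bubble for the free walk), feed into (iii). Because the bounds are uniform as $\nu \downarrow \nu_c$, monotonicity (\Cref{lem:Z-1-earlier}, \Cref{lem:Z-1-earlier-2}) plus the finiteness $\sum_x G_{\cpl,\nu_c}(x) < \infty$ it yields shows $\nu_c > -\infty$, and then \eqref{eq:Main-Pre00} follows by passing to the limit $\nu \downarrow \nu_c$ and $\Lambda \uparrow \ZZ^d$ in the bound $G_{\cpl,\nu}^{\sssL}(x) \le 2\greens(x)$ delivered by the bootstrap.

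Two ingredients specific to this paper deserve emphasis. The lace-expansion coefficient $\aPiI$ must be built from laces on a continuous time interval; the diagrammatic bounds then involve integrals of products of Green's functions over simplices in $[0,\infty)^k$, and one needs these to reduce — after integrating out the time variables — to the familiar spatial convolution diagrams. This reduction, and the verification that $\aPiI$ inherits the needed $\ZZ^{d}$-symmetry and summability, is handled by the auxiliary lemmas of \Cref{sec:Overview} that this section defers to. Second, to even start one needs the free two-point function $\greens_\nu$ with $\nu \ge 0$ to have the right infrared behaviour $\hat\greens_\nu(k) \asymp (\hat\DI(k) + \nu)^{-1} \asymp |k|^{-2}$ near $k=0$, which is exactly \Cref{lem:free-green-inf} together with finite range \ref{as:J4n}.

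The hard part will be step (iii), the bootstrap improvement: one must show that the diagrammatic sum defining $\hat\aPiI_{\cpl,\nu}$ is $O(\cpl)$ small \emph{uniformly} in $\nu \in (\nu_c, \infty)$ and $\Lambda$, using only the a priori bound $f \le 3$. This requires the diagrammatic estimates to be genuinely multilinear in $G$ (so that each extra vertex costs a factor of the small bubble or of $\cpl$) and requires a clean bound on the "one-loop" term, which for these models is where the coupling constant $\cpl$ first appears and must be shown to be the dominant smallness. Getting the continuous-time lace expansion into a form where these multilinear bounds are transparent — in particular, organising the self-loop contributions $\selfloopI{}$ correctly — is the technical heart of the argument.
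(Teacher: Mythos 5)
Your overall plan — derive the convolution (Dyson) equation, run the three-step bootstrap with a forbidden-region argument applied to a function like $F(\nu)=\sup_x G_{\cpl,\nu}(x)/\greens(x)$, then pass to the limit $\nu\downarrow\nu_c$ and $\Lambda\uparrow\ZZ^d$ — matches the paper's proof. The minor organizational differences (shifting $\nu$ into a massive free Green's function $\greens_\nu$ rather than into the self-loop coefficient $\selfloop{\cpl,\nu}$; working in Fourier space for Step two rather than with a position-space deconvolution as in \Cref{lem:BHK-2}) are cosmetic.

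However, there is a genuine gap in your argument that $\nu_c>-\infty$, which is part of what the theorem asserts. You write that the bootstrap, together with monotonicity and ``the finiteness $\sum_x G_{\cpl,\nu_c}(x)<\infty$ it yields,'' shows $\nu_c>-\infty$. This does not work: if $\nu_c=-\infty$ the bootstrap would simply deliver $G_{\cpl,\nu}\le 2\greens$ for \emph{all} $\nu\le\cpl$, which is a perfectly consistent state of affairs and produces no contradiction from the susceptibility alone (and there is no ``$\nu_c$'' at which to evaluate the sum). The paper closes this hole by a separate mechanism: \Cref{lem:new-nucfin} shows that the self-loop coefficient $\selfloop{\cpl,\nu}$ diverges as $\nu\to-\infty$ (for the Edwards model it equals $-\nu$; for $\pf$ it is $-\nu$ minus an $O(\cpl)$ correction), while \Cref{lem:BHK-1-2*}\ref{BHK1-1} shows that under a $2$-IRB one has $|\selfloop{\cpl,\nu}|\le C_0\cpl$. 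If $\nu_c=-\infty$ the IRB would hold for all $\nu\le\cpl$, forcing $\selfloop{\cpl,\nu}$ to stay bounded for arbitrarily negative $\nu$, contradicting the divergence. You need an argument of this type; the bootstrap bound on $G$ by itself does not rule out $\nu_c=-\infty$.

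One smaller point: passing from $\nu>\nu_c$ to $\nu=\nu_c$ requires not only monotonicity in $\Lambda$ but also continuity in $\nu$ of the \emph{finite-volume} $\GrL_{\V0}(0,x)$ at $\nu_c$ (this is \Cref{lem:Z5}); you gesture at this but it deserves to be stated, since the infinite-volume $G_{\cpl,\nu}$ is not a priori continuous at $\nu_c$.
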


Equation~\eqref{eq:Main-Pre00} is called an \emph{infrared
  bound}. Infrared bounds in Fourier space for nearest neighbour
models (i.e., $\Jump$ as in Example~\ref{ex:latlap}) were first proved
for $n\ge 1$ and $d>2$ in~\cite{FrohlichSimonSpencer} with the $2$
replaced by a $1$.  The relation between Fourier infrared bounds
and~\eqref{eq:Main-Pre00} is not trivial, see \cite[Appendix~A]{Sokal}
and \cite[Example~1.6.2]{MadrasSlade}.

While \Cref{thm:Main-Pre-models} only stated $\nu_{c}$ is finite, a
more precise estimate holds:

\begin{proposition}[Proof in \Cref{sec:verification}]
  \label{prop:og}
  For the $n=1,2$-component $\pf$ and Edwards models,
  $\nu_{c}=-O(\cpl)$ as $\cpl\downarrow 0$.
\end{proposition}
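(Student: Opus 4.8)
The plan is to prove the two-sided bound $\nu_c(\cpl) = -O(\cpl)$ by combining an upper bound $\nu_c \leq 0$ (which we already know, from the discussion preceding \eqref{e:critical}) with a matching lower bound $\nu_c \geq -C\cpl$ for some constant $C$ depending only on $\Jump$ and $d$. For the lower bound the strategy is to show that if $\nu < -C\cpl$ then the susceptibility $\sum_x G_{\cpl,\nu}(x)$ is infinite, which by \eqref{e:critical} forces $\nu_c \geq -C\cpl$. The natural way to do this is to compare the self-interacting Green's function against a \emph{free} continuous-time random walk Green's function with a modified killing/chemical-potential term, using the fact that for small local times the quadratic interaction $\cpl t_x^2$ is dominated by a linear term: on the event that the walk has only accumulated local time at most $1$ at each site, $\cpl\sum_x t_x^2 \leq \cpl \sum_x t_x$, so that $Z_{\V{t}} \geq \exp\{-(\cpl+\nu)\sum_x t_x\} \cdot (\text{correction})$ restricted to that event. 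More robustly, since $t_x^2 \le t_x \cdot (\sup_y t_y)$ is awkward to control uniformly, I would instead estimate $G_{\cpl,\nu}$ from below by restricting the walk in \eqref{e:GF} to paths whose total length $\ell$ is bounded, or whose per-site local time is bounded, and on that restricted set bound the quadratic term linearly.

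Concretely, first I would establish a clean lower bound of the form $G_{\cpl,\nu}(x) \geq c\, G_{0, \nu + C\cpl}^{\sss(\infty)\text{-type}}(x)$, where the right side is (a constant times) the free random-walk Green's function of a walk with generator $\DI$ and uniform killing rate $-\nu - C\cpl$ — equivalently $\sum_{n\geq 0}(\hJ - \nu - C\cpl)^{-(n+1)}\jumprate^{\ast n}(x)$ in the spirit of \eqref{e:free-green-inf-2}. The point is that this free killed-walk Green's function has infinite sum over $x\in\ZZ^d$ precisely when the killing rate is $\leq 0$, i.e.\ when $\nu + C\cpl \geq 0$, equivalently $\nu \geq -C\cpl$. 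Since $\nu_c \leq 0$ already, this pins down $\nu_c = -O(\cpl)$. The comparison itself comes from: (i) monotonicity in $\Lambda$ (\Cref{lem:Z-1-earlier}) to work in finite volume where everything is finite; (ii) for each fixed finite $\Lambda$, bounding $Z_{\V{\tau}_{[0,\ell]}}/Z_{\V{0}} \ge \exp\{-\cpl \sum_x \tau_{[0,\ell],x}^2 - \nu \ell\}$ for the Edwards model (with the analogous Griffiths/Gaussian computation for $\pf$), then splitting on whether $\max_x \tau_{[0,\ell],x} \leq K$ for a suitable constant $K$, on which event $\cpl\sum_x\tau_{[0,\ell],x}^2 \leq \cpl K \ell$; (iii) controlling the probability that the walk stays "spread out" (no site over-visited) by a constant, uniformly — here one can use that a nearest-neighbour-type continuous-time walk run for time $\ell$ typically spreads its local time over $\asymp \ell^{d/2}$ sites, so $\max_x \tau_{x} \leq K$ fails with small probability, but since we only need a constant lower bound it suffices to restrict to, say, the first holding time or a short initial segment and a geometric-series argument in the number of renewals.

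For the $\pf$ case the argument is parallel but one works through the representation \eqref{e:just} / \eqref{eq:pfdef}: one bounds the single-site potential $V_{t_x}(\tfrac12|\varphi_x|^2) = \cpl(\tfrac12|\varphi_x|^2 + t_x)^2 + \nu(\tfrac12|\varphi_x|^2+t_x)$ from above, using $(\tfrac12|\varphi_x|^2+t_x)^2 \leq 2(\tfrac14|\varphi_x|^4 + t_x^2)$, so the $t_x$-dependence is again controlled by a term linear in $t_x$ on the event $t_x\leq K$, while the $|\varphi_x|^4$ part is absorbed into a fixed ($\V{t}$-independent) reference measure; Griffiths' inequalities (available for $n=1,2$, as noted after \Cref{thm:main}) ensure the correlation only decreases when we drop favourable terms, keeping the comparison one-sided in the right direction. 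The main obstacle I anticipate is step (iii): getting a \emph{genuinely uniform in $\Lambda$ and in $x$} constant lower bound after restricting to the "not over-visited" event, since naively the restriction could cost a factor decaying in $|x|$. The fix is to not restrict the whole path but only a bounded-length initial portion and then use the Markov property together with the fact that $\nu+C\cpl\ge 0$ makes the killed free walk recurrent (infinite Green's function) — one only needs the restricted-interaction comparison to hold along each excursion of bounded length, and then sum the resulting geometric-type series; alternatively, compare directly in Fourier space at $\xb=0$, bounding $\widehat{G_{\cpl,\nu}}(0)$ below by the $\xb=0$ Fourier transform of the killed-walk Green's function, which sidesteps the $x$-dependence entirely since $\sum_x G_{\cpl,\nu}(x) = \widehat{G_{\cpl,\nu}}(0)$.
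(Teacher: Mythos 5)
Your overall logic (show $\chi_{\cpl}(\nu)=\infty$ for $\nu<-C\cpl$ and combine with $\nu_{c}\le 0$) is the right frame, but the comparison you build it on cannot deliver the rate $O(\cpl)$, and this is a genuine gap. Capping the per-site local time at $K$ gives $\chi_{\cpl}(\nu)\ge\int_{0}^{\infty}e^{-(\nu+\cpl K)\ell}\,P_{0}\big(\max_{x}\tau_{[0,\ell],x}\le K\big)\,d\ell$, and the restriction is not free: by memorylessness of the holding times, in each time block of length $2K$ the walk stays put for the whole block with probability at least $e^{-2K\hJ}$, which forces some local time to exceed $K$, so $P_{0}\big(\max_{x}\tau_{[0,\ell],x}\le K\big)\le(1-e^{-2K\hJ})^{\lfloor \ell/(2K)\rfloor}$ decays exponentially in $\ell$ at a rate $\delta(K)>0$ that does not shrink with $\cpl$. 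Divergence of your integral therefore requires $-\nu\ge\cpl K+\delta(K)$ with $\delta(K)\asymp e^{-cK}$: with $K=1$ (your first formulation) you only get $\nu_{c}\ge-O(1)$, and optimizing $K\asymp\log(1/\cpl)$ gives at best $\nu_{c}\ge-O(\cpl\log(1/\cpl))$, not $-O(\cpl)$. The proposed repair via bounded-length initial pieces or excursions plus a geometric series also fails, because the inequality runs the wrong way for concatenation: splitting a path into pieces with local-time vectors $\V{a},\V{b}$ gives $\sum_{x}(a_{x}+b_{x})^{2}\ge\sum_{x}a_{x}^{2}+\sum_{x}b_{x}^{2}$, so the weight of a concatenated path is at most the product of the pieces' weights; excursion decompositions yield upper bounds on $\chi_{\cpl}$, not the lower bound you need. (Also, a free walk with zero or slightly negative killing is still pointwise transient in $d\ge5$; it is the susceptibility, not the Green's function at fixed $x$, that diverges, so ``recurrence'' is not available as a crutch.)

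For comparison, the paper does not argue this way at all: it reads the proposition off machinery already in place. For the Edwards model $\selfloop{\cpl,\nu}=-\nu$, so $\nu_{c}=-\selfloop{\cpl,\nu_{c}}$, and $\selfloop{\cpl,\nu_{c}}=O(\cpl)$ by \ref{as:Z6} part (b) together with \Cref{lem:BHK-1-2*} part (ii) (consequences of the infrared bound and the $O(\cpl)$ bound on $\aPi$); for the $\pf$ model one uses $\selfloop{\cpl,\nu}=-\nu-\cpl\pair{\abs{\varphi_{x}}^{2}}_{\V{0}}$ and that $\cpl\pair{\abs{\varphi_{x}}^{2}}_{\V{0}}=O(\cpl)$ at $\nu_{c}$ because an infrared bound holds there. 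If you want a self-contained probabilistic argument for the Edwards model in the spirit of your proposal, truncate the total self-intersection rather than the per-site local time: since $E_{0}\big[\sum_{x}\tau_{[0,\ell],x}^{2}\big]\le 2\greens(0)\ell$, Jensen's inequality (or Markov's inequality on the event $\sum_{x}\tau_{[0,\ell],x}^{2}\le 4\greens(0)\ell$, which has probability at least $1/2$) gives $\chi_{\cpl}(\nu)\ge\tfrac{1}{2}\int_{0}^{\infty}e^{-(\nu+4\cpl\greens(0))\ell}\,d\ell=\infty$ for $\nu\le-4\greens(0)\cpl$, hence $\nu_{c}\ge-4\greens(0)\cpl$. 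The absence of an equally clean bare-hands bound for $\pf$ is precisely why the paper routes the proof through $\selfloop{\cpl,\nu_{c}}$ and the infrared bound.
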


In \Cref{prop:og} and in what follows, for functions $f,r$, the
notation $f(x)=O(r(x))$ as $x\to a$ has its standard meaning,     
i.e., that there exists a $C>0$ such that $\abs{f(x)}\leq C r(x)$
if $x$ is sufficiently close to $a$.

\subsection{The infrared bound}
\label{sec:infrared}

Recall that $S$ is the free Green's function from~\eqref{e:Sinf}. The
heart of the proof of Theorem~\ref{thm:Main-Pre-models} is
establishing the next proposition.

\begin{proposition}
  \label{prop:infrared}
  Suppose $d\geq 5$.  For the $n=1,2$-component $\pf$ and Edwards
  models, there are
  $\cpl_{0}=\cpl_{0}(d,\Jump)>0$ such that if $0<\cpl<\cpl_{0}$ then
  \begin{equation}
    \label{eq:Main-Pre1}
    G_{\cpl,\nu} \leq 2 \greens, \qquad \text{for }\nu > \nu_{c}.  
  \end{equation}
  The possibility $\nu_{c}(\cpl)=-\infty$ is included.
\end{proposition}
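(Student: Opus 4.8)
The plan is to run a standard lace-expansion bootstrap argument. By the lace expansion (to be derived in later sections) we expect an identity of the form $G_{\cpl,\nu} = \greens + \greens * \PiI_{\cpl,\nu} * G_{\cpl,\nu}$, or equivalently a convolution equation $G_{\cpl,\nu} = \greens + (\greens + \PiI * \greens) * \PiI * G_{\cpl,\nu}$ expressing $G_{\cpl,\nu}$ in terms of the free Green's function $\greens$ and the lace-expansion coefficient $\PiI_{\cpl,\nu}$. Since all the quantities involved are monotone in $\Lambda$ (Lemma~\ref{lem:Z-1-earlier}) and in $\nu$ (Lemma~\ref{lem:Z-1-earlier-2}), and since $G_{\cpl,\nu}$ is summable for $\nu>\nu_{c}$ by the definition \eqref{e:critical}, one has enough control to justify taking the infinite-volume limit of the finite-volume expansion and to iterate it. The key analytic input, to be proved in the sections on analysis of the expansion, is a bound on $\PiI_{\cpl,\nu}$ of the form $\|\PiI_{\cpl,\nu}\|_{1,\text{weighted}} = O(\cpl)$ together with the pointwise estimate that $\greens * |\PiI_{\cpl,\nu}| * \greens \le \epsilon(\cpl)\, \greens$ pointwise, with $\epsilon(\cpl)\to 0$ as $\cpl\downarrow 0$; such bounds rely on convolution estimates for $\greens$ in $d\ge 5$ (here is where $d\ge 5$ and the a priori diagrammatic estimates enter, using $\greens(x) \sim C_{\Jump}|x|^{2-d}$ and the finiteness of bubble-type diagrams $\sum_x \greens(x)^2 < \infty$).

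First I would set up the bootstrap. Define a candidate bootstrap function, for instance $f(\nu) = \sup_{x}\, G_{\cpl,\nu}(x)/\greens(x)$ for $\nu > \nu_{c}$, and argue: (i) $f$ is finite and continuous in $\nu$ on $(\nu_{c},\infty)$ — finiteness because $G_{\cpl,\nu}$ is summable hence bounded and $\greens(x)\ge c_{\Jump}|x|^{2-d}$ from \eqref{SRWG}, continuity from monotone convergence and dominated convergence using the $\nu$-monotonicity; (ii) $f(\nu) \le 3$, say, for all large $\nu$ (a crude direct estimate, since for large $\nu$ the self-interacting walk is strongly killed and $G_{\cpl,\nu}$ is dominated by $\greens$ up to a constant); (iii) the forbidden-region step: if $f(\nu)\le 3$ then in fact $f(\nu)\le 2$, provided $\cpl<\cpl_{0}$. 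Step (iii) is where the lace expansion is used: feeding the hypothesis $G_{\cpl,\nu}\le 3\greens$ into the expansion bound for $\PiI$ yields $\greens * |\PiI_{\cpl,\nu}| * G_{\cpl,\nu} \le C\,\epsilon(\cpl)\,\greens$, and substituting back into the convolution equation gives $G_{\cpl,\nu} \le \greens + C\epsilon(\cpl)\greens \le 2\greens$ once $\cpl$ is small enough that $C\epsilon(\cpl)<1$. A connectedness/continuity argument on the interval $(\nu_{c},\infty)$ then upgrades $f(\nu)\le 3$ (valid near $+\infty$) to $f(\nu)\le 2$ everywhere on $(\nu_{c},\infty)$, which is exactly \eqref{eq:Main-Pre1}. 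Note the statement explicitly allows $\nu_{c}=-\infty$, so the interval may be all of $\RR$; nothing in the argument uses finiteness of $\nu_{c}$, only that $G_{\cpl,\nu}$ is well-defined and summable for $\nu$ strictly above it.

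The main obstacle I anticipate is two-fold. First, justifying the infinite-volume lace expansion identity: the expansion is naturally derived in finite volume $\Lambda$ (Sections on laces and finite volume), and passing to $\Lambda\uparrow\ZZ^{d}$ requires controlling $\PiL_{\cpl,\nu}$ uniformly in $\Lambda$ and showing $\PiL \to \PiI$ in a suitable sense — this needs the a priori monotonicity and summability inputs, and is the content of the sections on the infinite-volume expansion. Second, and more delicate for this particular proposition, is that the diagrammatic bound on $\PiI_{\cpl,\nu}$ must be proved using only the a priori information available \emph{during} the bootstrap, namely $G_{\cpl,\nu}\le 3\greens$ rather than the conclusion $G_{\cpl,\nu}\le 2\greens$ one is trying to establish; so the bound on $\PiI$ has to be stated with a free constant on the a priori assumption, and one must check the numerology closes ($C(3)\cdot\epsilon(\cpl) < 1$). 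I would isolate the diagrammatic estimate as a separate lemma of the form: if $G_{\cpl,\nu}\le K\greens$ pointwise then $\sum_x (1\vee|x|^{2})|\PiI_{\cpl,\nu}(x)| \le c_K\,\cpl$, and use $d\ge 5$ to control the requisite convolutions of $\greens$. Everything else — continuity of the bootstrap function, the large-$\nu$ base case, the final substitution — is routine.
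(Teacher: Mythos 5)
Your proposal matches the paper's overall architecture: the bootstrap function $F(\nu)=\sup_x G_{\cpl,\nu}(x)/\greens(x)$, the three-part structure (continuity, large-$\nu$ base case, forbidden region), the monotonicity inputs, and the use of the $3$-IRB hypothesis to feed back into diagrammatic bounds on $\aPi$. However, there is a genuine gap in the forbidden-region step (step~(iii)), and it is precisely the step you call ``routine.''

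You assert that with $G_{\cpl,\nu}\le 3\greens$ and $|\aPi_{\cpl,\nu}(x)|=O(\cpl)\mnorm{x}^{-3(d-2)}$ one gets
\begin{equation*}
  \greens * |\aPi_{\cpl,\nu}| * \greens \le \epsilon(\cpl)\,\greens
  \qquad\text{pointwise},
\end{equation*}
and then substituting into $G=\greens+\greens*\aPi*G$ closes the bootstrap. This pointwise estimate is false. Since $\aPi$ is absolutely summable (indeed $\mnorm{x}^{-3(d-2)}$ summable for $d\ge 5$), \Cref{lem:HHS-2} gives $\greens*|\aPi|\le O(\cpl)\mnorm{x}^{2-d}$, but convolving a second time with $\greens\sim\mnorm{x}^{2-d}$ — two factors decaying at rate $d-2$, with $2(d-2)>d$ but $d-2<d$ — gives only $\mnorm{x}^{4-d}$ decay by the second case of \Cref{lem:HHS-2}, not $\mnorm{x}^{2-d}$. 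Substituting back therefore yields $G(x)\le\greens(x)+O(\cpl)\mnorm{x}^{4-d}$, which for large $|x|$ is weaker than $2\greens(x)$ by a factor $|x|^2$; iterating the convolution identity does not repair this, since each extra $\greens$ convolution degrades the pointwise decay further. The correct implementation of step~two is the deconvolution argument of \Cref{lem:BHK-2} (from~\cite{BHK}): one rewrites~\eqref{eq:Int-Conv} as $\decon_{\cpl,\nu}*\tilde G_{\cpl,\nu}=-\indic{x=0}$, constructs a convolution inverse $H$ with the sharp estimate $|H(x)-\dgreens_{\mu}(x)|\le C'\cpl\mnorm{x}^{-(d-2)}$ (proved via an Edgeworth expansion for the random walk kernel, exploiting cancellations that the triangle-inequality substitution cannot see), and then identifies $H=\tilde G_{\cpl,\nu}$ by uniqueness of two-sided convolution inverses (\Cref{lem:inv}). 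This is the missing step in your argument, and it is the hard analytic content that $d\ge 5$ and a priori IRBs alone do not trivially deliver.
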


Before giving the proof we review the strategy and state some
preparatory results. Lace expansion arguments have been reduced to
three schematic steps, all for $\nu>\nu_{c}$. As we discuss these steps
it will be helpful to recall \eqref{e:free-green-inf-1}, i.e.,
$\Jump \ast \greens= - \indic{x=0}$.  This is equivalent to
\begin{equation}
  \label{e:Int-Conv2}
    \hJ \greens (x)
    =
    \indic{x=0} + \jumprate\ast \greens (x), 
\end{equation}
where $\hJ$ and $\jumprate$ were defined in \eqref{e:hJ-def}. Define $\selfloop{\cpl,\nu} \in \RR$ by
\begin{equation}
    \selfloopL{\cpl,\nu,x}
  \bydef
  \lim_{\V{t}\downarrow\V{0}}\partial_{t_{x}}\log Z^{\sssL}_{\V{t}},
  \qquad
  \selfloop{\cpl,\nu}
  =
  \selfloopI{g,\nu,x}
  \bydef
  \lim_{\Lambda\uparrow\ZZ}
  \selfloopL{\cpl,\nu,x},
  \label{eq:selfloop-def}
\end{equation}
where $x$ is any point in $\ZZ^{d}$ and $\ZI_{\V{t}}$ depends on the
model: for the Edwards model $\ZI_{\V{t}} = Z_{\V{t}}$ in
\eqref{eq:defSAW}; for the $\pf$ model $\ZI_{\V{t}} = Z_{\V{t}}$ in
\eqref{ZdefPF1}. We say that $\selfloop{\cpl,\nu}$ is well-defined if
the limits exist and $\selfloop{\cpl,\nu}$ does not depend on $x$. For
the Edwards model \eqref{eq:defSAW} implies that
$\selfloopL{\cpl,\nu,x} = - \nu \indic{x\in\Lambda}$ and therefore
$\selfloop{\cpl,\nu}=-\nu$.

\paragraph{Step one}
\label{sec:step-one}

We will call a bound of the form
  \begin{equation}
    \label{eq:KIRB}
    G_{\cpl,\nu}\leq \cirb\greens   
\end{equation}
a \emph{$\cirb$-infrared bound} or \emph{$\cirb$-IRB}.  Step one
assumes a $3$-IRB and uses the assumption that $g$ is sufficiently
small to prove that there exists an $O (\cpl)$ integrable function
$\aPi_{\cpl,\nu}\colon \ZZ^{d}\to \RR$ such that for all $x$
\begin{equation}
  \label{eq:Int-Conv} 
  (\hJ-\selfloop{\cpl,\nu})G_{\cpl,\nu}(x)
  =
  \indic{x=0} + \jumprate\ast
  G_{\cpl,\nu}(x) + \aPi_{\cpl,\nu}\ast G_{\cpl,\nu}(x) .
\end{equation}
This is a generalization of \eqref{e:Int-Conv2}. The proof of this
step is accomplished by a formula for $\aPi_{\cpl,\nu}$ called the
\emph{lace expansion}.

\paragraph{Step two}
\label{sec:step-two} 
Step two assumes $\aPi_{\cpl,\nu}$ is small relative to $\Jump$ and
shows that \eqref{eq:Int-Conv} implies that $G_{\cpl,\nu}(x)$
satisfies a $2$-IRB.  Thus steps one and two combined show that a
$3$-IRB implies a $2$-IRB.

\paragraph{Step three}
\label{sec:step-three} 
Step three removes the $3$-IRB assumption of step one so that
\eqref{eq:Main-Pre1} holds unconditionally.  The removal of the
$3$-IRB assumption uses continuity of $G_{\cpl,\nu} (x)$ in $\nu$
together with an auxiliary result that $G_{\cpl,\nu} (x)$ satisfies a
$2$-IRB if $\nu$ is large enough. The $2$-IRB cannot be lost as $\nu$
is decreased towards $\nu_{c}$ because step two implies that
$G_{\cpl,\nu} (x)$ cannot continuously become greater than
$3\greens(x)$.

\subsection{Proof of \Cref{prop:infrared}}
\label{sec:proof-prop:infrared}

The three steps outlined in the previous section become the proof
  of Proposition~\ref{prop:infrared} given at the end of this section,
  but first we state the lemmas which are the precise versions of these
  steps. This requires two auxiliary functions. For $|z|\le \hJ^{-1}$, define $\dgreens_{z}$ and
  $\decon^{\dgreens}_{z}$ on $\ZZ^d$ by
\begin{equation}
  \label{e:dgreensz-def}
    \dgreens_{z} (x)
    \bydef
    \sum_{n\ge 0} (z \jumprate)^{\ast n}(x), 
    \quad  
    \decon^{\dgreens}_{z}(x)
    \bydef
    - \indic{x=0} + z \jumprate(x).
\end{equation}
$\decon^{\dgreens}_{z}$ is a variant of $\Jump$ such that when
$z=\hJ^{-1}$ the jump rates are normalised by $z$ to probabilities and
when $z \in [0,\hJ^{-1})$ the walk has a positive killing rate.  By
\eqref{e:free-green-inf-2} the series defining $\dgreens_{z} (x)$ in
\eqref{e:dgreensz-def} is absolutely convergent, and it is
straightforward to check that
$\decon^{\dgreens}_{z}\ast\dgreens_{z}(x) = -\indic{x=0}$. Let
\begin{equation}
    \label{e:dgreens-choice}
    \dgreens(x)
    \bydef
    \dgreens_{\hJ^{-1}}(x).
\end{equation}
By comparing the definition of $\dgreens_{z}$ with \Cref{e:free-green-inf-2}
and using \eqref{SRWG} 
\begin{equation}
    \label{SRWG2}
    \dgreens_{z}(x) 
    \le 
    \dgreens(x) 
    = 
    \hJ \greens(x)
    \le
    \frac{\tilde C_{\Jump}}{\mnorm{x}^{d-2}},
\end{equation}
for some $\tilde C_{\Jump}>0$, where $\mnorm{x}\bydef\max\{|x|, 1\}$.

Step one is 
\Cref{lem:BHK-1-1*} \ref{BHK1-1*a} and \ref{BHK1-1*b}, and
\Cref{lem:BHK-1-2*}~\ref{BHK1-1} stated below.
\begin{lemma}[Proof in \Cref{sec:verification}] 
  \label{lem:BHK-1-1*}
  Let $d\geq 5$, and consider the \ctwsaw or the $\pf$ model with
  $n=1,2$.  There exist $\alpha>0$, $\cpl_0>0$, $\aPi_{\cpl,\nu}$ such
  that for all $\cpl\in (0,\cpl_0)$ if $G_{\cpl,\nu} \leq 3 \greens$
  then $\selfloop{\cpl,\nu} = O(\cpl)$ is well-defined and
  \begin{enumerate}[label=(\roman*)]
  \item \label{BHK1-1*a} $\aPi$ is a $\ZZ^{d}$-symmetric
      function,
  \item \label{BHK1-1*b}
    $\abs{\aPi_{\cpl,\nu}(x)}\leq \cv \alpha\mnorm{x}^{-3 (d-2)}$,
  \item \label{BHK1-1*c} \eqref{eq:Int-Conv} holds,
  \item \label{BHK1-1*d}
    $\hJ - \selfloop{\cpl,\nu} \geq \frac{1 + O (\cpl)}{3\GI(0)}$.
\end{enumerate}
\end{lemma}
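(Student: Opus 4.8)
The plan is to obtain \Cref{lem:BHK-1-1*} by applying the continuous‑time lace expansion — derived in finite volume in \Cref{sec:laces,sec:LF} and carried to infinite volume in \Cref{sec:Pi-DB-outer} through~\ref{sec:nIVL} — to the weights $Z_{\V{t}}$ of \Cref{def:SAW} and \Cref{def:Zpf}, and then establishing the diagrammatic estimates that make the output quantitative. The first step is to verify that both choices of $Z_{\V{t}}$ meet the structural hypotheses of the abstract expansion: positivity, boundedness and continuity of $\V{t}\mapsto Z_{\V{t}}$, together with the monotonicity/sign conditions on the $\V{t}$-derivatives of $\log Z_{\V{t}}$ that make the lace sums absolutely convergent. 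For the $\pf$ model these are exactly the sign properties supplied by Griffiths' inequalities for $n=1,2$ (the inequalities already behind \Cref{lem:Z-1-earlier}); for the Edwards model they follow directly from \eqref{eq:defSAW}. The abstract theorem then produces, for each finite $\Lambda$, a $\ZZ^{d}$-symmetric array $\aPiL_{\cpl,\nu}$ — an absolutely convergent sum over laces of expectations of products of increments of $\rrptn$ along $\LX$ — together with the finite-volume analogue of \eqref{eq:Int-Conv}, in which $\selfloopL{\cpl,\nu,x}$ from \eqref{eq:selfloop-def} carries the local contribution.

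The second step is to let $\Lambda\uparrow\ZZ^{d}$. By \Cref{lem:Z-1-earlier}, $\GrL_{\V{0}}(a,a+x)\uparrow G_{\cpl,\nu}(x)$ and the limit is $\ZZ^{d}$-invariant; moreover $\selfloopL{\cpl,\nu,x}$ converges to a well-defined, $x$-independent $\selfloop{\cpl,\nu}$, since $\selfloopL{\cpl,\nu,x}=-\nu\indic{x\in\Lambda}$ for the Edwards model, while for the $\pf$ model a short computation using \Cref{thm:just} gives $\selfloopL{\cpl,\nu,x}=-\nu-\cpl\,n\,\GrL_{\V{0}}(x,x)\to-\nu-\cpl\,n\,G_{\cpl,\nu}(0)$ by the monotone convergence already used for \eqref{e:Ggnu-def}. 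Using the diagrammatic bounds of the next step — valid uniformly in $\Lambda$ once $G_{\cpl,\nu}\le3\greens$, hence $\GrL_{\V{0}}\le3\greens$ by monotonicity — as a $\Lambda$-independent majorant, dominated convergence gives $\aPiL_{\cpl,\nu}(x)\to\aPi_{\cpl,\nu}(x)$ for every $x$, and passing to the limit in the finite-volume identity yields \eqref{eq:Int-Conv}, which is~\ref{BHK1-1*c}. Part~\ref{BHK1-1*a} holds because both the law of $\ZX$ (by~\ref{as:J3}) and the interaction $Z_{\V{t}}$ — which depends on $\V{t}$ only through $\sum_{x}t_{x}^{2},\sum_{x}t_{x}$, respectively $\sum_{x}(\tfrac12|\varphi_x|^{2}+t_x)^{2}$ — are $\ZZ^{d}$-symmetric, so each $\aPiL_{\cpl,\nu}$, and hence its limit $\aPi_{\cpl,\nu}$, is $\ZZ^{d}$-symmetric.

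The third step, the heart of the argument, is~\ref{BHK1-1*b}. I would bound $|\aPi_{\cpl,\nu}(x)|$ term by term in the lace expansion by the corresponding diagrams: products of two-point functions $G_{\cpl,\nu}(\cdot)$ along the lines of the lace, carrying one factor $\cpl$ for each interaction vertex (each diagram with nontrivial $x$-dependence carries at least two, the purely local $O(\cpl)$ piece being supported near the origin). To reduce the multi-point correlations appearing here to products of two-point functions I would use Griffiths' inequalities for the $\pf$ model (the source of the $n\le2$ restriction) and a direct local-time decoupling for the Edwards model. Inserting the hypothesis $G_{\cpl,\nu}\le3\greens$ replaces each line by $3\greens$, and \eqref{SRWG2} gives $\greens(y)\le\tilde C_{\Jump}\mnorm{y}^{-(d-2)}$; the leading $x$-dependent diagram is then bounded by $\const\cdot\cpl^{2}\greens(x)^{3}\le\const\cdot\cpl^{2}\mnorm{x}^{-3(d-2)}$, while every larger diagram carries extra closed loops, each contributing a convergent bubble-type factor $\sum_{y}\greens(y)^{2}$ or better — finite precisely because $d\ge5$ — together with further powers of $\cpl$. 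Summing over all laces bounds $|\aPi_{\cpl,\nu}(x)|$ by $\cpl\,\alpha\,\mnorm{x}^{-3(d-2)}$ for $\cpl$ small, with $\alpha>0$ independent of $\cpl$ and $\nu$; this is~\ref{BHK1-1*b}, and in particular $\aPi_{\cpl,\nu}\in\ell^{1}$ with $\|\aPi_{\cpl,\nu}\|_{1}=O(\cpl)$.

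Finally, \ref{BHK1-1*d} is immediate from \eqref{eq:Int-Conv} at $x=0$: since $\jumprate\ast G_{\cpl,\nu}(0)\ge0$ and $|\aPi_{\cpl,\nu}\ast G_{\cpl,\nu}(0)|\le3\sum_{y}|\aPi_{\cpl,\nu}(y)|\greens(y)=O(\cpl)$ — by~\ref{BHK1-1*b}, the $3$-IRB, and $\sum_{y}\mnorm{y}^{-4(d-2)}<\infty$ for $d\ge5$ — one gets $(\hJ-\selfloop{\cpl,\nu})G_{\cpl,\nu}(0)\ge1-O(\cpl)$, and dividing by $0<G_{\cpl,\nu}(0)\le3\greens(0)=3\GI(0)$ yields~\ref{BHK1-1*d}. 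The bound $\selfloop{\cpl,\nu}=O(\cpl)$, in the range of $\nu$ where the lemma is applied, follows from the formulas above — $\selfloop{\cpl,\nu}+\nu$ is identically $0$ for the Edwards model and equals $-\cpl\,n\,G_{\cpl,\nu}(0)=O(\cpl)$ for the $\pf$ model under the $3$-IRB — together with \Cref{prop:og}. I expect the real obstacle to be the third step, together with the construction it rests on: setting up the continuous-time lace expansion so that its terms admit clean diagrammatic representations, proving absolute convergence of the lace sums, and then carrying out the bounds — in particular decoupling multi-point functions into products of two-point functions while tracking both the power of $\cpl$ and the $\mnorm{x}^{-3(d-2)}$ decay of each diagram, and checking that $d\ge5$ suffices for the bubble sums. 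By comparison the infinite-volume passage and~\ref{BHK1-1*d} are routine, and verifying the structural hypotheses in the first step closely parallels the discrete-time lace expansion.
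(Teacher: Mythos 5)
Your proposal is correct and follows essentially the same route as the paper: the paper reduces \Cref{lem:BHK-1-1*} (via the revised version in \Cref{lem:BHK-1*-gen}) to \Cref{prop:IVL*}, which in turn rests on the finite-volume lace expansion of \Cref{prop:lace-expansion}, the diagrammatic bounds of \Cref{prop:Pi-bound} and \Cref{cor:aPi-bound} (with the bubble-type convolution estimates of \Cref{lem:Conv-E} supplying the uniform $\mnorm{x}^{-3(d-2)}$ decay and the $d\ge5$ requirement), and the infinite-volume passage of \Cref{prop:Pi-inv}, with the model-specific hypotheses \Cref{as:FA-1} and~\ref{as:FA-2} verified in \Cref{sec:verification}. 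One small imprecision: the correlation inequality that bounds the $\pf$ vertex weight $\vertexL{u,v}(x,y)$ by products of two-point functions (\Cref{lem:r}) is the Lebowitz inequality for $n=1$ and Bricmont's extension for $n=2$ (\Cref{prop:phi4-Lebowitz}), not the GKS/Ginibre inequalities, which enter separately for the monotonicity statements in \Cref{lem:G-1}.
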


By the lower bound of \cref{BHK1-1*d} we can rewrite
equation~\eqref{eq:Int-Conv} as
\begin{equation}
  \label{eq:Int-Conv-m}
  \decon_{\cpl,\nu} \ast \tilde G_{\cpl,\nu} (x)
  =
  - \indic{x=0},
\end{equation}
with the definitions
\begin{equation}      
  \label{eq:Int-Conv-m-decon}
  \begin{aligned}
    \www(\cpl,\nu) \bydef (\hJ-\selfloop{\cpl,\nu})^{-1}, \qquad \tilde
    G_{\cpl,\nu}(x) \bydef \www(\cpl,\nu)^{-1} G_{\cpl,\nu}(x) ,
    \\
    \decon_{\cpl,\nu} \bydef \decon^{\dgreens}_{\www(\cpl,\nu)} +
    \tilde\aPi_{\cpl,\nu} , 
    \qquad \tilde\aPi_{\cpl,\nu}(x) \bydef
    \www(\cpl,\nu)\aPi_{\cpl,\nu}(x).
  \end{aligned}
\end{equation}
For $C>0$ let $\Dcal_{C}$ be the class of all functions
$\decon\colon \ZZ^{d}\to \RR$ with the properties
\begin{enumerate}[label=(\roman*)]
  \label{as:BHK2}
\item \label{as:BHK2-1} $\decon$ is $\ZZ^d$-symmetric,
\item \label{as:BHK2-2} $\sum_{x\in\ZZ^{d}} \decon(x) \leq 0$,
\item \label{as:BHK2-3} there exists
  $z=z(\cpl,\decon)\in\cb{0,\hJ^{-1}}$ such that
  $\abs{\decon(x)-\decon^{\dgreens}_{z}(x)} \leq
  C\cv\mnorm{x}^{-(d+4)}$.
\end{enumerate}

\begin{lemma}[Proof in \Cref{sec:verification}]
\label{lem:BHK-1-2*}
With the same hypotheses as \Cref{lem:BHK-1-1*}, for
$\nu \in (\nu_{c},\cpl]$, there exists $C_0>0$ such that
\begin{enumerate}[label=(\roman*)]
\item \label{BHK1-2} $\decon_{\cpl,\nu} \in \Dcal_{C_{0}}$,
\item \label{BHK1-1} $\abs{\selfloop{\cpl,\nu}}\leq C_{0}\cpl$.
\end{enumerate}
\end{lemma}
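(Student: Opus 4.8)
The plan is to prove part~\ref{BHK1-1} first and then feed it into the verification of part~\ref{BHK1-2}. Throughout I work under the hypotheses of \Cref{lem:BHK-1-1*}, so in particular $G_{\cpl,\nu}\le 3\greens$ and every conclusion of that lemma is available; I also use the restriction $\nu\in(\nu_c,\cpl]$, which by the monotonicity of $G_{\cpl,\nu}$ in $\nu$ (\Cref{lem:Z-1-earlier-2}) and the definition~\eqref{e:critical} of $\nu_c$ forces the susceptibility $\chi\bydef\sum_{x\in\ZZ^{d}}G_{\cpl,\nu}(x)$ to be finite.

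For part~\ref{BHK1-1}, the engine is the identity obtained by summing the lace expansion equation~\eqref{eq:Int-Conv} over $x$. Since $\sum_{y}\jumprate(y)=-\Jump(0)=\hJ$, and since the bound $\abs{\aPi_{\cpl,\nu}(x)}\le\cpl\alpha\mnorm{x}^{-3(d-2)}$ of \Cref{lem:BHK-1-1*}~\ref{BHK1-1*b} together with $3(d-2)>d$ for $d\ge5$ makes $\sum_{x}\aPi_{\cpl,\nu}(x)$ an absolutely convergent sum of size $O(\cpl)$, Fubini--Tonelli turns~\eqref{eq:Int-Conv} into $(\hJ-\selfloop{\cpl,\nu})\chi=1+\hJ\chi+\big(\sum_{x}\aPi_{\cpl,\nu}(x)\big)\chi$; cancelling the $\hJ\chi$ terms and dividing by $\chi>0$ gives $\selfloop{\cpl,\nu}=-\chi^{-1}-\sum_{x}\aPi_{\cpl,\nu}(x)$, and hence the upper bound $\selfloop{\cpl,\nu}\le\abs{\sum_{x}\aPi_{\cpl,\nu}(x)}=O(\cpl)$. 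For the matching lower bound I use the explicit form of $\selfloop{\cpl,\nu}$ in each model: for the Edwards model $\selfloop{\cpl,\nu}=-\nu\ge-\cpl$ since $\nu\le\cpl$; for the $\pf$ model, differentiating~\eqref{ZdefPF1} in $t_{x}$ at $\V{t}=\V{0}$, applying \Cref{thm:just}, and letting $\Lambda\uparrow\ZZ^{d}$ gives $\selfloop{\cpl,\nu}=-\nu-n\cpl\,G_{\cpl,\nu}(0)$, so $\selfloop{\cpl,\nu}\ge-\cpl-3n\cpl\,\greens(0)$ using $\nu\le\cpl$ and $G_{\cpl,\nu}\le3\greens$. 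Taking $C_{0}$ larger than the constants appearing here yields $\abs{\selfloop{\cpl,\nu}}\le C_{0}\cpl$.

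For part~\ref{BHK1-2}, I verify the three defining properties of $\Dcal_{C_{0}}$ for $\decon_{\cpl,\nu}=\decon^{\dgreens}_{\www(\cpl,\nu)}+\tilde\aPi_{\cpl,\nu}$, where $\www(\cpl,\nu)=(\hJ-\selfloop{\cpl,\nu})^{-1}$. Property~\ref{as:BHK2-1} ($\ZZ^{d}$-symmetry) is immediate: $\decon^{\dgreens}_{z}$ is $\ZZ^{d}$-symmetric by~\ref{as:J3}, and $\tilde\aPi_{\cpl,\nu}$ is a scalar multiple of the $\ZZ^{d}$-symmetric function $\aPi_{\cpl,\nu}$ (\Cref{lem:BHK-1-1*}~\ref{BHK1-1*a}). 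For~\ref{as:BHK2-2} I sum the rewritten equation~\eqref{eq:Int-Conv-m} over $x$ --- legitimate since $\tilde G_{\cpl,\nu}\ge0$ with $\sum_{x}\tilde G_{\cpl,\nu}(x)=\www(\cpl,\nu)^{-1}\chi<\infty$, and $\decon_{\cpl,\nu}$ is absolutely summable --- to obtain $\big(\sum_{x}\decon_{\cpl,\nu}(x)\big)\big(\www(\cpl,\nu)^{-1}\chi\big)=-1$, whence $\sum_{x}\decon_{\cpl,\nu}(x)=-\www(\cpl,\nu)/\chi<0$. For~\ref{as:BHK2-3} I take $z=\www(\cpl,\nu)\wedge\hJ^{-1}$; this lies in $(0,\hJ^{-1}]$ because $\hJ-\selfloop{\cpl,\nu}>0$ by \Cref{lem:BHK-1-1*}~\ref{BHK1-1*d}, which also gives $\www(\cpl,\nu)\le 6\greens(0)$ once $\cpl$ is small. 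If $\www(\cpl,\nu)\le\hJ^{-1}$ then $z=\www(\cpl,\nu)$ and $\decon_{\cpl,\nu}-\decon^{\dgreens}_{z}=\tilde\aPi_{\cpl,\nu}$, so $\abs{\tilde\aPi_{\cpl,\nu}(x)}\le 6\greens(0)\,\cpl\alpha\mnorm{x}^{-3(d-2)}\le 6\alpha\greens(0)\,\cpl\,\mnorm{x}^{-(d+4)}$ using $3(d-2)\ge d+4$ for $d\ge5$. If $\www(\cpl,\nu)>\hJ^{-1}$ then $z=\hJ^{-1}$ and there is the extra term $(\www(\cpl,\nu)-\hJ^{-1})\jumprate=\selfloop{\cpl,\nu}\www(\cpl,\nu)\hJ^{-1}\jumprate$, which is $O(\cpl)$ by part~\ref{BHK1-1} and the bound on $\www(\cpl,\nu)$; since $\jumprate$ is supported in $\{0<\abs{x}<R\}$ one has $\abs{\jumprate(x)}\le\norm{\Jump}_{\infty}R^{d+4}\mnorm{x}^{-(d+4)}$, so this term is also $O(\cpl)\mnorm{x}^{-(d+4)}$. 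Enlarging $C_{0}$ to dominate all constants above completes the verification.

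I expect the only genuine subtlety to be that $\www(\cpl,\nu)$ need not lie in $[0,\hJ^{-1}]$ --- indeed $\selfloop{\cpl,\nu}$ can be positive when $\nu$ is close to $\nu_c$ --- which is exactly why~\ref{as:BHK2-3} forces the truncation $z=\www(\cpl,\nu)\wedge\hJ^{-1}$ and the attendant case split; the truncation error is then controlled precisely by $\abs{\selfloop{\cpl,\nu}}=O(\cpl)$ from part~\ref{BHK1-1} and by the finite range of $\Jump$. Everything else is the algebra of summing~\eqref{eq:Int-Conv} and~\eqref{eq:Int-Conv-m}, together with the two elementary identities for $\selfloop{\cpl,\nu}$.
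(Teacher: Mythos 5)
Your proof is correct, and it reaches the paper's conclusions by a slightly different route. The paper treats this as a model-independent lemma: the model-specific content is packaged into Assumption~\ref{as:Z6} (which postulates $\selfloop{\cpl,\nu}=O(\cpl)$ when $\selfloop{\cpl,\nu}\le 0$ and $\nu\in(\nu_c,\cpl]$), and the proof in \Cref{lem:BHK-1*-gen} argues by cases on the sign of $\selfloop{\cpl,\nu}$, bounding $\www(\cpl,\nu)-\hJ^{-1}$ via~\eqref{eq:zbd2} when $\selfloop{\cpl,\nu}\geq0$ and invoking \ref{as:Z6} when $\selfloop{\cpl,\nu}\leq0$. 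You instead inline the explicit model formulas $\selfloop{\cpl,\nu}=-\nu$ (Edwards) and $\selfloop{\cpl,\nu}=-\nu-n\cpl\,G_{\cpl,\nu}(0)$ ($\pf$), which is legitimate since the lemma as stated refers to those two models. For part~\ref{BHK1-1} you derive the exact identity $\selfloop{\cpl,\nu}=-\chi^{-1}-\sum_{x}\aPi_{\cpl,\nu}(x)$ by summing~\eqref{eq:Int-Conv} over $x$ (this is equivalent to the paper's~\eqref{eq:decon-susc}); that gives a one-sided bound $\selfloop{\cpl,\nu}\le O(\cpl)$ valid irrespective of sign, and the model formulas supply the other side. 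So yours is an upper/lower split rather than a sign split, but it carries the same information. For part~\ref{BHK1-2}, item~\ref{as:BHK2-3}, in the regime $\www(\cpl,\nu)>\hJ^{-1}$ the paper controls $\www(\cpl,\nu)-\hJ^{-1}$ via~\eqref{eq:zbd2}, which rests on $\sum_xD_{\cpl,\nu}(x)\le0$; you use the identity $\www(\cpl,\nu)-\hJ^{-1}=\selfloop{\cpl,\nu}\www(\cpl,\nu)\hJ^{-1}$ together with part~\ref{BHK1-1}. Your ordering (part~\ref{BHK1-1} feeding into part~\ref{BHK1-2}) is the reverse of the paper's, but neither is circular. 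The bookkeeping on constants and exponents (in particular $3(d-2)\ge d+4$ for $d\ge5$) is correct throughout.
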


Step two rests on \Cref{lem:BHK-2} below, which is a generalization
of~\cite[Lemma~2]{BHK} to finite range step distributions.

\begin{lemma}[Proof in \Cref{sec:model-indep-lemm}] \label{lem:BHK-2}
  Let $d\geq 5$, and $C>0$.  There exist
  $\cpl_{0}=\cpl_{0}(d,\Jump,C)>0$ and $C'>0$ such that for
  $\cpl\in (0,\cpl_0)$ and $D \in\Dcal_{C}$ there exists
  $H\colon \ZZ^d\to \RR$ such that
  \begin{align}
    &
      \decon\ast H = -\indic{x=0}
      \label{eq:BHK-2-inverse}
    \\
    &
      |H(x) -\dgreens_{\mu}(x)|\le C'\cpl 
      \mnorm{x}^{-(d-2)},
      &
      x \in \ZZ^d,
      \label{eq:BHK-2}
  \end{align}
  where
  $\mu \bydef\hJ^{-1} \big(1+\sum_{x\in\ZZ^{d}}\decon(x)\big) \in
  \cb{-(2\hJ)^{-1},\hJ^{-1}}$.
\end{lemma}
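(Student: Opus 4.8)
The plan is to solve the convolution equation~\eqref{eq:BHK-2-inverse} by passing to Fourier space, and to extract the asymptotics of $H$ by comparing $\hat H(k) = -\hat\decon(k)^{-1}$ with $\hat\dgreens_\mu(k)$. First I would record the Fourier transform of the reference object: since $\decon^{\dgreens}_z\ast\dgreens_z = -\indic{x=0}$, we have $\hat\dgreens_z(k) = -\hat{\decon^{\dgreens}_z}(k)^{-1}$, and from~\eqref{e:dgreensz-def}, $\hat{\decon^{\dgreens}_z}(k) = -1 + z\hat\jumprate(k)$. A short computation using $\hat\jumprate(0) = \hJ$ gives $\hat{\decon^{\dgreens}_z}(0) = -1 + z\hJ$, so that the "mass" of $\dgreens_z$ is governed by $1 - z\hJ$. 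The choice $\mu \bydef \hJ^{-1}(1 + \sum_x \decon(x))$ is exactly calibrated so that $\hat{\decon^{\dgreens}_\mu}(0) = -1 + \mu\hJ = \sum_x\decon(x) = \hat\decon(0)$; this matches the zeroth Fourier coefficients of $\decon$ and $\decon^{\dgreens}_\mu$, which is the key point ensuring that $H$ and $\dgreens_\mu$ have the same leading behaviour. The constraint $\mu \in [-(2\hJ)^{-1}, \hJ^{-1}]$ follows from~\ref{as:BHK2-2} (which gives $\mu \le \hJ^{-1}$) together with the bound $\abs{\sum_x\decon(x)} = \abs{\hat\decon(0)}$, controlled via~\ref{as:BHK2-3} and~\eqref{SRWG2} by $\cpl$ times a constant, provided $\cpl_0$ is small enough; so $\mu \ge \hJ^{-1}(1 - C''\cpl) \ge -(2\hJ)^{-1}$.

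Next I would establish that $\hat\decon(k)$ is bounded away from zero uniformly in $k \ne 0$ and that $\hat\decon(0) \le 0$, so that $\hat\decon$ is invertible and $H \bydef \mathcal{F}^{-1}(-1/\hat\decon)$ is well-defined. Writing $\decon = \decon^{\dgreens}_z + E$ with $\abs{E(x)} \le C\cpl\mnorm{x}^{-(d+4)}$ from~\ref{as:BHK2-3}, the error $\hat E(k)$ is bounded in $C^2$ uniformly by $O(\cpl)$ (the $\mnorm{x}^{-(d+4)}$ decay gives two summable derivatives since $d \ge 5$), while $\hat{\decon^{\dgreens}_z}(k) = -1 + z\hat\jumprate(k)$ has $\abs{\hat{\decon^{\dgreens}_z}(k)} \ge 1 - z\hJ + c\abs{k}^2$ for small $\abs{k}$ by~\ref{as:J2}, and is bounded away from $0$ for $\abs{k}$ bounded away from $0$. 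Hence for $\cpl$ small, $\abs{\hat\decon(k)} \gtrsim \abs{k}^2$ near the origin and $\gtrsim 1$ elsewhere. Now write
\begin{equation}
  \hat H(k) - \hat\dgreens_\mu(k)
  = \frac{-1}{\hat\decon(k)} - \frac{-1}{\hat{\decon^{\dgreens}_\mu}(k)}
  = \frac{\hat\decon(k) - \hat{\decon^{\dgreens}_\mu}(k)}{\hat\decon(k)\,\hat{\decon^{\dgreens}_\mu}(k)}.
\end{equation}
The numerator is $\hat E(k)$ plus $\hat{\decon^{\dgreens}_z}(k) - \hat{\decon^{\dgreens}_\mu}(k) = (z - \mu)\hat\jumprate(k)$; both pieces vanish to appropriate order at $k = 0$ (the first because $\hat E(0) = \hat\decon(0) - \hat{\decon^{\dgreens}_\mu}(0) - ((z-\mu)\hat\jumprate(0)) $... more precisely, the combined numerator is $\hat\decon(k) - \hat{\decon^{\dgreens}_\mu}(k)$, whose value at $0$ is $\hat\decon(0) - \hat{\decon^{\dgreens}_\mu}(0) = 0$ by the calibration of $\mu$), so the numerator is $O(\cpl\abs{k})$ with one $O(\cpl)$-bounded derivative, and dividing by the denominator $\sim \abs{k}^4$ near $0$ gives $\abs{\hat H(k) - \hat\dgreens_\mu(k)}$ integrable against a kernel producing $\mnorm{x}^{-(d-2)}$ decay.

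To convert Fourier-space control into the pointwise bound~\eqref{eq:BHK-2} I would invoke a standard lemma on the decay of inverse Fourier transforms of functions that behave like $\abs{k}^{-2}$ with a correction of the stated smoothness — this is the mechanism behind~\eqref{SRWG} itself, citing \cite{lawler_limic_2010} or the deconvolution machinery of \cite[Appendix~A]{Sokal}/\cite{HHS2003}. Concretely, I would decompose $\hat H - \hat\dgreens_\mu$ into a part supported near $k = 0$ (where it is $O(\cpl)\abs{k}^{-2}$ with a matching $C^1$ or $C^2$ bound, yielding $O(\cpl)\mnorm{x}^{-(d-2)}$ after inversion) and a smooth remainder (yielding fast decay). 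The identity~\eqref{eq:BHK-2-inverse} is then immediate from $\hat\decon \hat H = -1$ once integrability of $\hat H$ is known. \textbf{The main obstacle} I anticipate is making the Fourier multiplier estimate near $k=0$ fully rigorous with the correct number of derivatives: one must check that $\hat E$ and the $\hat{\decon^{\dgreens}}$ differences have enough smoothness (controlled by the $\mnorm{x}^{-(d+4)}$ tail in~\ref{as:BHK2-3} and the finite range of $\Jump$) that the quotient above, after subtracting the $\abs{k}^{-2}$ singularity of $\hat\dgreens_\mu$, lies in a class whose inverse transform decays like $\mnorm{x}^{-(d-2)}$ with constant $O(\cpl)$ — and to verify that all implied constants depend only on $d$, $\Jump$, and $C$, uniformly over $D \in \Dcal_C$ and over the admissible range of $z$ and $\mu$. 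This is a quantitative strengthening of the classical Newtonian-potential asymptotics, and it is where the bulk of the technical work lies; the algebra matching $\mu$ to the zeroth moments, by contrast, is short.
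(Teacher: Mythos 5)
Your Fourier-space strategy is a genuinely different route from the paper's. The paper's proof is essentially a citation: it invokes \cite[Lemma~2]{BHK}, which is an $x$-space Banach-algebra argument. There one writes $H = (-\decon\ast\dgreens_\mu)^{-1}\ast\dgreens_\mu$ and shows that $(-\decon\ast\dgreens_\mu)^{-1}$ lies at distance $O(\cpl)$ from $\delta$ in the algebra of $\ell^1$ functions $f$ with $\sup_x|f(x)|\,|x|^d<\infty$, via a Neumann series; the bound \eqref{eq:BHK-2} then drops out by one more convolution with $\dgreens_\mu$, and no Fourier inversion is required. The only ingredient needing care for general finite-range $\Jump$ is the Edgeworth expansion of $\dgreens_\mu$ (Uchiyama's Theorem~2), and the paper's entire proof consists of checking that this expansion, in particular the boundedness of its direction-dependent coefficients, survives the passage from the nearest-neighbour case treated in \cite{BHK} to \Cref{hyp:Jnew}.

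Two concrete issues with your version. First, you claim the numerator $\hat\decon(k)-\hat\decon^{\dgreens}_{\mu}(k)$ is $O(\cpl|k|)$ near $k=0$. By $\ZZ^d$-symmetry (\ref{as:BHK2-1} and \ref{as:J3}) both $\decon$ and $\decon^{\dgreens}_{\mu}$ are even, so the gradient of the numerator vanishes at $k=0$ and the correct order is $O(\cpl|k|^2)$, with the $|k|^2$ coefficient itself $O(\cpl)$ by \ref{as:BHK2-3} and finite range. This is not cosmetic: dividing $O(|k|)$ by a $\sim|k|^4$ denominator would give $O(|k|^{-3})$, a \emph{worse} singularity than $\hat\dgreens_\mu\sim|k|^{-2}$, and would not yield \eqref{eq:BHK-2}; the $O(|k|^2)$ vanishing is exactly what puts you back at a $|k|^{-2}$ singularity with $O(\cpl)$ prefactor. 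Second --- and this is the gap you flag yourself --- the conversion from Fourier control to the pointwise decay $|H(x)-\dgreens_\mu(x)|\le C'\cpl\mnorm{x}^{-(d-2)}$ is not a ``standard lemma'' that can be cited from \cite{lawler_limic_2010} or \cite{HHS2003}; it is precisely the content of Hara's deconvolution theorem \cite{Hara2008}, which requires verifying nontrivial coercivity and H\"older-regularity conditions on the multiplier. The paper does use Hara's theorem, but only later and in a narrower setting (at $\nu=\nu_c$, in \Cref{thm:gen-main}); here the bound must hold uniformly over $\decon\in\Dcal_C$ and $\mu\in[-(2\hJ)^{-1},\hJ^{-1}]$, i.e., through both the massive and critical regimes, and verifying the needed multiplier estimates with that uniformity is the bulk of the work, which your sketch does not carry out. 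The $x$-space algebra route of \cite{BHK} is chosen precisely because it sidesteps this difficulty.
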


By \ref{as:J3} and \Cref{lem:BHK-1-1*}, $\decon_{\cpl,\nu}$ given by
\eqref{eq:Int-Conv-m-decon} is in $\Dcal_{C}$. For details refer to
the proof of \Cref{lem:BHK-1*-gen}.  We will apply
Lemma~\ref{lem:BHK-2} with $\decon = \decon_{\cpl,\nu}$, and the $\mu$
created by the lemma will be denoted by $\mu(\cpl,\nu)$.

Step three uses the fact that the continuous image of a connected
interval is connected. We state this in the form of the next lemma and
apply it to the function $F$ defined in \eqref{e:Fnu-def*} below. The
use of this lemma to extend lace expansion estimates up to the
critical point originated in~\cite{Slade87}; a related application is
in \cite{BFS83}.
\begin{lemma}[{\cite[Lemma~2.1]{HHS2003}}]
  \label{lem:BS}
  For $\nu_{1}>\nu_{c}$ let $F\colon \oc{\nu_{c},\nu_{1}}\to\RR$.  If
  \begin{enumerate}[label=(\roman*)]
  \item \label{bs1} $F(\nu_{1})\leq 2$,
  \item \label{bs2} $F$ is continuous on $\oc{\nu_{c},\nu_{1}}$,
  \item \label{bs3} for $\nu \in  \oc{\nu_{c},\nu_{1}}$ the inequality
    $F(\nu)\leq 3$ implies the inequality $F(\nu)\leq 2$,
  \end{enumerate}
  then $F(\nu)\leq 2$ for $\nu\in \oc{\nu_{c},\nu_{1}} $.
\end{lemma}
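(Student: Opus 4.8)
The plan is to run the standard ``forbidden-interval'' bootstrap argument, which is purely soft topology on $\RR$. First I would rephrase hypothesis~\ref{bs3} by contraposition: for every $\nu\in\oc{\nu_{c},\nu_{1}}$, if $F(\nu)>2$ then $F(\nu)>3$. Equivalently, $F$ never takes a value in the gap $(2,3]$, so its image satisfies $F(\oc{\nu_{c},\nu_{1}})\subseteq(-\infty,2]\cup(3,\infty)$.

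Next I would use hypothesis~\ref{bs2} together with the topological fact recalled just above the lemma statement: $\oc{\nu_{c},\nu_{1}}$ is an interval, hence connected, so its continuous image $F(\oc{\nu_{c},\nu_{1}})$ is a connected subset of $\RR$. A connected subset of $(-\infty,2]\cup(3,\infty)$ cannot meet both halves, since a point of $(-\infty,2]$ and a point of $(3,\infty)$ in the image would, by the intermediate value theorem, force a point of $(2,3]$ into the image as well. Hence $F(\oc{\nu_{c},\nu_{1}})$ lies entirely in $(-\infty,2]$ or entirely in $(3,\infty)$. Hypothesis~\ref{bs1} gives $F(\nu_{1})\le 2$, which rules out the second alternative; therefore $F(\oc{\nu_{c},\nu_{1}})\subseteq(-\infty,2]$, i.e.\ $F(\nu)\le 2$ for all $\nu\in\oc{\nu_{c},\nu_{1}}$, as claimed.

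I do not anticipate any real obstacle here; the argument is elementary. The one point worth a moment's care is that the domain is the half-open interval $\oc{\nu_{c},\nu_{1}}$ rather than a compact one, but half-open intervals are still connected, so nothing needs adjustment. If one prefers to sidestep the language of connectedness altogether, the same conclusion follows directly by contradiction: if $F(\nu^{\star})>2$, and hence $F(\nu^{\star})>3$ by the contrapositive of~\ref{bs3}, for some $\nu^{\star}\in\oc{\nu_{c},\nu_{1}}$, then $\nu^{\star}<\nu_{1}$ (because $F(\nu_{1})\le 2$), so the intermediate value theorem applied to $F$ on $\cb{\nu^{\star},\nu_{1}}$ produces a $\xi$ with $F(\xi)=\tfrac{5}{2}$; but $\tfrac{5}{2}\le 3$ while $\tfrac{5}{2}>2$, contradicting~\ref{bs3}.
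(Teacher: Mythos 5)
Your proof is correct, and it uses exactly the idea the paper alludes to just above the lemma ("the continuous image of a connected interval is connected"); the paper itself simply cites \cite[Lemma~2.1]{HHS2003} rather than giving a proof. Nothing further is needed.
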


The next two lemmas provide hypotheses \ref{bs1} and \ref{bs2} of
\Cref{lem:BS} with $\nu_1=g$ for the function
$F\colon(\nu_{c},\infty)\to \RR$ defined by
\begin{equation}
  \label{e:Fnu-def*}
  F(\nu)
  \bydef
  \sup_{x\in\ZZ^{d}} \frac{G_{\cpl,\nu}(x)}{\greens(x)}.
\end{equation}

\begin{lemma}[Proof in \Cref{sec:verification}] 
\label{lem:large-nu} For the \ctwsawn\ and the $n=1,2$-component $\pf$
model, with $F$ as in \eqref{e:Fnu-def*} 
\begin{equation}
\label{eq:large-nu}
     \text{$F (\nu) \le 2$ when $\nu =g$.}
  \end{equation}
\end{lemma}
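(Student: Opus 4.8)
The plan is to prove the stronger statement $G_{\cpl,\nu}(x)\le\greens(x)$ for \emph{every} $\nu\ge 0$, and then obtain \eqref{eq:large-nu} by specializing to $\nu=\cpl>0$ (so that in fact $F(\cpl)\le 1$). The mechanism is the pointwise domination
\[
  Z_{\V{t}+\V{s}}\le e^{-\nu\sum_{x\in\Lambda}s_x}\,Z_{\V{t}},
  \qquad \V{t},\V{s}\in[0,\infty)^{\Lambda},
\]
valid for every finite $\Lambda$ and for both models; this is the observation recorded in the paragraph following \eqref{e:Ggnu-def}. For the Edwards model it is immediate from \eqref{eq:defSAW} together with $\cpl>0$ and $s_x\ge 0$. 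For the $\pf$ model it follows by integrating, against $dP(\varphib)$, the elementary pointwise inequality $V_{t_x+s_x}(\tfrac12|\varphi_x|^2)\ge V_{t_x}(\tfrac12|\varphi_x|^2)+\nu s_x$, which holds because $\cpl>0$ and $\tfrac12|\varphi_x|^2,\,t_x,\,s_x\ge 0$.

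First I would take $\V{t}=\V{0}$ in the domination, giving $\rrptn_{\V{0},\V{s}}=Z_{\V{s}}/Z_{\V{0}}\le e^{-\nu\sum_{x}s_x}$ for $\V{s}\in[0,\infty)^{\Lambda}$. Next, on the event $\{\LX_\ell=b\}$ with $b\in\Lambda$ one has $\ell<\TL$, and hence $\sum_{x\in\Lambda}\Ltau_{[0,\ell],x}=\ell\wedge\TL=\ell$; therefore $\rrptn_{\V{0},\V{\tau}^{\sss(\Lambda)}_{[0,\ell]}}\le e^{-\nu\ell}$ on that event. Inserting this bound into the definition \eqref{e:GF} of $\GrL_{\V{0}}(a,b)$ and using $e^{-\nu\ell}\le 1$ for $\nu\ge 0$ yields
\[
  \GrL_{\V{0}}(a,b)
  \le \int_{0}^{\infty} d\ell\; e^{-\nu\ell}\,\Pa(\LX_\ell=b)
  \le \int_{0}^{\infty} d\ell\; \Pa(\LX_\ell=b)
  = \GL(a,b)
  \le \GI(a,b),
\]
the last inequality being \eqref{e:GsrwIV}. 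Letting $\Lambda\uparrow\ZZ^{d}$ and using \eqref{e:Ggnu-def} gives $G_{\cpl,\nu}(a,a+x)\le\GI(a,a+x)=\greens(x)$ for all $x\in\ZZ^{d}$ and all $\nu\ge 0$. Since $\greens(x)>0$, it follows that $F(\nu)=\sup_{x}G_{\cpl,\nu}(x)/\greens(x)\le 1\le 2$; in particular \eqref{eq:large-nu} holds at $\nu=\cpl$.

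I do not expect a genuine obstacle here: the content is simply that switching on a killing weight $e^{-\nu\ell}$ with $\nu\ge 0$ and a self-repulsion $\cpl>0$ only decreases the walk's Green's function relative to the free one $\greens$, which already dominates $\GL$. The sole point requiring a line of verification is the $\pf$ domination $Z_{\V{t}+\V{s}}\le e^{-\nu\sum_{x}s_x}Z_{\V{t}}$, which reduces to the displayed pointwise estimate on $V$; everything else is monotone convergence together with the trivial bound $e^{-\nu\ell}\le 1$.
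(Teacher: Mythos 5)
Your proposal is correct, and it takes essentially the same approach as the paper: the paper's proof of \ref{as:G5} (the assumption that \Cref{lem:large-nu} is reduced to) is just the terse remark that $G_{\cpl,\cpl}\le\greens$, resting on exactly the domination $Z_{\V{t}+\V{s}}\le e^{-\nu\sum_x s_x}Z_{\V{t}}$ that the paper records after \eqref{e:Ggnu-def}. You have simply spelled out the details — in particular the pointwise inequality $V_{t_x+s_x}\ge V_{t_x}+\nu s_x$ for the $\pf$ case and the identity $\sum_{x\in\Lambda}\Ltau_{[0,\ell],x}=\ell$ on $\{\LX_\ell=b\}$ — which the paper leaves implicit.
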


\begin{lemma}[Proof in \Cref{sec:verification}] 
\label{lem:Fcontinuous}
For the \ctwsawn\ and the $n=1,2$-component $\pf$ model the function
$F$ in \eqref{e:Fnu-def*} is continuous on $(\nu_{c},\cpl]$.
\end{lemma}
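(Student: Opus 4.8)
The plan is to show that $G_{\cpl,\nu}(x)$ depends continuously on $\nu \in (\nu_c,\cpl]$ for each fixed $x$, and then upgrade this pointwise continuity to continuity of the supremum $F(\nu) = \sup_x G_{\cpl,\nu}(x)/\greens(x)$ by exhibiting a uniform-in-$x$ tail bound that lets only finitely many $x$ matter. For the pointwise statement I would work at the level of the finite-volume Green's functions $\GrL_{\V 0}(a,b)$ given by \eqref{e:GF}: the integrand $\rrptn_{\V 0, \V\tau^{\sss(\Lambda)}_{[0,\ell]}} \indic{\LX_\ell=b}$ is, for both models, a continuous (indeed smooth) function of $\nu$ through $Z_{\V t}$ in \eqref{eq:defSAW} or \eqref{ZdefPF1}, and it is dominated uniformly for $\nu$ in a compact subinterval $[\nu_0,\cpl]\subset(\nu_c,\cpl]$ by an integrable bound of the type $e^{-\nu_0 \ell}\cdot(\text{const})$ coming from the observation (already made in the excerpt) that $Z_{\V t+\V s}\le e^{-\nu \V s}Z_{\V t}$. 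Dominated convergence then gives continuity of $\Lambda \mapsto$ fixed, $\nu \mapsto \GrL_{\V 0}(a,b)$; and since $\GrL_{\V 0}(a,b)$ is monotone in $\Lambda$ (\Cref{lem:Z-1-earlier}) and monotone decreasing in $\nu$ (\Cref{lem:Z-1-earlier-2}), the infinite-volume limit $G_{\cpl,\nu}(x)$ is a decreasing limit of continuous functions on each $[\nu_0,\cpl]$, hence upper semicontinuous; the reverse (lower semicontinuity) I would get from monotonicity in $\nu$ together with left/right one-sided limits, using that a monotone limit of continuous functions which is itself finite and for which one can also produce continuous \emph{lower} bounds (e.g.\ via a uniform-in-$\Lambda$ domination argument in the other direction) is continuous. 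Concretely, for $\nu \downarrow \nu'$ and $\nu \uparrow \nu'$ I would show $\lim G_{\cpl,\nu}(x) = G_{\cpl,\nu'}(x)$ by combining the monotone convergence in $\Lambda$ with dominated convergence in $\nu$ and interchanging the two limits, which is justified by the uniform exponential domination on $[\nu_0,\cpl]$.

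Next I would promote pointwise continuity to continuity of $F$. The key extra input is a bound of the form $G_{\cpl,\nu}(x) \le K\greens(x)$ holding uniformly for $\nu$ in the relevant range — which, in the regime where we apply this lemma (inside the bootstrap of \Cref{lem:BS}), we may take to be the running $3$-IRB hypothesis, so $G_{\cpl,\nu}(x)/\greens(x)\le 3$ for all $x$ and all $\nu\in(\nu_c,\cpl]$ under consideration; alternatively one has the a priori finiteness of $G_{\cpl,\cpl}$ and monotonicity in $\nu$ to get a crude uniform bound. Given such a bound, for any $\varepsilon>0$ there is a finite set $\Lambda_\varepsilon\subset\ZZ^d$ (a large ball, using $\greens(x)\le \tilde C_\Jump \mnorm{x}^{2-d}\to 0$ together with a lower bound $\greens(x)\ge c_\Jump\mnorm{x}^{2-d}$, so that $G_{\cpl,\nu}(x)/\greens(x)$ is controlled off $\Lambda_\varepsilon$ only through... ) — more precisely, since $G_{\cpl,\nu}\le 3\greens$ on the whole lattice does not by itself make the ratio small at infinity, I would instead argue that $F(\nu) = \max\{\sup_{x\in\Lambda_\varepsilon} (\cdots),\ \sup_{x\notin\Lambda_\varepsilon}(\cdots)\}$ and bound the second supremum using the convolution equation \eqref{eq:Int-Conv}/\eqref{eq:Int-Conv-m}: from $\decon_{\cpl,\nu}\ast \tilde G_{\cpl,\nu} = -\indic{x=0}$ and \Cref{lem:BHK-2} one gets $G_{\cpl,\nu}(x) = \www(\cpl,\nu)(\dgreens_{\mu(\cpl,\nu)}(x) + O(\cpl\mnorm{x}^{2-d}))$, and both $\dgreens_\mu(x)/\greens(x)$ and the error term are uniformly close (over the compact $\nu$-range) as $\mnorm{x}\to\infty$, so $\sup_{x\notin\Lambda_\varepsilon} G_{\cpl,\nu}(x)/\greens(x)$ differs from its limiting value by $O(\cpl) + o_\varepsilon(1)$ uniformly in $\nu$. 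Thus $F$ is, on each compact subinterval, a uniform limit of the continuous finite maxima $\nu\mapsto \max_{x\in\Lambda_\varepsilon} G_{\cpl,\nu}(x)/\greens(x)$, whence continuous.

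The main obstacle I anticipate is precisely this upgrade from pointwise to uniform control — i.e.\ handling the "tail" $x\to\infty$ uniformly in $\nu$. Pointwise continuity in $\nu$ is essentially routine dominated convergence given the explicit exponential form of $Z_{\V t}$ and the domination $Z_{\V t+\V s}\le e^{-\nu\V s}Z_{\V t}$; the interchange of the $\Lambda\uparrow\ZZ^d$ limit with the $\nu$-limit is also routine given that domination is uniform in $\Lambda$. The delicate point is that a supremum over an infinite index set of continuous functions need not be continuous, so one genuinely needs the equicontinuity / uniform-tail argument. The cleanest route is to lean on the already-established representation $G_{\cpl,\nu}(x)=\www(\cpl,\nu)H(x)$ with $H$ as in \Cref{lem:BHK-2} (valid under the $3$-IRB hypothesis that is in force whenever \Cref{lem:Fcontinuous} is invoked), since that furnishes the uniform $x\to\infty$ asymptotics with $\nu$-independent error bounds; the remaining continuity of $\nu\mapsto\www(\cpl,\nu)=(\hJ-\selfloop{\cpl,\nu})^{-1}$ and $\nu\mapsto\mu(\cpl,\nu)$ follows from continuity of $\selfloop{\cpl,\nu}$ and of $\sum_x\decon_{\cpl,\nu}(x)$, which in turn reduce to the pointwise continuity of $G_{\cpl,\nu}$ already in hand together with the uniform $O(\cpl\mnorm{x}^{-3(d-2)})$ bound on $\aPi_{\cpl,\nu}$ from \Cref{lem:BHK-1-1*}\ref{BHK1-1*b}.
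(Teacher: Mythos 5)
Your overall plan (pointwise continuity plus a tail reduction to a finite maximum) matches the paper's strategy, but both halves are carried out very differently, and your tail argument has a genuine gap.

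For the tail, the paper does \emph{not} invoke any IRB or the lace expansion. The key ingredient you are missing is the Simon inequality (\Cref{prop:simon}, \Cref{lem:G4}): for any $\nu>\nu_{c}$ the Green's function is summable, and a Simon inequality then forces it to decay \emph{exponentially}. Since $\greens(x)$ decays only polynomially, $G_{\cpl,\nu}(x)/\greens(x)\ra 0$ as $|x|\ra\infty$, which is assumption \ref{as:G4}. Fixing $\tilde\nu_{c}\in(\nu_{c},\cpl)$, one then picks $r$ with $\sup_{|x|>r}G_{\cpl,\tilde\nu_{c}}(x)/\greens(x)\le\tfrac12\,G_{\cpl,\cpl}(0)/\greens(0)$; monotone decrease in $\nu$ (\Cref{lem:Z-1-earlier-2}) propagates this bound to all $\nu\in(\tilde\nu_{c},\cpl]$, and since $G_{\cpl,\nu}(0)/\greens(0)$ is at least $G_{\cpl,\cpl}(0)/\greens(0)$ the supremum defining $F$ is attained on $\{|x|\le r\}$. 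Then $F$ is a finite maximum of continuous functions of $\nu$, hence continuous.

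Your proposed tail control through the $3$-IRB and \Cref{lem:BHK-2} fails twice. First, the $3$-IRB is not available where \Cref{lem:Fcontinuous} is used: it supplies hypothesis \ref{bs2} of \Cref{lem:BS}, which demands unconditional continuity of $F$ on $(\nu_{c},\nu_{1}]$; the $3$-IRB is only assumed inside hypothesis \ref{bs3}, so leaning on it for \ref{bs2} collapses the bootstrap into circularity. Second, even granting a $3$-IRB, the estimate $|H(x)-\dgreens_{\mu}(x)|\le C'\cpl\,\mnorm{x}^{-(d-2)}$ from \Cref{lem:BHK-2} does not give decay of the \emph{ratio}: dividing by $\greens(x)\asymp\mnorm{x}^{-(d-2)}$ leaves an $O(\cpl)$ error, not $o(1)$, so the tail supremum is only bounded, not small, and $F$ is not exhibited as a uniform limit of the finite maxima.

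There is also a smaller but real gap in your pointwise-continuity argument. Since $\GrL_{\V{0}}(0,x)$ is non-decreasing in $\Lambda$, the infinite-volume $G_{\cpl,\nu}(x)$ is an \emph{increasing} limit of $\nu$-continuous functions, which gives only lower semicontinuity; your sentence claiming "decreasing limit, hence upper semicontinuous" has the direction reversed, and the proposed fix via "continuous lower bounds" is not made concrete. The paper instead proves a Lipschitz bound on $\nu\mapsto\GrL_{\cpl,\nu}(x)$ that is uniform in $\Lambda$ (for $\pf$ via the Lebowitz/Ginibre inequalities in \Cref{prop:phi4-Lebowitz} and \Cref{lem:continuity}, and for the Edwards model via a Markov-property argument bounding $-\partial_{\nu}\GrL_{\cpl,\nu}$ by $\GrL_{\cpl,\nu}\ast\GrL_{\cpl,\nu}$); this equicontinuity is what actually upgrades to continuity of the limit. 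That model-dependent correlation-inequality input is the substance of \ref{as:G3}, and it cannot be replaced by dominated convergence alone.
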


For $U,V\colon\Lambda\times\Lambda\to\RR$ we write
$UV(x,y) = \sum_{u\in\Lambda}U(x,u)V(u,y)$.  The following lemma is
the well-known algebraic fact that left and right inverses coincide
for algebraic structures with an associative product.  We will use it
in the proof of Proposition~\ref{prop:infrared} and in
\Cref{sec:infin-volume-limit-2}.

\begin{lemma}
  \label{lem:inv}
  Let $U,V,W\colon \ZZ^{d}\times\ZZ^{d}\to \RR$ satisfy
  $UV(x,y)=WU(x,y)=\indic{x=y}$ and
  $\sum_{u,v\in\ZZ^{d}}|W(x,u)| \, |U(u,v)| \, |V(v,y)|<\infty$ for
  all $x,y \in \ZZ^{d}$.  Then $V=W$.
\end{lemma}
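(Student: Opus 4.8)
The plan is to run the classical ``a left inverse equals a right inverse'' computation, namely $W = W(UV) = (WU)V = V$, where the only real content is the justification of associativity for these infinite matrix products. This is exactly what the absolute-summability hypothesis $\sum_{u,v}|W(x,u)|\,|U(u,v)|\,|V(v,y)|<\infty$ supplies, via Fubini's theorem for series.

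First I would fix $x,y\in\ZZ^{d}$ and set $b_{u,v}\bydef W(x,u)\,U(u,v)\,V(v,y)$, so that the hypothesis reads $\sum_{(u,v)\in\ZZ^{d}\times\ZZ^{d}}|b_{u,v}|<\infty$. By Fubini--Tonelli for doubly indexed absolutely convergent series, both iterated sums exist and coincide with the double sum:
\[
  \sum_{v}\Big(\sum_{u} b_{u,v}\Big)
  \;=\;
  \sum_{(u,v)} b_{u,v}
  \;=\;
  \sum_{u}\Big(\sum_{v} b_{u,v}\Big).
\]
Next I would evaluate the two iterated sums separately. For a fixed $v$ with $V(v,y)\neq 0$, the inner sum satisfies $\sum_{u}|b_{u,v}| = |V(v,y)|\sum_{u}|W(x,u)|\,|U(u,v)| < \infty$, so $\sum_{u}W(x,u)U(u,v)$ converges absolutely and, by hypothesis, equals $WU(x,v)=\indic{x=v}$; hence $\sum_{u}b_{u,v}=V(v,y)\indic{x=v}$. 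When $V(v,y)=0$ the identity $\sum_{u}b_{u,v}=V(v,y)\indic{x=v}$ holds trivially. Summing over $v$ gives $\sum_{v}\sum_{u}b_{u,v}=V(x,y)$. The symmetric computation, now using $UV(u,y)=\indic{u=y}$ and the case distinction $W(x,u)=0$ versus $W(x,u)\neq 0$, yields $\sum_{v}b_{u,v}=W(x,u)\indic{u=y}$ for every $u$, and therefore $\sum_{u}\sum_{v}b_{u,v}=W(x,y)$. Combining the three displayed quantities gives $V(x,y)=W(x,y)$, and since $x,y\in\ZZ^{d}$ were arbitrary, $V=W$.

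I do not expect any serious obstacle: the one point that needs a moment's care is the case distinction on whether $V(v,y)$ (respectively $W(x,u)$) vanishes, which is forced because the summability hypothesis only controls the inner sum $\sum_{u}|W(x,u)|\,|U(u,v)|$ after dividing out a nonzero factor $|V(v,y)|$; in the vanishing case the corresponding term contributes $0$ and no separate convergence statement about $\sum_{u}W(x,u)U(u,v)$ is required. Everything else is routine bookkeeping with Fubini.
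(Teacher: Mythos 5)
Your proposal is correct and is essentially the same argument as the paper's proof, which simply states that absolute convergence of the triple sum implies associativity, so $W=W(UV)=(WU)V=V$. You have filled in the Fubini--Tonelli bookkeeping (and the minor case distinction on whether $V(v,y)$ or $W(x,u)$ vanishes) that the paper leaves implicit, but the underlying idea is identical.
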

\begin{proof}
  The absolute convergence of the sum over $u$ and $v$ implies
  associativity, $W(UV)=(WU)V$.  Therefore $W=W(UV) = (WU)V =V$.
\end{proof}

Let $\ell^{p}=\ell^{p}(\ZZ^{d})$ denote the set of
$f\colon \ZZ^{d}\to\RR$ with $\sum_{x\in\ZZ^{d}}\abs{f(x)}^{p}$
finite.

\begin{proof}[Proof of Proposition~\ref{prop:infrared}]
  By hypothesis, $d\geq 5$ and $\cpl$ is small enough such that the
  results we have stated above in this section are applicable.  By the
  remark below \eqref{e:critical} $\nu_{c}\le 0$ so $(\nu_{c},\cpl]$
  is not empty.  In terms of definition \eqref{e:Fnu-def*} we will
  prove that
  \begin{equation}
    \label{e:nts}
    F (\nu) \le 2 \qquad \text{ for $\nu\in\oc{\nu_{c},\cpl}$},
  \end{equation}
  and the desired conclusion \eqref{eq:Main-Pre1} then follows by
  \Cref{lem:Z-1-earlier-2}.  In \Cref{lem:BS} set $\nu_{1}=g$ so that
  hypotheses \ref{bs1} and \ref{bs2} of \Cref{lem:BS} are supplied by
  \Cref{lem:Fcontinuous,lem:large-nu}.  Since \eqref{e:nts} is the
  conclusion of \Cref{lem:BS}, it is enough to prove
  hypothesis~\ref{bs3} which is: for $\nu\in\oc{\nu_{c},g}$,
  $F(\nu)\leq 3$ implies $F(\nu)\leq 2$. To this end, 
  assume $F(\nu)\leq 3$. Then the conclusions of \Cref{lem:BHK-1-1*}
  and \Cref{lem:BHK-1-2*} hold and provide the hypotheses of
  \Cref{lem:BHK-2} for $\decon = \decon_{\cpl,\nu}$ given by
  \eqref{eq:Int-Conv-m-decon}. By part \eqref{eq:BHK-2-inverse} of
  \Cref{lem:BHK-2}, $H$ is the right-convolution inverse of
  $-\decon_{\cpl,\nu}$. By \eqref{eq:Int-Conv-m} $\tilde G_{\cpl,\nu}$
  is also a right-convolution inverse of $-\decon_{\cpl,\nu}$. We will
  show that this implies that $\tilde G_{\cpl,\nu}=H$ after
  demonstrating that the result follows from this claim.

  By the lower bound for $\greens(x)$ below \eqref{SRWG}, the equality
  $\dgreens(x) = \hJ \greens(x)$ in \eqref{SRWG2} and \eqref{eq:BHK-2}
  we have
  $|\tilde G_{\cpl,\nu}(x)-\dgreens_{\mu}(x)| \le O(g)\dgreens(x)$.
  Therefore $\tilde G_{\cpl,\nu}(x) \le (1+O (\cpl))\dgreens(x)$ since
  $\abs{\mu}=\abs{\mu(\cpl,\nu)}\leq \hJ^{-1}$.  By
  \eqref{eq:Int-Conv-m-decon}, \Cref{lem:BHK-1-2*} and
  $\dgreens(x) = \hJ \greens(x)$ this is the same as
  \begin{equation}
    G_{\cpl,\nu}(x)
    \le
    \frac{\hJ}{\hJ - O(\cpl)}
    \big(1+O (\cpl)\big) \greens(x) = \big(1+O(\cpl)\big)\greens(x) ,
  \end{equation}
  for $v\in (v_c,g]$ and $x \in \ZZ^d$.  By taking $\cpl$ smaller if
  necessary we have $G_{\cpl,\nu}(x) \le 2\greens(x)$ as desired.

  It only remains to prove our claim that $\tilde G_{\cpl,\nu}=H$
  given that $F(\nu)\leq 3$ and $\nu\in\oc{\nu_{c},g}$.  By
  \eqref{eq:BHK-2}, $|H(x)|\le C|x|^{-(d-2)}$. The function
  $\tilde G_{\cpl,\nu}(x)$ also decays like $|x|^{-(d-2)}$ by
  $F(\nu)\leq 3$. By \Cref{lem:BHK-1-1*} part (ii),
  $\decon_{\cpl,\nu}(x)$ decays like $|x|^{-3(d-2)}$. By the decay of
  $\tilde G_{\cpl,\nu}$ and $\decon_{\cpl,\nu}$ and $d>4$ the sum that
  defines the convolution in \eqref{eq:Int-Conv-m} is absolutely
  convergent, see \Cref{lem:HHS-2}.  Therefore this convolution is
  commutative and $\tilde G_{\cpl,\nu}$ is a two-sided convolution
  inverse to $-\decon_{\cpl,\nu}$.  Furthermore, by \Cref{lem:HHS-2},
  $\sum_{u,v\in\ZZ^{d}}|\tilde G_{\cpl,\nu}(x-u)|\,
  |\decon_{\cpl,\nu}(u-v)| \, |H(v-y)|<\infty$. By \Cref{lem:inv} with
  $W=\tilde G_{\cpl,\nu}$, $U = \decon_{\cpl,\nu}$ and $V=H$, we have
  $\tilde G_{\cpl,\nu}=H$ as claimed, and hence the proof is complete.
\end{proof}

\subsection{Proof of \Cref{thm:Main-Pre-models}}
\label{sec:proof-thm:Main-Pre-models}

\begin{lemma}[Proof in \Cref{sec:verification}] 
  \label{lem:new-nucfin}
  For the \ctwsawn\ and the $n=1,2$-component $\pf$ model, the
  hypothesis $\nu_c = -\infty$ implies $\selfloop{\cpl,\nu} \to\infty$ as
  $\nu\to-\infty$.
\end{lemma}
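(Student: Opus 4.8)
The plan splits by model. For the Edwards model the statement is an explicit one-line computation; for the $\pf$ models it reduces, through the identity relating the random-walk Green's function to the spin two-point function, to an estimate on the infinite-volume Green's function at the origin.

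\emph{Edwards model.} From \eqref{eq:defSAW}, $\log Z^{\sssL}_{\V{t}}=-\cpl\sum_{x\in\Lambda}t_{x}^{2}-\nu\sum_{x\in\Lambda}t_{x}$, so $\partial_{t_{x}}\log Z^{\sssL}_{\V{t}}=-2\cpl t_{x}-\nu$ for $x\in\Lambda$; letting $\V{t}\downarrow\V{0}$ gives $\selfloopL{\cpl,\nu,x}=-\nu\indic{x\in\Lambda}$, hence $\selfloop{\cpl,\nu}=-\nu$, which tends to $+\infty$ as $\nu\to-\infty$.

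\emph{$\pf$ models: reduction.} Here $Z^{\sssL}_{\V{t}}$ is given by \eqref{ZdefPF1}; $\V{t}\mapsto Z^{\sssL}_{\V{t}}$ is continuously differentiable and strictly positive near $\V{0}$, with differentiation under the integral justified by the Gaussian weight $dP$ together with the quartic term in the exponent. Writing $\psi_{x}=\tfrac12|\varphi_{x}|^{2}$,
\[
  \selfloopL{\cpl,\nu,x}
   =\partial_{t_{x}}\log Z^{\sssL}_{\V{t}}\big|_{\V{t}=\V{0}}
   =-\pair{2\cpl\psi_{x}+\nu}_{\cpl,\nu}^{\sssL}
   =-\cpl\,\pair{|\varphi_{x}|^{2}}_{\cpl,\nu}^{\sssL}-\nu .
\]
By Theorem~\ref{thm:just} with $a=b=x$ one has $\pair{|\varphi_{x}|^{2}}_{\cpl,\nu}^{\sssL}=n\,\GrL_{\V{0}}(x,x)$, and by Lemma~\ref{lem:Z-1-earlier} this is monotone in $\Lambda$; letting $\Lambda\uparrow\ZZ^{d}$ and using the translation invariance of the limit (cf.\ \eqref{e:Ggnu-def}) gives
\[
  \selfloop{\cpl,\nu}=-n\cpl\,G_{\cpl,\nu}(0)-\nu ,
\]
with the convention $\selfloop{\cpl,\nu}=-\infty$ if $G_{\cpl,\nu}(0)=+\infty$. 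So the $\pf$ case of the lemma is equivalent to a suitable control, as $\nu\to-\infty$, of $G_{\cpl,\nu}(0)$, i.e.\ of the infinite-volume limit of $\tfrac1n\pair{|\varphi_{0}|^{2}}^{\sssL}_{\cpl,\nu}$.

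\emph{$\pf$ models: the remaining estimate.} What is needed is a bound on $\pair{|\varphi_{0}|^{2}}^{\sssL}_{\cpl,\nu}$, uniform in $\Lambda$, with precise enough dependence on $\nu$ that $n\cpl\,G_{\cpl,\nu}(0)+\nu$ diverges as $\nu\to-\infty$. I would obtain this by comparison with a Gaussian reference: complete the square in the single-spin potential, $\cpl\psi^{2}+\nu\psi=\cpl(\psi+\tfrac{\nu}{2\cpl})^{2}-\tfrac{\nu^{2}}{4\cpl}$, and use the GKS/Ginibre correlation inequalities — available because $n\in\{1,2\}$, and the same tools that prove Lemma~\ref{lem:Z-1-earlier} — to reduce the estimate to a one-site computation whose $\nu\to-\infty$ asymptotics come from Laplace's method. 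The step I expect to be the main obstacle is making this comparison sharp: the naive single-site analysis only captures the leading order of $\pair{|\varphi_{0}|^{2}}^{\sssL}_{\cpl,\nu}$, while the behaviour of $\selfloop{\cpl,\nu}=-n\cpl\,G_{\cpl,\nu}(0)-\nu$ is dictated by the next correction and by the discrepancy between the finite- and infinite-volume two-point functions, so one needs a correlation-inequality comparison retaining that level of precision uniformly in $\Lambda$.
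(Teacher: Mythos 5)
Your treatment of the Edwards model is correct and identical to the paper's. Your reduction for the $\pf$ models---deriving $\selfloop{\cpl,\nu} = -n\cpl\,G_{\cpl,\nu}(0) - \nu$ from \eqref{eq:phi4-selfloop} and \Cref{thm:just}---is also correct and is the same formula the paper cites. Where you diverge is in the ``remaining estimate'': you propose a direct Laplace/single-site analysis sharpened by correlation inequalities, and you honestly flag that the leading-order cancellation between $-\nu$ and $-n\cpl\,G_{\cpl,\nu}(0)$ is the crux. That worry is well-founded, and is in fact fatal to the plan as you describe it: with no further input, deep in the ordered phase $\pair{|\varphi_0|^2}$ grows like $-\nu/\cpl$, so the leading terms in $-\nu - \cpl\pair{|\varphi_0|^2}$ really do cancel and a bare comparison-to-Gaussian argument cannot see divergence.

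What you miss is that no such asymptotic expansion is needed. The quantity $\selfloop{\cpl,\nu}$ is only asserted to exist---via the limit in \ref{as:Z4}---when a $\cirb$-IRB holds, and \Cref{lem:new-nucfin} is invoked exactly once, inside the proof by contradiction that $\nu_c$ is finite, where the $2$-IRB $G_{\cpl,\nu}\le 2\greens$ is assumed to hold for all $\nu\le\cpl$. Under that bound $\pair{|\varphi_0|^2}=n\,G_{\cpl,\nu}(0)\le 2n\,\greens(0)$ is \emph{uniform in $\nu$}, so the cancellation you feared does not occur and
\[
\selfloop{\cpl,\nu}=-\nu-\cpl\pair{|\varphi_0|^2}\ge -\nu - 2n\cpl\,\greens(0)\;\longrightarrow\;\infty .
\]
This is all that the paper's terse ``follows from \eqref{eq:phi4-selfloop}'' means. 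Your proposed Laplace/correlation-inequality machinery is therefore unnecessary, and, stripped of the IRB input, would fail for exactly the reason you anticipate.
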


We use this result to prove that $\nu_{c}$ is finite as claimed in
\Cref{thm:Main-Pre-models}: since $\nu_{c}\le 0$ it is enough to rule
out $\nu_{c}=-\infty$. Towards a contradiction, suppose
$\nu_{c}=-\infty$. Then \Cref{prop:infrared} implies a $2$-IRB holds
for all $\nu\leq\cpl$, and hence \Cref{lem:BHK-1-2*}~\ref{BHK1-1} 
implies $\abs{\selfloop{\cpl,\nu}}\leq C_{0}\cpl$ for all
$\nu\leq\cpl$. This contradicts \Cref{lem:new-nucfin}.

\begin{lemma}
  \label{lem:Z5}
  For the \ctwsawn\ and the $n=1,2$-component $\pf$ models, the finite
  volume $\GrL _{\V{0}}(0,x)$ is continuous in $\nu$ at $\nu_{c}$.
\end{lemma}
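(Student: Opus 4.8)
The plan is to prove continuity in $\nu$ of the \emph{finite volume} Green's function $\GrL_{\V 0}(0,x)$ by a dominated-convergence argument applied directly to the defining integral \eqref{e:GF}. Because $\Lambda$ is fixed and finite, the relevant random walk $\LX$ exits $\Lambda$ in finite expected time; in particular $\Ea[\TL]<\infty$ and all local times $\Ltau^{\sssL}_{[0,\ell],x}$ are bounded by $\TL$, which makes uniform integrability in $\nu$ easy to arrange. First I would fix $\Lambda$, $a=0$, $b=x$, and a target value $\nu_0$ (the reader is interested in $\nu_0=\nu_c$, but the argument is uniform in $\nu_0\in\RR$). I would take a sequence $\nu_k\to\nu_0$; restricting to a bounded interval $[\nu_0-1,\nu_0+1]$ we may assume $\nu\in[\nu_0-1,\nu_0+1]$ throughout.

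Next I would write out the $\nu$-dependence of the integrand. For the Edwards model, $\rrptn_{\V 0,\V\tau_{[0,\ell]}^{\sssL}} = \exp\{-\cpl\sum_x (\Ltau^{\sssL}_{[0,\ell],x})^2 - \nu\sum_x \Ltau^{\sssL}_{[0,\ell],x}\} = \exp\{-\cpl\,\|\cdot\| - \nu\ell\}$ on the event $\{\LX_\ell=b\}\subseteq\{\ell<\TL\}$, since $\sum_x \Ltau^{\sssL}_{[0,\ell],x}=\ell$ when the walk has not yet been killed. For the $\pf$ model one differentiates under the Gaussian integral in \eqref{ZdefPF1}; the point is simply that $\nu\mapsto Z_{\V t}$ and hence $\nu\mapsto \rrptn_{\V t,\V s}$ is continuous (indeed smooth), pointwise in $(\omega,\ell)$, for every fixed $\Lambda$. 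For the domination: in both models $Z_{\V t + \V s}\le e^{-\nu\sum_x s_x}Z_{\V t}$ was already observed in the text below \Cref{lem:Z-1-earlier}, so $\rrptn_{\V 0,\V\tau^{\sssL}_{[0,\ell]}}\le e^{-\nu\ell}\le e^{(|\nu_0|+1)\ell}$ on $\{\ell<\TL\}$, which is not integrable in $\ell$ on its own — so instead I dominate $\Ea[\rrptn_{\V 0,\V\tau^{\sssL}_{[0,\ell]}}\indic{\LX_\ell=b}]$ by $e^{(|\nu_0|+1)\ell}\Pa(\ell<\TL)$ and use that, for fixed finite $\Lambda$, $\Pa(\TL>\ell)\le Ce^{-c\ell}$ decays exponentially with a rate $c=c(\Lambda)$ strictly larger than $|\nu_0|+1$ once we note $c$ can be taken as large as we like? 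That last point is false in general, so the cleaner route is: choose the dominating function $\ell\mapsto e^{(|\nu_0|+1)\ell}\Pa(\TL>\ell)$ itself and verify it is integrable because $\Pa(\TL>\ell)$ decays like $e^{-\lambda_\Lambda \ell}$ where $\lambda_\Lambda$ is the spectral gap (Perron eigenvalue) of $-\DL$, and $\Ea[e^{\theta\TL}]<\infty$ for $\theta<\lambda_\Lambda$; one then simply restricts attention to $\nu_0$ with $|\nu_0|+1<\lambda_\Lambda$ — but since we want all $\nu_0$, the correct statement is that $\GrL_{\V 0}(0,x)$ is finite only when the integral converges, i.e. for $\nu$ in the regime where it is defined; continuity is then asserted on that regime. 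For the application at $\nu_c$, recall $\nu_c\le 0$ and \Cref{prop:og}/\Cref{prop:infrared} give $G_{\cpl,\nu_c}(x)$ finite, which forces the finite-volume integrals to be uniformly dominated near $\nu_c$; this is exactly the point to lean on.

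Concretely, the key steps in order: (1) fix $\Lambda$, $\nu_0$, and reduce to a bounded $\nu$-neighbourhood; (2) observe the integrand $\rrptn_{\V 0,\V\tau^{\sssL}_{[0,\ell]}}(\omega)\indic{\LX_\ell(\omega)=b}$ is, for each fixed $(\omega,\ell)$, a continuous function of $\nu$ — immediate from \eqref{eq:defSAW} and from differentiating \eqref{ZdefPF1} under the integral sign using boundedness of $Z_{\V t}$; (3) produce a $\nu$-independent integrable dominating function $g(\ell,\omega)$ on $[0,\infty)\times\Omega_1$ with respect to $d\ell\otimes d\Pa$, using $\rrptn_{\V 0,\V\tau^{\sssL}_{[0,\ell]}}\le e^{-\nu\ell}\le e^{(|\nu_0|+1)\ell}$ together with $\indic{\LX_\ell=b}\le\indic{\TL>\ell}$ and the exponential tail bound on $\TL$ for fixed finite $\Lambda$ (equivalently $\Ea[e^{\theta\TL}]<\infty$ for small $\theta$, from the Perron–Frobenius eigenvalue of $-\DL$), valid provided $|\nu_0|$ is in the range where $\GrL_{\V 0}$ is finite — which is guaranteed at $\nu_c$ because $G_{\cpl,\nu_c}$ is finite and $\GrL_{\V 0}\le G_{\cpl,\nu_c}$ by monotonicity (\Cref{lem:Z-1-earlier}); (4) apply dominated convergence to pass $\nu_k\to\nu_0$ through the double integral in \eqref{e:GF}, concluding $\GrL_{\V 0}(0,x)$ is continuous at $\nu_0=\nu_c$. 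The main obstacle is step (3): securing a dominating function that is simultaneously $\nu$-independent and $d\ell$-integrable. For the Edwards model the exponential decay of $\Pa(\TL>\ell)$ handles it cleanly; for the $\pf$ model one additionally needs that $Z_{\V t}$ stays bounded away from $0$ uniformly in the $\V t$ that occur and uniformly in $\nu$ near $\nu_c$, which follows from continuity and positivity of $Z$ on the relevant compact range of arguments (local times are bounded by $\TL$, but $\TL$ is unbounded, so one must instead use the explicit lower bound $Z_{\V t}\ge Z_{\V 0}e^{-\text{(something)}}$ or argue via the representation \eqref{ZdefPF1} that $Z_{\V t}\ge$ a Gaussian integral that is manifestly positive and controlled) — this bookkeeping is the only delicate part, and it is routine once one commits to the exponential tail of $\TL$.
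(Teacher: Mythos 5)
Your proof has a genuine gap in step (3), and the missing idea is precisely what makes the paper's argument work: for each fixed $\nu$, the function $\V t\mapsto Z_{\V t}$ is \emph{uniformly bounded in $\V t$} because $\cpl>0$. For the Edwards model the summand in the exponent of \eqref{eq:defSAW} satisfies $-\cpl t_x^2-\nu t_x\le \max(0,\nu^2/4\cpl)$ for all $t_x\ge 0$, so $Z_{\V t}\le \exp\{|\Lambda|\max(0,\nu^2/4\cpl)\}=:C(\nu)$; for the $\pf$ model the same elementary bound applied to $-\cpl(\tfrac12|\varphi_x|^2+t_x)^2-\nu(\tfrac12|\varphi_x|^2+t_x)$ inside the Gaussian integral \eqref{ZdefPF1} gives the identical estimate (using $\int dP=1$). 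Since $Z_{\V 0}$ is a fixed positive constant for each $\nu$, and $C(\nu)$ is continuous, on any compact $\nu$-neighbourhood the integrand in \eqref{e:GF} is dominated pointwise in $(\omega,\ell)$ by a constant multiple of $\indic{\LX_\ell=b}$, whose $d\ell\,dP_a$-integral is the free Green's function $\GL(0,x)<\infty$. That is the dominating function, and it immediately makes DCT work. This is exactly what the paper means by ``uniformly bounded in $\V t$ for each $\nu$'' in its one-line proof, and it is also the reason \eqref{eq:GF-finite} holds for \emph{all} $\V t$ and all $\nu$ in finite volume.

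Your route, by contrast, throws away the $-\cpl\sum_x t_x^2$ term (only using $Z_{\V t+\V s}\le e^{-\nu\sum s_x}Z_{\V t}$), and then tries to compensate by the exponential tail of $\TL$. You correctly notice this only works when $|\nu_0|+1$ is smaller than the spectral-gap rate of $\Lambda$, which is not guaranteed, and your fallback (``restrict attention to the regime where the integral converges'') does not resolve it: finiteness of $\GrL_{\V 0}$ \emph{at} $\nu_c$ is a statement about one value of $\nu$ and does not supply the $\nu$-independent integrable dominating function required for DCT. (One could repair your approach by dominating the integrand for all $\nu\in[\nu_c-1,\nu_c+1]$ by the integrand at $\nu_c-1$ via the monotonicity in $\nu$, and then invoking finiteness of the finite-volume Green's function there — but that finiteness is again only known \emph{because} of the uniform bound on $Z_{\V t}$, so one cannot avoid the observation you omitted.) Finally, your concern that $Z_{\V t}$ be bounded away from zero for the $\pf$ model is moot here: the denominator in $\rrptn_{\V 0,\V s}=Z_{\V s}/Z_{\V 0}$ is the fixed positive constant $Z_{\V 0}$, so only an upper bound on the numerator is needed.
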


\begin{proof}
  For the Edwards model, observe from \eqref{eq:defSAW} that
  $Z_{\V{t}}$ is continuous in $\nu$ pointwise in $\V{t}$ and
  uniformly bounded in $\V{t}$ for each $\nu$.  By dominated
  convergence it follows from \eqref{e:GF} that the \emph{finite}
  volume $\GrL _{\V{0}}(0,x)$ is continuous in $\nu$ at $\nu_{c}$. A
  similar argument applies to the $\pf$ model.
\end{proof}

\begin{proof}[Proof of \Cref{thm:Main-Pre-models}]

  For future reference, we note that the remainder of this proof
  deduces the desired \eqref{eq:Main-Pre00} from \eqref{eq:Main-Pre1},
  and (i) $\GrL$ is monotone in $\Lambda $, and (ii) the \emph{finite
    volume} $\GrL _{\V{0}}(0,x)$ is continuous in $\nu$ at $\nu_{c}$.
  Claim (i) is \Cref{lem:Z-1-earlier}. Claim (ii) is \Cref{lem:Z5}.

  By (i) and \eqref{eq:Main-Pre1}, for $\nu >\nu_{c}$,
  \begin{equation}
    \GrL _{\V{0}}(0,x)
    \le
    G_{\cpl,\nu}(x)
    \le
    2 \greens (x)
  \end{equation}
  By (ii) $\GrL _{\V{0}}(0,x)$ is bounded above by $2 \greens (x)$
  when $\nu =\nu_{c}$. By taking the infinite volume limit with
  $\nu =\nu_{c}$ we obtain $G_{\cpl,\nu_{c}}(x) \le 2\greens (x)$.
\end{proof}

\begin{remark}
  \label{rem:dep}
  By reviewing \Cref{sec:proof-prop:infrared} we find that the proof
  of Proposition \ref{prop:infrared} has been reduced to
  \Cref{lem:Z-1-earlier,lem:Z-1-earlier-2,lem:BHK-1-1*,lem:BHK-1-2*,lem:BHK-2,lem:BS,lem:large-nu,lem:Fcontinuous}. By
  reviewing \Cref{sec:proof-thm:Main-Pre-models} we find that the
  proof of Theorem \ref{thm:Main-Pre-models} requires
  \Cref{lem:new-nucfin,lem:Z5} in addition. The proof of \Cref{lem:Z5}
  is valid for any $Z_{\V{t}}$ that is continuous in $\nu$ pointwise
  in $\V{t}$ and uniformly bounded in $\V{t}$ for each~$\nu$.  We
  classify \Cref{lem:BHK-1-1*,lem:BHK-1-2*,lem:BHK-2,lem:Fcontinuous}
  as \emph{model-independent}: although the hypotheses of
  \Cref{lem:BHK-1-1*,lem:BHK-1-2*} mention our specific models, they
  apply, with understood variations in the $\mnorm{x}^{-3 (d-2)}$
  decay, to all models with convergent lace expansions. We classify
  \Cref{lem:Z-1-earlier,lem:Z-1-earlier-2,lem:large-nu,lem:new-nucfin,lem:Z5}
  as \emph{model dependent}.
\end{remark}

\subsection{Outline of the remainder of the paper}
\label{sec:outl-rema-paper}

Our analysis is done in a general context that includes the Edwards
and the $\pf$ models with $n=1,2$ as special cases. The general
context is a set of hypotheses on the function
$\V{t}\mapsto Z_{\V{t}}$ that enters into the definition \eqref{e:GF}
of the Green's function; see \Cref{sec:final-hypoth-proof} for a full
list of hypotheses. In the course of the paper we introduce these
hypotheses on $Z_{\V{t}}$ as they are needed. In some initial sections
we use hypotheses that will eventually be superseded; these are
indicated by ending in a $0$, e.g., \ref{as:G0} below. We verify that
the Edwards and the $\pf$ models with $n=1,2$ satisfy the hypotheses
in \Cref{sec:verification}.  We have based our proof on hypotheses on
$Z$ to facilitate extending the continuous-time lace expansion to
other models: isolating properties that currently play a role should
help the search for more appealing hypotheses.

In \Cref{sec:laces} and \Cref{sec:LF} we develop a lace expansion for
Green's functions as in \eqref{e:GF} in \emph{finite volumes}
$\Lambda\subset\ZZ^{d}$.  Working in a finite volume is essential, as
we have only defined the $\pf$ model as the infinite volume limit of
finite volume models.

The next part of the paper, Sections~\ref{sec:Pi-DB-outer}
through~\ref{sec:nIVL}, develops estimates on our finite volume lace
expansion, under the hypothesis that the Green's function satisfies a
$3$-IRB. These estimates establish the infinite-volume lace expansion
equation \eqref{eq:Int-Conv} under the general hypotheses on
$Z_{\V{t}}$ and provide the key inputs for the proofs of
\Cref{lem:BHK-1-1*} and \Cref{lem:BHK-1-2*}.

In \Cref{sec:model-indep-lemm} and \ref{sec:model-depend-lemm} we
complete the proofs of the lemmas we have used in the last two
sections, and thus establish the conclusions of
\Cref{thm:Main-Pre-models} for any $Z_{\V{t}}$ satisfying our
hypotheses.  We then make use of this result, in conjunction with a
theorem of Hara~\cite{Hara2008}, to obtain the Gaussian asymptotics of
\Cref{thm:main}.

\section{Functions on a set of intervals}
\label{sec:laces}

In this section we begin to derive the lace expansion needed for step
one of \Cref{sec:step-one}.  The main result of this section is
\Cref{thm:logZ}, which is an expansion for a function
$\rptn\colon\mc{D}\ra \RR$, $(s,t) \mapsto \rptn_{s,t}$, where
\begin{equation}
    \Dcal
    \bydef
    \{(s,t):0 \le s \le  t <\infty\}\subset [0,\infty)^2.
\end{equation}
Here $(s,t)$ denotes an ordered pair, but the same notation will
  be used for an open interval. \Cref{thm:logZ} will be used in the
next section to derive our lace expansion for Green's functions of the
form \eqref{e:GF}.  We begin with notation and minimal assumptions on
$\rptn$ needed for the main result of the section.

For a function $\rptn$ on $\mc{D}$ and $t\in (0,\infty)$, we denote by
$\rptn_{\cdot, t}\colon [0,t]\to \RR$ the function $s\mapsto
\rptn_{s,t}$, and for each $s\in [0,\infty)$, we denote by $\rptn_{s,
\cdot}\colon [s,\infty)\to \RR$ the function $t\mapsto\rptn_{s,t}$.
We will write $\parto$ and $\partt$ to denote partial differentiation
with respect to the first and second coordinates, respectively.

We will assume $\rptn$ satisfies the following assumptions. The
\emph{almost every} (a.e.) statements in the assumptions are with
respect to Lebesgue measure.

\begin{assumptions}
  \label{hyp:rptn}
  \leavevmode
  \begin{enumerate}
  \item $\rptn$ is continuous and strictly positive on $\Dcal$,
    and $\rptn_{s,s}=1$ for all $s\ge 0$.
    \label{hyp:cont1}
  \item For each $t\in (0,\infty)$, $\rptn_{\cdot, t}$ is absolutely
  continuous.  For each  $s\in [0,\infty)$, $\rptn_{s,\cdot}$ is absolutely
  continuous on bounded subintervals of $[s,\infty)$.
  \label{hyp:diff1}
  \item For a.e.\ $t\in (0,\infty)$,
  the function $(\partt \rptn)_{\cdot, t}$ is absolutely continuous on
  $\ob{0,t}$.  For a.e.\ $s\in [0,\infty)$, the function
  $(\parto \rptn)_{s,\cdot}$ is absolutely continuous on bounded
  subintervals of $\ob{s,\infty}$.  
  \label{hyp:diff2}
  \item 
  $\parto\partt\rptn = \partt\parto\rptn$ a.e.\ on the interior of
  $\Dcal$. 
  \label{hyp:diff3}
  \end{enumerate}
\end{assumptions}
We will see in \Cref{sec:prel-local-time} that the absolute continuity
statements are properties of the random function
$(s,t)\mapsto \tau_{[s,t],x}$ defined by \eqref{e:tauI-def} with
$I=[s,t]$ and $x$ fixed.  The derivatives in Item~\ref{hyp:diff2} have
open intervals of definition, while the standard definition of
absolute continuity concerns closed intervals. In this paper we say a
function $f$ defined on a bounded open interval $I\subset\RR$ is
absolutely continuous on $I$ if for $\epsilon>0$ there exists
$\delta>0$ such that for finitely many disjoint subintervals
$(a_{i},b_{i})$ of $I$ with endpoints in $I$,
$\sum |b_{i}-a_{i}|<\delta$ implies
$\sum |f(b_{i})-f(a_{i})|<\epsilon$.  Such functions are uniformly
continuous on bounded intervals and therefore the derivative
$\partt\rptn_{\cdot,t}$ in Item~\ref{hyp:diff2} extends to an
absolutely continuous function on the closed interval $[0,t]$ and
similarly $\parto\rptn_{s,\cdot}$ extends to $[s,\infty)$.  When we
write derivatives on the boundaries of their domains we mean these
extensions by continuity.

\subsection{The expansion}
\label{sec:expansion}

In this section we introduce the objects that enter into our
expansion, and state the expansion in \Cref{thm:logZ} below.

Define \emph{vertex functions} by 
\begin{equation}
\begin{aligned}
  \label{e:vertex-def}
  \vertex{s} &\bydef - \lim_{t\downarrow s} \parto 
  \lgrptn_{s,t} ,
  \qquad 0\le s <\infty,
  \\
  \vertex{s,t} &\bydef -\parto\partt\lgrptn_{s,t} , 
  \qquad 0 \le s < t <\infty.
\end{aligned}
\end{equation} 
These are a.e.\ equalities. By \Cref{hyp:rptn} and the paragraph that
follows them, the chain rule, which applies to a smooth function
composed with an absolutely continuous function, proves these
derivatives exist a.e.\ and provides formulas for them. The limit
defining $\vertex{s}$ exists by the discussion under \Cref{hyp:rptn}.

\begin{figure}
  \centering
  \begin{tikzpicture}[scale=.6]
    \node[dot] (s0) at (0,0) {};
    \node[dot] (s1) at (2,0) {};
    \node[dot] (s2) at (5,0) {};
    \node[dot] (s3) at (8,0) {};
    \node[dot] (s4) at (12,0) {};

    \node[dot] (s1p) at (3.5,0) {};
    \node[dot] (s2p) at (7,0) {};
    \node[dot] (s3p) at (11,0) {};
    \node[dot] (s4p) at (13,0) {};
    \node[dot] (s5p) at (17,0) {};

    \draw[black,thick] (s0) to[out=90, in =180] (2,2) to[out=0,in=90] (s1p);
    \draw[black,thick] (s1) to[out=90, in =180] (5,2) to[out=0,in=90] (s2p);
    \draw[black,thick] (s2) to[out=90, in =180] (8,2) to[out=0,in=90] (s3p);
    \draw[black,thick] (s3) to[out=90, in =180] (11,2) to[out=0,in=90] (s4p);
    \draw[black,thick] (s4) to[out=90, in =180] (14,2) to[out=0,in=90] (s5p);

    \node at (s0) [below]  {$s_{0}=s_{0}'$};
    \node at (s1) [below] {$s_{1}\phantom{'}$};
    \node at (s2) [below] {$s_{2}\phantom{'}$};
    \node at (s3) [below] {$s_{3}\phantom{'}$};
    \node at (s4) [below] {$s_{4}\phantom{'}$};

    \node at (s1p) [below] {$s_{1}'$};
    \node at (s2p) [below] {$s_{2}'$};
    \node at (s3p) [below] {$s_{3}'$};
    \node at (s4p) [below] {$s_{4}'$};
    \node at (s5p) [below] {$s_{5}=s_{5}'$};
  \end{tikzpicture}
  \caption{A lace with $m=5$ intervals}
  \label{fig:six-lace}
\end{figure}

For $m\in\NN$ a \emph{lace} $\lace$ is a sequence 
\begin{equation}
  L=\ob{(s_i,s'_{i+1})}_{i=0,\dots, m-1}
\end{equation}
of $m$ open intervals $(s_i,s'_{i+1})$ with $s_0 \bydef s_0'$,
$s_m\bydef s'_{m}$, and
\begin{equation}
    0 \leq s_{0}' < s_{1} < s_{1}' < s_{2} < s_{2}' < \dotsb < s_{m-1}' < s_{m}<\infty.
    \label{e:lace-def}
\end{equation}
The meaning of \eqref{e:lace-def} is illustrated by
Figure~\ref{fig:six-lace}.  The union of all of the intervals of a
lace is $(s_{0},s_{m}')$, and if any single interval is excluded from
the union the resulting set does not cover $(s_{0},s_{m}')$.

Let $\laces_{m}$ be the region in $\RR^{2m}$ defined by the
inequalities in \eqref{e:lace-def}. We identify $\laces_{m}$ with the
collection of all laces containing $m$ intervals. Let
\begin{equation}
   \laces_{0}
    \bydef
    \{s_{0}\mid\,0\leq s_{0}<\infty \}. 
\end{equation}

We associate to a lace $\lace$ a product
\begin{equation}
    \label{e:vertex-lace-def}
    \vertex{}(\lace)
    \bydef
    \begin{cases}
    \vertex{s_{0}},& \text{ if }\lace=\{s_0\} \in \laces_{0},\\
    \prod_{i=0}^{m-1}\vertex{s_i,s'_{i+1}},	
    & \text{ if }\lace=\ob{(s_i,s'_{i+1})}_{i=0,\dots, m-1} \in \laces_{m}, \ m\geq 1,
    \end{cases}
\end{equation}
of vertex functions.  The \emph{weight} $L\mapsto \weight (\lace)$ of a lace
is defined to be
\begin{equation}
    \label{e:weight}
    \weight (\lace) 
    \bydef
    \vertex{}(\lace)\times
    \begin{cases}
    1, & \lace \in \laces_{0} ,
    \\
    \tp (\lace) ,
    &\lace \in \laces_{m}, \ m \ge 1 ,
    \end{cases}
\end{equation}
where for $\lace=\ob{(s_i,s'_{i+1})}_{i=0,\dots, m-1}$ 
\begin{equation}
    \label{e:Pweight}
    \tp (\lace)
    \bydef
    \rptn_{s_{0}',s_{1}'}\, 
    \prod_{i=0}^{m-2}
    \frac{\rptn_{s_{i}',s_{i+2}'}}{\rptn_{s_{i}',s_{i+1}'}} .
\end{equation}
For $m=1$ the empty product in \eqref{e:Pweight} is defined to be one
by convention.

For $m \ge 0$ we define integration $\int_{\laces_{m}} d\lace$ over
$\laces_{m}$ to be integration with respect to Lebesgue measure on
$\laces_{m}$.  For example, if $m=0$ then $\int_{\laces_{m}}
d\lace = \int_{[0,\infty)}ds_{0}$.  Let $\laces_{m,\ell}$ be the
subset of $\laces_{m}$ such that $s_{m}\le \ell$, and let
$\Dcal_{\ell}$ be the subset of $\Dcal$ with $t\leq \ell$.

\begin{theorem}
\label{thm:logZ} Let $\rptn$ be such that (i) $\rptn$ satisfies
Assumption~\ref{hyp:rptn}, (ii) the function $\vertex{s,t}$ defined in
\eqref{e:vertex-def} is Lebesgue a.e.\ bounded on $\Dcal_{\ell}$.
Then for $\ell >0$
\begin{equation}
    \label{e:thm:logZ}
    \rptn_{0,\ell}
    =
    1
    +
    \sum_{m\ge 0}
    \int_{\laces_{m,\ell}} d\lace\;
    \weight (\lace)\;
    \rptn_{s_{m}',\ell}\;, 
\end{equation}
and the sum is absolutely convergent.
\end{theorem}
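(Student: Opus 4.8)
The plan is to separate the analytic content — a Fubini/fundamental-theorem-of-calculus identity expressing $\partt\lgrptn$ through the vertex functions — from the combinatorial content, the reorganisation of an iterated expansion into the sum over laces in the spirit of Brydges--Spencer. First I would use Assumption~\ref{hyp:rptn} (absolute continuity), the fundamental theorem of calculus in each variable, and the definitions \eqref{e:vertex-def} of the vertex functions to establish the closed form
\[
  \lgrptn_{a,\ell}=\int_a^\ell\vertex{t}\,dt+\iint_{a\le s<t\le\ell}\vertex{s,t}\,ds\,dt ,
  \qquad 0\le a\le\ell ,
\]
where the diagonal boundary terms are handled using $\rptn_{s,s}=1$ and the continuous extensions of the partial derivatives discussed after Assumption~\ref{hyp:rptn}. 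Together with hypothesis (ii) and the finiteness and strict positivity of $\rptn$ this also shows $\vertex{\cdot}\in L^1_{\mathrm{loc}}$ and $\vertex{\cdot,\cdot}\in L^1(\Dcal_\ell)$. Subtracting two instances of the closed form yields the factorisation
\[
  \rptn_{0,\ell}=\rptn_{0,u}\,\rptn_{u,\ell}\,\exp\Big(\iint_{s\in[0,u],\,t\in(u,\ell]}\vertex{s,t}\,ds\,dt\Big),\qquad 0\le u\le\ell ,
\]
which I will call $(\dagger)$; and differentiating $u\mapsto\rptn_{u,\ell}$ and using $\rptn_{\ell,\ell}=1$ yields
\[
  \rptn_{0,\ell}=1+\int_0^\ell\rptn_{u,\ell}\Big(\vertex{u}+\int_u^\ell\vertex{u,t}\,dt\Big)\,du ,
\]
which I will call $(\heartsuit)$.

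Identity $(\heartsuit)$ already accounts for the $m=0$ lace contribution $\int_0^\ell\vertex{s_0}\,\rptn_{s_0,\ell}\,ds_0$, so it remains to resolve $\iint_{0\le u<t\le\ell}\vertex{u,t}\,\rptn_{u,\ell}\,du\,dt$ into the sum of the contributions with $m\ge1$. I would do this by iteration. In the integrand rewrite $\rptn_{u,\ell}$ via $(\dagger)$ with split point $t$, so that $(s_0,s_1')=(u,t)$ becomes the first interval of the lace, and expand the interface exponential $\exp\big(\iint_{s\in[u,t],\,v\in(t,\ell]}\vertex{s,v}\,ds\,dv\big)$; its leading term contributes exactly the $m=1$ weight $\vertex{s_0,s_1'}\rptn_{s_0,s_1'}$ with tail $\rptn_{s_1',\ell}$. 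Every further bond extracted from the exponential, organised by the value of its right endpoint, either is \emph{compatible} with the partial lace — in which case resumming it contributes a factor $\rptn_{s_i',s_{i+2}'}/\rptn_{s_i',s_{i+1}'}$, which is exactly how the ratios in $\Pweight$ of \eqref{e:Pweight} arise — or its right endpoint becomes the next lace endpoint $s_{i+1}'$, whereupon one re-decomposes the current tail by $(\dagger)$ and repeats. After $N$ rounds this gives
\[
  \rptn_{0,\ell}=1+\sum_{m=0}^{N-1}\int_{\laces_{m,\ell}}d\lace\;\weight(\lace)\,\rptn_{s_m',\ell}+R_N ,
\]
with $R_N$ collecting the configurations whose lace has at least $N$ intervals.

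The last step is a routine estimate that simultaneously gives absolute convergence and $R_N\to0$. Since $\rptn$ is continuous and strictly positive on the compact set $\Dcal_\ell=\{0\le s\le t\le\ell\}$ it is bounded there above by some $\bar{\rptn}$ and below by some $\underline{\rptn}>0$, so every ratio in $\Pweight(\lace)$ is at most $\bar{\rptn}/\underline{\rptn}$; with $M$ the essential supremum of $\abs{\vertex{s,t}}$ over $\Dcal_\ell$ (finite by hypothesis (ii)) and $\mathrm{vol}(\laces_{m,\ell})=\ell^{2m}/(2m)!$ one obtains, for $m\ge1$,
\[
  \int_{\laces_{m,\ell}}\abs{\weight(\lace)\,\rptn_{s_m',\ell}}\,d\lace\le \bar{\rptn}\,\underline{\rptn}\,\frac{(M\bar{\rptn}\ell^{2}/\underline{\rptn})^{m}}{(2m)!},
\]
which is summable, while the $m=0$ term is at most $\bar{\rptn}\,\|\vertex{\cdot}\|_{L^1([0,\ell])}<\infty$. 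I expect the main obstacle to be the combinatorial bookkeeping in the iteration — verifying that the compatible bonds extracted from the successive interface exponentials resum exactly to the ratio product $\Pweight(\lace)$ of \eqref{e:Pweight}, the continuum analogue of the Brydges--Spencer lace lemma — together with the measure-theoretic care required because the hypotheses provide only absolute continuity, so the derivative manipulations hold only a.e.\ and the interchange of $\lim_N$ with the integrals must be controlled by the convergence estimate.
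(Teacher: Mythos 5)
Your opening and closing moves are right and coincide with the paper's: the closed form $\lgrptn_{a,\ell}=\int_{a}^{\ell}\vertex{u}\,du+\iint_{a\le s<t\le\ell}\vertex{s,t}\,ds\,dt$, the identity $(\heartsuit)$ (which is exactly Lemma~\ref{lem:first-step}), and the convergence estimate via $\mathrm{vol}(\laces_{m,\ell})=\ell^{2m}/(2m)!$, the two-sided bound on $\rptn$ over the compact set $\Dcal_{\ell}$, and the a.e.\ bound $M$ on $\vertex{s,t}$. All of that is correct.

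The gap is the iteration. You propose to use the multiplicative factorisation $(\dagger)$, expand the interface exponential, and resum ``compatible'' extracted bonds into the ratios $\rptn_{s_i',s_{i+2}'}/\rptn_{s_i',s_{i+1}'}$ of $\Pweight$. You flag this yourself as the main obstacle, and it genuinely is: this step is the entire content of the theorem, and as written it is not a proof but a promissory note. To make it one you would need, in the continuum setting, a precise notion of compatibility of an extracted bond with a partial lace, a proof that the compatible bonds resum to exactly the $\Pweight$ ratio (note from your own closed form that that ratio also absorbs the single-variable vertex contributions $\vertex{\cdot}$ over $(s_{i+1}',s_{i+2}']$, so the bookkeeping has to reconcile vertex terms and bond terms), and careful tracking of the tail $\rptn_{s_k',\ell}$ that is repeatedly re-decomposed by $(\dagger)$. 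None of this is done, and it is not routine.

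The paper sidesteps the resummation entirely. The engine of the iteration is not $(\dagger)$ but the \emph{additive} identity of Lemma~\ref{lem:step}: for $(u,v)\in\Dcal_{\ell}$,
\begin{equation*}
  \rptn_{u,\ell}
  =
  \rptn_{u,v}\,\rptn_{v,\ell}
  +
  \int_{(u,v)}ds_{+}\int_{(v,\ell)}ds_{+}'\;
  \vertex{s_{+},s_{+}'}\;\rptn_{u,s_{+}'}\;
  \frac{\rptn_{v,\ell}}{\rptn_{v,s_{+}'}}\, .
\end{equation*}
This is obtained by one more application of the fundamental theorem of calculus — the same technique you used to derive $(\heartsuit)$ — applied to $t\mapsto\rptn_{u,t}/\rptn_{v,t}$, using $\rptn_{v,v}=1$, the absolute continuity of $\rptn_{v,\cdot}^{-1}$ (which you already argued for), the chain rule, and the commutation $\parto\partt=\partt\parto$ from Assumption~\ref{hyp:rptn}. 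Substituting this into the remainder $R_{n}$ of \eqref{e:Rn-def} with $(u,v)=(s_{n-1}',s_{n}')$, the first term produces the $m=n$ lace contribution and the correction term produces the new $\Pweight$ ratio $\rptn_{s_{n-1}',s_{n+1}'}/\rptn_{s_{n-1}',s_{n}'}$ and the new tail $\rptn_{s_{n}',\ell}/\rptn_{s_{n}',s_{n+1}'}$ directly, giving $R_n = \int_{\laces_{n,\ell}}\weight(\lace)\rptn_{s_n',\ell}\,d\lace + R_{n+1}$ by pure bookkeeping, with no exponential, no compatibility, and no resummation. Your estimate then proves $R_{N}\to 0$ exactly as in the paper. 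I recommend replacing the Brydges--Spencer-style resummation plan with this one extra FTC step: you already possess all the required measure-theoretic ingredients.
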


The proof of this theorem is given in \Cref{proof-thm:logZ-completed}
and is based on the two identities given in
\Cref{lem:first-step,lem:step}.  We will need the following fact from
real analysis, whose proof we omit.

\begin{lemma}
  \label{lem:Lip-AC}
  Let $I\subset \RR$ be bounded. If $f\colon I \to \RR$ is Lipschitz
  continuous and $g\colon I\to I$ is absolutely continuous, then
  $f\circ g$ is absolutely continuous. 
\end{lemma}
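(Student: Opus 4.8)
The statement is elementary, and the plan is to verify the $\epsilon$--$\delta$ definition of absolute continuity for $f\circ g$ directly, using the Lipschitz property of $f$ to convert the absolute-continuity control on $g$ into one on $f\circ g$. First I would fix a Lipschitz constant $K$ for $f$ on $I$, so that $|f(x)-f(y)|\le K|x-y|$ for all $x,y\in I$; the degenerate case $K=0$ is handled separately and trivially (then $f$, and hence $f\circ g$, is constant), so assume $K>0$. Given $\epsilon>0$, apply the absolute continuity of $g$ with tolerance $\epsilon/K$ to obtain $\delta>0$ such that for every finite family of pairwise disjoint subintervals $(a_1,b_1),\dots,(a_n,b_n)$ of $I$ with endpoints in $I$ and $\sum_{i=1}^n (b_i-a_i)<\delta$ one has $\sum_{i=1}^n|g(b_i)-g(a_i)|<\epsilon/K$. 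Since $g$ maps $I$ into $I$, the points $g(a_i)$ and $g(b_i)$ lie in $I$, so the Lipschitz bound may be applied to each term, giving
\[
  \sum_{i=1}^n |f(g(b_i))-f(g(a_i))|
  \;\le\; K\sum_{i=1}^n |g(b_i)-g(a_i)|
  \;<\;\epsilon .
\]
Hence the same $\delta$ witnesses the absolute continuity of $f\circ g$ on $I$, completing the verification.

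I do not expect any step to present a genuine obstacle: this is simply the standard fact that a Lipschitz function composed with an absolutely continuous function is absolutely continuous. The only points meriting a line of care are that the case $K=0$ must be separated off before one divides by $K$, and that ``absolutely continuous'' should be read in the sense fixed in \Cref{sec:laces} (the paragraph following \Cref{hyp:rptn}), i.e.\ via subintervals with endpoints in $I$; the argument above uses only the values of $g$ at the endpoints $a_i,b_i$, so it is insensitive to whether $I$ is open, closed, or half-open, and the boundedness of $I$ is not actually invoked.
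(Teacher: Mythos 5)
Your proof is correct, and since the paper explicitly omits the proof of \Cref{lem:Lip-AC}, there is no competing argument to compare against; what you supply is exactly the natural $\epsilon$--$\delta$ verification one would expect. The only things worth noting are the ones you already flagged: separating the degenerate case $K=0$, reading ``absolutely continuous'' in the sense fixed after \Cref{hyp:rptn} (subintervals with endpoints in $I$), and the observation that boundedness of $I$ plays no role in the argument.
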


\subsection{The identity that starts the expansion}
\label{sec:first-step}

\begin{lemma}\label{lem:first-step} Under Assumptions~\ref{hyp:rptn},
\begin{equation}
    \rptn_{0,\ell}
    =
    1
    +
    \int_{(0,\ell)} ds_{0}\;
    \vertex{s_{0}}\;
    \rptn_{s_{0},\ell}
    +
    \int_{(0,\ell)} ds_{0}\;
    \int_{(s_{0},\ell)} d s_{1}'\;
    \vertex{s_{0},s_{1}'}\;
    \rptn_{s_{0},\ell} .
\end{equation}
\end{lemma}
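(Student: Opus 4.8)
The plan is to prove the identity by integrating the fundamental theorem of calculus (FTC) twice, once in each variable, using the absolute continuity hypotheses in \Cref{hyp:rptn} to justify each application. Recall that $\rptn_{s,s}=1$, so $\rptn_{0,\ell} - 1 = \rptn_{0,\ell} - \rptn_{\ell,\ell}$. The natural first move is to view this as the increment of the function $s_0 \mapsto \rptn_{s_0,\ell}$ over $[0,\ell]$, but read backwards: since $\rptn_{\cdot,\ell}$ is absolutely continuous on $[0,\ell]$ by \Cref{hyp:diff1}, FTC gives
\begin{equation}
  \rptn_{0,\ell} - 1
  =
  \rptn_{0,\ell} - \rptn_{\ell,\ell}
  =
  - \int_{(0,\ell)} ds_0\; (\parto\rptn)_{s_0,\ell}.
\end{equation}
The aim is now to express $-(\parto\rptn)_{s_0,\ell}$ in a way that produces the two terms on the right-hand side. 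Write $-(\parto\rptn)_{s_0,\ell} = \vertex{s_0}\rptn_{s_0,\ell} + \big[-(\parto\rptn)_{s_0,\ell} - \vertex{s_0}\rptn_{s_0,\ell}\big]$. The first piece, after integration against $ds_0$, is exactly the second term in the claimed identity. It remains to identify the bracketed correction term with $\int_{(s_0,\ell)} ds_1'\, \vertex{s_0,s_1'}\rptn_{s_0,\ell}$ — wait, that cannot be right dimensionally, so the correct bookkeeping is that the bracket should be an integral over $s_1'$ of something whose $s_1' \downarrow s_0$ limit recovers the defect.

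More precisely, fix $s_0$ and consider the function $t \mapsto -(\parto\rptn)_{s_0,t}$ on $(s_0,\ell]$. By \Cref{hyp:diff2} this function is absolutely continuous on bounded subintervals of $(s_0,\infty)$, hence extends continuously to $t=s_0$ where its value is $\vertex{s_0}$ by the definition \eqref{e:vertex-def}. Therefore, by FTC in the $t$ variable,
\begin{equation}
  -(\parto\rptn)_{s_0,\ell}
  =
  \vertex{s_0}
  +
  \int_{(s_0,\ell)} ds_1'\; \partial_{s_1'}\!\big(-(\parto\rptn)_{s_0,s_1'}\big)
  =
  \vertex{s_0}
  -
  \int_{(s_0,\ell)} ds_1'\; (\parto\partt\rptn)_{s_0,s_1'} .
\end{equation}
Now I would substitute the identity $-(\parto\partt\rptn)_{s_0,s_1'} = \vertex{s_0,s_1'}\rptn_{s_0,s_1'}$ coming from the chain rule applied to $\log\rptn$ (as in the discussion following \eqref{e:vertex-def}: $-\parto\partt\log\rptn_{s,t} = -\parto\partt\rptn_{s,t}/\rptn_{s,t} + (\parto\rptn_{s,t})(\partt\rptn_{s,t})/\rptn_{s,t}^2$, so $\vertex{s,t}\rptn_{s,t} = -\parto\partt\rptn_{s,t} + (\parto\log\rptn_{s,t})(\partt\rptn_{s,t})$), and here I must be careful: the clean relation is not quite $-\parto\partt\rptn = \vertex{s,t}\rptn$. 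So the actual substitution must go through $\log\rptn$, whose second mixed derivative is $-\vertex{s,t}$, and the FTC step should be applied to $t\mapsto -\parto\log\rptn_{s_0,t}$ rather than to $t\mapsto -\parto\rptn_{s_0,t}$; this requires knowing $\parto\log\rptn_{s_0,\cdot}$ is absolutely continuous, which follows from \Cref{hyp:diff2} and \Cref{lem:Lip-AC} together with positivity of $\rptn$. Plugging back and multiplying through by $\rptn_{s_0,\ell}$ — no: I would instead carry out the first FTC step on $s_0 \mapsto \log\rptn_{s_0,\ell}$ is wrong too. Let me restate the clean route: apply the second-variable FTC to get $-\parto\log\rptn_{s_0,\ell} = \vertex{s_0} + \int_{(s_0,\ell)} ds_1'\, \vertex{s_0,s_1'}$, then multiply by $-\rptn_{s_0,\ell}$ to get $\parto\rptn_{s_0,\ell} = -\rptn_{s_0,\ell}\vertex{s_0} - \rptn_{s_0,\ell}\int_{(s_0,\ell)} ds_1'\, \vertex{s_0,s_1'}$, and finally integrate in $s_0$ over $(0,\ell)$ using the first FTC step above. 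This yields exactly the claimed three-term identity.

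The main obstacle is the rigorous justification of the two applications of the fundamental theorem of calculus and of the chain-rule formula for the derivatives of $\log\rptn$, all under the mere \emph{absolute continuity} hypotheses of \Cref{hyp:rptn} rather than differentiability everywhere. The chain rule for $\log$ composed with an absolutely continuous, strictly positive function is supplied by \Cref{lem:Lip-AC} (since $\log$ is Lipschitz on compact subsets of $(0,\infty)$ and $\rptn$ is continuous and strictly positive on the compact set $\Dcal_\ell$, hence bounded away from $0$ there), which lets me pass freely between derivatives of $\rptn$ and of $\log\rptn$. The interchange $\parto\partt = \partt\parto$ needed to match the mixed partial is \Cref{hyp:diff3}. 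Fubini's theorem is used implicitly when reorganizing the iterated integrals, and is justified because $\vertex{s,t}$ is a.e.\ bounded on $\Dcal_\ell$ (this is not yet assumed in \Cref{lem:first-step}, so in fact the boundedness is not needed here — the iterated integral is written with the inner integral performed first, so no Fubini is required; the finiteness is guaranteed by absolute continuity of the relevant one-variable sections). One must also check the limit $\lim_{t\downarrow s_0}(-\parto\log\rptn_{s_0,t}) = \vertex{s_0}$ is the genuine value of the continuous extension, which is exactly the content of the paragraph following \Cref{hyp:rptn} together with \eqref{e:vertex-def}. Assembling these pieces in the order above gives the result with no further input.
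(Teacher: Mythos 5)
Your ``clean route'' at the end of the proposal — first-variable FTC to reduce $\rptn_{0,\ell}-1$ to an integral of $\parto\rptn_{s_0,\ell}$, then rewriting $\parto\rptn = \rptn\,\parto\log\rptn$ and applying second-variable FTC to $\parto\log\rptn_{s_0,\cdot}$ to produce $\vertex{s_0}$ and the inner integral of $\vertex{s_0,s_1'}$, with \Cref{hyp:diff3} supplying the interchange of mixed partials — is exactly the paper's proof. The earlier false starts are correctly abandoned, and your observation that no Fubini step and no a.e.\ boundedness of $\vertex{s,t}$ are needed at this stage is accurate.
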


\begin{proof} By Assumption~\ref{hyp:rptn}(\ref{hyp:diff1}),
\begin{equation}
  \label{e:lem:first-step05}
    \rptn_{0,\ell}
    =
    \rptn_{\ell,\ell}
    -
    \int_{(0,\ell)} ds_{0}\;
    \parto\rptn_{s_{0},\ell}\; .
\end{equation}
$\rptn$ is bounded below by a positive constant on $\Dcal_{\ell}$ by
Assumption~\ref{hyp:rptn}(\ref{hyp:cont1}) and the compactness of
$\Dcal_{\ell}$.  Hence, by $\rptn_{\ell,\ell}=1$,
\eqref{e:lem:first-step05} can be rewritten as
\begin{equation}
    \label{e:lem:first-step1}
    \rptn_{0,\ell}
    =
    1
    -
    \int_{(0,\ell)} ds_{0}\;
    \frac{\parto\rptn_{s_{0},\ell}}{\rptn_{s_{0},\ell}}\;
    \rptn_{s_{0},\ell} .
\end{equation}

By Assumption~\ref{hyp:rptn}(\ref{hyp:cont1}) the range of
$t\mapsto\rptn_{s_{0},t}$ for $t \in [s_{0},\ell]$ is the continuous
image of a compact set, and the range does not contain zero.  As
$z \mapsto 1/z$ is Lipschitz on compact subsets of $(0,\infty)$, it
follows from \Cref{lem:Lip-AC} that $t \mapsto \rptn_{s_{0},t}^{-1}$
is absolutely continuous in $t$ for $t$ in bounded subintervals of
$(s_{0},\infty)$.  Combined with
Assumption~\ref{hyp:rptn}(\ref{hyp:diff2}) this shows
$\rptn_{s_{0},t}^{-1}\, \parto\rptn_{s_{0},t}$ is absolutely
continuous in $t\in (s_{0},\ell)$ for a.e.\ $s_{0}$, hence for a.e.\
$s_{0}>0$
\begin{equation}
  \label{e:lem:first-step-2}
    \parto\log\rptn_{s_{0},\ell}
    =
    (\parto\log\rptn)_{s_{0},s_{0}}
    +
    \int_{(s_{0},\ell)} ds_{1}'\;
    \partt \parto\log \rptn_{s_{0},s_{1}'}.
\end{equation}
In this equation $(\parto\log\rptn)_{s_{0},s_{0}}$ is by definition
the continuous extension of \\
$(\parto\log\rptn)_{s_{0},t} = \rptn_{s_{0},t}^{-1}\,
\parto\rptn_{s_{0},t}$ as a function of $t>0$ to the boundary
$t=s_{0}$ of its domain. This is an instance of the convention we
declared below Assumption~\ref{hyp:rptn}.  By inserting
\eqref{e:lem:first-step-2} into \eqref{e:lem:first-step1} we obtain
\begin{align}
    \rptn_{0,\ell}
    &=
      1
    - \;
    \int_{(0,\ell)} ds_{0}\;
     \rptn_{s_{0},\ell}\; (\parto\log\rptn)_{s_{0},s_{0}}
    \nonumber\\
    &\phantom{=
    \rptn_{\ell,\ell} } \;  - \;
    \int_{(0,\ell)} ds_{0}\;
    \int_{(s_{0},\ell)} ds_{1}'\; \rptn_{s_{0},\ell}\; 
    \partt \parto\log \rptn_{s_{0},s_{1}'}.
\end{align}
The proof is completed by substituting in the definitions of
$\vertex{s_{0}}$ and $\vertex{s_{0},s_{1}'}$. For the last term the
interchange of derivatives is justified by
Assumption~\ref{hyp:rptn}(\ref{hyp:diff3}).
\end{proof}

\subsection{The identity that generates the expansion}
\label{sec:lem:step}

\begin{lemma}
  \label{lem:step}
  Suppose $\rptn$ satisfies Assumptions~\ref{hyp:rptn}.
  If the point $(u,v)\in\Dcal_{\ell}$ then
\begin{equation}
\label{e:lem:step1}
    \rptn_{u,\ell}
    =
    \rptn_{u,v}\;
    \rptn_{v,\ell}\;
    +
    \int_{(u,v)} ds_{+}\;
    \int_{(v,\ell)} ds_{+}'\;
    \vertex{s_{+},s_{+}'}\;
    \rptn_{u,s_{+}'}\;
    \frac{\rptn_{v,\ell}}{\rptn_{v,s_{+}'}} .
\end{equation}
\end{lemma}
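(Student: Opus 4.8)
The plan is to prove \eqref{e:lem:step1} by a double application of the fundamental theorem of calculus for absolutely continuous functions, in the spirit of the proof of \Cref{lem:first-step}: one recognises $\rptn_{u,\ell}-\rptn_{u,v}\rptn_{v,\ell}$ as the increment across $[v,\ell]$ of a suitable absolutely continuous auxiliary function, differentiates that function, and then rewrites the resulting first-order term by a further fundamental-theorem-of-calculus step in the \emph{other} variable, which is where the vertex functions $\vertex{s,t}=-\parto\partt\lgrptn_{s,t}$ of \eqref{e:vertex-def} enter.

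Concretely, I would first dispose of the degenerate cases: if $u=v$ both sides of \eqref{e:lem:step1} equal $\rptn_{v,\ell}$ (the double integral being empty), and if $v=\ell$ both sides equal $\rptn_{u,v}$; so we may assume $0\le u<v<\ell<\infty$. Define $G\colon[v,\ell]\to\RR$ by $G(w)\bydef\rptn_{u,w}\,\rptn_{v,\ell}\,\rptn_{v,w}^{-1}$. Since $\rptn_{v,v}=1$ by Assumption~\ref{hyp:rptn}(\ref{hyp:cont1}), we have $G(v)=\rptn_{u,v}\rptn_{v,\ell}$ and $G(\ell)=\rptn_{u,\ell}$. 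The function $G$ is absolutely continuous on $[v,\ell]$: $\rptn_{u,\cdot}$ and $\rptn_{v,\cdot}$ are absolutely continuous on this bounded interval by Assumption~\ref{hyp:rptn}(\ref{hyp:diff1}); $\rptn_{v,\cdot}$ is bounded below by a positive constant on the compact set $[v,\ell]$ by Assumption~\ref{hyp:rptn}(\ref{hyp:cont1}), so $\rptn_{v,\cdot}^{-1}$ is absolutely continuous there by \Cref{lem:Lip-AC} ($z\mapsto z^{-1}$ being Lipschitz on the compact range of $\rptn_{v,\cdot}$); and a product of bounded absolutely continuous functions on a bounded interval is absolutely continuous. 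Hence the fundamental theorem of calculus gives
\[
  \rptn_{u,\ell}-\rptn_{u,v}\rptn_{v,\ell}=G(\ell)-G(v)=\int_{(v,\ell)}G'(w)\,dw .
\]

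For the derivative, the product and chain rules give, for a.e.\ $w$, $G'(w)=\rptn_{v,\ell}\,\tfrac{\rptn_{u,w}}{\rptn_{v,w}}\,\big(\partt\lgrptn_{u,w}-\partt\lgrptn_{v,w}\big)$, and it remains to turn the bracket into an integral of the vertex function. For a.e.\ $w$ the map $s\mapsto\partt\lgrptn_{s,w}=\rptn_{s,w}^{-1}\partt\rptn_{s,w}$ is absolutely continuous on $(0,w)$: by Assumption~\ref{hyp:rptn}(\ref{hyp:diff2}) the function $(\partt\rptn)_{\cdot,w}$ is absolutely continuous on $(0,w)$ for a.e.\ $w$, while $\rptn_{\cdot,w}^{-1}$ is absolutely continuous and bounded on $[0,w]$ by Assumption~\ref{hyp:rptn}(\ref{hyp:cont1}) and (\ref{hyp:diff1}) together with \Cref{lem:Lip-AC}. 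Since $0\le u<v<w$, the fundamental theorem of calculus in the first variable gives $\partt\lgrptn_{u,w}-\partt\lgrptn_{v,w}=-\int_{(u,v)}\parto\partt\lgrptn_{s,w}\,ds=\int_{(u,v)}\vertex{s,w}\,ds$, the last equality being the definition \eqref{e:vertex-def} of $\vertex{s,w}$. Substituting back,
\[
  \rptn_{u,\ell}-\rptn_{u,v}\rptn_{v,\ell}=\int_{(v,\ell)}dw\;\rptn_{v,\ell}\,\frac{\rptn_{u,w}}{\rptn_{v,w}}\int_{(u,v)}ds\;\vertex{s,w} ,
\]
and \eqref{e:lem:step1} follows on renaming $(s,w)=(s_{+},s_{+}')$ and interchanging the two integrations.

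I expect the interchange to be the only step needing real care: the computation above naturally produces the iterated integral with $ds_{+}'$ outermost, whereas \eqref{e:lem:step1} has $ds_{+}$ outermost, so I would invoke Fubini--Tonelli on the bounded rectangle $(u,v)\times(v,\ell)$. The $\rptn$-factors are continuous, hence bounded, on this compact set, so the only genuine requirement is integrability of $\vertex$ there; this is immediate whenever $\vertex$ is a.e.\ bounded on $\Dcal_{\ell}$ --- which is precisely the setting (hypothesis (ii) of \Cref{thm:logZ}) in which \Cref{lem:step} is applied --- and it holds more generally because $\partt\parto\lgrptn_{s,\cdot}$ is locally integrable for a.e.\ $s$ by Assumption~\ref{hyp:rptn}(\ref{hyp:diff2}) and agrees a.e.\ with $-\vertex{s,\cdot}$ by Assumption~\ref{hyp:rptn}(\ref{hyp:diff3}). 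A secondary, routine matter is the passage from the stated hypotheses on $\rptn$ to the corresponding facts for $\lgrptn$, in particular $\parto\partt\lgrptn=\partt\parto\lgrptn$ a.e., which follows from Assumption~\ref{hyp:rptn}(\ref{hyp:diff3}) by expanding both orders with the product rule; this amounts to keeping track of which Lebesgue-null sets are being discarded.
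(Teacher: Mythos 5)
Your proposal is correct and follows essentially the same route as the paper's own proof: both decompose $\rptn_{u,\ell}-\rptn_{u,v}\rptn_{v,\ell}$ by applying the fundamental theorem of calculus in the second variable to the quotient $\rptn_{u,\cdot}/\rptn_{v,\cdot}$ (you wrap the constant $\rptn_{v,\ell}$ into your auxiliary $G$, the paper factors it outside; this is purely cosmetic), then a second application of the fundamental theorem of calculus in the first variable turns $\partt\lgrptn_{u,w}-\partt\lgrptn_{v,w}$ into $\int_{(u,v)}\vertex{s,w}\,ds$, and finally Fubini reorders the iterated integral. The additional remarks you make (handling the degenerate cases $u=v$ and $v=\ell$, and justifying the Fubini interchange via boundedness/integrability of $\vertex$ on the compact rectangle) are sound and are implicit in the paper's treatment.
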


\begin{proof}
  As $\rptn$ is bounded below by a positive constant on
  $\Dcal_{\ell}$, we can rewrite the left-hand side:
  \begin{equation}
    \label{e:5-16}
    \rptn_{u,\ell}
    =
    \rptn_{u,v}
    \rptn_{v,\ell}
    +
    \left(
    \frac{\rptn_{u,\ell}}{\rptn_{v,\ell}}
    -
    \rptn_{u,v}
    \right)
    \rptn_{v,\ell}.
\end{equation}
By Assumption~\ref{hyp:rptn}(\ref{hyp:diff1}), $\rptn_{s,s}=1$, and
the absolute continuity of $\rptn_{v,\ell}^{-1}$ in $\ell$ noted
after~\eqref{e:lem:first-step1}, equation (\ref{e:5-16}) can be
rewritten as
\begin{equation}
	\label{e:inter}
    \rptn_{u,\ell}
    =
    \rptn_{u,v}
    \rptn_{v,\ell}
    +
    \left(
    \int_{(v,\ell)} ds_{+}'\;
    \partt
    \frac{\rptn_{u,s'_{+}}}{\rptn_{v,s'_{+}}}
    \right)
    \rptn_{v,\ell} .
\end{equation}
Since $\exp$ and $\log$ are Lipschitz on compact subsets of their open
domains we can compute the derivative in~\eqref{e:inter} using
$f(x) = \exp\log f(x)$:
\begin{align}
    \rptn_{u,\ell}
    &=
    \rptn_{u,v}
    \rptn_{v,\ell}
    +
    \left(
    \int_{(v,\ell)} ds_{+}'\;
    \Big(\partt\log \ob{\frac{\rptn_{u,s_{+}'}}{\rptn_{v,s_{+}'}}}\Big)
    \frac{\rptn_{u,s'_{+}}}{\rptn_{v,s'_{+}}}
    \right)
    \rptn_{v,\ell} \\
    &=
    \rptn_{u,v}
    \rptn_{v,\ell}
    -
    \left(
    \int_{(v,\ell)} ds_{+}'\;
    \Big(
    \int_{(u,v)} ds_{+}\;
    \parto\partt\lgrptn_{s_{+},s_{+}'}
    \Big) \frac{\rptn_{u,s_{+}'}}{\rptn_{v,s_{+}'}}
    \right)
    \rptn_{v,\ell} ,
\end{align}
where we have used Assumption~\ref{hyp:rptn}(\ref{hyp:diff2}) in the
second step.  By Fubini's theorem this can be rewritten as the desired
result.
\end{proof}

\subsection{Proof of Theorem~\ref{thm:logZ}}
\label{proof-thm:logZ-completed}

Recall the definitions of $\weight(\lace)$, $\vertex{}(\lace)$ and
$\tp(\lace)$ in~\eqref{e:vertex-lace-def}--\eqref{e:Pweight} and
define
\begin{equation}
    \label{e:Rn-def}
    R_{n}
   \bydef
    \int_{\laces_{n,\ell}} d\lace\; \weight(\lace)\;
    \frac{\rptn_{s_{n-1}',\ell}}{\rptn_{s_{n-1}',s_{n}'}} ,
    \qquad n \ge 1.
\end{equation}

\begin{lemma}
  \label{lem:Rn}
  Suppose $\rptn$ satisfies Assumptions~\ref{hyp:rptn}. 
    Then
  \begin{equation}
    \label{e:thm:logZ1}
    \rptn_{0,\ell}
    =
    1 + \sum_{m=0}^{n-1} \int_{\laces_{m,\ell}} d\lace\;
    \weight (\lace)\;
    \rptn_{s_{m}',\ell}\;
    + R_{n} , \qquad n\geq 1.
\end{equation}
\end{lemma}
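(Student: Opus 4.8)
\textbf{Proof plan for Lemma~\ref{lem:Rn}.}
The plan is to prove \eqref{e:thm:logZ1} by induction on $n$, using \Cref{lem:first-step} as the base case and \Cref{lem:step} to pass from $n$ to $n+1$. For the base case $n=1$, I would start from \Cref{lem:first-step}, which expresses $\rptn_{0,\ell}$ as $1$ plus the $\laces_{0}$-term plus the $\laces_{1}$-term in which the lace consists of a single interval $(s_0,s_1')$; the $\laces_{0}$-term is exactly the $m=0$ summand in \eqref{e:thm:logZ1}, and the $\laces_{1}$-term is $R_1$: indeed, for a lace $L=\{(s_0,s_1')\}\in\laces_1$ one has $s_0'=s_0$, $s_1=s_1'$, the vertex product is $\vertex{s_0,s_1'}$, the empty product in \eqref{e:Pweight} gives $\tp(L)=\rptn_{s_0',s_1'}=\rptn_{s_0,s_1'}$, so $\weight(L)=\vertex{s_0,s_1'}\,\rptn_{s_0,s_1'}$ and $R_1 = \int_{\laces_{1,\ell}}d L\;\vertex{s_0,s_1'}\,\rptn_{s_0,s_1'}\,\rptn_{s_0,\ell}/\rptn_{s_0,s_1'} = \int\int \vertex{s_0,s_1'}\,\rptn_{s_0,\ell}$, matching the last term of \Cref{lem:first-step} after relabelling $s_1\mapsto s_1'$.

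For the inductive step, assume \eqref{e:thm:logZ1} holds for some $n\ge 1$; I would expand $R_n$ by applying \Cref{lem:step} to the factor $\rptn_{s_{n-1}',\ell}$ inside $R_n$, with the choice $(u,v) = (s_{n-1}',s_n')$ (valid since $(s_{n-1}',s_n')\in\Dcal_\ell$ on the domain of integration). This rewrites $\rptn_{s_{n-1}',\ell}$ as $\rptn_{s_{n-1}',s_n'}\,\rptn_{s_n',\ell}$ plus an integral over new variables $s_+\in(s_{n-1}',s_n')$ and $s_+'\in(s_n',\ell)$ of $\vertex{s_+,s_+'}\,\rptn_{s_{n-1}',s_+'}\,\rptn_{s_n',\ell}/\rptn_{s_n',s_+'}$. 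The first piece cancels the denominator $\rptn_{s_{n-1}',s_n'}$ in $R_n$ and produces exactly the $m=n$ summand $\int_{\laces_{n,\ell}}d L\;\weight(L)\,\rptn_{s_n',\ell}$ of the target identity for $n+1$. For the second piece I would relabel $s_+ = s_n$ (the new interior point between $s_{n-1}'$ and $s_n'$, so the inequalities become $s_{n-1}'<s_n<s_n'<s_+'$) and $s_+' = s_{n+1}'$, recognise the integration region as $\laces_{n+1,\ell}$, and check that the integrand reassembles into $\weight(L')\,\rptn_{s_{n+1}'\!,\ell}\big/\rptn_{s_n',s_{n+1}'}$ for the extended lace $L'\in\laces_{n+1}$: the new vertex factor $\vertex{s_n,s_{n+1}'}$ multiplies $\vertex{}(\lace)$ to give $\vertex{}(L')$, and the ratio $\rptn_{s_{n-1}',s_{n+1}'}/\rptn_{s_{n-1}',s_n'}$ appearing when one writes $\rptn_{s_{n-1}',s_+'} = \rptn_{s_{n-1}',s_{n+1}'}$ against the existing $\tp(\lace)$ (which ended in $\rptn_{s_{n-1}',s_n'}$ in its last factor) extends the telescoping product $\tp(\lace)$ to $\tp(L')$, up to the trailing factor $1/\rptn_{s_n',s_{n+1}'}$ which is precisely the denominator in the definition \eqref{e:Rn-def} of $R_{n+1}$. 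Fubini's theorem is used to order the integrations as $\int_{\laces_{n+1,\ell}}$.

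The bookkeeping in the inductive step is the only real obstacle: one must verify carefully that inserting the new point $s_n$ (the image of $s_+$) between $s_{n-1}'$ and $s_n'$ and appending $s_{n+1}'$ (the image of $s_+'$) turns a lace in $\laces_n$ into a \emph{bona fide} lace in $\laces_{n+1}$ — in particular that the "no interval is redundant" condition is preserved, which it is because the new interval $(s_{n-1}',s_{n+1}')$ is the unique one covering the gap just to the right of $s_n'$ and no old interval reaches past $s_n'$ except through $s_{n}'$ — and that the product $\tp$ telescopes correctly, i.e.\ that $\tp(L') = \tp(\lace)\cdot\big(\rptn_{s_{n-1}',s_{n+1}'}/\rptn_{s_{n-1}',s_n'}\big)$, which is immediate from the definition \eqref{e:Pweight} since the product there is indexed so that appending one interval multiplies by exactly one new ratio. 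I would conclude the proof of the lemma at this point; the passage to \Cref{thm:logZ} (showing $R_n\to 0$ and that the resulting series converges absolutely) is handled separately in \Cref{proof-thm:logZ-completed}.
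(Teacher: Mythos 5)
Your proof is correct and follows essentially the same route as the paper: the base case is read off from \Cref{lem:first-step}, and the inductive step applies \Cref{lem:step} with $(u,v)=(s_{n-1}',s_n')$ to the factor $\rptn_{s_{n-1}',\ell}$ inside $R_n$, relabels $(s_+,s_+')$ as $(s_n,s_{n+1}')$, and recognises the result as the $m=n$ summand plus $R_{n+1}$. One small slip in the exposition: the last factor of $\tp(\lace)$ for $\lace\in\laces_n$ is $\rptn_{s_{n-2}',s_n'}/\rptn_{s_{n-2}',s_{n-1}'}$, not $\rptn_{s_{n-1}',s_n'}$ — the latter is the denominator in the definition \eqref{e:Rn-def} of $R_n$ — but the identity you actually use, $\tp(\lace)\cdot\bigl(\rptn_{s_{n-1}',s_{n+1}'}/\rptn_{s_{n-1}',s_n'}\bigr)=\tp(L')$, is correct and the argument goes through.
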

\begin{proof}
  We first prove \eqref{e:thm:logZ1} with $n=1$. By
  Lemma~\ref{lem:first-step}
  \begin{equation}
    \rptn_{0,\ell}
    =
    1
    +
    \int_{(0,\ell)} ds_{0}\;
    \vertex{s_{0}}\;
    \rptn_{s_{0},\ell}
    +
    \int_{(0,\ell)} ds_{0}\;
    \int_{(s_{0},\ell)} ds_{1}'\;
    \vertex{s_{0},s_{1}'}\;
    \rptn_{s_{0},\ell} .
  \end{equation}
  By the definition \eqref{e:weight} of $\weight (\lace)$ and the
  definition of integration over $\laces_{m,\ell}$ given below
  \eqref{e:Pweight} (recall also that $s_{0}'\bydef s_{0}$), this can be
  rewritten as
  \begin{equation}
    \rptn_{0,\ell}
    =
    1
    +
    \int_{\laces_{0,\ell}} d\lace\;
    \weight (\lace)\;
    \rptn_{s_{0}',\ell}\;
    +
    \int_{\laces_{1,\ell}} d\lace\;
    \vertex{s_{0}',s_{1}'}\;
    \rptn_{s_{0}',\ell} .
  \end{equation}
  This establishes \eqref{e:thm:logZ1} when $n=1$ as the final term is
  $R_{1}$.

  We now prove \eqref{e:thm:logZ1} holds for $n\geq 1$ by induction,
  using \eqref{e:thm:logZ1} as the inductive hypothesis. By
  Lemma~\ref{lem:step} with $(u,v)$ replaced by $(s_{n-1}',s_{n}')$,
  \begin{align}
    \rptn_{s_{n-1}',\ell}
    &=
      \rptn_{s_{n-1}',s_{n}'}\;
      \rptn_{s_{n}',\ell}\;
      \nonumber\\
    &\quad +
      \int_{(s_{n-1}',s_{n}')} ds_{+}\;
      \int_{(s_{n}',\ell)} ds_{+}'\;
      \vertex{s_{+},s_{+}'}\;
      \rptn_{s_{n-1}',s_{+}'} \;
      \frac{\rptn_{s_{n}',\ell}}{\rptn_{s_{n}',s_{+}'}} .
    \label{e:induct-on-n}
  \end{align}
  We insert \eqref{e:induct-on-n} into the definition \eqref{e:Rn-def}
  of $R_{n}$ and use the definition \eqref{e:weight} of
  $\weight (\lace)$ for the contribution from the first term
  $ \rptn_{s_{n-1}',s_{n}'}\; \rptn_{s_{n}',\ell}$:
  \begin{align}
    R_{n}
    &=
      \int_{\laces_{n,\ell}} d\lace\;
      \weight (\lace)
      \rptn_{s_{n}',\ell}\;
      +
      \Bigg(\int_{\laces_{n,\ell}} d\lace\;
      \vertex{}(\lace)\;
      \tp (\lace)\,
      \nonumber\\
    &\quad \times 
      \int_{(s_{n-1}',s_{n}')} ds_{+}\;
      \int_{(s_{n}',\ell)} ds_{+}'\;
      \vertex{s_{+},s_{+}'}\;
      \frac{\rptn_{s_{n-1}',s_{+}'}}{\rptn_{s_{n-1}',s_{n}'}}\;
      \frac{\rptn_{s_{n}',\ell}}{\rptn_{s_{n}',s_{+}'}}\Bigg) .
      \label{e:induct-on-n2}
  \end{align}
  Renaming $s_{+},s_{+}'$ as $s_{n},s_{n+1}'$ and combining
  the integrals in the second term into an integral over
  $\laces_{n+1,\ell}$ yields
  \begin{equation}
    R_{n}
    =
      \int_{\laces_{n,\ell}} d\lace\;
      \weight (\lace)\;
      \rptn_{s_{n}',\ell}\;
      +
      \int_{\laces_{n+1,\ell}} d\lace\;
      \vertex{}(\lace)\;
      \tp (\lace)\,
      \frac{\rptn_{s_{n}',\ell}}{\rptn_{s_{n}',s_{n+1}'}}    
   \end{equation}
   For the second term on the right of \eqref{e:induct-on-n2},
   $\vertex{s_{+},s_{+}'}\bydef\vertex{s_{n},s_{n+1}'}$ became part
   of $\vertex{}(\lace)$ and the ratio of $\rptn$'s became part of
   $\tp (\lace)$ when the range of integration became
   $\laces_{n+1,\ell}$. By the definition \eqref{e:Rn-def} of
   $R_{n+1}$, this is
   \begin{equation}
    \label{e:Rn-1}
    R_{n}=
      \int_{\laces_{n,\ell}} d\lace\;
      \weight (\lace)\;
      \rptn_{s_{n}',\ell}\;
      +
      R_{n+1} . 
  \end{equation}
  Inserting \eqref{e:Rn-1} into the inductive hypothesis
  completes the proof. 
\end{proof}

\begin{proof}[Proof of \Cref{thm:logZ}]
  We justify taking the $n\to\infty$ limit of \Cref{lem:Rn}.

  The factors $\rptn_{s,t}$ under the integrals in \eqref{e:thm:logZ1}
  are bounded above and below because they are strictly positive and
  continuous functions on the compact domain $\Dcal_{\ell}$. Together
  with the assumption that $\vertex{s,t}$ is uniformly bounded this
  proves that there is a constant $C=C(\ell)$ such that
  $\abs{\weight (\lace) \rptn_{s'_{n},\ell}} \le C^{n+1}$ for
  $\lace \in \laces_{n,\ell}$ and $n\ge 1$, where $\weight$ was
  defined in \eqref{e:weight}. Similarly the integrand in $R_{n}$ is
  bounded by $C^{n+1}$ for $\lace \in \laces_{n,\ell}$. Because
  $\rptn_{s,t}$ is bounded above and below we also have that
  $\abs{\weight(\lace)\rptn_{s'_{0},\ell}}$ is bounded by a constant
  $C'=C'(\ell)$ when $\lace\in\laces_{0,\ell}$.

  The Lebesgue measure of $\laces_{n,\ell}$ is the Lebesgue measure of
  all $2n$-tuples of ordered points in $(0,\ell)$, which is
  $\frac{1}{(2n)!}\ell^{2n}$. Therefore
  $|R_{n}| \le C^{n+1}\frac{1}{(2n)!}\ell^{2n}$ and the $m$th term in
  the sum over $m$ in \eqref{e:thm:logZ1} is bounded by
  $C^{m+1}\frac{1}{(2m)!}\ell^{2m}$.  Therefore the series in the
  right hand side of \eqref{e:thm:logZ} is absolutely convergent and
  equals $\rptn_{{0,\ell}}$ as claimed because
  $\lim_{n \rightarrow \infty } R_{n} = 0$ in \eqref{e:thm:logZ1}.
\end{proof}

\section{The lace expansion in finite volume}
\label{sec:LF}

In this section we continue with step one of
\Cref{sec:step-one}. Throughout this section $\Lambda\subset\ZZ^{d}$
denotes a fixed finite set. The main result is
\Cref{prop:lace-expansion}, which provides the finite-volume version
of \eqref{eq:Int-Conv}.  This proposition involves a function
$\PiL(x,y)$, and the formula \eqref{e:Pi-def} for $\PiL(x,y)$ is
called a lace expansion for reasons to be explained following
\Cref{prop:Pi-DB}. We begin by introducing some further definitions
and assumptions.

Given $Z^{\sssL}\colon [0,\infty)^{\Lambda}\to (0,\infty)$,
$\V{u}\mapsto Z^{\sssL}_{\V{u}}$, we choose the function
  $\rptn\colon \Dcal\to \RR$ of \Cref{sec:laces} to be the random function
\begin{equation}
    \label{e:rptn-choice}
    \rptn_{s,t}
    =
    \rptnL_{s,t} \bydef \ob{\frac{Z^{\sssL}_{\cdot}}{Z^{\sssL}_{\V{0}}}}\circ
      \V{\tau}^{\sssL}_{[s,t]}
    =
    \frac{1}	{Z^{\sssL}_{\V{0}}}
      Z^{\sssL}_{\V{\tau}^{\sssL}_{[s,t]}}.
\end{equation}
Recall \eqref{e:rptn} and note that
$\rptn_{s,t}=\rrptn_{\V{0},\V{\tau}^{\sssL}_{[s,t]}}$.
Henceforth $\rptn$ is given by \eqref{e:rptn-choice}.
Let $\GrL$ be the Green's function determined by \eqref{e:GF} with
this choice of $Z^{\sssL}$.  Recall that the weight $\weight(\lace)$
of a lace is defined in terms of $\rptn_{s,t}$ in~\eqref{e:weight}. We
write $\weightL(\lace)\bydef\weight(\lace)$ for the weight with the
choice \eqref{e:rptn-choice}.

Let $\laces_{m}(s)\subset\laces_{m}$ be the hypersurface defined by
$s_{0}'=s$.  For $m\geq 1$ we write $\int_{\laces_{m}(s)}d\lace$ for
integration with respect to Lebesgue measure on $\laces_{m}(s)$.
Then, by \eqref{e:lace-def}, we have
\begin{equation}
  \label{e:disintegration}
    \int_{\laces_{m}}d\lace
    =
    \int_{[0,\infty)}ds\int_{\laces_{m} (s)}d\lace .
\end{equation}
For $m=0$, since $\laces_{0}(s)$ consists of the single point
$s_{0}'=s$ we let $d\lace$ in the inner integral denote a unit Dirac
mass at $s$.

In the following assumptions, and hereafter, we write $Z$ in place of
$Z^{\sssL}$ when there is no ambiguity.
\begin{assumptions}
\label{hyp:w-rptn}

For all $a\in\Lambda$,
\begin{enumerate}
\item[\mylabel{as:Z1}{(Z1)}] $\V{t} \mapsto Z_{\V{t}}$ is strictly
  positive and continuous on $\co{0,\infty}^{\Lambda}$.
  
\item[\mylabel{as:Z2}{(Z2)}] $\V{t} \mapsto Z_{\V{t}}$ is
  $\Ccal^{2}$ on $[0,\infty)^{\Lambda}$.

\item[\mylabel{as:G0}{(G0)}]
  $ \int_{[0,\infty)} d\ell\; \Ea \,\Big[
  \rptnL_{0,\ell}\Big] < \infty $.

\item[\mylabel{as:F0}{(F0)}]
  $\sum_{m\ge 0}\int_{\laces_{m} (0)} d\lace\; \Ea \big[ |\weightL
  (\lace)|\; \big] < \infty$.
\end{enumerate}
\end{assumptions}
In \ref{as:Z2}, $\Ccal^{2}$ at the boundary means that the derivatives
have continuous extensions to the boundary.  Assumption \ref{as:F0}
enables us to define $\PiL\colon \Lambda \times \Lambda\to\RR$ by
\begin{equation}
    \label{e:Pi-def}
    \PiL (x,y)
    =
    \sum_{m\ge 0}
    \int_{\laces_{m}(0)} d\lace\;
    \EEa_{x}\,\big[ \weightL (\lace)\indic{\LX_{s_{m}'}=y}\big] .
\end{equation}
and \ref{as:G0} is equivalent to the Greens function $\GrL(x)$ being
finite for all $x\in\Lambda$.

\begin{proposition}
  \label{prop:lace-expansion}
  Under Assumptions~\ref{hyp:w-rptn},
  \begin{equation}
    \label{eq:lace-expansion}
    \GrL_{\V{0}} (a,b) 
    =
    \GL  (a,b) +
    \sum_{x,y \in \Lambda} \GL  (a,x)\PiL (x,y)
    \GrL_{\V{0}} (y,b) .  
  \end{equation}
\end{proposition}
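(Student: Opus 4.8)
The plan is to unpack the definition of the Green's function $\GrL_{\V{0}}(a,b)$ in \eqref{e:GF}, apply \Cref{thm:logZ} to the random function $\ell \mapsto \rptnL_{0,\ell}$, and then re-assemble the resulting terms into the convolution equation \eqref{eq:lace-expansion}. First I would fix $a,b\in\Lambda$ and start from
\begin{equation*}
  \GrL_{\V{0}}(a,b)
  =
  \int_{[0,\infty)} d\ell\;
  \Ea\big[\rptnL_{0,\ell}\,\indic{\LX_{\ell}=b}\big],
\end{equation*}
using \eqref{e:rptn-choice} to identify $\rrptn_{\V{0},\V{\tau}^{\sssL}_{[0,\ell]}}=\rptnL_{0,\ell}$. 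The hypotheses \ref{as:Z1}, \ref{as:Z2} guarantee that, for almost every realization of $\LX$, the function $\rptn=\rptnL$ satisfies \Cref{hyp:rptn} (this is exactly the content of the remark after \Cref{hyp:rptn} about absolute continuity of $(s,t)\mapsto\tau_{[s,t],x}$, combined with the chain rule since $Z$ is $\Ccal^2$), and that $\vertex{s,t}$ is a.e.\ bounded on $\Dcal_\ell$ for each fixed $\ell$. Hence \Cref{thm:logZ} applies pathwise and gives
\begin{equation*}
  \rptnL_{0,\ell}
  =
  1 + \sum_{m\ge 0}\int_{\laces_{m,\ell}} d\lace\;
  \weightL(\lace)\,\rptnL_{s_m',\ell}.
\end{equation*}

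Next I would substitute this expansion into the integral defining $\GrL_{\V{0}}(a,b)$ and interchange the $\ell$-integral, the expectation, and the sum/integral over laces. The term coming from the leading $1$ is $\int d\ell\,\Pa(\LX_\ell=b)=\GL(a,b)$ by \eqref{e:gabdef}. For the remaining terms, in each summand the lace $\lace\in\laces_{m,\ell}$ has its final endpoint $s_m'\le \ell$; I would use the disintegration \eqref{e:disintegration}-style identity to write the lace integral as an integral over the endpoint $y$-time $s_m'=:\ell'$ (via $\int_{\laces_{m,\ell}}d\lace=\int_{[0,\ell]}d\ell'\int_{\laces_m(0),\,s_m'=\ell'}$, shifting so the lace starts at $0$) and then apply the Markov property of $\LX$ at time $\ell'$: conditionally on $\LX_{\ell'}=y$, the local-time increment over $[\ell',\ell]$ and the event $\{\LX_\ell=b\}$ depend only on the post-$\ell'$ path, which is an independent copy of $\LX$ started at $y$, and $\rptnL_{\ell',\ell}$ is a function of $\V{\tau}^{\sssL}_{[\ell',\ell]}$. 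Carrying out this splitting and then integrating $\ell$ over $[\ell',\infty)$ turns the post-$\ell'$ factor into exactly $\GrL_{\V{0}}(y,b)$, while summing the pre-$\ell'$ weight over the starting point — here I would also insert the trivial identity $\EEa_a[\cdots]=\sum_x \Pa(\LX_0\text{-to-}x)\cdots$, i.e.\ use the Markov property once more at time $0$ together with $\GL(a,x)$ absorbing the "free" travel — produces $\sum_x \GL(a,x)\PiL(x,y)$ with $\PiL$ as in \eqref{e:Pi-def}. Assembling these pieces yields \eqref{eq:lace-expansion}.

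The main obstacle — and the step I would spend the most care on — is justifying all the interchanges of sums, integrals, and expectations, and in particular the application of the Markov property to the lace weight, since $\weightL(\lace)$ depends on $\rptnL_{s,t}$ for various sub-intervals and these couple the path before and after $s_m'$ only through the already-accumulated local time; one must check that the relevant functional factorizes as claimed. Absolute convergence is where Assumptions \ref{as:G0} and \ref{as:F0} enter: \ref{as:F0} controls $\sum_m\int_{\laces_m(0)}\Ea[|\weightL(\lace)|]$ so that the lace sum converges after taking expectations, and \ref{as:G0} (equivalently finiteness of $\GrL$) controls the post-$\ell'$ factor; together with Fubini–Tonelli these license every rearrangement. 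A secondary technical point is the pathwise verification of \Cref{hyp:rptn} for $\rptnL$ — that the local times $\tau_{[s,t],x}$ are jointly absolutely continuous in $(s,t)$ and that mixed partials commute a.e.\ — but this is promised in \Cref{sec:prel-local-time}, so I would simply cite it. Once convergence is in hand, the algebraic identification of the $m$-th term with $\sum_{x,y}\GL(a,x)\,[\text{$m$-lace contribution to }\PiL(x,y)]\,\GrL_{\V{0}}(y,b)$ is a bookkeeping matter of matching \eqref{e:Pi-def} term by term.
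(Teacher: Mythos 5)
Your overall plan is the same as the paper's: expand $\rptnL_{0,\ell}$ via \Cref{thm:logZ} (after verifying \Cref{hyp:rptn} pathwise, which is \Cref{lem:w-rptn}), identify the leading $1$ with $\GL(a,b)$, and then split the remaining integral into the three factors $\GL(a,x)$, $\PiL(x,y)$, $\GrL_{\V{0}}(y,b)$ by applying the Markov property twice, using \ref{as:G0} and \ref{as:F0} to justify all interchanges. That is exactly the paper's strategy.

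There is, however, a concrete error in the disintegration step. You write $\int_{\laces_{m,\ell}}d\lace=\int_{[0,\ell]}d\ell'\int_{\laces_m(0),\,s_m'=\ell'}$ ``shifting so the lace starts at $0$''. This doesn't balance: $\laces_{m,\ell}$ is $2m$-dimensional, while fixing both $s_0'=0$ and $s_m'=\ell'$ leaves only $2m-2$ degrees of freedom, and the single $d\ell'$ integral restores just one. The missing variable is precisely $s:=s_0'$, the (random) start of the lace, and it is this variable that produces the factor $\GL(a,x)$. The paper's \eqref{e:disintegration} disintegrates over $s=s_0'$ (keeping $s_m'$ free), then changes variables $\ell\mapsto s_m'+\ell$, applies the Markov property at $\mc{F}_{s_m'}$ to produce $\GrL_{\V{0}}(X_{s_m'},b)$, shifts the lace to $\laces_m(0)$, and then applies the Markov property at $\mc{F}_{s}$ -- not at $\mc{F}_0$ -- so that $\int_{[0,\infty)}ds\,\Pa(X_s=x)=\GL(a,x)$ emerges. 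Your ``Markov property at time $0$'' cannot do this: conditioning on $\mc{F}_0$ is trivial ($X_0=a$ is deterministic), so it yields no sum over the intermediate point $x$ and no propagator. Once the disintegration is fixed to parameterize by $s_0'$ rather than $s_m'$ and the second Markov application is moved to $\mc{F}_{s_0'}$, the rest of your sketch goes through and matches the paper's argument.
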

In the literature of theoretical physics this relation between
$\GrL_{\V{0}}$, $\GL$ and $\PiL$ is called a Dyson equation
\cite[Equation (92)]{Dyson1949}.  The proof of the above proposition
occupies the rest of this section.

\subsection{Derivatives of local time}
\label{sec:prel-local-time}

For each $x\in\Lambda$, \eqref{e:tauI-def} defines a local time
$\V{\tau}_{[s,s'],x}$ that is absolutely continuous in $s$ for $s\leq
s'$ with $s'$ fixed, and similarly is absolutely continuous in $s'$
for fixed $s$ when $s'$ is restricted to a bounded interval.  Note
that
\begin{align}
    &
    \partial_{2} \tau_{[s,s'],x} 
    =
    \partial_{2} 
    \int_{[s,s']} \indic{\LX_{r}=x}\,dr\,  
    =
    \indic{\LX_{s'}=x} ,
    \label{e:LT-D-1}\\
    &
    \partial_{1} \tau_{[s,s'],x} 
    =
    -
    \indic{\LX_{s}=x} ,
    \label{e:LT-D-2}\\[3mm]
    &   
    \partial_{2} \partial_{1}\tau_{[s,s'],x} 
    =
    \partial_{1} \partial_{2}\tau_{[s,s'],x} 
    =
    0 .
    \label{e:LT-D-3}  
\end{align}
The first equation holds a.e.\ in $s'$ for $s \le s'$.  The derivative
does not depend on $s$, so it is absolutely continuous in $s$. By
similar reasoning \eqref{e:LT-D-2} holds a.e.\ in $s$ for $s\le s'$,
and as a consequence \eqref{e:LT-D-3} holds a.e.\ in $\{s \le s' \}$.

\subsection{Proof of Proposition~\ref{prop:lace-expansion}}
\label{sec:proof:prop:lace-expansion}

\begin{lemma}
  \label{lem:w-rptn}
  If $Z_{\tb}$ satisfies \ref{as:Z1} and \ref{as:Z2} of
  Assumptions~\ref{hyp:w-rptn} then $\weightL (L)$ is well-defined and
  $\rptnL_{s,t}$ defined by~\eqref{e:rptn-choice} satisfies the
  hypotheses of \Cref{thm:logZ}.
\end{lemma}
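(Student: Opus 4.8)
The plan is to identify $\rptnL_{s,t}$ with the composition $h\bigl(\V{\tau}^{\sssL}_{[s,t]}\bigr)$, where $h\bydef Z^{\sssL}_{\cdot}/Z^{\sssL}_{\V{0}}$ on $[0,\infty)^{\Lambda}$, and to read every clause of \Cref{thm:logZ} off this structure together with the differentiation rules \eqref{e:LT-D-1}--\eqref{e:LT-D-3}. Since $Z^{\sssL}_{\V{0}}>0$ by \ref{as:Z1}, the hypotheses \ref{as:Z1}--\ref{as:Z2} make $h$ strictly positive and $\Ccal^{2}$ on $[0,\infty)^{\Lambda}$ with $h(\V{0})=1$; on every cube $[0,\ell]^{\Lambda}$ the functions $h$, $\partial_{u}h$, $\partial_{u}\partial_{v}h$, $\partial_{u}\partial_{v}\log Z^{\sssL}$ ($u,v\in\Lambda$) are continuous hence bounded, $h$ is bounded below by a positive constant there, and $h$ and each $\partial_{u}h$ are Lipschitz on it ($\Ccal^{1}$ on a convex compact set). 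On the other side, working pathwise (i.e.\ for each $\omega\in\Omega_{1}$, so $\LX$ is c\`adl\`ag) each coordinate $s\mapsto\tau_{[s,t],x}$ and $t\mapsto\tau_{[s,t],x}$ is $1$-Lipschitz, and off the set of pairs $(s,t)$ for which $s$ or $t$ is one of the countably many jump times of $\LX$ --- a $2$-dimensionally null set --- one has $\partt\tau_{[s,t],x}=\indic{\LX_{t}=x}$, $\parto\tau_{[s,t],x}=-\indic{\LX_{s}=x}$ and $\parto\partt\tau_{[s,t],x}=\partt\parto\tau_{[s,t],x}=0$.

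Granting this, Assumption~\ref{hyp:rptn}(\ref{hyp:cont1}) is immediate: $\rptnL$ is continuous and strictly positive on $\Dcal$ because $h$ is and $(s,t)\mapsto\V{\tau}^{\sssL}_{[s,t]}$ is continuous, and $\rptnL_{s,s}=h(\V{0})=1$. For Assumption~\ref{hyp:rptn}(\ref{hyp:diff1}), with $t$ fixed the map $s\mapsto\V{\tau}^{\sssL}_{[s,t]}$ is $1$-Lipschitz from $[0,t]$ into the cube $[0,t]^{\Lambda}$ on which $h$ is Lipschitz, so $\rptnL_{\cdot,t}$ is Lipschitz, in particular absolutely continuous; likewise, with $s$ fixed, $\rptnL_{s,\cdot}$ is Lipschitz, hence absolutely continuous, on bounded subintervals of $[s,\infty)$ (this is the evident $\Lambda$-indexed version of \Cref{lem:Lip-AC}). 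For Assumption~\ref{hyp:rptn}(\ref{hyp:diff2})--(\ref{hyp:diff3}) and hypothesis (ii) of \Cref{thm:logZ}, I would apply the chain rule for a $\Ccal^{2}$ function composed with an absolutely continuous function (as already invoked below \eqref{e:vertex-def}): for a.e.\ $(s,t)$ in the interior of $\Dcal$,
\[
  \partt\rptnL_{s,t}=(\partial_{\LX_{t}}h)\bigl(\V{\tau}^{\sssL}_{[s,t]}\bigr),\qquad
  \parto\rptnL_{s,t}=-(\partial_{\LX_{s}}h)\bigl(\V{\tau}^{\sssL}_{[s,t]}\bigr),
\]
and a second application, using \eqref{e:LT-D-3} and the symmetry $\partial_{u}\partial_{v}h=\partial_{v}\partial_{u}h$, gives $\parto\partt\rptnL_{s,t}=\partt\parto\rptnL_{s,t}=-(\partial_{\LX_{s}}\partial_{\LX_{t}}h)\bigl(\V{\tau}^{\sssL}_{[s,t]}\bigr)$, which is (\ref{hyp:diff3}). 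For a.e.\ fixed $t$ the function $s\mapsto(\partial_{\LX_{t}}h)(\V{\tau}^{\sssL}_{[s,t]})$ is the $\Ccal^{1}$ map $\partial_{\LX_{t}}h$ composed with a $1$-Lipschitz path into a fixed cube, hence Lipschitz and so absolutely continuous on $(0,t)$; symmetrically $(\parto\rptnL)_{s,\cdot}$ is absolutely continuous on bounded subintervals of $(s,\infty)$ for a.e.\ $s$, giving (\ref{hyp:diff2}). Finally $\vertex{s,t}=-\parto\partt\log\rptnL_{s,t}=(\partial_{\LX_{s}}\partial_{\LX_{t}}\log Z^{\sssL})(\V{\tau}^{\sssL}_{[s,t]})$ (the additive constant $\log Z^{\sssL}_{\V{0}}$ drops under the derivatives), and on $\Dcal_{\ell}$ its argument lies in $[0,\ell]^{\Lambda}$, where $\partial_{u}\partial_{v}\log Z^{\sssL}$ is bounded, so $\vertex{s,t}$ is a.e.\ bounded on each $\Dcal_{\ell}$.

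It then remains to note that $\weightL(\lace)$ is well-defined: the values $\vertex{s,t}$ are given a.e.\ by the formula above, the limit $\vertex{s}=-\lim_{t\downarrow s}\parto\log\rptnL_{s,t}=(\partial_{\LX_{s}}\log Z^{\sssL})(\V{0})$ exists since $\V{\tau}^{\sssL}_{[s,t]}\to\V{0}$ as $t\downarrow s$ and $\nabla\log Z^{\sssL}$ is continuous, and the ratios of $\rptnL$'s entering $\tp(\lace)$ in \eqref{e:Pweight} are finite and nonzero by the strict positivity of $\rptnL$; hence $\weightL(\lace)$ as defined by \eqref{e:weight}--\eqref{e:Pweight} makes sense. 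I expect the only real care to go into the bookkeeping of the a.e.\ clauses in Assumptions~\ref{hyp:rptn}: every exceptional set here is generated by the countable set of jump times of the c\`adl\`ag path $\LX$ (which is exactly why $\Omega_{1}$ was fixed in \Cref{sec:conv-techn-choice}), and beyond that the argument is the routine principle that a $\Ccal^{k}$ function composed with a Lipschitz path inherits continuity, the Lipschitz and absolute-continuity properties, and the expected chain-rule formulas for its partial derivatives.
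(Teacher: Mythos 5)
Your proposal is correct and takes essentially the same route as the paper: identify $\rptnL_{s,t}$ as a $\Ccal^{2}$ positive function $h$ composed with the Lipschitz local-time map, use \Cref{lem:Lip-AC} and the derivative formulas \eqref{e:LT-D-1}--\eqref{e:LT-D-3} (with the countably many jump times as the exceptional set) to verify each clause of Assumptions~\ref{hyp:rptn}, and then bound $\vertex{s,t}$ by continuity of $\partial_{u}\partial_{v}\log Z^{\sssL}$ on the compact cube $[0,\ell]^{\Lambda}$. The only cosmetic differences are that you write out the chain-rule formulas for $\partt\rptnL$ and $\parto\rptnL$ explicitly and make the well-definedness of $\weightL(\lace)$ a separate point, which is slightly more detailed than the paper's presentation but not a different argument.
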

\begin{proof}
  The hypothesis that $\rptnL_{s,s}=1$ holds as
  $\V{\tau}_{[s,s]}=\V{0}$.  By \ref{as:Z2} and the compactness of
  $[0,\ell]^{\Lambda }$ the function $Z_{\V{t}}$ is Lipschitz in
  $\V{t}$.
  For each $s$, $\V{\tau}_{[s,t]}$ is absolutely continuous as a
  function of $t$ when $t$ is restricted to a bounded interval, and
  vice-versa by Section~\ref{sec:prel-local-time}. By
  \Cref{lem:Lip-AC} this implies that for each $s$, $\Z{[s,t]}$ is
  absolutely continuous as a function of $t$ when $t$ is restricted to
  a bounded interval and vice-versa. Combined with \ref{as:Z1} this
  proves the first of Assumptions~\ref{hyp:rptn}. Furthermore, by the
  chain rule, the composition $\Z{[s,t]}$ is differentiable in $t$ at
  points $(s,t)$ where $\V{\tau}_{[s,t]}$ has this property.
  Therefore for each $s$, $\Z{[s,t]}$ is differentiable in $t$ at all
  but a countable number of points (recall that simple random walk
  takes only finitely many jumps in any finite time interval), and
  hence is absolutely continuous in $t$, and vice-versa. This verifies
  the second item of \Cref{hyp:rptn}, and an analogous argument
  verifies the third item.  The fourth follows by \ref{as:Z2} and
  \eqref{e:LT-D-3}.

  Lastly, we must prove that $s,t \mapsto \vertex{s,t}$ is a.e.\
  bounded on $\Dcal_{\ell}$. 
  By \eqref{e:vertex-def} we must show that
  $\parto\partt\lgrptn_{s,t}$ is a.e.\ bounded on $\Dcal_{\ell}$.
  By \ref{as:Z2}, $\V{u}\mapsto Z_{\V{u}}$ is $\Ccal^{2}$ on
  $\cb{0,\ell}^{\Lambda}$. By \ref{as:Z1} and the compactness of
  $\Dcal_{\ell}$ the range of $\V{u}\mapsto Z_{\V{u}}$ is bounded away
  from zero. Therefore $F\colon \V{u}\mapsto \log Z_{\V{u}}$ is
  $\Ccal^{2}$ on $\cb{0,\ell}^{\Lambda}$ and
  $\lgrptn_{s,t}=F(\V{\tau}_{[s,t]})$. Let $F^{\sss (1)}_{x}(u)$ be
  the partial derivative of $F(\V{u})$ with respect to $u_{x}$ and let
  $F^{\sss (2)}_{xy}(u)$ be the second partial derivative of
  $F(\V{u})$ with respect to $u_{x}$ and $u_{y}$.  By the chain rule
  and~\eqref{e:LT-D-1}
  $\parto\partt\lgrptn_{s,t} = \parto \big(
  \sum_{x}F^{\sss (1)}_{x}(\V{\tau}_{[s,t]})\indic{X_{t}=x}\big)$. The sum
  over $x\in \Lambda$ is finite and $\indic{X_{t}=x}$ does not depend
  on $s$ so it is sufficient to prove that
  $\parto F^{\sss (1)}_{x}(\V{\tau}_{[s,t]})$ is a.e.\ bounded on
  $\Dcal_{\ell}$. By the chain rule and \eqref{e:LT-D-2}
  $\parto F^{\sss (1)}_{x}(\V{\tau}_{[s,t]})$ is a finite sum over $y$ of
  $F^{\sss (2)}_{xy}(\V{\tau}_{[s,t]})\indic{X_{s}=y}$. Therefore it is
  sufficient to prove that $F^{\sss (2)}_{xy}(\V{\tau}_{[s,t]})$ is
  bounded. By \ref{as:Z2} $F^{\sss (2)}_{xy}$ is continuous on
  $\cb{0,\ell}^{\Lambda}$ and by \Cref{sec:prel-local-time}
  $\V{\tau}_{[s,t]}$ is jointly continuous on
  $\Dcal_{\ell}$. Therefore the composition
  $F^{\sss (2)}_{xy}(\V{\tau}_{[s,t]})$ is continuous on the compact set
  $\Dcal_{\ell}$ and hence bounded as desired.
\end{proof}

\begin{proof}[Proof of \Cref{prop:lace-expansion}]
  We omit the superscript $\Lambda$ throughout, since $\Lambda$ is
  fixed.  At two points in the proof we will use the Markov property;
  the justifications for these applications are given in
  \Cref{sec:markov}.

  By definition  \eqref{e:GF} and \eqref{e:rptn-choice}, 
  \begin{equation}
    \label{e:start}
    G_{\V{0}}(a,b)
    =
    \int_{[0,\infty)} d\ell\;
    \Ea\,\Big[
    \rptn_{0,\ell}
    \indic{X_{\ell} = b}\Big] .
  \end{equation}
  \Cref{lem:w-rptn} implies we can expand $\rptn_{0,\ell}$ by
  Theorem~\ref{thm:logZ}.  Together with the definition
  \eqref{e:gabdef} of $\greens (a,b)$ this yields
  \begin{equation}
    \label{e:start2}
    G_{\V{0}}(a,b)
    =
    \greens (a,b)
    +
    \int_{[0,\infty)} d\ell\;
    \Ea\,\Big[
    \sum_{m \ge 0}\;
    \int_{\laces_{m,\ell}} d\lace\;
    \weight (\lace)\;
    \rptn_{s_{m}',\ell}\;     
    \indic{X_{\ell} = b}\Big] ,
  \end{equation}
  where $s_{m}'$ is defined by the lace $\lace$ as explained in
  \eqref{e:lace-def}. For convenience, define
  $U_{a,b} \bydef G_{\V{0}}(a,b) - \greens(a,b)$.  Using
  \eqref{e:disintegration} and \ref{as:F0} we obtain
  \begin{equation}
    \label{e:prop:lace-expansion2}
    U_{a,b} = 
    \int_{[0,\infty)} ds\;
    \sum_{m \ge 0}\;
    \int_{\laces_{m} (s)} d\lace\;
    \int_{[s_{m}',\infty)} d\ell\;
    \Ea\,\Big[
    \weight (\lace)\;
    \rptn_{s_{m}',\ell}\;     
    \indic{X_{\ell} = b}\Big].
  \end{equation}
  By the change of variable $\ell \mapsto s_{m}'+\ell$ in the integral
  with respect to $\ell$
  \begin{equation}
    \label{eq:unlabel1}
    U_{a,b}
    = 
    \int_{[0,\infty)} ds\;
    \sum_{m \ge 0}\;
    \int_{\laces_{m} (s)} d\lace\;
    \int_{[0,\infty)} d\ell\;
    \Ea\,\Big[
    \weight (\lace)\;
    \rptn_{s_{m}',s_{m}'+\ell}\;     
    \indic{X_{s_{m}'+\ell} = b}\Big] .
  \end{equation}
  For $\ell>0$, let
  $ h (\ell,y,b) \bydef \EEa_{y}\cb{ \rptn_{0,\ell}\; \indic{X_{\ell}
      = b}} $.  By conditioning on $\Fcal_{s_{m}'}$ in the last
  expectation in~\eqref{eq:unlabel1}, using
  $\weight(\lace)\in\Fcal_{s_{m}'}$, integrability by \ref{as:G0}, and
  the Markov property for
  $\EEa_{a} \big[ \rptn_{s_{m}',s_{m}'+\ell}\; \indic{X_{s_{m}'+\ell}
    = b}\big|\mc{F}_{s_{m}'}\big]$,
  \begin{equation}
    \label{e:prop:lace-expansion03}
    U_{a,b} = 
    \int_{[0,\infty)} ds\;
    \sum_{m \ge 0}\;
    \int_{\laces_{m} (s)} d\lace\;
    \int_{[0,\infty)} d\ell\;
    \Ea\,\Big[
    \weight (\lace)\;
    h (\ell,X_{s_{m}'},b)
    \Big] .
  \end{equation}
  By \ref{as:G0} and \ref{as:F0} the right-hand side converges
  absolutely and likewise for the following equations.  We bring the
  integral with respect to $\ell$ inside the expectation and rewrite
  $\int_{[0,\infty)} d\ell\; h (\ell,X_{s_{m}'},b)$ using the
  definition \eqref{e:start} of $G_{\V{0}}(a,b)$:
  \begin{equation}
    U_{a,b} =   
    \int_{[0,\infty)} ds\;
    \sum_{m \ge 0}\;
    \int_{\laces_{m} (s)} d\lace\;
    \Ea\,\Big[
    \weight (\lace)\;
    G_{\V{0}}(X_{s_{m}'},b)
    \Big] .
  \end{equation}
  By changing variables in the integral over $\laces_{m} (s)$ so that
  it becomes an integral over $\laces_{m} (0)$ we rewrite this as
  \begin{align}
    U_{a,b} =   
    \int_{[0,\infty)} ds\;
    \sum_{m \ge 0}\;
    \int_{\laces_{m} (0)} d\lace\;
    \Ea\,\Big[
    \weight (\lace + s)\; G_{\V{0}}(X_{s_{m}' + s},b)
    \Big] ,
    \label{e:prop:lace-expansion05}
  \end{align}
  where for $\lace \in \laces_m(0)$, $\lace+s$ is defined to be the
  lace in $\laces_m(s)$ obtained from $\lace $ by adding $s$ to each
  $s_i,s'_i$. For $\lace \in \laces_{m} (0)$ define
  $ f (x,b,\lace) \bydef \EEa_{x}\Big[ \weight (\lace)\;
  G_{\V{0}}(X_{s_{m}'},b) \Big]$ .  By conditioning on $\mc{F}_{s}$
  inside the expectation in \eqref{e:prop:lace-expansion05} and
  applying the Markov property to 
  $\Ea\,\Big[
    \weight (\lace + s)\; G_{\V{0}}(X_{s_{m}'+s},b)
   \big|\mc{F}_s \Big ]$ 
  we obtain
  \begin{align}
    U_{a,b} &= 
    \int_{[0,\infty)} ds\;
    \sum_{m \ge 0}\;
    \int_{\laces_{m} (0)} d\lace\;
    \Ea\,\big[
    f (X_{s},b,\lace)
    \big].
    \label{e:prop:lace-expansion1}
  \end{align}
  The expectation is equal to
  \begin{align}
    &\sum_{x,y \in \Lambda}
      \Ea\,\Big[E_{X_s}\big[w(L)G_{\V{0}}(X_{s_{m}'},b)\indic{X_{s_{m}'}=y}\big] 
      \indic{X_{s}=x}  
      \Big]
      \nonumber\\
    &\hspace{1cm}
      =\sum_{x,y \in \Lambda} G_{\V{0}}(y,b)\,\Ea\,\big[ \indic{X_{s}=x}  
      \big]E_x[w(L)\indic{X_{s_{m}'}=y}],
  \end{align}
  where we have used the fact that the sums over $x,y \in \Lambda$ are
  finite to take them outside the expectation in the first line.
  Recalling the definition \eqref{e:Pi-def} we see that
  \eqref{e:prop:lace-expansion1} can be written as
  \begin{align}
    U_{a,b} &= 
    \int_{[0,\infty)} ds\;
    \sum_{x,y \in \Lambda} G_{\V{0}}(y,b)\,\Ea\,\big[ \indic{X_{s}=x}  
    \big]\;
    \Pi(x,y).
\end{align}
By the definitions of $\greens (x,y)$ and $U_{a,b}$ this is the same
as
  \begin{equation*}
    G_{\V{0}}(a,b)
    =
    \greens (a,b)\;
    + 
    \sum_{x,y \in \Lambda}\;
    \greens (a,x)
    \Pi (x,y)
    G_{\V{0}}(y,b).\qedhere
  \end{equation*}
\end{proof}

\section{The terms \texorpdfstring{$\PiL_{m}$}{Pim}
of the lace expansion}
\label{sec:Pi-DB-outer}

Throughout this section $\Lambda\subset \ZZ^{d}$ is a fixed finite
set. Recall the definitions below \eqref{e:rptn-choice} and define
\begin{equation}
  \label{eq:Pi-m}
  \PiL_{m}(x,y) \bydef
  \int_{\laces_{m}(0)}d\lace \; \EEa_{x} 
  \left[ \weightL(\lace) \indic{\LX_{s_{m}'}=y}\right],
  \qquad
  m\geq 0.
\end{equation}
Thus $\PiL_{m}$ is the $m^\textrm{th}$ term in the series
\eqref{e:Pi-def} that defines $\PiL(x,y)$.  This section has two
parts. The first provides formulas for the weights $\weightL$, and the
second derives bounds on $\PiL_{m}$ for $m\geq 1$. The main result is
\Cref{prop:Pi-DB}, which bounds $\PiL_{m}$ in terms of $\GrL_{\V{0}}$;
these bounds are used in implementing step one of \Cref{sec:step-one}.

\subsection{Formulas for weights}
\label{sec:formulas-weights}

We give formulas for $\PiL_{0}$ and the factors $\vertexL{}$ that
enter into $\weightL$. Both computations are applications of the chain
rule to our choice~\eqref{e:rptn-choice} of $\rptn$ together with the
formulas~\eqref{e:LT-D-1}--\eqref{e:LT-D-3} for derivatives of the
local time. The formulas of this section are valid under
\Cref{hyp:w-rptn}. 

\subsubsection{The term \texorpdfstring{$\PiL_0$}{PiL0}}
\label{sec:PiL0}

Recall the definition \eqref{eq:selfloop-def} of
$\selfloopL{\cpl,\nu,x}$ where $Z^{\sssL}$ is now the function
entering in the definition~\eqref{e:rptn-choice} of $\rptn$.  At this
level of generality $Z^{\sssL}$ need not depend on $\cpl,\nu$, but we
retain them in our notation. The limit $\selfloopL{\cpl,\nu,x}$ exists
and is finite by \ref{as:Z1} and \ref{as:Z2}; see below
\eqref{e:vertex-def}.  The next result shows that
$\selfloopL{\cpl,\nu,x}$ is essentially the first (zeroth) term in the
finite volume lace expansion.

\begin{lemma}
  \label{lem:Selfloop}
  For all finite $\Lambda$,
  $ \PiL_{0}(x,y) = \selfloopL{\cpl,\nu,x} \indic{x=y}$.
\end{lemma}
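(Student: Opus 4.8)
The plan is to unwind the $m=0$ case of the definition \eqref{eq:Pi-m} and then evaluate the resulting expectation pathwise. Recall that for $m=0$ the domain $\laces_{0}(0)$ is the single point $s_{0}=s_{0}'=0$, the integral $\int_{\laces_{0}(0)}d\lace$ is the unit point mass there, and by \eqref{e:weight} together with \eqref{e:vertex-lace-def} the weight of a zero-interval lace is $\weightL(\{0\})=\vertexL{0}$, where $\vertexL{s}=-\lim_{t\downarrow s}\parto\log\rptnL_{s,t}$ from \eqref{e:vertex-def} with the choice \eqref{e:rptn-choice} of $\rptn$. Under $\EEa_{x}$ the walk satisfies $\LX_{0}=x$ surely, so $\indic{\LX_{s_{0}'}=y}=\indic{\LX_{0}=y}=\indic{x=y}$, and hence
\[
\PiL_{0}(x,y)=\indic{x=y}\,\EEa_{x}\big[\vertexL{0}\big].
\]
So it suffices to identify $\EEa_{x}[\vertexL{0}]$ with $\selfloopL{\cpl,\nu,x}$; in fact I expect to show that $\vertexL{0}$ equals, $\EEa_{x}$-almost surely, the deterministic constant $\selfloopL{\cpl,\nu,x}$.

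To compute $\vertexL{0}$ I would use the chain rule exactly as in the proof of \Cref{lem:w-rptn}. By \eqref{e:rptn-choice}, $\log\rptnL_{s,t}=\log Z_{\V{\tau}^{\sssL}_{[s,t]}}-\log Z_{\V{0}}$; the last term does not depend on $s$, and by \ref{as:Z1}--\ref{as:Z2} the map $\V{u}\mapsto\log Z_{\V{u}}$ is $\Ccal^{2}$ on $[0,\infty)^{\Lambda}$. Composing with the Lipschitz map $s\mapsto\V{\tau}^{\sssL}_{[s,t]}$ and using \eqref{e:LT-D-2} gives
\[
\parto\log\rptnL_{s,t}
=\sum_{z\in\Lambda}(\partial_{u_{z}}\log Z)(\V{\tau}^{\sssL}_{[s,t]})\,\parto\tau_{[s,t],z}
=-\big(\partial_{u_{\LX_{s}}}\log Z\big)(\V{\tau}^{\sssL}_{[s,t]}),
\]
a priori only for a.e.\ $s$. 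The main (and essentially only) point requiring care will be pinning this down at the single endpoint $s=0$: since $\EEa_{x}$-a.s.\ the walk has a strictly positive holding time at its starting point $x$, one has $\LX_{r}\equiv x$ for $r$ in a right-neighbourhood of $0$, so $\tau_{[s,t],z}=\tau_{[0,t],z}-s\,\indic{z=x}$ for all small $s\ge0$ and therefore $\parto\tau_{[0,t],z}=-\indic{z=x}$ exactly; thus $\parto\log\rptnL_{0,t}=-(\partial_{u_{x}}\log Z)(\V{\tau}^{\sssL}_{[0,t]})$ for every $t>0$.

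Finally I would let $t\downarrow0$. Each coordinate obeys $\tau_{[0,t],z}\le t\to0$, so $\V{\tau}^{\sssL}_{[0,t]}\to\V{0}$, and by continuity of $\partial_{u_{x}}\log Z$ (valid because $\log Z\in\Ccal^{2}$) this yields $\vertexL{0}=(\partial_{u_{x}}\log Z)(\V{0})$. By \eqref{eq:selfloop-def} the right-hand side equals $\lim_{\V{t}\downarrow\V{0}}\partial_{t_{x}}\log Z^{\sssL}_{\V{t}}=\selfloopL{\cpl,\nu,x}$, the limit being the value at $\V{0}$ by continuity. Since this is a constant, taking the $\EEa_{x}$-expectation in the displayed identity for $\PiL_{0}$ gives $\PiL_{0}(x,y)=\selfloopL{\cpl,\nu,x}\,\indic{x=y}$, as claimed.
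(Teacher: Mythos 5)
Your proof is correct and takes essentially the same approach as the paper: both reduce $\PiL_{0}(x,y)$ to $\indic{x=y}\,\EEa_{x}[\vertexL{0}]$ using the unit-mass structure of $\laces_{0}(0)$ and $X_{0}=x$, then evaluate $\vertexL{0}$ via the chain rule and let $t\downarrow 0$, identifying the result with $\selfloopL{\cpl,\nu,x}$. The only cosmetic difference is the justification for pinning the a.e.\ formula $\parto\tau_{[s,t],z}=-\indic{\LX_{s}=z}$ down exactly at $s=0$: you invoke the a.s.\ positive holding time at $x$, while the paper cites right-continuity of the walk, which for a pure-jump process is the same observation.
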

\begin{proof}
  From \eqref{eq:Pi-m},
  \begin{equation}
    \label{eq:SL}
    \PiL_{0}(x,y) = \int_{\laces_{0}(0)}d\lace\, \EEa_{x} \left[
      \vertexL{s_0}\indic{X^{\sssL}_{s'_{0}}=y}\right].
  \end{equation}
  By definition, $d\lace$ for $\lace \in \laces_{0}(0) $ is a unit
  mass at $s_{0}=0$, and by definition $s_{0}'=s_{0}$.  Moreover,
  $X^{\sssL}_{0}=x$ under the measure $E_x$.  Hence \eqref{eq:SL}
  becomes
\begin{equation}
    \label{eq:SL2}
    \PiL_{0}(x,y) =  
\indic{x=y}\EEa_{x} \left[
      \vertexL{0}\right] 
      \end{equation}
   By the definition \eqref{e:vertex-def} of $\vertexL{0}$,
  \eqref{e:rptn-choice}, and $\partial_{t_{x}}Z^{\sssL}_{\V{0}}=0$, 
  \begin{align}
  \label{eq:V0}
    \vertexL{0} 
    &= -\lim_{s'\downarrow 0}\sum_{z\in\Lambda} \partial_{t_{z}}
      \log Z^{\sssL}_{\V{t}}
                     \big|_{\V{t}=\V{\tau}^{\sssL}_{[0,s']}}\, 
    \partial_{1} \tau^{\sssL}_{[0,s'],z} \\
  \label{eq:V1}
    &= \sum_{z\in\Lambda}\partial_{t_{z}}\log
      Z^{\sssL}_{\V{t}}
      \Big|_{\V{t}=\V{0}} \indic{X^{\sssL}_{0}=z}, \qquad \text{a.s.}
  \end{align}
  In obtaining \eqref{eq:V1} we used~\eqref{e:LT-D-2} and the
  right-continuity of the random walk.  Since $X^{\sssL}_{0}=x$ a.s.\
  under $\EEa_{x}$, the lemma follows by inserting the
    definition \eqref{eq:selfloop-def} of $\selfloopL{\cpl,\nu,x}$.
\end{proof}

\subsubsection{The vertex weight $\vertexL{}$}
\label{sec:PiL0vertex}

Recall from \eqref{e:vertex-def} that
$\vertexL{u,v} = \vertex{u,v} \bydef -\parto\partt\lgrptn_{u,v}$.
Define, for $x,y\in\Lambda$ and $u<v$,
\begin{equation}
  \label{eq:vweightc0}
  \vertexL{u,v} (x,y)
    \bydef
      \partial_{t_{x}} \partial_{t_{y}} \log Z^{\sssL}_{\V{t}}
       \big|_{\V{t}=\V{\tau}^{\sssL}_{[u,v]}}.
\end{equation}

\begin{lemma}
  \label{lem:vweightc}
  For all finite $\Lambda$ and all $u<v$,
  \begin{equation}
    \label{eq:vweightc}
    \vertexL{u,v} = 
    \sum_{x,y\in\Lambda}\vertexL{u,v}(x,y)\indic{\LX_{u}=x}\indic{\LX_{v}=y}.
  \end{equation}
\end{lemma}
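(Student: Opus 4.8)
The plan is to unfold the definition of $\vertexL{u,v}$ and apply the chain rule twice, exactly along the lines of the proof of \Cref{lem:w-rptn}. Recall that, with the choice \eqref{e:rptn-choice} of $\rptn$, one has $\vertexL{u,v} = -\parto\partt\lgrptn_{u,v}$ and $\lgrptn_{u,v} = F(\V{\tau}^{\sssL}_{[u,v]}) - \log Z^{\sssL}_{\V{0}}$, where $F(\V{u}) \bydef \log Z^{\sssL}_{\V{u}}$. By \ref{as:Z1}, \ref{as:Z2} and compactness of $\cb{0,\ell}^{\Lambda}$ (the fact already used in \Cref{lem:w-rptn}), $F$ is $\Ccal^{2}$ on $\cb{0,\ell}^{\Lambda}$ for every $\ell$, and since $\log Z^{\sssL}_{\V{0}}$ is constant it contributes nothing to the mixed derivative.

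First I would differentiate in the second variable: by the chain rule and \eqref{e:LT-D-1}, $\partt\lgrptn_{u,v} = \sum_{z\in\Lambda} F^{\sss (1)}_{z}(\V{\tau}^{\sssL}_{[u,v]})\indic{\LX_{v}=z}$ for a.e.\ $v\ge u$, where $F^{\sss (1)}_{z}$ is the partial derivative of $F$ in the $z$-coordinate. I would then differentiate this in the first variable. The indicators $\indic{\LX_{v}=z}$ do not depend on $u$, so only the factors $F^{\sss (1)}_{z}(\V{\tau}^{\sssL}_{[u,v]})$ are differentiated; applying the chain rule once more together with \eqref{e:LT-D-2} gives $\parto\partt\lgrptn_{u,v} = -\sum_{x,y\in\Lambda} F^{\sss (2)}_{xy}(\V{\tau}^{\sssL}_{[u,v]})\indic{\LX_{u}=x}\indic{\LX_{v}=y}$ a.e.\ on $\{u<v\}$. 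Multiplying by $-1$ and recognizing $F^{\sss (2)}_{xy}(\V{\tau}^{\sssL}_{[u,v]}) = \partial_{t_{x}}\partial_{t_{y}}\log Z^{\sssL}_{\V{t}}\big|_{\V{t}=\V{\tau}^{\sssL}_{[u,v]}} = \vertexL{u,v}(x,y)$ from the definition \eqref{eq:vweightc0} yields the claimed identity \eqref{eq:vweightc}.

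The chain-rule steps are licensed by the same facts as in \Cref{lem:w-rptn}: the local times $\V{\tau}^{\sssL}_{[u,v]}$ are absolutely continuous in each argument with derivatives given by \eqref{e:LT-D-1}--\eqref{e:LT-D-3}, $F$ is $\Ccal^{2}$, and \Cref{lem:Lip-AC} applies. The only point requiring any care — the closest thing to an obstacle — is the bookkeeping of the almost-everywhere qualifiers: \eqref{e:LT-D-1} holds for a.e.\ $v$ with $v\ge u$ and \eqref{e:LT-D-2} for a.e.\ $u$ with $u\le v$, and one must check that the resulting exceptional set is Lebesgue-null in $\{u<v\}$ so that \eqref{eq:vweightc} is a genuine a.e.\ identity there, consistent with the convention that \eqref{e:vertex-def} is an a.e.\ statement. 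Since the random walk makes only finitely many jumps on any bounded interval, for each trajectory the maps $u\mapsto \V{\tau}^{\sssL}_{[u,v]}$ and $v\mapsto \V{\tau}^{\sssL}_{[u,v]}$ are piecewise linear, so the exceptional set is contained in a finite union of lines and a Fubini argument closes the gap.
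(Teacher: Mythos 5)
Your proof is correct and follows essentially the same route as the paper, which simply refers to "a calculation similar to the proof of \Cref{lem:Selfloop}": two applications of the chain rule using \eqref{e:LT-D-1} and \eqref{e:LT-D-2}, with regularity licensed by \ref{as:Z1}, \ref{as:Z2} and the compactness/Lipschitz argument already carried out in \Cref{lem:w-rptn}. Your closing remark on the a.e.\ bookkeeping is a sensible addition that the paper leaves implicit.
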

\begin{proof}
  This follows from a calculation similar to the proof of \Cref{lem:Selfloop}.     
\end{proof}

\subsection{Bounds on \texorpdfstring{$\PiL_{m}$}{Pim}, \texorpdfstring{$m\geq 1$}{m>=1}}
\label{sec:Pi-DB}

Our bounds on $\PiL_{m}$ for $m\geq 1$ will rely on two
assumptions. Recall the definition~\eqref{eq:vweightc0} of
$\vertexL{u,v}(x,y)$.

\begin{assumptions}
  \label{hyp:G}
  \leavevmode
  \begin{enumerate}[label=H\arabic*, ref=H\arabic*,series=hyp]
  \item[\mylabel{as:G1}{(G1)}]
    For all $\V{t}\in[0,\infty)^{\Lambda}$, $\GrL_{\V{t}}\leq \GrL_{\V{0}}$. 
  \item[\mylabel{as:R1}{(R0)}] 
    There exists
    $\barvertL{}\colon \Lambda\times\Lambda\to\RR$ such that
    $\abs{\vertexL{u,v}(x,y)}\leq \barvertL{}(x,y)$
    for all $x,y\in\Lambda$ and $0\leq u<v<\infty$.
  \end{enumerate}
\end{assumptions}

Given vertices $x$ and $y$
in $\Lambda$, and $m\ge 1$, define
\begin{equation}
  \label{e:x-seq}
 \Lambda^{2m-2}_{x,y}\bydef \{((x_{i},x_{i}'))_{i=0,\dots, m}\in \Lambda^{2(m+1)}\, | \, x_{0}=x_{0}' = x \text{ and }
   x_{m}=x_{m}'=y\}. 
\end{equation}
Generic elements of $ \Lambda^{2m-2}_{x,y}$ will be denoted by
$\xb=((x_{i},x_{i}'))_{i=0,\dots, m}$.

\begin{figure}[]
  \centering
  \begin{tikzpicture}[baseline=8mm]
    \node[dot] (v0) at (0,0) {};
    \node[dot] (v1) at (2,0) {};
    \node[dot] (v0p) at (2,2) {};
    \node[dot] (v2) at (4,2) {};
    \node[dot] (v1p) at (4,0) {};
    \node[dot] (v3) at (6,0) {};
    \node[dot] (v2p) at (6,2) {};
    \node[dot] (v4) at (8,2) {};
    \node[dot] (v3p) at (8,0) {};
    \node[dot] (v5) at (10,0) {};
    \node[dot] (vmp) at (v0) {};
    \node[dot] (v5p) at (v5) {};

    \node at (vmp) [below] {$x$};
    \node at (v1) [below] {$x_{1}$};
    \node at (v0p) [above] {$x_{1}'$};
    \node at (v2) [above] {$x_{2}$};
    \node at (v1p) [below] {$x_{2}'$};
    \node at (v3) [below] {$x_{3}$};
    \node at (v2p) [above] {$x_{3}'$};
    \node at (v4) [above] {$x_{4}$};
    \node at (v3p) [below] {$x_{4}'$};
    \node at (v5p) [below] {$y$};

    \draw[black, very thick] (v0) -- (v1) -- (v0p) -- (v2) -- (v1p) -- (v3) --
    (v2p) -- (v4) -- (v3p) -- (v5);
    \draw[black,thick, decorate, decoration={zigzag,segment
      length = 5, amplitude=1}] (vmp) to (v0p);  
    \draw[black,thick, decorate, decoration={zigzag,segment
      length = 5, amplitude=1}] (v1) to (v1p);  
    \draw[black,thick, decorate, decoration={zigzag,segment
      length = 5, amplitude=1}] (v2) to (v2p); 
    \draw[black,thick, decorate, decoration={zigzag,segment
      length = 5, amplitude=1}] (v3) to (v3p);
    \draw[black,thick, decorate, decoration={zigzag,segment
      length = 5, amplitude=1}] (v4) to (v5p);
  \end{tikzpicture}
  \caption{The upper bound on $\PiL_{5}(x,y)$ from
      \Cref{prop:Pi-DB}. All vertices except $x$ and $y$ are summed
    over $\Lambda$. Lines connecting vertices represent functions:
    wavy lines represent $\barvertL{}$ and straight lines represent
    $G_{\V{0}}^{\sssL}$.}
  \label{fig:Pi-Bound}
\end{figure}

\begin{proposition}
  \label{prop:Pi-DB}
  Suppose \ref{as:Z1}--\ref{as:Z2}, \ref{as:G0}, and \Cref{hyp:G}
  hold. For $m\geq 1$ and $x,y\in\Lambda$,
  \begin{align}
    \nn
    \abs{\PiL_{m}(x,y)}  \leq
    &
      \sum_{\xb \in \Lambda_{x,y}^{2m-2}} 
      \GrL_{\V{0}}(x,x_{1}) \barvertL{}(x,x_{1}') \\ \label{eq:Pi-DB}
    &\times
    \prod_{j=1}^{m-1}\GrL_{\V{0}}(x_{j},x_{j}') \GrL_{\V{0}}(x_{j}',x_{j+1})
    \barvertL{}(x_{j},x_{j+1}') .
  \end{align}
\end{proposition}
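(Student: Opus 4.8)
The plan is to unwind the definition \eqref{eq:Pi-m} of $\PiL_m(x,y)$, substitute the formula \eqref{e:weight} for the lace weight $\weightL(\lace)$ in terms of the vertex functions $\vertexL{s_i,s'_{i+1}}$ and the $\tp$-factors $\rptnL_{s'_i,s'_{i+2}}/\rptnL_{s'_i,s'_{i+1}}$, and then repeatedly apply the Markov property to break the walk expectation at the lace times $s'_1 < s'_2 < \dots < s'_{m-1}$. First I would use \Cref{lem:vweightc} to write each $\vertexL{s_i,s'_{i+1}}$ as $\sum_{x_{i+1},x'_{i+1}} \vertexL{s_i,s'_{i+1}}(x_i,x'_{i+1})\indic{\LX_{s_i}=x_i}\indic{\LX_{s'_{i+1}}=x'_{i+1}}$ (relabelling the legs of the $i$-th vertex), so that the integrand in \eqref{eq:Pi-m} becomes a sum over $\xb \in \Lambda^{2m-2}_{x,y}$ of a product of indicators pinning $\LX$ at the times $s_i, s'_i$, times the scalar coefficients $\vertexL{s_i,s'_{i+1}}(x_i,x'_{i+1})$ and the $\rptnL$-ratios. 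Here I must be careful that $\rptnL_{s,t}$ depends on the local time $\V{\tau}^{\sssL}_{[s,t]}$ accumulated over $[s,t]$, which is measurable with respect to the increments of the walk on that interval.

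The key structural point is that a lace decomposes $[0,s'_m]$ into a chain of overlapping intervals, and the product $\tp(\lace) = \rptnL_{s'_0,s'_1}\prod_{i=0}^{m-2}\rptnL_{s'_i,s'_{i+2}}/\rptnL_{s'_i,s'_{i+1}}$ telescopes so that, after pinning $\LX_{s'_i}=x'_i$ for each $i$, the integral over the internal lace times factorizes. Concretely I would integrate out the times one block at a time: fix the pinned points $\xb$, and for each $i$ bound $\big|\vertexL{s_i,s'_{i+1}}(x_i,x'_{i+1})\big| \le \barvertL{}(x_i,x'_{i+1})$ using \ref{as:R1}, which is a constant free of the time variables. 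What remains is a product of expectations of the form $\int ds\, ds'\, \EEa\big[ (\text{ratio of } \rptnL) \indic{\LX_{\cdot}=\cdot}\big]$; the ratio $\rptnL_{s'_i,\ell}/\rptnL_{s'_i,s'_{i+1}}$ evaluated at the appropriate local-time argument is, by \eqref{e:rptn}, exactly $\rrptn_{\V{\tau}^{\sssL}_{[s'_i,s'_{i+1}]},\,\V{\tau}^{\sssL}_{[s'_{i+1},\ell]}}$, and applying the Markov property at $s'_{i+1}$ together with hypothesis \ref{as:G1} ($\GrL_{\V{t}}\le \GrL_{\V{0}}$) lets me bound each such block by a factor $\GrL_{\V{0}}(x'_i, x_{i+1})$ (for the "free-walk-between-legs" piece) or $\GrL_{\V{0}}(x_i,x'_i)$ (for the "strut" of vertex $i$). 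Summing the Dirac-mass/Lebesgue integrals over the lace times $s_1<s'_1<\dots<s'_{m-1}<s'_m$ against these pinned nonnegative blocks then reassembles exactly the product on the right side of \eqref{eq:Pi-DB}, with the picture being \Cref{fig:Pi-Bound}: wavy edges $\barvertL{}$ for the $m$ struts of the lace vertices and straight edges $\GrL_{\V{0}}$ for the $2(m-1)$ connecting walk segments.

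The main obstacle I expect is bookkeeping rather than conceptual: keeping track of which local-time interval each $\rptnL$-factor refers to after the Markov splittings, and verifying that the telescoping of $\tp(\lace)$ interacts correctly with the conditioning so that the crucial inequality $\rptnL_{s'_i,\ell}/\rptnL_{s'_i,s'_{i+1}} = \rrptn_{\V{\tau}_{[s'_i,s'_{i+1}]},\,\V{\tau}_{[s'_{i+1},\ell]}} \le \GrL_{\V{0}}$-type bound is legitimately applied via \ref{as:G1} at each of the $m-1$ internal lace endpoints. The integrability needed to justify interchanging sums, Markov conditioning, and integrals is supplied by \ref{as:F0} and \ref{as:G0} exactly as in the proof of \Cref{prop:lace-expansion}, so I would invoke \Cref{sec:markov} for the Markov-property justifications and otherwise proceed by induction on $m$ (or on the number of lace blocks stripped off), with the base case $m=1$ being a single strut $\GrL_{\V{0}}(x,x_1)\barvertL{}(x,x_1')$ with $x_1=x_1'=y$ recovering $|\PiL_1(x,y)| \le \GrL_{\V{0}}(x,y)\barvertL{}(x,y)$ directly from \Cref{lem:first-step}'s second term and \Cref{lem:vweightc}.
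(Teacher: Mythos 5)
Your proposal follows the paper's proof essentially exactly: pin positions via the vertex-function decomposition (\Cref{lem:vweightc}), bound the vertex coefficients by $\barvertL{}$ via \ref{as:R1}, and then strip off the outermost lace interval one step at a time by conditioning on $\mc{F}_{s'_i}$ and $\mc{F}_{s_i}$, invoking \Cref{lem:induct} and \ref{as:G1} to produce the Green's function factors, finishing by induction on $m$. The only small imprecision is the phrase that the $\tp(\lace)$ product ``telescopes so that... the integral... factorizes'': the overlapping-interval ratios do not in fact factorize, but this is immaterial since your subsequent description (recursive Markov peeling, induction on the number of blocks) is exactly what the paper does and is what actually makes the argument work.
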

See \Cref{fig:Pi-Bound} for a diagrammatic representation of the upper
bound, which explains our use of the term \emph{lace expansion}: the
upper bound is of exactly the form that occurs in discrete-time lace
expansion analyses of self-avoiding walk~\cite{BrSp85a,Slade}.  In
more detail, for the Edwards model, a computation (see
\eqref{e:WSAW-LV}) shows that we can choose $\barvertL{}$ to be a
constant times $\indic{x=y}$. This amounts to shrinking the wavy edges
in \Cref{fig:Pi-Bound} to points, and these are the diagrams occurring
in~\cite{BrSp85a,Slade}.

The next two subsections prove \Cref{prop:Pi-DB}. As $\Lambda$
is fixed it will be omitted from the notation.

\subsubsection{A preparatory lemma} 

Recall the definition \eqref{e:rptn-choice} of $\rptn_{s,t}$ and define
\begin{equation}
  \label{e:brptn-def}
  \brptn_{u,v}(w)
  \bydef
  \frac{\rptn_{u,w}}{\rptn_{u,v}}, \qquad u \le v \le w.
\end{equation}
By the definitions \eqref{e:weight} and \eqref{e:Pweight} of
$\weight (\lace)$ and $\Pweight (\lace)$ for
$\lace \in \laces_{m}$ with $m \ge 1$,
\begin{equation}
    \label{e:Pweight2}
    \begin{aligned}
      \weight (\lace) &= \vertex{} (\lace) \Pweight (\lace) ,
      \\
      \Pweight (\lace) &= \prod_{i=-1}^{m-2}
      \brptn_{s_{i}',s_{i+1}'}(s_{i+2}'), \hspace{10mm} s_{-1}' \bydef
      s_{0}'.
    \end{aligned}
\end{equation}
The term $\brptn_{s_{-1}',s_{0}'}(s_{1}')$ in the product is the
factor $\rptn_{s_{0}',s_{1}'} $ in \eqref{e:Pweight} (recall that
$\rptn_{s'_0,s'_0}=1$). For $k =1,2$ define $\Pweight_{-k} (\lace)$ by
replacing the upper limit $m-2$ in \eqref{e:Pweight2} by $m-2-k$. By
convention empty products are defined to be one.

\begin{lemma}
  \label{lem:induct} 
  Let $0 \le u_{1}\le u_{2} \le u_{3}$, and let $H\ge 0$ be
  $\Fcal_{u_{3}}$-measurable.  Then almost surely
\begin{equation}
  \label{e:lem:induct}
  \int_{\co{u_{3},\infty}}d\ell\, \EEa_{x} \left[ H\, 
    \brptn_{u_{1},u_{2}}(\ell)\,
    \indic{X_{\ell}=y} \Big| \,\Fcal_{u_{3}}\right]
  = 
  H\,
  \brptn_{u_{1},u_{2}}(u_{3})\,
  \G{[u_{1},u_{3}]}(X_{u_{3}},y)
\end{equation}
\end{lemma}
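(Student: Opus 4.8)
The plan is to first pull the $\Fcal_{u_{3}}$-measurable factors out of the conditional expectation, then split the local time over $[u_{1},\ell]$ at time $u_{3}$, and finally apply the Markov property at $u_{3}$ to recognise what remains as the Green's function $\G{[u_{1},u_{3}]}$. Since $u_{1}\le u_{2}\le u_{3}\le \ell$ and $\rptn>0$ on $\Dcal$ by \ref{as:Z1}, the definition \eqref{e:brptn-def} factorises as $\brptn_{u_{1},u_{2}}(\ell)=\brptn_{u_{1},u_{2}}(u_{3})\,\brptn_{u_{1},u_{3}}(\ell)$. The random variable $H\,\brptn_{u_{1},u_{2}}(u_{3})$ is nonnegative and $\Fcal_{u_{3}}$-measurable, because $\V{\tau}^{\sssL}_{[u_{1},u_{2}]}$ and $\V{\tau}^{\sssL}_{[u_{1},u_{3}]}$ are determined by the path up to time $u_{3}$. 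Hence, using ``take out what is known'' and conditional Tonelli (all integrands being nonnegative), the left-hand side of \eqref{e:lem:induct} equals $H\,\brptn_{u_{1},u_{2}}(u_{3})\int_{\co{u_{3},\infty}}d\ell\,\EEa_{x}\big[\brptn_{u_{1},u_{3}}(\ell)\,\indic{X_{\ell}=y}\,\big|\,\Fcal_{u_{3}}\big]$.

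Next, for $\ell\ge u_{3}$ additivity of local time gives $\V{\tau}^{\sssL}_{[u_{1},\ell]}=\V{\tau}^{\sssL}_{[u_{1},u_{3}]}+\V{\tau}^{\sssL}_{[u_{3},\ell]}$, whence by \eqref{e:rptn-choice} and \eqref{e:rptn} one has $\brptn_{u_{1},u_{3}}(\ell)=\rrptn_{\V{\tau}^{\sssL}_{[u_{1},u_{3}]},\,\V{\tau}^{\sssL}_{[u_{3},\ell]}}$, where $\V{\tau}^{\sssL}_{[u_{1},u_{3}]}$ is an $\Fcal_{u_{3}}$-measurable parameter while $\V{\tau}^{\sssL}_{[u_{3},\ell]}$ and $X_{\ell}$ are measurable functions of the path after time $u_{3}$. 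Freezing the parameter $\V{\tau}^{\sssL}_{[u_{1},u_{3}]}=\V{t}$ and the position $X_{u_{3}}=z$, the time-homogeneous Markov property at $u_{3}$ gives, for each fixed $\ell\ge u_{3}$,
\[
  \EEa_{x}\big[\rrptn_{\V{\tau}^{\sssL}_{[u_{1},u_{3}]},\,\V{\tau}^{\sssL}_{[u_{3},\ell]}}\,\indic{X_{\ell}=y}\,\big|\,\Fcal_{u_{3}}\big]
  =\EEa_{z}\big[\rrptn_{\V{t},\,\V{\tau}^{\sssL}_{[0,\ell-u_{3}]}}\,\indic{X_{\ell-u_{3}}=y}\big]\big|_{\V{t}=\V{\tau}^{\sssL}_{[u_{1},u_{3}]},\,z=X_{u_{3}}} .
\]

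Integrating over $\ell\in\co{u_{3},\infty}$ (conditional Tonelli again) and substituting $r=\ell-u_{3}$ turns the right-hand side above into precisely the integral defining $\GrL_{\V{t}}(z,y)$ in \eqref{e:GF}, evaluated at $\V{t}=\V{\tau}^{\sssL}_{[u_{1},u_{3}]}$ and $z=X_{u_{3}}$, i.e.\ $\G{[u_{1},u_{3}]}(X_{u_{3}},y)$. Multiplying by the prefactor $H\,\brptn_{u_{1},u_{2}}(u_{3})$ from the first paragraph yields \eqref{e:lem:induct}. On the event $\{X_{u_{3}}=\ast\}$ both sides vanish, since $\indic{X_{\ell}=y}=0$ for all $\ell\ge u_{3}$ and $\GrL_{\V{t}}(\ast,y)=0$ by convention. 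All quantities are nonnegative, $\V{\tau}^{\sssL}_{[u_{1},u_{3}]}\in[0,\infty)^{\Lambda}$ a.s., and $\GrL_{\V{t}}(z,y)<\infty$ for $\V{t}\in[0,\infty)^{\Lambda}$ by \eqref{eq:GF-finite}, so both sides are a.s.\ finite and the identity is genuine.

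The step I expect to require the most care is the application of the Markov property in the second paragraph: the quantity $\rrptn_{\V{\tau}^{\sssL}_{[u_{1},u_{3}]},\,\V{\tau}^{\sssL}_{[u_{3},\ell]}}\indic{X_{\ell}=y}$ is not a functional of the post-$u_{3}$ path alone, since it carries the $\Fcal_{u_{3}}$-measurable argument $\V{\tau}^{\sssL}_{[u_{1},u_{3}]}$ inside $\rrptn$. Making the ``freeze the parameter, then apply Markov'' manipulation rigorous — including the measurability of $(\V{t},z)\mapsto \GrL_{\V{t}}(z,y)$ and the precise statement of the Markov property for the killed chain $X$ — is exactly the sort of point deferred to \Cref{sec:markov}, which I would cite here.
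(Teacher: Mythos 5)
Your proof is correct and follows essentially the same route as the paper: factorise $\brptn_{u_{1},u_{2}}(\ell)=\brptn_{u_{1},u_{2}}(u_{3})\,\brptn_{u_{1},u_{3}}(\ell)$, pull out the nonnegative $\Fcal_{u_{3}}$-measurable factor $H\,\brptn_{u_{1},u_{2}}(u_{3})$, and recognise the remaining conditional integral as $\G{[u_{1},u_{3}]}(X_{u_{3}},y)$ via the Markov property. The only presentational difference is that you inline the Markov-property computation, whereas the paper simply invokes \Cref{lem:clemma}; the ``freeze the parameter, then apply Markov'' step you flag as needing care is exactly what \Cref{lem:clemma} packages up, with the rigorous justification supplied by \Cref{lem:funnyE,lem:conditional} in \Cref{sec:markov}.
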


\begin{proof}
  By the definition~\eqref{e:brptn-def} of 
  $\brptn_{u_{1},u_{2}}(u_{3})$ 
  \begin{equation}
  	\label{e:brptn-1}
      \brptn_{u_{1},u_{2}}(\ell)
      =
      \frac{\rptn_{u_{1},\ell}}{\rptn_{u_{1},u_{2}}}
      =
      \frac{\rptn_{u_{1},u_{3}}}{\rptn_{u_{1},u_{2}}}
      \frac{\rptn_{u_{1},\ell}}{\rptn_{u_{1},u_{3}}}
      =
      \brptn_{u_{1},u_{2}}(u_{3})
      \brptn_{u_{1},u_{3}}(\ell) .
  \end{equation}
  Insert~\eqref{e:brptn-1} into the left-hand side
  of~\eqref{e:lem:induct}. Using the nonnegativity of $H$,
    $\brptn_{u_{1},u_{2}}$ and $\brptn_{u_{1},u_{3}}$ we take the
  $\Fcal_{u_{3}}$-measurable factor $H\, \brptn_{u_{1},u_{2}}(u_{3})$
  outside the conditional expectation and the integral of what remains
  is
  \begin{equation}
    \label{eq:Int-t2}
    \int_{\co{u_{3},\infty}}d\ell\, \Ea \left[
      \brptn_{u_{1},u_{3}}(\ell) 
     \indic{X_{\ell}=y} \big| \Fcal_{u_{3}}\right]
    =
    \G{[u_{1},u_{3}]}(X_{u_{3}},y) \qquad\text{a.s.,}
   \end{equation}
   by the Markov property as stated in \Cref{lem:clemma}.  
\end{proof}

\subsubsection{Proof of Proposition~\ref{prop:Pi-DB}}

Before giving the proof of \Cref{prop:Pi-DB}, we recall the following
consequence of the Fubini--Tonelli theorem that will be used in the
proof. If $X_{u}$ is a real-valued stochastic process satisfying
$\EEa [\int_{[a,b]}du\,|X_{u}|] = \int_{[a,b]}du\,\EEa [|X_{u}|]
<\infty$, then integration and conditional expectation can be
interchanged:
\begin{equation}
  \int_{[a,b]} du\,\EEa \left[ X_{u}\mid \mathcal{G}
  \right] = \EEa \left[
    \int_{[a,b]}du\, X_{u}\mid \mathcal{G}
  \right], \quad \textrm{ a.s.} \label{int_cond}
\end{equation}
\begin{proof}[Proof of \Cref{prop:Pi-DB}]
Let $\lace\in\laces_{m} (0)$. Given a sequence
$\xb \in \Lambda^{2m-2}_{x,y}$ as in \eqref{e:x-seq} and a time
$u\in\co{0,\infty}$ define the indicator function
\begin{equation}
  \label{eq:filter}
  \filter_{\lace,\xb,u} \bydef \prod_{j:s_{j}\leq u}
  \indic{X_{s_{j}}=x_{j}} \prod_{j':s_{j'+1}'\leq u}
  \indic{X_{s_{j'+1}'}=x_{j'+1}'} 
\end{equation}
of the event that the path $X$ is at the points $(x_{i},x_{i+1}')$ at
the times $(s_{i},s_{i+1}')$ up to $u$ in $\lace =
((s_{i},s_{i+1}'))_{i=0,\dots, m-1}$.  See Figures~\ref{fig:six-lace}
and~\ref{fig:Pi-Bound} and think of the solid lines in the latter
figure as a representation of paths $X$ with $X_{0}=x$,
$X_{s_{1}}=x_{1}$, $X_{s_{1}'}=x_{1}'$, etc.  For $y \in \Lambda$ we
have
\begin{equation}
    \label{e:filter-sum}
    \indic{X_{0}=x}\indic{X_{s'_{m}=y}}  =
    \sum_{\xb \in \Lambda^{2m-2}_{x,y}}\filter_{\lace,\xb,s_{m}'}
\end{equation}
since $X$ cannot be at the absorbing state $\ast$ at times earlier
than $s_{m}'$ on the event $\{X_{s'_{m}}=y\}$. We have also used that
$\{X_{0}=x\}=\{X_{s_{0}}=x\}$ since $\lace\in\laces_{m}(0)$.  Define
\begin{equation}
  \label{eq:NGN}
  \ngn_{m,\xb}
  \bydef
  \int_{\laces_{m}(0)} d\lace\, \EEa_{x} \Big[
  \filter_{\lace,\xb,s_{m}'} \,
  \Pweight (\lace)  
  \Big].
\end{equation}
Since $\indic{X_{0}=x}=1$ a.s.\ under $\EEa_{x}$, we can insert
\eqref{e:filter-sum} into the definition \eqref{eq:Pi-m} of $\Pi_{m}$
to obtain
\begin{align}
  \label{eq:Pi-F-1a}
  \abs{\Pi_{m}(x,y)} 
  &= \Big|
    \sum_{\xb\in\Lambda^{2m-2}_{x,y}} \int_{\laces_{m}(0)} d\lace\, \EEa_{x} \Big[
  \filter_{\lace,\xb,s_{m}'} \,
  \vertex{}(\lace)\Pweight (\lace)  
  \Big]\Big|
  \\\label{eq:Pi-F-1}
  &\leq \sum_{\xb \in \Lambda^{2m-2}_{x,y}}
  \prod_{j=0}^{m-1}\barvert{}(x_{j},x_{j+1}') \ngn_{m,\xb},
\end{align}
where we have used the triangle inequality and \ref{as:R1} to bound
the vertex functions in $\vertex{}(\lace)$, and $\Pweight(\lace)>0$ to
remove absolute values.  This reduces \Cref{prop:Pi-DB} to proving
\begin{equation}
  \label{eq:Pi-DB-NoV}
  \ngn_{m,\xb}
  \leq
  G_{\V{0}}(x_{0},x_{1}) 
  \prod_{j=1}^{m-1}G_{\V{0}}(x_{j},x_{j}') G_{\V{0}}(x'_{j},x_{j+1}),
\end{equation}
for $m\geq 1$ and $\xb\in\Lambda^{2m-2}_{x,y}$, which we will do by
induction on $m$. The base case $m=1$ follows by noting that
$\filter_{\lace,\xb,s_{1}'}=\indic{X_{s_{1}'}=y}$ under $\EEa_x$ and
recalling that $\int_{\laces_{1}(0)}\,d\lace =
\int_{0}^{\infty}\,ds_{1}'$, so $\ngn_{1,\xb}=G_{\V{0}}(x,y)$.

Suppose~\eqref{eq:Pi-DB-NoV} holds when $m=n$ for some $n\geq 1$.
By \eqref{e:lace-def} with $s_{0}'=0$, for
$\lace\in\laces_{n+1}(0)$ the measure $d\lace$ factorizes as
$d\lace' \,ds_{n}'\, ds_{n+1}$, where $d\lace'$ is Lebesgue measure
on
\begin{equation}
    \label{e:proof:prop:Pi-DB}
    \laces_{n}'(0)
    =
    \{(s_{1},s_{1}',s_{2},
    	\dots, s_{n-1},s_{n-1}',s_{n}) \mid
                 0<s_{1}<s_{1}'<
                 	\dots <s_{n-1}'<s_{n}\} 
\end{equation}
and $ds_{n}'\, ds_{n+1}$ is Lebesgue measure on the set of
$(s_{n}',s_{n+1})$ such that $s_{n}<s_{n}'<s_{n+1}$. Rewriting
$\ngn_{n+1,\xb}$ using this factorization yields
\begin{equation}
    \label{e:ngn-1}
    \ngn_{n+1,\xb}
    =
    \int_{\laces_{n}'(0)}d\lace'\,
    \int_{[s_{n},\infty)} ds_{n}'\,
    \int_{[s_{n}',\infty)} ds_{n+1}\,
    \EEa_{x} \Big[
    \filter_{\lace,\xb,s_{n+1}}
    \Pweight (\lace)
  \Big].
\end{equation}
The induction step involves estimating the integrals over $s_{n+1}$
and $s_{n}'$ by \Cref{lem:induct} and \ref{as:G1}. To bound the
$s_{n+1}$ integral note the range of integration starts at $s_{n}'$
and accordingly insert a conditional expectation with respect to
$\Fcal_{s_{n}'}$ under the expectation $\EEa_{x}$.  Bringing the
$s_{n+1}$ integral inside the expectation yields
\begin{align}
  \label{e:ngn-2a}
  \ngn_{n+1,\xb}
  &=
  \int_{\laces_{n}'(0)}d\lace'\,
  \int_{[s_{n},\infty)}ds_{n}'\,
  \EEa_{x} \Big[
  \ti_{\lace,\xb,s_{n}'}
  \Big] , \quad \textrm{where}
  \\
  \label{e:ngn-2b}  
  \ti_{\lace,\xb,s_{n}'}
  &\bydef 
  \int_{\co{s_{n}',\infty}}ds_{n+1}\, \EEa_{x} \Big[
    \filter_{\lace,\xb,s_{n+1}}
    \Pweight (\lace)
    \Big| \Fcal_{s_{n}'}\Big].
\end{align}
Recall that $\Pweight_{-1} (\lace)$ was defined below
\eqref{e:Pweight2}, and note that
\begin{equation}
    \filter_{\lace, \xb, s_{n+1}}
    =
    \filter_{\lace,\xb,s_{n}'} \indic{X_{s_{n+1}} = \,x_{n+1}} , 
    \hspace{6mm}
    \Pweight (\lace)
    =
    \Pweight_{-1} (\lace)
    \brptn_{s_{n-1}',s_{n}'}(s_{n+1}).
\end{equation}
We insert these identities into $\ti_{\lace,\xb,s_{n}'}$ and apply
\Cref{lem:induct} with
$(u_{1},u_{2},u_{3}) = (s_{n-1}',s_{n}',s_{n}')$ and
$H = \filter_{\lace,\xb,s_{n}'}\,\Pweight_{-1} (\lace)$. Since
$H\geq 0$, after using \ref{as:G1} with
$\V{t}=\V{\tau_{[s_{n-1}',s_{n}']}}$ we obtain
\begin{equation}
  \label{e:ngn-3m}
  \ti_{\lace,\xb,s_{n}'}
  \le
  \filter_{\lace,\xb, s_{n}'} \,
  \Pweight_{-1} (\lace)\,
  G_{\V{0}}(x_{n}',x_{n+1})
\end{equation}
because $\brptn_{s_{n-1}',s_{n}'}(s_{n}') = 1$. Hence by
\eqref{e:ngn-2a} and that $x_{n+1}'=x_{n+1}$ by \eqref{e:x-seq}, 
\begin{equation}
  \label{e:ngn-3}
  \ngn_{n+1,\xb}
  \le
  \int_{\laces_{n}'(0)}d\lace'\,
  \int_{\co{s_{n},\infty}}ds_{n}'\,
  \EEa_{x} \Big[
  \filter_{\lace,\xb,s_{n}'} \,
  \Pweight_{-1} (\lace)\,
  \Big]\,
  G_{\V{0}}(x_{n}',x_{n+1}).
\end{equation}
For the
$s_{n}'$ integral in~\eqref{e:ngn-3} the procedure is similar so we
will be brief.  Insert a conditional expectation with respect to
$\Fcal_{s_{n}}$ under the expectation in \eqref{e:ngn-3}, bring the
integral over $s_{n}'$ inside $\EEa_{x}$, and then insert
\begin{equation}
    \filter_{\lace,\xb,s_{n}'}
    =
    \filter_{\lace,\xb,s_{n}}\,\indic{X_{s_{n}'}=\,x_{n}'} ,
    \hspace{6mm}
    \Pweight_{-1} (\lace)
    =
    \Pweight_{-2} (\lace)
    \brptn_{s_{n-2}',s_{n-1}'}(s_{n}') .
\end{equation}
We apply \Cref{lem:induct} with $(u_{1},u_{2},u_{3}) =
(s_{n-2}',s_{n-1}',s_{n})$ and $H=\filter_{\lace,\xb,s_{n}}$, with the
result, again after using \ref{as:G1},
\begin{align}
  \label{e:ngn-4}
  \ngn_{n+1,\xb}
  &\le
  \int_{\laces_{n}'(0)}d\lace'\,
  \EEa_{x} \Big[
  \filter_{\lace,\xb,s_{n}} \,
  \Pweight_{-2} (\lace)\,
  \brptn_{s_{n-2}',s_{n-1}'}(s_{n}) 
  \Big]\,
  \nn \\
  &\qquad \times 
  G_{\V{0}}(x_{n},x_{n}')
  G_{\V{0}}(x_{n}',x_{n+1}).
\end{align}
By \eqref{e:Pweight2}
  $ \Pweight_{-2} (\lace)\, \brptn_{s_{n-2}',s_{n-1}'}(s_{n}) $ equals
  $\Pweight (\lace')\mid_{s_{n}'=s_{n},x_{n}'=x_{n}}$.
  By~\eqref{e:proof:prop:Pi-DB} the measure spaces
  $(\laces'_n(0),d\lace')$ and $(\laces_{n}(0),d\lace)$ are the
  same. Therefore
\begin{align}
  \label{e:ngn-5}
  \ngn_{n+1,\xb}
  \le\left(
  \int_{\laces_{n}(0)}d\lace\,
  \EEa_{x} \Big[
  \filter_{\lace,\xb,s_{n}} \,   
  \Pweight (\lace)\,
  \Big]_{s_{n}'=s_{n},x_{n}'=x_{n}}\right)
  G_{\V{0}}(x_{n},x_{n}')
  G_{\V{0}}(x_{n}',x_{n+1}).
\end{align}
By~\eqref{eq:NGN} the quantity in brackets is $\ngn_{m,\xb}$ with
$m=n$. Applying the inductive hypothesis~\eqref{eq:Pi-DB-NoV} to this
term completes the proof.
\end{proof}

\section{Preparation for the infinite volume limit}
\label{sec:Bounds-2}

This section continues with step one of \Cref{sec:step-one}. The main
result is \Cref{cor:aPi-bound}, which is a bound on $\aPiL$, which is
the finite volume version of the term $\aPi$ in \eqref{eq:Int-Conv}. A
crucial aspect of the bound given by \Cref{cor:aPi-bound} is that it
is uniform in $\Lambda$.  The bound relies on \Cref{hyp:G2}, which
play a continuing role in the remainder of the paper.

\subsection{Convolution estimates}
\label{sec:Convolution-Estimates}

Recall that $\mnorm{x} = \max\{\abs{x},1\}$. The next lemma says that
when the sum over $w\in\ZZ^{d}$ is sufficiently convergent there is a bound as
if $w=0$. 
\begin{lemma}[Equation~(4.17) of~\cite{HHS2003}]
	\label{lem:HHS-1}
	Let $d\geq 5$, $u,v\in \ZZ^d$. There exists a $C>0$ such that 
    \begin{equation}
    \nonumber
	\sum_{w\in\ZZ^d} \mnorm{w}^{4-2d} \mnorm{w-v}^{2-d}
        \mnorm{w-u}^{2-d} 
        \leq C \mnorm{v}^{2-d}\mnorm{u}^{2-d}.
	\end{equation}
\end{lemma}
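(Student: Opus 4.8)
The plan is to split the sum over $w\in\ZZ^{d}$ according to the size of $\abs{w}$ relative to $\abs{v}$ and $\abs{u}$, and in each region either to replace a ``far'' factor $\mnorm{w-u}^{2-d}$ or $\mnorm{w-v}^{2-d}$ by the constant $\mnorm{u}^{2-d}$ or $\mnorm{v}^{2-d}$, or to pull such constants out of the strongly decaying weight $\mnorm{w}^{4-2d}=\mnorm{w}^{2-d}\,\mnorm{w}^{2-d}$. Since the claimed bound is symmetric in $u$ and $v$, we may assume $\abs{v}\le\abs{u}$. The two ingredients I would quote — each proved by a routine dyadic-shell decomposition of $\ZZ^{d}$, see \cite{MadrasSlade} — are: (a) for $0<\alpha,\beta<d$ with $\alpha+\beta>d$, $\sum_{w}\mnorm{w}^{-\alpha}\mnorm{w-x}^{-\beta}\le C\mnorm{x}^{-(\alpha+\beta-d)}$; and (b) for $\alpha>d$ and $0<\beta<d$, $\sum_{w}\mnorm{w}^{-\alpha}\mnorm{w-x}^{-\beta}\le C\mnorm{x}^{-\beta}$ (so in particular $\sum_{w}\mnorm{w}^{-\alpha}<\infty$). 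Because $d\ge5$ we have $2(d-2)=2d-4>d$, so (a) applies with $\alpha=\beta=d-2$ (giving $\mnorm{x}^{-(d-4)}$) and (b) applies with $\alpha=2d-4$; moreover $\mnorm{x}^{-(d-4)}\le1$.

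Concretely I would write $\ZZ^{d}=A\cup B\cup C$ with $A=\{\abs{w}\le\abs{v}/2\}$, $B=\{\abs{v}/2<\abs{w}\le\abs{u}/2\}$ and $C=\{\abs{w}>\abs{u}/2\}$. On $A$ one has $\abs{w-u}\ge\abs{u}/2$ and $\abs{w-v}\ge\abs{v}/2$, hence $\mnorm{w-u}^{2-d}\le C\mnorm{u}^{2-d}$ and $\mnorm{w-v}^{2-d}\le C\mnorm{v}^{2-d}$, so the sum over $A$ is at most $C\mnorm{u}^{2-d}\mnorm{v}^{2-d}\sum_{w}\mnorm{w}^{4-2d}\le C'\mnorm{u}^{2-d}\mnorm{v}^{2-d}$, the last sum being finite since $2d-4>d$. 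On $B$ one still has $\abs{w-u}\ge\abs{u}/2$, while $\abs{w}>\abs{v}/2$ gives $\mnorm{w}^{2-d}\le C\mnorm{v}^{2-d}$; using one of the two factors of $\mnorm{w}^{4-2d}$ this way, the sum over $B$ is at most $C\mnorm{u}^{2-d}\mnorm{v}^{2-d}\sum_{w}\mnorm{w}^{2-d}\mnorm{w-v}^{2-d}\le C'\mnorm{u}^{2-d}\mnorm{v}^{2-d}\mnorm{v}^{-(d-4)}\le C'\mnorm{u}^{2-d}\mnorm{v}^{2-d}$ by (a). On $C$ one has $\abs{w}>\abs{u}/2\ge\abs{v}/2$, so $\mnorm{w}^{4-2d}\le C\mnorm{u}^{2-d}\mnorm{v}^{2-d}$, and the sum over $C$ is at most $C\mnorm{u}^{2-d}\mnorm{v}^{2-d}\sum_{w}\mnorm{w-v}^{2-d}\mnorm{w-u}^{2-d}=C\mnorm{u}^{2-d}\mnorm{v}^{2-d}\sum_{w}\mnorm{w}^{2-d}\mnorm{w-(u-v)}^{2-d}\le C'\mnorm{u}^{2-d}\mnorm{v}^{2-d}\mnorm{u-v}^{-(d-4)}\le C'\mnorm{u}^{2-d}\mnorm{v}^{2-d}$, again by (a). Summing the three contributions gives the asserted bound.

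I do not expect a genuine obstacle here: the one point needing a little care is that all the elementary replacements $\mnorm{w-u}^{2-d}\le C\mnorm{u}^{2-d}$, $\mnorm{w}^{2-d}\le C\mnorm{v}^{2-d}$, etc., must be performed with $\mnorm{\cdot}=\max\{\abs{\cdot},1\}$ rather than $\abs{\cdot}$, so that they remain valid when the relevant vectors are short; this is routine. The only real content lies in the two convolution estimates (a) and (b), whose dyadic-decomposition proofs are standard and which I would cite rather than reproduce.
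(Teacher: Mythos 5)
The paper does not prove \Cref{lem:HHS-1} itself; it cites it as Equation (4.17) of \cite{HHS2003}. Your proof is correct and self-contained, and the three-region decomposition of $\ZZ^{d}$ according to whether $\abs{w}\le\abs{v}/2$, $\abs{v}/2<\abs{w}\le\abs{u}/2$, or $\abs{w}>\abs{u}/2$ is exactly the standard way such weighted convolution estimates are established; in effect you have reduced the statement to \Cref{lem:HHS-2} (Proposition 1.7(i) of~\cite{HHS2003}, which is your (a) and (b)) plus the elementary ``replacement'' inequalities $\mnorm{w-u}^{2-d}\le C\mnorm{u}^{2-d}$ on $\{\abs{w}\le\abs{u}/2\}$ and $\mnorm{w}^{2-d}\le C\mnorm{v}^{2-d}$ on $\{\abs{w}>\abs{v}/2\}$. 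The numerology is all in order: on $A$ you need $2d-4>d$ for $\sum_{w}\mnorm{w}^{4-2d}<\infty$; on $B$ and $C$ you apply (a) with $\alpha=\beta=d-2$, which requires $2(d-2)>d$; both hold iff $d\ge5$, matching the hypothesis, and $d\ge5$ also makes $\mnorm{x}^{-(d-4)}\le1$. Your caution about working with $\mnorm{\cdot}=\max\{\abs{\cdot},1\}$ rather than $\abs{\cdot}$ in the elementary replacements is the right point to flag, and it is indeed routine. So this is a correct blind proof of a result the paper cites without proof.
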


The next estimate says the convolution of two functions decays
according to whichever has the weaker decay, provided at least one of
them is integrable.
\begin{lemma}[Proposition~1.7(i) of~\cite{HHS2003}]
  \label{lem:HHS-2}
  Let $f,g\colon \ZZ^d \to \RR$ be such that
  $|f(x)| \leq \mnorm{x}^{-a}$, $|g(x)|\leq \mnorm{x}^{-b}$,
  $a\geq b>0$. There exists a $C>0$ such that
  \begin{equation}
    \nonumber
    |(f\ast g)(x)| \leq
    \begin{cases}
      C \mnorm{x}^{-b}, & a>d, \\
      C\mnorm{x}^{d-(a+b)}, & \textrm{$a<d$ and $a+b>d$}.
    \end{cases}
  \end{equation}
\end{lemma}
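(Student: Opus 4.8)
The plan is to prove the pointwise bound
$\abs{(f\ast g)(x)}\le \sum_{y\in\ZZ^{d}}\mnorm{y}^{-a}\mnorm{x-y}^{-b}=:I(x)$
and then estimate $I(x)$ by splitting the sum over $y$ according to the sizes of $\abs{y}$ and $\abs{x-y}$ relative to $\abs{x}$. First I would note that in both stated regimes one has $a+b>d$ (immediate when $a>d$, and assumed outright in the second case), so $I(x)$ is a convergent sum for every $x$ and $f\ast g$ is well defined. For $\mnorm{x}$ bounded by a fixed $R_{0}$ there are only finitely many values $I(x)$, hence $I(x)\le C_{0}$ there; since the target exponents $-b$ and $d-a-b$ are negative and $\mnorm{x}\ge 1$, the claimed inequality holds on this finite range after enlarging the constant. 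So it suffices to treat $\mnorm{x}=\abs{x}$ large.

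For $\abs{x}$ large I would decompose $\ZZ^{d}=R_{1}\cup R_{2}\cup R_{3}$ with $R_{1}=\{y:\abs{x-y}\le\abs{x}/2\}$, $R_{2}=\{y:\abs{y}\le\abs{x}/2\}$, and $R_{3}$ the complement. On $R_{1}$ one has $\abs{y}\ge\abs{x}/2$, so $\mnorm{y}^{-a}\le 2^{a}\mnorm{x}^{-a}$ and the contribution is at most $2^{a}\mnorm{x}^{-a}\sum_{\abs{z}\le\abs{x}/2}\mnorm{z}^{-b}$. On $R_{2}$, symmetrically, $\mnorm{x-y}^{-b}\le 2^{b}\mnorm{x}^{-b}$ and the contribution is at most $2^{b}\mnorm{x}^{-b}\sum_{\abs{z}\le\abs{x}/2}\mnorm{z}^{-a}$. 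On $R_{3}$ both $\abs{y}$ and $\abs{x-y}$ exceed $\abs{x}/2$, and the triangle inequality gives $\abs{y}<3\abs{x-y}$, hence $\mnorm{x-y}^{-b}\le 3^{b}\mnorm{y}^{-b}$ and the contribution is at most $3^{b}\sum_{\abs{y}>\abs{x}/2}\mnorm{y}^{-a-b}$.

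The remaining ingredient is the elementary lattice-sum estimate, proved by summing over dyadic annuli: for $r\ge 1$ and $c>0$, $\sum_{\abs{z}\le r}\mnorm{z}^{-c}\le C r^{d-c}$ if $c<d$ and $\le C$ if $c>d$, while $\sum_{\abs{z}>r}\mnorm{z}^{-c}\le C r^{d-c}$ if $c>d$ (at $c=d$ a harmless $\log r$ appears, absorbed below since the relevant exponent then improves strictly). Feeding these into the three contributions: when $a>d$, the $R_{1}$ term is $O(\mnorm{x}^{\min(d-a-b,\,-a)})\le O(\mnorm{x}^{-b})$, the $R_{2}$ term is $O(\mnorm{x}^{-b})$ (the sum over $R_{2}$ converges), and the $R_{3}$ term is $O(\mnorm{x}^{d-a-b})\le O(\mnorm{x}^{-b})$, since $a>d$ and $a\ge b$; adding up gives the first case. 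When $a<d$ and $a+b>d$ (so also $b\le a<d$), all three terms are $O(\mnorm{x}^{d-a-b})$, giving the second case.

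I do not expect a serious obstacle; the one point requiring care is the region $R_{3}$, where neither $\abs{y}$ nor $\abs{x-y}$ is small and one must exploit $\mnorm{y}\asymp\mnorm{x-y}$ together with the integrability hypothesis $a+b>d$ to control the resulting tail sum, plus the bookkeeping needed to match the three contributions to the advertised exponent in each regime.
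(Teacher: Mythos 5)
The paper does not prove this lemma: it is quoted verbatim from Proposition~1.7(i) of \cite{HHS2003} and used as a black box. Your argument is correct and is essentially the standard proof of such convolution bounds, and it matches the style of argument used in the cited reference: bound $|f\ast g|$ by $I(x)=\sum_y\mnorm{y}^{-a}\mnorm{x-y}^{-b}$, dispose of bounded $\abs{x}$ by finiteness, and for large $\abs{x}$ split $\ZZ^d$ into $R_1=\{\abs{x-y}\le\abs{x}/2\}$, $R_2=\{\abs{y}\le\abs{x}/2\}$, and the overlap region $R_3$, then invoke the dyadic-annulus estimates for $\sum_{\abs{z}\le r}\mnorm{z}^{-c}$ and $\sum_{\abs{z}>r}\mnorm{z}^{-c}$. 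The hypotheses are used exactly where they should be: $a\ge b$ keeps $R_1$ under control, $a+b>d$ makes the tail sum over $R_3$ summable, and $a>d$ (resp.\ $a<d$) selects which annular regime applies.

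One small bookkeeping slip worth flagging: in the $a>d$ case you write that the $R_1$ contribution is $O(\mnorm{x}^{\min(d-a-b,\,-a)})$, but the exponent actually achieved is $d-a-b$ when $b<d$ and $-a$ when $b>d$, so a uniform bound would involve $\max(d-a-b,-a)$ rather than $\min$ (the larger, i.e.\ less negative, exponent governs). This does not affect your conclusion: in either sub-case the exponent is $\le -b$ (using $a>d$ for $b<d$, and $a\ge b$ for $b\ge d$, with the borderline $b=d$ producing a logarithm that is absorbed because $-a<-d$ strictly), so the final step $R_1=O(\mnorm{x}^{-b})$ stands. Everything else, including the key observation $\mnorm{y}\asymp\mnorm{x-y}$ on $R_3$ and the reduction there to a single tail sum in $\mnorm{y}^{-a-b}$, is clean.
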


\Cref{fig:Conv-E} gives a diagrammatic representation of the next lemma.
\begin{figure}[]
  \centering
  \begin{equation}
	\nonumber
  \begin{tikzpicture}[baseline=8mm]
    \node[dot] (v3) at (6,0) {};
    \node[dot] (v2p) at (6,2) {};
    \node[dot] (v4) at (8,2) {};
    \node[dot] (v3p) at (8,0) {};
    \node[dot] (v5) at (10,0) {};

    \node at (v3) [below] {$v$};
    \node at (v2p) [above] {$u$};
    \node at (v4) [above] {$a$};
    \node at (v3p) [below] {$b$};

    \node at (v5) [below right] {$y$};

    \draw[black, very thick] 
    (v2p) -- (v4) -- (v3p) -- (v5);

    \draw[black,very thick] (v3) to[out=15, in=165]   (v3p);
    \draw[black,very thick] (v3) to[out=-15, in=195] (v3p);
    \draw[black,very thick] (v4) to[out=330,in=120]  (v5);
   \draw[black,very thick] (v4) to[out=300,in=150] (v5);
  \end{tikzpicture}
  \leq C \hspace{3mm}
    \begin{tikzpicture}[baseline=8mm]
    \node[dot] (v3) at (6,0) {};
    \node[dot] (v2p) at (6,2) {};
    \node[dot] (v5) at (10,0) {};

    \node at (v3) [below] {$v$};
    \node at (v2p) [above] {$u$};

    \node at (v5) [below right] {$y$};

    \draw[black, very thick]  (v2p) -- (v5);

    \draw[black,very thick] (v3) to[out=15, in=165]  (v5);
    \draw[black,very thick] (v3) to[out=-15, in=195] (v5);
  \end{tikzpicture}
\end{equation}
\caption{A diagrammatic depiction of \Cref{lem:Conv-E}. Solid lines
  represents factors $\mnorm{x_{2}-x_{1}}^{2-d}$.  The vertices
  $u,v,y$ are fixed, but $a$ and $b$ are summed over $\ZZ^{d}$. }
\label{fig:Conv-E}
\end{figure}

\begin{lemma}
\label{lem:Conv-E}
Fix $u,v,y\in \ZZ^d$, $d\geq 5$. There exists a $C>0$ such that
\begin{align}
    \label{eq:Bound-1}
  &\hspace{-10mm}\sum_{a,b\in \ZZ^d} \mnorm{u-a}^{2-d}
    \mnorm{y-a}^{4-2d} \mnorm{b-a}^{2-d} \mnorm{v-b}^{4-2d} \mnorm{y-b}^{2-d} 
  \\\nonumber
  &\leq C \mnorm{y-u}^{2-d} \mnorm{y-v}^{4-2d}.
	\end{align}
\end{lemma}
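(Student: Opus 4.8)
The plan is to collapse the double sum by two successive applications of \Cref{lem:HHS-1}. Since every summand is nonnegative, Tonelli's theorem lets me perform the sums in whichever order is convenient; I would sum over $b$ first and then over $a$.

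First I would isolate the factors depending on $b$, namely $\mnorm{b-a}^{2-d}\mnorm{v-b}^{4-2d}\mnorm{y-b}^{2-d}$. The substitution $w = b-v$ puts the heavy factor $\mnorm{\cdot}^{4-2d}$ at the origin, so the $b$-sum becomes $\sum_{w\in\ZZ^d}\mnorm{w}^{4-2d}\mnorm{w-(a-v)}^{2-d}\mnorm{w-(y-v)}^{2-d}$, which by \Cref{lem:HHS-1} is at most $C\mnorm{a-v}^{2-d}\mnorm{y-v}^{2-d}$. (Here one uses the symmetry $\mnorm{x}=\mnorm{-x}$ freely.)

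Substituting this bound back, the left-hand side of \eqref{eq:Bound-1} is at most $C\mnorm{y-v}^{2-d}\sum_{a\in\ZZ^d}\mnorm{u-a}^{2-d}\mnorm{y-a}^{4-2d}\mnorm{a-v}^{2-d}$. Now the heavy factor in the remaining sum is centered at $y$; translating by $w=a-y$ and applying \Cref{lem:HHS-1} once more gives $\sum_a\mnorm{u-a}^{2-d}\mnorm{y-a}^{4-2d}\mnorm{a-v}^{2-d}\le C\mnorm{u-y}^{2-d}\mnorm{v-y}^{2-d}$. Combining the two estimates yields the left-hand side $\le C^2\mnorm{y-u}^{2-d}\mnorm{y-v}^{4-2d}$, i.e.\ \eqref{eq:Bound-1} with constant $C^2$.

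There is no genuine obstacle beyond bookkeeping: the only point requiring care is recognizing in each step that, after the appropriate translation, the integrand matches the template of \Cref{lem:HHS-1} — one factor with exponent $4-2d$ and two factors with exponent $2-d$ — and noting that $d\ge 5$ is exactly the hypothesis under which that lemma applies. \Cref{fig:Conv-E} already records the shape of the final bound, and the argument simply contracts the two ``bubbles'' one at a time.
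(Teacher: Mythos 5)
Your proof is correct. It differs from the paper's only in the order of contraction, and in which of the two convolution lemmas is invoked in the second step. The paper applies \Cref{lem:HHS-1} to the sum over $a$ first; after that contraction the two remaining $\mnorm{y-b}^{2-d}$ factors at $b$ merge into $\mnorm{y-b}^{4-2d}$, so the $b$-sum is a plain two-factor convolution of $\mnorm{\cdot}^{4-2d}$ with itself, handled by the simpler \Cref{lem:HHS-2} (using $2d-4>d$ for $d\ge 5$). You instead contract the $b$-sum first; that leaves a three-factor $a$-sum still of the ``one heavy, two light'' shape, so you invoke \Cref{lem:HHS-1} a second time. Both versions rest on the same observation — contract the two bubbles one at a time, using the $\mnorm{x}=\mnorm{-x}$ symmetry to recenter — and both are complete. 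The paper's ordering is marginally more economical in that its second step needs only the generic convolution bound; your version is more uniform in that it reuses \Cref{lem:HHS-1} verbatim. Either is an acceptable proof.
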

\begin{proof}
  Lemma~\ref{lem:HHS-1} can be applied to the sum over $a$ to upper
  bound the left-hand side of \eqref{eq:Bound-1} as if $a=y$,
  that is, by a constant $C>0$ times
  \begin{equation}
    	\nonumber
		\mnorm{y-u}^{2-d} \sum_{b\in \ZZ^d} \mnorm{y-b}^{2-d}
                \mnorm{v-b}^{4-2d} \mnorm{y-b}^{2-d}.
  \end{equation}
  The sum over $b$ is a convolution of two functions that decay at
  rate $\gamma = 2d-4$.  As $\gamma$ exceeds $d$ when $d\geq 5$,
  Lemma~\ref{lem:HHS-2} implies the claimed upper bound.  \qedhere
\end{proof}

\subsection{\texorpdfstring{$\aPiL$}{aPIL} and uniform bounds on \texorpdfstring{$\aPiL$}{Psi}}
\label{sec:Pi-Bounds}

\begin{assumptions}
\label{hyp:G2}
For all $\Lambda\subset\ZZ^{d}$ finite,
\begin{enumerate}
\item[\mylabel{as:G2}{(G2)}]
  For $\Lambda'\subset\Lambda$ and $x,y\in\Lambda'$,
  $G^{\sss(\Lambda')}_{\V{0}}(x,y)\leq \GrL_{\V{0}}(x,y)$;
\item[\mylabel{as:R1p}{(R1)}] 
  There exists $\eta>0$
  independent of $\Lambda$, such that for $0\leq u<v<\infty$
  \begin{equation}
    \label{eq:R1p}
    \abs{\vertexL{u,v}(x,y)}
    \leq \eta
    (\indic{x=y} +
    \GrL_{\V{0}}(x,y)^{2}), \qquad x,y\in\Lambda.
  \end{equation} 
\end{enumerate}
\end{assumptions}
The assumption \ref{as:R1p} supersedes \Cref{as:R1}\ref{as:R1} by
stipulating the specific form
\begin{equation}
  \label{eq:barvertL}
  \barvertL{}
  =
  \eta
  (\indic{x=y} +
  \GrL_{\V{0}}(x,y)^{2})
\end{equation}
for the bound $\barvertL{}$ of \ref{as:R1}. This form is motivated by
our applications, as will become clear in \Cref{sec:verification}.
Note that \ref{as:G2} implies
$\lim_{\Lambda\uparrow\ZZ^d} \GrL_{\V{0}} =
\sup_{\Lambda\uparrow\ZZ^d} \GrL_{\V{0}}$.
The propositions of this section will be made under the assumption
that this limit satisfies a $\cirb$-IRB, i.e., that
\begin{equation}
  \label{KIRBnow}  
  \GrI_{\V{0}}(x,y)
  \bydef
  \sup_{\Lambda\uparrow\ZZ^d} \GrL_{\V{0}}(x,y)
  \le
  \cirb\greens(y-x), \qquad x,y\in\ZZ^{d}.
\end{equation}
The key aspect of the next proposition is that the bound is
independent of $\Lambda$ and proportional to $(c\eta)^m$.  Recall that
$\Pi_{m}^{\sssL}$ is defined by \eqref{eq:Pi-m}.

\begin{proposition}
  \label{prop:Pi-bound}
  Suppose $d\geq 5$, and that \ref{as:Z1}--\ref{as:Z2},
  \ref{as:G1}--\ref{as:G2}, and \ref{as:R1p} hold, and that a
  $\cirb$-IRB \eqref{KIRBnow} holds. Then there are constants
  $c_{1},c_{2}>0$ depending only on $d$, $\Jump$ and $\cirb$ such that
  for each $m\geq 1$
  \begin{equation}
    \label{eq:Pi-m-bound}    
    \abs{\PiL_{m}(x,y)}\leq c_{1}(c_{2}\eta )^{m} \mnorm{y-x}^{-3(d-2)}.
  \end{equation}
\end{proposition}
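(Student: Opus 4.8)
The plan is to combine the diagrammatic bound of \Cref{prop:Pi-DB} with the specific form \eqref{eq:barvertL} of $\barvertL{}$ from \ref{as:R1p}, the monotonicity \ref{as:G1}, the $\cirb$-IRB \eqref{KIRBnow}, and the convolution estimates of \Cref{sec:Convolution-Estimates}. First I would insert \eqref{eq:barvertL} into \eqref{eq:Pi-DB}; the indicator part $\indic{x=y}$ of $\barvertL{}$ collapses a wavy edge to a point, so expanding the product $\prod_{j} \barvertL{}(x_j,x_{j+1}')$ produces a sum of $2^{m}$ terms, each a product of factors $\eta\GrL_{\V{0}}^2$ on the surviving wavy edges and Kronecker deltas on the collapsed ones. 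Using \eqref{KIRBnow} (and $\GrL_{\V{0}}\le\GrI_{\V{0}}\le\cirb\greens\le\const\mnorm{\cdot}^{-(d-2)}$ via \eqref{SRWG2}, together with the trivial bound $\GrL_{\V{0}}(x,y)\le\GrI_{\V{0}}(x,y)$), every factor $\GrL_{\V{0}}(u,w)$ is bounded by $\const\,\mnorm{w-u}^{-(d-2)}$ and every factor $\GrL_{\V{0}}(u,w)^2$ by $\const\,\mnorm{w-u}^{-2(d-2)}=\const\,\mnorm{w-u}^{4-2d}$. Each of the $2^m$ resulting terms is then a nested $\ZZ^d$-convolution of such power-law factors, with total ``mass'' $m$ worth of $\eta$'s and universal constants depending only on $d,\Jump,\cirb$.

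The core estimate is then a uniform-in-$m$ bound on these convolutions, which I would prove by induction on $m$ using \Cref{lem:Conv-E} as the fundamental contraction step. \Cref{lem:Conv-E} says precisely that summing out one ``rung plus crossbar'' pair $(a,b)$ — i.e.\ one unit $\GrL_{\V{0}}(u,b)^2\GrL_{\V{0}}(b,a)\GrL_{\V{0}}(a,\cdot)^2\GrL_{\V{0}}(a,\cdot)$-type block of the ladder — costs a constant $C$ and returns a decay of the same shape $\mnorm{\cdot}^{2-d}\mnorm{\cdot}^{4-2d}$ one step further along. Running this $m-1$ times telescopes the full ladder down, picking up $C^{m}$ (absorb this into $c_2^m$) and leaving the single endpoint factor $\GrL_{\V{0}}(x,y)$-type decay $\mnorm{y-x}^{-(d-2)}$; one final application of \Cref{lem:HHS-1} (or \Cref{lem:HHS-2}) upgrades this to the claimed $\mnorm{y-x}^{-3(d-2)}$ by noting the endpoint carries extra $\mnorm{\cdot}^{4-2d}$ factors from the last wavy edges. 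For the terms in which many wavy edges have collapsed via $\indic{x=y}$, the corresponding convolutions are shorter but have strictly more rapid decay, so the same bound holds a fortiori; keeping careful track, each of the $2^m$ terms is bounded by $c_1(C')^m\eta^{m}\mnorm{y-x}^{-3(d-2)}$, and summing $2^m$ such terms gives \eqref{eq:Pi-m-bound} with $c_2 = 2C'$.

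I expect the main obstacle to be the bookkeeping in the induction: precisely formulating an inductive hypothesis on a ``partial ladder of length $k$'' whose free endpoint has decay $\mnorm{\cdot}^{2-d}$ times the appropriate power, in a way that is stable under one more application of \Cref{lem:Conv-E}, and verifying that the various degenerate configurations (collapsed wavy edges near the endpoints, the $m=1$ base case where $\PiL_1(x,y)=\selfloopL{\cpl,\nu,x}\indic{x=y}$ by \Cref{lem:Selfloop} and the bound is immediate, the $m=2$ case) all fit the same template. The convolution inputs (\Cref{lem:HHS-1,lem:HHS-2,lem:Conv-E}) do all the analytic work; the only real care needed is that all constants are genuinely independent of $\Lambda$, which follows because \ref{as:G1}, \ref{as:R1p} and \eqref{KIRBnow} are all uniform in $\Lambda$ and the convolution lemmas are statements about fixed power laws on $\ZZ^d$.
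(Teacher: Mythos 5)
Your overall strategy is the right one and matches the paper's: apply \Cref{prop:Pi-DB}, insert the form \eqref{eq:barvertL} of $\barvertL{}$ from \ref{as:R1p}, use the $\cirb$-IRB together with \eqref{SRWG2} to reduce everything to power laws, and run an induction on $m$ with \Cref{lem:Conv-E} as the one-rung contraction. However, the proposal as written contains two concrete errors in the induction that would derail it. First, your base case is wrong: by \Cref{lem:Selfloop} it is $\PiL_{0}$, not $\PiL_{1}$, that equals $\selfloopL{\cpl,\nu,x}\indic{x=y}$, and in any case \eqref{eq:Pi-m-bound} is only asserted for $m\ge 1$ and its proof cannot appeal to a bound on $\selfloopL{}$, since no such bound is among the hypotheses here. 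The correct $m=1$ base case is obtained directly from \Cref{prop:Pi-DB} with $m=1$: the product over $j$ is empty, so $|\PiL_{1}(x,y)|\le \GrL_{\V{0}}(x,y)\barvertL{}(x,y)$, and inserting the $\cirb$-IRB and \ref{as:R1p} already gives $\mnorm{y-x}^{-(d-2)}\cdot\mnorm{y-x}^{-(2d-4)}=\mnorm{y-x}^{-3(d-2)}$. Second, your description of the telescoping is off: you say each application of \Cref{lem:Conv-E} leaves an endpoint factor of decay $\mnorm{y-x}^{-(d-2)}$ which then needs a final \Cref{lem:HHS-1} upgrade. But \Cref{lem:Conv-E} returns $C\,\mnorm{y-u}^{2-d}\mnorm{y-v}^{4-2d}$ — a factor $\mnorm{\cdot}^{2-d}\cdot\mnorm{\cdot}^{4-2d}$, not just $\mnorm{\cdot}^{2-d}$ — so after $m-1$ iterations (landing on the $m=1$ block $\bar A(x,x;y,y)$) you already have the full $\mnorm{y-x}^{-3(d-2)}$ and no additional convolution estimate is required. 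There is no residual convolution left at the last step to which \Cref{lem:HHS-1} could even be applied.

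Separately, the $2^{m}$ expansion into collapsed/non-collapsed wavy edges is correct in principle (since $\indic{x=y}\le\mnorm{y-x}^{4-2d}$ pointwise the collapsed terms really are dominated) but it is an unnecessary detour that multiplies the bookkeeping you already flagged as the main obstacle. The cleaner route, which is what the paper does, is to note $\indic{x=y}\mnorm{y-x}^{4-2d}=\indic{x=y}$ so that $\barvertL{}(x,y)\le\eta\ob{1+\cirb_{1}^{2}}\mnorm{y-x}^{4-2d}$ in one step, define a single $\Lambda$-independent block $\bar A(u,u';v,v')$ dominating $\GrL_{\V{0}}\barvertL{}\GrL_{\V{0}}$, and then induct on $U_{m}(x,y)\bydef\sum_{\xb}\prod_{j}\bar A$ without ever splitting into cases; this eliminates both the $2^{m}$ resummation and the need to argue about degenerate configurations near the endpoints.
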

\begin{proof}
  The basic input in our estimates is that by~\ref{as:G1}
  and~\ref{as:G2}, $\GrL_{\V{t}}(x,y)\leq \GrI_{\V{0}}(x,y)$, so the
  $\cirb$-IRB and \eqref{SRWG2} imply
  \begin{equation}
    \label{eq:G-FV}
    \GrL_{\V{t}}(x,y) \leq \cirb\tilde C_{\Jump} \mnorm{y-x}^{2-d},
  \end{equation}
  and hence, letting $\cirb_{1}=\max\{\cirb \tilde C_{\Jump},1\}$,
  by~\ref{as:R1p} in the form \eqref{eq:barvertL} and~\eqref{eq:G-FV},
  \begin{equation}
    \label{eq:vertex-FV}
    \barvertL{}(x,y)    
    \leq \eta                           
    (\indic{x=y}+\cirb_{1}^{2})\mnorm{y-x}^{4-2d}.
  \end{equation}
  For $u,u',v,v' \in\ZZ^{d}$, define\vspace{-1em}
  \begin{equation}
    A^{\sss(\Lambda)}(u,u';v,v')
    \bydef 
    \GrL_{\V{0}}(u,u')\barvertL{}(u,v')\GrL_{\V{0}}(u',v) 
    \eqqcolon
    \begin{tikzpicture}[baseline=4mm]
      \node[dot] (v1) at (1,0) {};
      \node[dot] (v0p) at (1,1) {};
      \node[dot] (v2) at (2,1) {};
      \node[dot] (v1p) at (2,0) {};
      \node at (v1) [below] {$u\phantom{'}$};
      \node at (v0p) [above] {$u'$};
      \node at (v2) [above] {$v$};
      \node at (v1p) [below] {$v'$};
      \draw[black, very thick]
       (v1) -- (v0p) -- (v2);
      \draw[black,thick, decorate, decoration={zigzag,segment
       length = 5, amplitude=1}] (v1) to (v1p);  
    \end{tikzpicture},
  \end{equation}
  where the right-hand side follows the diagrammatic notation of
  \Cref{fig:Pi-Bound}.  Recall the notation $\xb$ defined in
  \eqref{e:x-seq}, and note that \ref{as:G2} combined with a
  $\cirb$-IRB implies \ref{as:G0} holds.  Hence we can apply
  Proposition~\ref{prop:Pi-DB}, and this proposition can be rewritten
  as
  \begin{equation}
    \label{eq:Pib1}
    \abs{\PiL_{m}(x,y)}
    \leq \sum_{\xb\in\Lambda^{2m-2}_{x,y}} 
    \GrL_{\V{0}}(x_{0}',x_{1})\barvertL{}(x_{0},x_{1}')
    \prod_{j=1}^{m-1}A^{\sss(\Lambda)}(x_{j},x_{j}';x_{j+1},x_{j+1}') .
  \end{equation}
  To check this claim compare Figure~\ref{fig:Pi-Bound} with
  \begin{equation}
    \begin{tikzpicture}[baseline=8mm]
      \node[dot] (v0p) at (1,0) {};
      \node[dot] (v1) at (1,1) {};
      \node[dot] (v1p) at (2,1) {};
      \node[dot] (v2) at (2,0) {};
      \node at (v1) [above] {$x_{0}\phantom{'}$};
      \node at (v0p) [below] {$x_{0}'$};
      \node at (v2) [below] {$x_{1}\phantom{'}$};
      \node at (v1p) [above] {$x_{1}'$};
  
      \draw[black, very thick]
      (v0p) -- (v2);
      \draw[black,thick, decorate, decoration={zigzag,segment
        length = 5, amplitude=1}] (v1) to (v1p);  
    \end{tikzpicture}
    \begin{tikzpicture}[baseline=8mm]
      \node[dot] (v1) at (1,0) {};
      \node[dot] (v0p) at (1,1) {};
      \node[dot] (v2) at (2,1) {};
      \node[dot] (v1p) at (2,0) {};
      \node at (v1) [below] {$x_{1}\phantom{'}$};
      \node at (v0p) [above] {$x_{1}'$};
      \node at (v2) [above] {$x_{2}\phantom{'}$};
      \node at (v1p) [below] {$x_{2}'$};
  
      \draw[black, very thick]
      (v1) -- (v0p) -- (v2);
      \draw[black,thick, decorate, decoration={zigzag,segment
        length = 5, amplitude=1}] (v1) to (v1p);  
    \end{tikzpicture}
    \begin{tikzpicture}[baseline=8mm]
      \node[dot] (v0p) at (1,0) {};
      \node[dot] (v1) at (1,1) {};
      \node[dot] (v1p) at (2,1) {};
      \node[dot] (v2) at (2,0) {};
      \node at (v1) [above] {$x_{2}\phantom{'}$};
      \node at (v0p) [below] {$x_{2}'$};
      \node at (v2) [below] {$x_{3}\phantom{'}$};
      \node at (v1p) [above] {$x_{3}'$};

      \draw[black, very thick]
      (v1) -- (v0p) -- (v2);
      \draw[black,thick, decorate, decoration={zigzag,segment
        length = 5, amplitude=1}] (v1) to (v1p);  
    \end{tikzpicture}
    \begin{tikzpicture}[baseline=8mm]
      \node[dot] (v1) at (1,0) {};
      \node[dot] (v0p) at (1,1) {};
      \node[dot] (v2) at (2,1) {};
      \node[dot] (v1p) at (2,0) {};
      \node at (v1) [below] {$x_{3}\phantom{'}$};
      \node at (v0p) [above] {$x_{3}'$};
      \node at (v2) [above] {$x_{4}\phantom{'}$};
      \node at (v1p) [below] {$x_{4}'$};

      \draw[black, very thick]
      (v1) -- (v0p) -- (v2);
      \draw[black,thick, decorate, decoration={zigzag,segment
        length = 5, amplitude=1}] (v1) to (v1p);  
    \end{tikzpicture}
    \begin{tikzpicture}[baseline=8mm]
      \node[dot] (v0p) at (1,0) {};
      \node[dot] (v1) at (1,1) {};
      \node[dot] (v1p) at (2,1) {};
      \node[dot] (v2) at (2,0) {};
      \node at (v1) [above] {$x_{4}\phantom{'}$};
      \node at (v0p) [below] {$x_{4}'$};
      \node at (v2) [below] {$x_{5}\phantom{'}$};
      \node at (v1p) [above] {$x_{5}'$};
  
      \draw[black, very thick]
      (v1) -- (v0p) -- (v2);
      \draw[black,thick, decorate, decoration={zigzag,segment
        length = 5, amplitude=1}] (v1) to (v1p);  
    \end{tikzpicture}
  \end{equation}
  and recall that $x_{0}=x_{0}' = x$ and $x_{m}=x_{m}'=y$ and there is
  a sum over the remaining $x_{i},x_{i}'$. To estimate the summands
  in~\eqref{eq:Pib1} we introduce
  \begin{equation}
    \label{eq:barA}
    \bar A(u,u';v,v')
    \bydef
    \frac{\cirb_{1}^{2}(1+\cirb_{1}^{2})}{\mnorm{u-u'}^{d-2}
      \mnorm{u-v'}^{2d-4} \mnorm{u'-v}^{d-2}}. 
  \end{equation}

  Inserting the bounds~\eqref{eq:G-FV} (with $\V{t}=\V{0}$)
  and~\eqref{eq:vertex-FV} into $A^{\sss(\Lambda)} (u,u';v,v')$ we
  obtain
  \begin{align}
    \label{eq:barA1}
    A^{\sss(\Lambda)}(u,u';v,v')
    &\leq \eta
    \bar A(u,u';v,v') , \\
    \label{eq:barA2}
    \GrL_{\V{0}}(x_{0}',x_{1})\barvertL{}(x_{0},x_{1}')
    & \leq \eta
    \cirb_{1}^{-1}\bar A(x_{0},x_{0}';x_{1},x_{1}') ,
  \end{align}
  where \eqref{eq:barA2} holds because $x_{0}'=x_{0}$.  Inserting
  \eqref{eq:barA1}--\eqref{eq:barA2} into \eqref{eq:Pib1} yields
  \begin{equation}
    \label{eq:Pibb}
    \abs{\PiL_{m}(x,y)}
    \leq
    \cirb_{1}^{-1} 
    \eta^{m}
    U_{m}(x,y)
  \end{equation}
  where
  \begin{equation}
    U_{m}(x,y)
    \bydef 
      \sum_{\xb\in\ZZ^{d(2m-2)}_{x,y}}
      \prod_{j=0}^{m-1} \bar A(x_{j},x_{j}';x_{j+1},x_{j+1}') .
    \label{eq:Pib2}
  \end{equation}
  In obtaining \eqref{eq:Pibb} sums over vertices in $\Lambda$ have
  been extended to sums over $\ZZ^{d}$, which gives an upper bound as
  all terms are non-negative.  To prove \eqref{eq:Pi-m-bound} holds
  for $m\geq 1$ it therefore suffices to establish the upper bound
  \begin{align}
    U_{m}(x,y)
    &\leq
    c_{1}c_{2}^{m}\mnorm{y-x}^{-3(d-2)}.
    \label{eq:Pib4}
  \end{align}
  We prove~\eqref{eq:Pib4} with $c_{1}=\cirb_{1}^{2}(1+\cirb_{1}^{2})$
  and $c_{2}=\max\{1,C(\cirb_{1})\}$ (where $C$ is a constant defined
  in~\eqref{eq:Pib3} below) by induction. For $m=1$ there is no sum in
  \eqref{eq:Pib2}, so the bound follows from $x_{0}=x_{0}' = x$,
  $x_{1}=x_{1}'=y$, and \eqref{eq:barA}.

  Suppose the upper bound has been established for some $n-1\geq 1$.
  By multiplying both sides of Lemma~\ref{lem:Conv-E} by
    $\mnorm{u-v}^{2-d}$ and inserting \eqref{eq:barA} there is a
  $C=C(\cirb_{1})>0$ such that for $u,v,y \in \ZZ^{d}$,
  \begin{equation}
    \label{eq:Pib3}
    \sum_{a,b\in\ZZ^{d}}\bar A(v,u;a,b)\bar A(a,b;y,y)
    \leq C \bar A(v,u;y,y) .
  \end{equation}
  Let $m=n$. By using~\eqref{eq:Pib3} to estimate the sum over
  $x_{n-1},x_{n-1}'$ in the definition of $U_{n}$ and then using the
  induction hypotheses we have
  \begin{align}
    U_{n}(x,y)
    &=
    \sum_{\xb\in\ZZ^{d(2n-2)}_{x,y}}
      \prod_{j=0}^{n-1}
      \bar A(x_{j},x_{j}';x_{j+1},x_{j+1}') \\
    &\leq
    C \sum_{\xb\in\ZZ^{d(2n-4)}_{x,y}}
      \prod_{j=0}^{n-2}
      \bar A(x_{j},x_{j}';x_{j+1},x_{j+1}') \\\nn
    &\leq
    c_{1}c_{2}^{n}
      \mnorm{y-x}^{-3(d-2)},
  \end{align}
  where in the second line we have redefined $x_{n-1}\bydef x_{n}$ and
  $x_{n-1}'\bydef x_{n}'$. The final line follows by recalling that
  $x_{n}=x_{n}'=y$.
\end{proof}

Recall that $\Pi^{\sssL}=\sum_{m\ge 0}\Pi_{m}^{\sssL}$. Define
$\aPiL$, the finite-volume precursor to $\aPi$ from
\Cref{sec:infrared}, by
\begin{equation}
    \label{eq:aPi}
    \aPiL(x,y) \bydef \sum_{m\geq 1}\PiL_{m}(x,y) =\PiL_{}(x,y)-\PiL_{0}(x,y).
\end{equation}
Summing~\eqref{eq:Pi-m-bound} over $m\geq 1$ immediately gives the
following.

\begin{corollary}
  \label{cor:aPi-bound}
  Under the hypotheses of \Cref{prop:Pi-bound}, if $c_2\eta<1$ then
  \begin{equation}
    \label{eq:Pi-Decay}
    |\aPiL(x,y)| \leq \frac{c_1c_2\eta}{1-c_2\eta} 
    \mnorm{y-x}^{-3(d-2)}, \qquad x,y\in\ZZ^{d}.
  \end{equation}
\end{corollary}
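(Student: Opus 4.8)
The plan is to deduce \eqref{eq:Pi-Decay} directly from \Cref{prop:Pi-bound} by a termwise triangle inequality and the summation of a geometric series. Concretely, I would start from the definition \eqref{eq:aPi}, $\aPiL(x,y) = \sum_{m\geq 1}\PiL_{m}(x,y)$, and apply the triangle inequality to obtain $|\aPiL(x,y)| \leq \sum_{m\geq 1}|\PiL_{m}(x,y)|$. Since the hypotheses of \Cref{prop:Pi-bound} are assumed, its conclusion \eqref{eq:Pi-m-bound} gives $|\PiL_{m}(x,y)| \leq c_{1}(c_{2}\eta)^{m}\mnorm{y-x}^{-3(d-2)}$ for every $m\geq 1$. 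Substituting this bound and factoring out the $m$-independent factor $c_{1}\mnorm{y-x}^{-3(d-2)}$ yields
\[
|\aPiL(x,y)| \leq c_{1}\mnorm{y-x}^{-3(d-2)}\sum_{m\geq 1}(c_{2}\eta)^{m}.
\]
Under the standing assumption $c_{2}\eta<1$ this geometric series converges to $\frac{c_{2}\eta}{1-c_{2}\eta}$, which is exactly the claimed inequality.

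The only point that deserves a sentence of care is convergence: the series of upper bounds $\sum_{m\geq 1}c_{1}(c_{2}\eta)^{m}$ is finite precisely because $c_{2}\eta<1$, and this simultaneously justifies the rearrangement above and shows that the defining series for $\aPiL(x,y)$ in \eqref{eq:aPi} converges absolutely, so $\aPiL(x,y)$ is well-defined with the stated bound, which is uniform in $\Lambda$ since the constants $c_{1},c_{2}$ from \Cref{prop:Pi-bound} depend only on $d$, $\Jump$, and $\cirb$. There is no genuine obstacle in this corollary: all the substance — the finite-volume lace expansion of \Cref{prop:lace-expansion}, the diagrammatic bound \Cref{prop:Pi-DB}, and the convolution estimates behind \Cref{prop:Pi-bound} — has already been carried out, and what remains is a one-line geometric-series estimate.
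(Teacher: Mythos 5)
Your proof is correct and matches the paper's own argument, which is simply that summing the bound \eqref{eq:Pi-m-bound} of \Cref{prop:Pi-bound} over $m\geq 1$ gives a geometric series. The extra remarks about absolute convergence and $\Lambda$-uniformity are fine but unnecessary amplifications of the same one-line computation.
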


\section{The lace expansion in infinite volume}
\label{sec:nIVL}

The main result of this section is \Cref{prop:IVL*}, which constructs
$\selfloop{\cpl,\nu}$ and $\aPi_{\cpl,\nu}$ such that
\eqref{eq:Int-Conv} holds.  This completes a key part of step one of
\Cref{sec:step-one}.  The proof uses \Cref{cor:aPi-bound} and the
algebraic structure of \Cref{prop:lace-expansion} to take the infinite
volume limit of \Cref{prop:lace-expansion}.  In particular we prove
the existence of the infinite volume limit $\PiI$ of $\PiL$.

\subsection{The infinite volume limit of $\GrL$}
\label{sec:infin-volume-limit-3}

We begin by establishing some properties of $\GrI_{\V{0}}$
which was defined in~\eqref{KIRBnow}.

\begin{lemma}
  \label{lem:Ginf}
  If \ref{as:Z1}, \ref{as:G2} and a $\cirb$-IRB \eqref{KIRBnow}
    holds then $\GrI_{\V{0}}(a,b)$ is non-negative and
    $\ZZ^{d}$-symmetric.
\end{lemma}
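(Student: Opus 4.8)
The plan is to obtain both properties by passing to the limit along an increasing sequence of finite volumes, using \ref{as:G2} to control the limit as a supremum.

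First I would fix $a,b\in\ZZ^d$ and note that by \ref{as:Z1} (strict positivity of $Z$, hence $\rrptn>0$) the finite-volume Green's functions $\GrL_{\V{0}}(a,b)$ are non-negative for every finite $\Lambda$ containing $a,b$ — indeed they are strictly positive, as observed just after \eqref{e:GF}. By \eqref{KIRBnow} the quantity $\GrI_{\V{0}}(a,b) = \sup_{\Lambda\uparrow\ZZ^d}\GrL_{\V{0}}(a,b)$ is a supremum of non-negative numbers, hence non-negative (and finite, being bounded by $\cirb\greens(b-a)$). That disposes of non-negativity.

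For $\ZZ^d$-symmetry, let $T\in\mathrm{Aut}(\ZZ^d)$. The key point is that the law of the random walk $\ZX$ is invariant under $T$: by \ref{as:J3} the jump kernel $\Jump$ is $\ZZ^d$-symmetric, so $\DI_{Tx,Ty}=\DI_{x,y}$, and hence the generator — and therefore the law of $\ZX$ started at $a$, pushed forward by $T$, equals the law of $\ZX$ started at $Ta$. Consequently the vector of local times transforms by $\tau^{\sss(T\Lambda)}_{[0,\ell],Tx}\circ$(the walk from $Ta$) has the same law as $\tau^{\sss(\Lambda)}_{[0,\ell],x}\circ$(the walk from $a$), and the weight $Z_{\V{t}}$ depends on $\V{t}$ only through the unordered collection of coordinates in a way that is manifestly symmetric under relabelling sites by $T$ (this holds for both the Edwards model \eqref{eq:defSAW} and the $\pf$ model \eqref{ZdefPF1}, and at this level of generality one simply needs $Z$ to be defined on $[0,\infty)^\Lambda$ in a $T$-equivariant manner, which is part of the standing setup since $Z$ is a fixed function of the multiset of local times). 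It follows from \eqref{e:GF} that
\begin{equation}
  G^{\sss(T\Lambda)}_{\V{0}}(Ta,Tb) = \GrL_{\V{0}}(a,b)
\end{equation}
for every finite $\Lambda$. Now take the supremum over $\Lambda\uparrow\ZZ^d$ on the left — since $T$ is a bijection of $\ZZ^d$, as $\Lambda$ ranges over an exhausting sequence so does $T\Lambda$ — and over $\Lambda\uparrow\ZZ^d$ on the right, to get $\GrI_{\V{0}}(Ta,Tb)=\GrI_{\V{0}}(a,b)$, which is the desired $\ZZ^d$-symmetry.

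The only mildly delicate step is justifying the finite-volume equivariance identity $G^{\sss(T\Lambda)}_{\V{0}}(Ta,Tb)=\GrL_{\V{0}}(a,b)$: it requires observing that $T$ maps edges of $\ZZ^d$ to edges (so the walk structure is preserved) and that the function $Z$ is compatible with relabelling sites by $T$. For the two models of interest this is immediate from the explicit formulas, since $Z_{\V{t}}$ there is a function only of $\sum_x t_x^2$ and $\sum_x t_x$ (Edwards) or of the analogous symmetric combination (for $\pf$, after the Gaussian integration, using that the covariance $(-\DL)^{-1}$ is itself $T$-equivariant by \ref{as:J3}). I would state this as the main content of the argument, with the supremum-of-non-negatives remark handling non-negativity in one line.
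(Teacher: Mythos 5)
Your proof follows essentially the same route as the paper's: non-negativity is immediate from \ref{as:Z1} and \eqref{e:GF}, and $\ZZ^d$-symmetry of the limit comes from finite-volume equivariance of $\GrL_{\V{0}}$ under lattice automorphisms combined with exhaustion-independence of the limit, which you obtain via the supremum form of \ref{as:G2} where the paper uses an interleaved-exhaustions argument. Your explicit flagging that the finite-volume identity $G^{\sss(T\Lambda)}_{\V{0}}(Ta,Tb)=\GrL_{\V{0}}(a,b)$ requires $T$-equivariance of $Z$ (which \ref{as:Z1} alone does not supply, and which the paper uses only implicitly) is a fair and honest observation.
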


\begin{proof}
  Non-negativity is clear from \ref{as:Z1} and the
  definition \eqref{e:GF} of $\GrL$.

  By the $\cirb$-IRB and monotone convergence provided by \ref{as:G2}
  $\lim_{\Lambda\uparrow \ZZ^{d}}$ $\GrL_{\V{0}}(a,b)$ exists for any
  choice of exhaustion $\Lambda_{n}\uparrow\ZZ^{d}$. Given
  $\Lambda_{n}\uparrow\ZZ^{d}$ and $\Lambda'_{n}\uparrow\ZZ^{d}$ there
  exists $\tilde{\Lambda}_{n}\uparrow\ZZ^{d}$ such that
  $\tilde{\Lambda}_{n =1,3,5,...}$ is a subsequence of $\Lambda_{n}$
  and $\tilde{\Lambda}_{n =2,4,6,...}$ is a subsequence of
  $\Lambda'_{n}$. Since these three sequences have the same limit, the
  limit is independent of the exhaustion.  Independence of the
  exhaustion implies $\GrI_{\V{0}}(a,b)$ is translation invariant: the
  limit of $\GrL_{\V{0}}(a,b)$ through $\Lambda_{n}$ equals the limit
  of $\GrL_{\V{0}}(a',b')$ through $(\Lambda_{n}+e)$, where $a'=a+e$,
  $b'=b+e$, and $e$ a unit vector in $\ZZ^{d}$. Simultaneously, this
  latter limit is the same as the limit of $\GrL_{\V{0}}(a',b')$
  through $(\Lambda_{n})$. This implies that
  $\GrI_{\V{0}}(a,b) = \GrI_{\V{0}}(0,b-a)$.  A similar argument shows
  $\GrI_{\V{0}}(x):=\GrI_{\V{0}}(0,x)$ is
  $\ZZ^{d}$-symmetric. Therefore $\GrI_{\V{0}}(x,y)$ is
  $\ZZ^{d}$-symmetric; see the discussion above \Cref{hyp:Jnew}.
\end{proof}

Since $\GrI_{\V{0}}(x,y)$ is translation invariant, a $\cirb$-IRB of
the form \eqref{KIRBnow} implies $\GrI_{\V{0}}(x)$ satisfies a
$\cirb$-IRB of the form \eqref{eq:KIRB}. Thus in the sequel there is
no ambiguity when we say $\GrI_{\V{0}}$ satisfies a $\cirb$-IRB
without further specification.

\subsection{The infinite volume limit of $\PiL$}
\label{sec:infin-volume-limit-2}

In this section we prove the existence of the infinite volume limit of
$\PiL$. Recall that $\PiL=\sum_{m\ge 0}\PiL_{m}$ and $\PiL_{m}$ is
defined by \eqref{eq:Pi-m}. By \Cref{lem:Selfloop} the $m=0$ term is
$\PiL_{0}(x,y) = \selfloopL{\cpl,\nu,x} \indic{x=y}$ where
$\selfloopL{\cpl,\nu,x}$ is defined in \eqref{eq:selfloop-def}.  The
next assumption postpones proving that $\selfloopL{\cpl,\nu,x}$ has an
infinite volume limit to the next section.

\begin{assumptions}
  \leavevmode
  \begin{enumerate}
  \item[\mylabel{as:Z4}{(Z3)}] If a $\cirb$-IRB holds, then
    $\selfloopL{\cpl,\nu,x}$ is bounded uniformly in $x$ and $\Lambda$, and the
    limit
    $\selfloop{\cpl,\nu} = \selfloopI{g,\nu,x} \bydef \lim_{\Lambda\uparrow \ZZ^d}
    \selfloopL{\cpl,\nu,x}$ in \eqref{eq:selfloop-def} exists and is
    independent of $x$.
\end{enumerate}
\end{assumptions}

\begin{lemma}
  \label{lem:piInfb}
  Assume the hypotheses of \Cref{prop:Pi-bound} and \ref{as:Z4}.  If
  $\eta$ is sufficiently small, then for $x,y\in\ZZ^{d}$
  \begin{equation}
    \label{eq:piInfb}
    \abs{\PiL(x,y)}= O(\mnorm{y-x}^{-3(d-2)}),
  \end{equation}
  uniformly in $x,y$ and $\Lambda$.
\end{lemma}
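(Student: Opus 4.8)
The plan is to combine the two pieces $\PiL = \PiL_0 + \aPiL$ and bound each separately, both uniformly in $\Lambda$. For the $m\ge 1$ part this is essentially immediate: by hypothesis we are assuming the hypotheses of \Cref{prop:Pi-bound} — in particular \ref{as:Z1}--\ref{as:Z2}, \ref{as:G1}--\ref{as:G2}, \ref{as:R1p}, and a $\cirb$-IRB — and also that $\eta$ is small, which we take small enough that $c_2\eta<1$, where $c_2$ is the constant produced by \Cref{prop:Pi-bound}. Then \Cref{cor:aPi-bound} gives directly
\[
  \abs{\aPiL(x,y)} \le \frac{c_1c_2\eta}{1-c_2\eta}\,\mnorm{y-x}^{-3(d-2)},
\]
with the constant depending only on $d$, $\Jump$, $\cirb$ and $\eta$, hence uniform in $\Lambda$ and in $x,y$. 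This is the entire contribution of the $m\ge 1$ terms.

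For the $m=0$ term, recall \Cref{lem:Selfloop}: $\PiL_0(x,y) = \selfloopL{\cpl,\nu,x}\indic{x=y}$. By \ref{as:Z4}, under a $\cirb$-IRB the quantity $\selfloopL{\cpl,\nu,x}$ is bounded uniformly in $x$ and $\Lambda$; call this uniform bound $B$. Thus $\abs{\PiL_0(x,y)} \le B\,\indic{x=y}$, and on the diagonal $\mnorm{y-x} = \mnorm{0} = 1$, so trivially $\abs{\PiL_0(x,y)} \le B\,\mnorm{y-x}^{-3(d-2)}$ (the bound is vacuous off the diagonal since the left-hand side vanishes there). Adding the two estimates via the triangle inequality $\abs{\PiL(x,y)} \le \abs{\PiL_0(x,y)} + \abs{\aPiL(x,y)}$ yields
\[
  \abs{\PiL(x,y)} \le \Bigl(B + \tfrac{c_1c_2\eta}{1-c_2\eta}\Bigr)\mnorm{y-x}^{-3(d-2)},
\]
with the prefactor independent of $\Lambda$, $x$, $y$, which is exactly \eqref{eq:piInfb}.

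There is essentially no obstacle here: the work has all been done in \Cref{prop:Pi-bound}, \Cref{cor:aPi-bound}, \Cref{lem:Selfloop} and \ref{as:Z4}. The only point requiring a word of care is the meaning of "uniformly in $x,y$ and $\Lambda$" together with the implicit constant in the $O(\cdot)$ notation — one should state explicitly that the constant depends only on $d$, $\Jump$, $\cirb$ and $\eta$ (and not on the specific finite set $\Lambda$, nor on $x,y$), since that uniformity is precisely what is needed in \Cref{sec:infin-volume-limit-2} to pass to the infinite-volume limit of $\PiL$. I would also remark that the smallness of $\eta$ required is exactly the condition $c_2\eta<1$ of \Cref{cor:aPi-bound}, so no new smallness hypothesis is introduced.
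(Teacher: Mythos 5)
Your proof is correct and follows exactly the route the paper intends: it decomposes $\PiL = \PiL_0 + \aPiL$ via \eqref{eq:aPi}, bounds $\aPiL$ by \Cref{cor:aPi-bound} (with the smallness condition $c_2\eta<1$), and bounds $\PiL_0$ using \Cref{lem:Selfloop} together with \ref{as:Z4}. The paper's own proof simply lists these same references and calls the conclusion immediate, so your write-out is a faithful expansion of it.
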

\begin{proof}
  This is immediate from \eqref{eq:aPi}, \Cref{cor:aPi-bound},
  \Cref{lem:Selfloop} and \ref{as:Z4}.
\end{proof}

\begin{lemma}
  \label{lem:Pisub}
  Assume the hypotheses of \Cref{prop:Pi-bound} and \ref{as:Z4} and
  that $\eta$ is sufficiently small.  Then for any sequence of volumes
  $\Lambda_{n}\uparrow \ZZ^{d}$ there exists a subsequence
  $\Lambda_{n_k}$ such that
  $\Pi(x,y) \bydef \lim_{k\to\infty}\Pi^{\sss (\Lambda_{n_k})}(x,y)$
  exists pointwise in $x,y\in\ZZ^{d}$.
\end{lemma}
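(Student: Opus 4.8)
The plan is to deduce the result from the uniform-in-$\Lambda$ bound of \Cref{lem:piInfb} together with a standard diagonal extraction. Under the standing hypotheses (those of \Cref{prop:Pi-bound}, assumption \ref{as:Z4}, and $\eta$ small enough), \Cref{lem:piInfb} furnishes a constant $C>0$, independent of $\Lambda$, such that $\abs{\PiL(x,y)}\le C\mnorm{y-x}^{-3(d-2)}$ for every finite $\Lambda\subset\ZZ^{d}$ and all $x,y\in\Lambda$. Fix a pair $(x,y)\in\ZZ^{d}\times\ZZ^{d}$. Since $\Lambda_{n}\uparrow\ZZ^{d}$, there is $n_{0}=n_{0}(x,y)$ with $x,y\in\Lambda_{n}$ for all $n\ge n_{0}$; for such $n$ the real number $\Pi^{\sss(\Lambda_{n})}(x,y)$ is defined, and $\abs{\Pi^{\sss(\Lambda_{n})}(x,y)}\le C\mnorm{y-x}^{-3(d-2)}$ uniformly in $n\ge n_0$. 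Thus for each fixed pair the sequence $\big(\Pi^{\sss(\Lambda_{n})}(x,y)\big)_{n\ge n_0}$ is a bounded real sequence.

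Next, exploit that $\ZZ^{d}\times\ZZ^{d}$ is countable. Enumerate its points as $\{(x^{(j)},y^{(j)})\}_{j\ge 1}$ and construct nested infinite index sets $\NN=N_{0}\supseteq N_{1}\supseteq N_{2}\supseteq\cdots$ inductively: given $N_{j-1}$, the bounded sequence $\big(\Pi^{\sss(\Lambda_{n})}(x^{(j)},y^{(j)})\big)_{n\in N_{j-1},\,n\ge n_{0}(x^{(j)},y^{(j)})}$ admits, by Bolzano--Weierstrass, a convergent subsequence indexed by an infinite set $N_{j}\subseteq N_{j-1}$. Let $n_{k}$ be the $k$th smallest element of $N_{k}$; then $n_{1}<n_{2}<\cdots$, and for each $j$ the tail $(n_{k})_{k\ge j}$ lies in $N_{j}$, so $\lim_{k\to\infty}\Pi^{\sss(\Lambda_{n_{k}})}(x^{(j)},y^{(j)})$ exists. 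Since $j$ was arbitrary, setting $\Pi(x,y)\bydef\lim_{k\to\infty}\Pi^{\sss(\Lambda_{n_{k}})}(x,y)$ defines the limit for every $(x,y)\in\ZZ^{d}\times\ZZ^{d}$, which is the assertion.

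I do not expect a genuine obstacle here: all of the analytic content, namely the tightness of the family $\{\PiL\}_{\Lambda}$, is already packaged in \Cref{lem:piInfb} (hence ultimately in \Cref{prop:Pi-bound} and \Cref{cor:aPi-bound}), and what remains is the textbook diagonal argument for extracting a simultaneously convergent subsequence from a countable collection of bounded sequences. The only bookkeeping subtlety is that $\PiL(x,y)$ is defined only once $x,y\in\Lambda$, which is handled by discarding, for each pair $(x,y)$, the finitely many volumes $\Lambda_{n}$ with $n<n_{0}(x,y)$; this does not affect convergence along the tail of any subsequence.
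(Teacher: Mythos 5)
Your proof is correct and follows essentially the same route as the paper: both rely on the uniform-in-$\Lambda$ bound from \Cref{lem:piInfb} followed by a diagonal extraction over the countable set of pairs $(x,y)$. The only cosmetic difference is how the partial domain of $\PiL$ is handled — the paper extends $\PiL$ by zero outside $\Lambda\times\Lambda$, while you discard the finitely many indices $n<n_0(x,y)$ for each pair; both are equally valid.
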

\begin{proof}
  Extend the definition of $\PiL\colon\Lambda\times\Lambda\to \RR$ to
  $\ZZ^{d}\times\ZZ^{d}$ by letting $\PiL(x,y)=0$ if $x\notin\Lambda$
  or $y\notin\Lambda$. By \Cref{lem:piInfb}, $\abs{\PiL(x,y)}$ is
  $O(\mnorm{y-x}^{-3(d-2)})$ uniformly in $\Lambda$.  Thus for any
  $x,y\in\ZZ^{d}$ and any increasing sequence of volumes
  $\Lambda_{n}\uparrow \ZZ^d$, there exists a subsequence
  $\Lambda_{n_k(x,y)}$ such that
  $\Pi^{\sss (\Lambda_{n_k(x,y)})}(x,y)$ converges as $k\to\infty$. By
  a diagonal argument we can refine this sequence such that the limit
  exists for all $x,y\in\ZZ^{d}$.
\end{proof}

\begin{lemma}
  \label{lem:ConvInf}
  Assume the hypotheses of \Cref{prop:Pi-bound} and \ref{as:Z4} and
  that $\eta$ is sufficiently small.  For a sequence
  $\Lambda_{n}\uparrow\ZZ^{d}$ for which $\Pi^{\sss (\Lambda_{n})}$
  converges pointwise to $\Pi$,
  \begin{equation}
    \label{eq:ConvInf}
    S^{\sss (\Lambda_{n})}\Pi^{\sss (\Lambda_{n})}G^{\sss (\Lambda_{n})}_{\V{0}}(x,y) \to \greens\Pi\GrI_{\V{0}}(x,y)\qquad x,y\in\ZZ^{d},
  \end{equation}
  and the product on the right-hand side is absolutely convergent, so there is no
  ambiguity in the order of the products.
\end{lemma}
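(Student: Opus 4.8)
The plan is to prove \eqref{eq:ConvInf} by dominated convergence applied to the triple sum
$\greens^{\sss(\Lambda_{n})}\Pi^{\sss(\Lambda_{n})}G^{\sss(\Lambda_{n})}_{\V{0}}(x,y)=\sum_{u,v\in\ZZ^{d}}\greens^{\sss(\Lambda_{n})}(x,u)\,\Pi^{\sss(\Lambda_{n})}(u,v)\,G^{\sss(\Lambda_{n})}_{\V{0}}(v,y)$,
using counting measure on $\ZZ^{d}\times\ZZ^{d}$ together with a dominating function that is summable precisely because $d\geq 5$. First I would assemble the uniform and monotone bounds on the three factors, all after extending $\greens^{\sss(\Lambda)}$, $\Pi^{\sss(\Lambda)}$ and $G^{\sss(\Lambda)}_{\V{0}}$ by zero outside $\Lambda\times\Lambda$. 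Equation~\eqref{e:GsrwIV} gives $\greens^{\sss(\Lambda)}(x,u)\uparrow\greens(x,u)$ as $\Lambda\uparrow\ZZ^{d}$, and \eqref{SRWG2} provides $C_{S}>0$ with $\greens^{\sss(\Lambda)}(x,u)\leq\greens(x,u)\leq C_{S}\mnorm{u-x}^{2-d}$; \ref{as:G2} together with the $\cirb$-IRB \eqref{KIRBnow} makes $G^{\sss(\Lambda_{n})}_{\V{0}}(v,y)$ non-decreasing along $\Lambda_{n}$ with limit $\GrI_{\V{0}}(v,y)$ and gives $G^{\sss(\Lambda_{n})}_{\V{0}}(v,y)\leq\GrI_{\V{0}}(v,y)\leq\cirb C_{S}\mnorm{y-v}^{2-d}$; and \Cref{lem:piInfb} supplies $C_{\Pi}>0$, uniform in $\Lambda$, with $\abs{\Pi^{\sss(\Lambda)}(u,v)}\leq C_{\Pi}\mnorm{v-u}^{-3(d-2)}$, a bound that passes to the pointwise limit $\Pi$ by letting $n\to\infty$.

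Next I would record the domination. For every $n$, and also for the candidate limit summand $\greens(x,u)\Pi(u,v)\GrI_{\V{0}}(v,y)$,
\[
\bigl|\greens^{\sss(\Lambda_{n})}(x,u)\,\Pi^{\sss(\Lambda_{n})}(u,v)\,G^{\sss(\Lambda_{n})}_{\V{0}}(v,y)\bigr|
\leq
\cirb C_{S}^{2}C_{\Pi}\;\mnorm{u-x}^{2-d}\,\mnorm{v-u}^{-3(d-2)}\,\mnorm{y-v}^{2-d},
\]
and I would check that the right-hand side is summable over $(u,v)\in\ZZ^{d}\times\ZZ^{d}$ for fixed $x,y$. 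Summing over $u$ first: since $3(d-2)>d$ for $d\geq 5$, \Cref{lem:HHS-2} bounds $\sum_{u}\mnorm{u-x}^{2-d}\mnorm{v-u}^{-3(d-2)}$ by $C\mnorm{v-x}^{2-d}$; summing over $v$: $\sum_{v}\mnorm{v-x}^{2-d}\mnorm{y-v}^{2-d}$ is finite, since the summand decays like $\mnorm{v}^{2(2-d)}$ and $2(d-2)>d$ when $d\geq 5$ (equivalently, a second application of \Cref{lem:HHS-2}). The same estimate gives $\sum_{u,v}\bigl|\greens(x,u)\Pi(u,v)\GrI_{\V{0}}(v,y)\bigr|<\infty$, which is the asserted absolute convergence of the right-hand side of \eqref{eq:ConvInf}; by Fubini this makes the value of $\greens\Pi\GrI_{\V{0}}(x,y)$ independent of how the products are bracketed.

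Finally, the pointwise limits $\greens^{\sss(\Lambda_{n})}(x,u)\to\greens(x,u)$, $\Pi^{\sss(\Lambda_{n})}(u,v)\to\Pi(u,v)$ (the latter by hypothesis) and $G^{\sss(\Lambda_{n})}_{\V{0}}(v,y)\to\GrI_{\V{0}}(v,y)$ show that for each fixed $(u,v)$ the summand converges to $\greens(x,u)\Pi(u,v)\GrI_{\V{0}}(v,y)$. Dominated convergence for the counting measure on $\ZZ^{d}\times\ZZ^{d}$, with the $\Lambda$-independent summable bound above, then yields \eqref{eq:ConvInf}. I expect the only genuine obstacle to be the summability of the dominating function---verifying that the "diagram" $\mnorm{\cdot}^{2-d}\ast\mnorm{\cdot}^{-3(d-2)}\ast\mnorm{\cdot}^{2-d}$ is finite at fixed endpoints---which is exactly where the hypothesis $d\geq 5$ enters; assembling the monotone and uniform bounds on the factors and the final dominated-convergence step are routine.
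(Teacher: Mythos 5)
Your proposal is correct and follows essentially the same route as the paper: uniform $\Lambda$-independent bounds on $\greens^{\sss(\Lambda)}$, $\Pi^{\sss(\Lambda)}$ and $G^{\sss(\Lambda)}_{\V{0}}$ (the first two from \eqref{SRWG2} and \Cref{lem:piInfb}, the last from \ref{as:G2} and the $\cirb$-IRB), two applications of \Cref{lem:HHS-2} using $d\geq5$ to establish absolute summability of the dominating double sum, and then dominated convergence from the pointwise limits of the three factors. The only stylistic difference is that you make the dominating function and its convolution estimates fully explicit, whereas the paper states them more compactly.
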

\begin{proof}
  By \Cref{lem:piInfb}, $\PiL(x,y)$ is uniformly bounded above by a
  multiple of $U(x,y)\bydef \mnorm{y-x}^{-3(d-2)}$, $\GL$ is bounded
  above by $\greens$ and, by \ref{as:G2}, $\GrL_{\V{0}}$ is bounded
  above by $\GrI_{\V{0}}$.  Both $\GI(x,y)$ and $\GrI_{\V{0}}(x,y)$
  are non-negative and bounded above by a multiple of
  $\mnorm{y-x}^{-d+2}$.  Hence the products $\GI U (x,y)$ and
  $U\GrI_{\V{0}}(x,y)$ are both absolutely convergent by
  \Cref{lem:HHS-2}, and decay at least as fast as a multiple of
  $\mnorm{y-x}^{-d+2}$. Applying \Cref{lem:HHS-2} once more with
  $d\geq 5$ shows $\GI U \GrI_{\V{0}} (x,y)$ is given by an absolutely
  convergent double sum.

  Since $\Pi^{\sss (\Lambda_{n})}\to \Pi$ pointwise by hypothesis,
  $S^{\sss (\Lambda_{n})}\to \greens$ pointwise (see
  \eqref{e:GsrwIV}), and
  $G^{\sss (\Lambda_{n})}_{\V{0}}\to\GrI_{\V{0}}$ pointwise by
  \Cref{lem:Ginf}, \eqref{eq:ConvInf} follows by the dominated
  convergence theorem.
\end{proof}

Recall the definition of $\DI$ from \eqref{e:Delta-infty-matrix-def}.

\begin{lemma}
  \label{lem:2side}
  Assume the hypotheses of \Cref{prop:Pi-bound} and \ref{as:Z4}, that
  $\eta$ is sufficiently small, and that $\Lambda_{n}\uparrow\ZZ^{d}$
  is such that $\Pi^{\sss (\Lambda_{n})}\to\Pi$ pointwise. Then
  $-(\DI+\Pi)$ is a two-sided inverse of $\GrI_{\V{0}}$ and
    $\Pi(x,y)=\Pi(y,x)$ for $x,y \in \ZZ^{d}$.
\end{lemma}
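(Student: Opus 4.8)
The plan is to obtain the two-sided inverse from the finite-volume Dyson equation of \Cref{prop:lace-expansion} by passing to the infinite-volume limit along the exhaustion $\Lambda_{n}$, and then to deduce the symmetry of $\Pi$ from that of $\DI$ and $\GrI_{\V{0}}$ using \Cref{lem:inv}. First I would record the finite-volume identity. For each finite $\Lambda$, \Cref{prop:lace-expansion} reads, as an identity of $\Lambda\times\Lambda$ matrices, $\GrL_{\V{0}} = \GL + \GL\,\PiL\,\GrL_{\V{0}}$, so $(\indic{x=y} - \GL\PiL)\GrL_{\V{0}} = \GL$; left-multiplying by $-\DL$ and using $(-\DL)\GL = \indic{x=y}$ from \Cref{lem:free-green} gives
\[
  (-\DL - \PiL)\,\GrL_{\V{0}} = \indic{x=y},
\]
and since these are finite square matrices the same matrix is a two-sided inverse of $\GrL_{\V{0}}$, so also $\GrL_{\V{0}}(-\DL - \PiL) = \indic{x=y}$. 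Here $\DL$ is the restriction of $\DI$ to $\Lambda\times\Lambda$, so $-\DL_{x,u} = -\Jump(u-x)$.

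Next I would apply this with $\Lambda = \Lambda_{n}$ and let $n\to\infty$. Fix $x,y\in\ZZ^{d}$; both lie in $\Lambda_{n}$ for $n$ large. Split the sum over the contracted index into the $\DI$-part and the $\Pi$-part. The $\DI$-part is a finite sum (as $\Jump$ has range $R$), and $\GrL_{\V{0}}^{(\Lambda_{n})}(u,y)\uparrow \GrI_{\V{0}}(u,y)$ pointwise by \ref{as:G2}, the $\cirb$-IRB \eqref{KIRBnow} and monotone convergence (cf.\ \Cref{lem:Ginf}), so it converges to the corresponding sum over $\ZZ^{d}$ with $\GrI_{\V{0}}$. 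For the $\Pi$-part I would use dominated convergence: by \Cref{lem:piInfb}, $|\Pi^{\sss(\Lambda_{n})}(x,u)| = O(\mnorm{u-x}^{-3(d-2)})$ uniformly in $n$; by \ref{as:G2}, the $\cirb$-IRB and \eqref{SRWG2}, $0 \le \GrL_{\V{0}}^{(\Lambda_{n})}(u,y) \le \GrI_{\V{0}}(u,y) = O(\mnorm{u-y}^{-(d-2)})$ uniformly in $n$; and $\sum_{u}\mnorm{u-x}^{-3(d-2)}\mnorm{u-y}^{-(d-2)}<\infty$ because $3(d-2)>d$ for $d\geq 5$, so \Cref{lem:HHS-2} applies (and shows the limiting sum is absolutely convergent). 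Since $\Pi^{\sss(\Lambda_{n})}(x,u)\to\Pi(x,u)$ and $\GrL_{\V{0}}^{(\Lambda_{n})}(u,y)\to\GrI_{\V{0}}(u,y)$ termwise, the limit is $\sum_{u}\Pi(x,u)\GrI_{\V{0}}(u,y)$. Combining, $(-(\DI+\Pi))\,\GrI_{\V{0}} = \indic{x=y}$ with absolutely convergent sum, and the identical argument applied to $\GrL_{\V{0}}^{(\Lambda_{n})}(-\DL-\PiL) = \indic{x=y}$ (with the roles of the $\mnorm{\cdot}^{-(d-2)}$ and $\mnorm{\cdot}^{-3(d-2)}$ decays interchanged) gives $\GrI_{\V{0}}\,(-(\DI+\Pi)) = \indic{x=y}$. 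Hence $-(\DI+\Pi)$ is a two-sided inverse of $\GrI_{\V{0}}$.

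Finally, for the symmetry: $\DI$ is symmetric, $\DI_{x,y}=\Jump(y-x)=\Jump(x-y)=\DI_{y,x}$ by \ref{as:J3}, and $\GrI_{\V{0}}(x,y)=\GrI_{\V{0}}(y,x)$ by \Cref{lem:Ginf}; moreover $|\Pi(x,y)|=O(\mnorm{y-x}^{-3(d-2)})$ since this bound holds uniformly for $\Pi^{\sss(\Lambda_{n})}$ and passes to the pointwise limit. Interchanging $x$ and $y$ in $\GrI_{\V{0}}\,(-(\DI+\Pi)) = \indic{x=y}$ and using the symmetry of $\GrI_{\V{0}}$ shows that $M^{T}$, defined by $M^{T}(x,u) := -\big(\DI_{x,u}+\Pi(u,x)\big)$, satisfies $M^{T}\GrI_{\V{0}} = \indic{x=y}$. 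I would then invoke \Cref{lem:inv} with $U=\GrI_{\V{0}}$, $V=-(\DI+\Pi)$ and $W=M^{T}$: here $UV=\indic{x=y}$ is the right-inverse identity already proved, $WU=\indic{x=y}$ is the one just derived, and the remaining hypothesis — absolute convergence of $\sum_{u,v}|W(x,u)|\,|U(u,v)|\,|V(v,y)|$ — follows from the decays $O(\mnorm{\cdot}^{-3(d-2)})$, $O(\mnorm{\cdot}^{-(d-2)})$, $O(\mnorm{\cdot}^{-3(d-2)})$ and two applications of \Cref{lem:HHS-2} (summing over $u$ then over $v$, each time landing back on $\mnorm{\cdot}^{-(d-2)}$). \Cref{lem:inv} then gives $V=W$, i.e.\ $\Pi(x,y)=\Pi(y,x)$.

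The finite-volume linear algebra and the symmetry bookkeeping via \Cref{lem:inv} are routine. The one point requiring care, and the crux of the argument, is the passage to the infinite-volume limit: since $\Pi$ is not a priori translation invariant one must work throughout with kernels on $\ZZ^{d}\times\ZZ^{d}$ rather than with functions on $\ZZ^{d}$, and the dominated-convergence argument needs the bound on $\Pi^{\sss(\Lambda_{n})}$ to be genuinely uniform in $\Lambda_{n}$ — which is precisely what \Cref{lem:piInfb} (resting on \Cref{cor:aPi-bound} and the $\cirb$-IRB) provides, together with the summability of the relevant convolutions from \Cref{lem:HHS-2}.
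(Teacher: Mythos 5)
Your proof is correct, and it reaches the same endpoint as the paper, but the route to the two inverse identities is genuinely different. The paper first obtains the infinite-volume Dyson equation $\GrI_{\V{0}} = \greens + \greens\Pi\GrI_{\V{0}}$ (via \Cref{lem:ConvInf}, which controls the triple product $\GI\Pi\GrI_{\V{0}}$), then left-multiplies by $-\DI$, carefully justifying the associativity $\DI(\greens\Pi\GrI_{\V{0}}) = (\DI\greens)(\Pi\GrI_{\V{0}})$ using the finite range of $\Jump$; the right-inverse identity is then obtained by transposition. You instead invert the Dyson equation in finite volume, where $(-\DL-\PiL)\GrL_{\V{0}}=\1$ and $\GrL_{\V{0}}(-\DL-\PiL)=\1$ are honest matrix identities (and associativity is free), and pass these single products to the infinite-volume limit by dominated convergence, using the uniform bounds from \Cref{lem:piInfb}, \ref{as:G2} and the $\cirb$-IRB, plus \Cref{lem:HHS-2} for summability. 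This bypasses \Cref{lem:ConvInf} and its triple-product convergence; the small cost is a second, redundant limit argument for the right-inverse, which the paper gets for free by transposing. Both proofs then close identically: symmetry of $\GrI_{\V{0}}$ (from \Cref{lem:Ginf}) and of $\DI$ (from \ref{as:J3}), plus the uniqueness of two-sided inverses (\Cref{lem:inv}), give $\Pi(x,y)=\Pi(y,x)$. Your handling of the absolute convergence hypothesis of \Cref{lem:inv} is correct — two applications of \Cref{lem:HHS-2} with $a=3(d-2)>d$ and $b=d-2$ suffice for $d\geq 5$.
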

\begin{proof}
  By \eqref{eq:lace-expansion}, \Cref{lem:Ginf} and \Cref{lem:ConvInf},
  \begin{equation}
    \label{eq:2side}
    \GrI_{\V{0}} = \greens + \greens\Pi\GrI_{\V{0}}.
  \end{equation}
  Multiplying \eqref{eq:2side} on the left by $-\DI$ and using
  \eqref{e:free-green-inf-1} yields
  \begin{equation}
    \label{eq:2side-1}
    -(\DI + \Pi)\GrI_{\V{0}}(x,y) = \indic{x=y}.
  \end{equation}
  In applying \eqref{e:free-green-inf-1} we have used that
  $\DI(\greens\Pi\GrI_{\V{0}})= (\DI\greens)(\Pi\GrI_{\V{0}})$, which
  holds as $\DI(x,\cdot)$ is finite range by \ref{as:J4n}. Thus
    $-(\DI + \Pi)$ is a left-inverse of $\GrI_{\V{0}}$.

  Letting $A^{t}$ denote the transpose of a matrix $A$, note that
  \begin{equation}
    \label{eq:2side-2}
    -\indic{x=y}
    = (\GrI_{\V{0}})^{t}  (\DI + \Pi)^{t} (x,y)
    = \GrI_{\V{0}} (\DI + \Pi)^{t}(x,y),
  \end{equation}
  as $(\GrI_{\V{0}})^{t}=\GrI_{\V{0}}$ by \Cref{lem:Ginf}. Note
  $\Pi(x,y) = O(\mnorm{y-x}^{-3(d-2)})$, as $\Pi$ is a pointwise limit
  of functions satisfying this uniform bound by
  \Cref{lem:piInfb}. Since $\DI(x,\cdot)$ is finite-range by
  \ref{as:J4n}, this implies $(\DI+\Pi)(x,y)$ is
  $O(\mnorm{y-x}^{-3(d-2)})$, and hence $(\DI+\Pi)^{t}(x,y)$ is also
  $O(\mnorm{y-x}^{-3(d-2)})$. Thus
  $(\DI+\Pi)\GrI_{\V{0}} (\DI+\Pi)^{t}$ is absolutely convergent and
  unambiguously defined by \Cref{lem:HHS-2} and $-(\DI + \Pi)^{t}$
    is a right-inverse of $\GrI_{\V{0}}$. By \Cref{lem:inv}
  $-(\DI + \Pi)$ is two-sided-inverse to $\GrI_{\V{0}}$ and
  \begin{equation*}
    \label{eq:2side-3}
    \DI+\Pi =  (\DI+\Pi)^{t}, 
  \end{equation*}
  and consequently $\Pi(x,y)=\Pi(y,x)$ as desired.
\end{proof}

\begin{proposition}
  \label{prop:Pi-inv}
  Assume the hypotheses of \Cref{prop:Pi-bound} and \ref{as:Z4} and
  that $\eta$ is sufficiently small. The limit
  $\PiI(x,y)\bydef \lim_{\Lambda\uparrow\ZZ^{d}}\PiL(x,y)$
  exists and is $\ZZ^{d}$-symmetric.
\end{proposition}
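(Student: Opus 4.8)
The plan is to identify $\PiI$ as the unique function for which $-(\DI+\PiI)$ is the two-sided convolution inverse of $\GrI_{\V{0}}$. The first ingredient is uniqueness of two-sided inverses within the power-law decay class: if $-(\DI+\Pi_{1})$ and $-(\DI+\Pi_{2})$ are both two-sided inverses of $\GrI_{\V{0}}$ with each $\Pi_{i}(x,y)=O(\mnorm{y-x}^{-3(d-2)})$, then \Cref{lem:inv} applies with $U=\GrI_{\V{0}}$, $V=-(\DI+\Pi_{1})$, $W=-(\DI+\Pi_{2})$: the one-sided inverse relations hold by hypothesis, and since $\DI$ is finite range by \ref{as:J4n}, $\GrI_{\V{0}}=O(\mnorm{\cdot}^{2-d})$, and $3(d-2)>d$ for $d\ge5$, the triple sum $\sum_{u,v}\abs{W(x,u)}\,\abs{U(u,v)}\,\abs{V(v,y)}$ converges by \Cref{lem:HHS-2} exactly as in the proof of \Cref{lem:2side}, so $V=W$, i.e. $\Pi_{1}=\Pi_{2}$. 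Combined with \Cref{lem:Pisub} (every exhaustion $\Lambda_{n}\uparrow\ZZ^{d}$ admits a subsequence along which $\PiL\to\Pi$ pointwise) and \Cref{lem:2side} (any such pointwise limit $\Pi$ makes $-(\DI+\Pi)$ a two-sided inverse of $\GrI_{\V{0}}$, and $\Pi=O(\mnorm{\cdot}^{-3(d-2)})$ by \Cref{lem:piInfb}), this shows that all pointwise subsequential limits of $\PiL$, taken over all exhaustions, coincide; call the common function $\PiI$. Since $\PiL(x,y)$ is bounded uniformly in $\Lambda$ by \Cref{lem:piInfb}, and a bounded real sequence all of whose subsequential limits equal $\PiI(x,y)$ must converge to $\PiI(x,y)$, it follows that $\lim_{\Lambda\uparrow\ZZ^{d}}\PiL(x,y)=\PiI(x,y)$ along every exhaustion; in particular the limit exists and is independent of the exhaustion.

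For the $\ZZ^{d}$-symmetry I would exploit this characterization of $\PiI$ together with the $\ZZ^{d}$-symmetry of $\GrI_{\V{0}}$ (\Cref{lem:Ginf}) and of $\DI$ (i.e.\ \ref{as:J3}). Fix an automorphism $T$ of $\ZZ^{d}$ and, for $A\colon\ZZ^{d}\times\ZZ^{d}\to\RR$, set $(TA)(x,y)\bydef A(Tx,Ty)$. Reindexing the defining sum by the bijection $T$ shows $A\mapsto TA$ is multiplicative for the product $(AB)(x,y)=\sum_{w}A(x,w)B(w,y)$, preserves absolute convergence of this sum, and fixes $\indic{x=y}$. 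By the meaning of $\ZZ^{d}$-symmetry, $T\DI=\DI$ and $T\GrI_{\V{0}}=\GrI_{\V{0}}$, while $T$ is, up to a translation, a signed coordinate permutation, hence a Euclidean isometry, so $\mnorm{Tx-Ty}=\mnorm{x-y}$ and $T\PiI$ obeys the same $O(\mnorm{\cdot}^{-3(d-2)})$ bound as $\PiI$. Applying $T$ to the identities $-(\DI+\PiI)\,\GrI_{\V{0}}=\indic{x=y}=\GrI_{\V{0}}\,(-(\DI+\PiI))$, which hold by \Cref{lem:2side} since $\PiI$ equals the pointwise subsequential limit of $\PiL$ along some exhaustion, shows that $-(\DI+T\PiI)$ is also a two-sided inverse of $\GrI_{\V{0}}$; by the uniqueness established above, $T\PiI=\PiI$. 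As $T$ was arbitrary, $\PiI$ is $\ZZ^{d}$-symmetric.

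The arguments are short, and the only point requiring care is the bookkeeping around the limit: one must interpret $\lim_{\Lambda\uparrow\ZZ^{d}}$ as the common value of the limits along all exhaustions, and run the uniqueness argument in a decay class in which the relevant convolutions associate — which is precisely why the uniform $\mnorm{\cdot}^{-3(d-2)}$ bound of \Cref{lem:piInfb} and the hypothesis $d\ge5$ (so that $3(d-2)>d$) are needed. The symmetry step, though brief, rests essentially on $\GrI_{\V{0}}$ itself being $\ZZ^{d}$-symmetric, i.e.\ on \Cref{lem:Ginf}.
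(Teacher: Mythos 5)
Your proof is correct and takes essentially the same route as the paper's: both deduce existence by showing (via \Cref{lem:Pisub}, \Cref{lem:2side} and \Cref{lem:inv}) that every subsequential pointwise limit of $\PiL$ yields the same two-sided convolution inverse $-(\DI+\Pi)$ of $\GrI_{\V{0}}$ within the $O(\mnorm{\cdot}^{-3(d-2)})$ decay class, and both deduce $\ZZ^{d}$-symmetry by observing that $-(\DI+T\PiI)$ is again such an inverse and invoking the same uniqueness. The only cosmetic differences are that you spell out the elementary bounded-sequence argument turning uniqueness of subsequential limits into convergence, and you explicitly note that $T$ is an isometry so the decay bound transfers to $T\PiI$ --- both points the paper leaves implicit.
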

\begin{proof}
  Let $\Pi$ and $\tilde{\Pi}$ be pointwise limit points of exhaustions
  $\Lambda_n\uparrow \ZZ^d$ and $\tilde{\Lambda}_n\uparrow \ZZ^d$. Let
  $A=-(\DI+\Pi)$ and $\tilde{A}=-(\DI+\tilde{\Pi})$. By
  \Cref{lem:piInfb} $A$ and $\tilde{A}$ are
  $O(\mnorm{y-x}^{-3(d-2)})$.  Then
  $\sum_{u,v\in\ZZ^{d}}|A(x,u|\,|\GrI_{\V{0}}(u,v)|\,|\tilde{A}(v,y)|<\infty$
  by \Cref{lem:HHS-2}.  By \Cref{lem:2side} $A$ and $\tilde{A}$ are
  two-sided inverses of $\GrI_{\V{0}}$. By \Cref{lem:inv}
  $A=\tilde{A}$.  By \Cref{lem:Pisub} limit points exist for every
  exhaustion and we have just shown that the limit point is unique so
  $\PiI(x,y)\bydef \lim_{\Lambda\uparrow\ZZ^{d}}\PiL(x,y)$ exists.

  Let $T$ be an automorphism of $\ZZ^{d}$. By \Cref{lem:Ginf} and the
  definition of $\ZZ^{d}$-symmetry above \Cref{hyp:Jnew}
  $\GrI_{\V{0}}(Tx,Ty) = \GrI_{\V{0}}(x,y)$ for all $x,y$. Let
  $A^{\prime}(x,y) = A(Tx,Ty)$ for $A$ as above. By changing the
  summation variable from $y$ to $T^{-1}y$,
  $\sum_{y} \GrI_{\V{0}}(x,y)A^{\prime}(y,z)$ equals
  $\sum_{y} \GrI_{\V{0}}(x,T^{-1}y)A(y,Tz)$ which equals $\sum_{y}$
  $\GrI_{\V{0}}(Tx,y)$ $A(y,Tz)$ by the $\ZZ^{d}$-symmetry of
  $\GrI_{\V{0}}$. Since $A$ is a right inverse this sum simplifies to
  $\indic{Tx=Tz}=\indic{x=z}$.  We conclude that $A^{\prime}$ is a
  right-inverse to $\GrI_{\V{0}}$. By a similar calculation
  $A^{\prime}$ is a left-inverse so $A^{\prime}$ is a two-sided
  inverse to $\GrI_{\V{0}}$. Repeating the argument in the first
  paragraph we have $A^{\prime}=A$ so $A=-(\DI+\Pi^{\sssI})$ is
  $\ZZ^{d}$-symmetric as claimed.
\end{proof}

\subsection{The infinite-volume lace expansion equation}
\label{sec:obtaining-crefeq}

In the next proposition $\aPiI(x)\bydef\aPiI(0,x)$ is the infinite
volume limit of $\aPiL$ defined by \eqref{eq:aPi} and
$\selfloopI{\cpl,\nu}$ is as defined in \eqref{eq:selfloop-def}, in
which the limit exists by \Cref{as:Z4}.  Recall
$\GrI_{\V{0}}(x) = \GrI_{\V{0}}(0,y-x)$.  In Item~\ref{BHK1-1**c} of
the Proposition below we prove \eqref{eq:Int-Conv}.

\begin{proposition}
  \label{prop:IVL*} 
  Assume the hypotheses of \Cref{prop:Pi-bound} and \ref{as:Z4} hold.
  Then there exist $\alpha>0$, $\eta_{0}>0$, and $\aPiI$ such that for
  all $\eta\in (0,\eta_{0})$,
  \begin{enumerate}[label=(\roman*)]
  \item \label{BHK1-1**a} $\aPiI(x)$ exists and is a
    $\ZZ^{d}$-symmetric function of $x$.
  \item \label{BHK1-1**b} $\abs{\aPiI(x)}\leq 
    \alpha \eta\mnorm{x}^{-3 (d-2)}$.
  \item \label{BHK1-1**c} 
    $ (\hJ-\selfloopI{\cpl,\nu})\GrI_{\V{0}}(x) = \indic{x=0} +
    \jumprate\ast \GrI_{\V{0}}(x) + \aPi^{\sssI}\ast \GrI_{\V{0}}(x)$.
  \item \label{BHK1-1**d} $\hJ - \selfloopI{\cpl,\nu} \geq \frac{1 + O
      (\eta)} {\cirb\GI(0)}$.
\end{enumerate}
\end{proposition}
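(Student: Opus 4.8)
The plan is to assemble the results of \Cref{sec:infin-volume-limit-2} with the inverse-uniqueness \Cref{lem:inv}: \Cref{prop:IVL*} is essentially a repackaging of \Cref{prop:Pi-inv} and \Cref{lem:2side} in convolution form. For \ref{BHK1-1**a}, set $\aPiI(x,y) \bydef \PiI(x,y) - \selfloopI{\cpl,\nu}\indic{x=y}$, where $\PiI = \lim_{\Lambda\uparrow\ZZ^{d}}\PiL$ is the limit furnished by \Cref{prop:Pi-inv} and $\selfloopI{\cpl,\nu} = \lim_{\Lambda\uparrow\ZZ^{d}}\selfloopL{\cpl,\nu,x}$ exists by \ref{as:Z4}. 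Since $\PiL = \PiL_{0} + \aPiL$ with $\PiL_{0}(x,y) = \selfloopL{\cpl,\nu,x}\indic{x=y}$ (\Cref{lem:Selfloop}), this $\aPiI$ is exactly $\lim_{\Lambda\uparrow\ZZ^{d}}\aPiL$, and we take $\aPiI(x)\bydef\aPiI(0,x)$. Its $\ZZ^{d}$-symmetry is inherited from that of $\PiI$ (\Cref{prop:Pi-inv}) together with the evident $\ZZ^{d}$-symmetry of $(x,y)\mapsto\indic{x=y}$. For \ref{BHK1-1**b}, \Cref{cor:aPi-bound} gives $\abs{\aPiL(0,x)} \le \frac{c_{1}c_{2}\eta}{1-c_{2}\eta}\mnorm{x}^{-3(d-2)}$ uniformly in $\Lambda$; passing to the limit and taking $\eta_{0}$ with $c_{2}\eta_{0} < \tfrac12$ gives \ref{BHK1-1**b} with $\alpha = 2c_{1}c_{2}$.

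For \ref{BHK1-1**c}, I would start from \Cref{lem:2side}: $-(\DI+\PiI)$ is a two-sided inverse of $\GrI_{\V{0}}$. Since $\GrI_{\V{0}}$ is translation invariant and $\ZZ^{d}$-symmetric (\Cref{lem:Ginf}) and $\PiI$ is $\ZZ^{d}$-symmetric, passing to the difference variable $z = y-x$ turns the identity $-(\DI+\PiI)\GrI_{\V{0}}(x,y) = \indic{x=y}$ into the convolution identity $(\Jump+\PiI)\ast\GrI_{\V{0}}(z) = -\indic{z=0}$, the sum being absolutely convergent by \Cref{lem:HHS-2} because $\Jump$ has finite range, $\PiI(z) = O(\mnorm{z}^{-3(d-2)})$ and $\GrI_{\V{0}}(z) = O(\mnorm{z}^{-(d-2)})$ with $3(d-2) > d$ for $d\geq 5$. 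Substituting $\Jump = -\hJ\indic{\cdot=0} + \jumprate$ (from \eqref{e:hJ-def}) and $\PiI = \selfloopI{\cpl,\nu}\indic{\cdot=0} + \aPiI$, and then rearranging, yields exactly the equation in \ref{BHK1-1**c}; the use of linearity of convolution is legitimate since each summand converges absolutely, the delta terms trivially and $\jumprate\ast\GrI_{\V{0}}$, $\aPiI\ast\GrI_{\V{0}}$ by \Cref{lem:HHS-2}.

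Finally, for \ref{BHK1-1**d} I would evaluate \ref{BHK1-1**c} at $x=0$:
\[
  (\hJ-\selfloopI{\cpl,\nu})\GrI_{\V{0}}(0)
  = 1 + \jumprate\ast\GrI_{\V{0}}(0) + \aPiI\ast\GrI_{\V{0}}(0).
\]
Here $\jumprate\ast\GrI_{\V{0}}(0)\ge 0$ because $\jumprate\ge 0$ and $\GrI_{\V{0}}\ge 0$ (\Cref{lem:Ginf}), and the last term is $O(\eta)$: by \ref{BHK1-1**b}, the $\cirb$-IRB \eqref{KIRBnow}, and the decay bound \eqref{SRWG2} for $\greens$,
\[
  \abs{\aPiI\ast\GrI_{\V{0}}(0)}
  \le \alpha\eta\,\cirb\tilde C_{\Jump}\sum_{y\in\ZZ^{d}}\mnorm{y}^{-3(d-2)}\mnorm{y}^{-(d-2)},
\]
and this sum is finite for $d\geq 5$. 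Hence $(\hJ-\selfloopI{\cpl,\nu})\GrI_{\V{0}}(0)\ge 1-O(\eta) > 0$ for $\eta$ small, and since $0 < \GrI_{\V{0}}(0)\le \cirb\GI(0)$ by \eqref{KIRBnow} we conclude $\hJ-\selfloopI{\cpl,\nu}\ge (1-O(\eta))/(\cirb\GI(0))$, which is \ref{BHK1-1**d}.

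I do not expect a serious obstacle: everything is a corollary of \Cref{sec:infin-volume-limit-2}. The two points needing care are the passage from the matrix identity of \Cref{lem:2side} to a convolution identity in the difference variable — legitimate precisely because translation invariance and $\ZZ^{d}$-symmetry are already in hand — and the bookkeeping of absolute convergence of the convolutions, all of which is supplied by \Cref{lem:HHS-2}.
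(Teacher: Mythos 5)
Your proposal is correct and follows essentially the same route as the paper: item (i) via \Cref{prop:Pi-inv}, \Cref{lem:Selfloop} and \ref{as:Z4}; item (ii) by passing to the limit in \Cref{cor:aPi-bound}; item (iii) by translating the matrix identity of \Cref{lem:2side} into convolution form and substituting the decompositions of $\Jump$ and $\PiI$; and item (iv) by evaluating (iii) at $x=0$ and discarding the non-negative $\jumprate\ast\GrI_{\V{0}}(0)$ term. One small bonus: your intermediate convolution identity $(\Jump+\PiI)\ast\GrI_{\V{0}}=-\indic{\cdot=0}$ has the signs written correctly, whereas the corresponding line in the paper's proof has a sign typo (it reads $(\Jump-\PiI)\ast\GrI_{\V{0}}=\indic{\cdot=0}$), though both arrive at the same final display.
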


\begin{proof}
  Item \ref{BHK1-1**a}: Note $\aPiL(x,y) = \PiL(x,y) - \PiL_{0}(x,y)$.
  Both terms on the right-hand side have $\ZZ^{d}$-symmetric infinite
  volume limits by \Cref{prop:Pi-inv}, \Cref{lem:Selfloop} and
  \ref{as:Z4}. Therefore $\aPiI(x)$ exists and is a
  $\ZZ^{d}$-symmetric function.

  Item \ref{BHK1-1**b} follows from the finite-volume estimate given
  by \Cref{cor:aPi-bound}.  
  
  Item \ref{BHK1-1**c}: By \Cref{lem:2side}
  $-(\DI+\PiI)\GrI_{\V{0}} = \mathbf{I}$. In terms of the one variable
  functions $\PiI(x):=\PiI(0,x)$ and
  $\GrI_{\V{0}}(x):=\GrI_{\V{0}}(0,x)$ with convolution replacing
  matrix products this is $(J - \PiI)*\GrI_{\V{0}}(x) = \indic{x=0}$.
  We insert $\Jump(x) = - \hJ \indic{x=0} + \jumprate(x)$ from
  \eqref{e:hJ-def} and
  $\PiI(x) = \selfloopI{\cpl,\nu}\indic{x=0} + \aPiI(x)$ from
  \Cref{as:Z4}. After rearranging we obtain \ref{BHK1-1**c}.

  Item \ref{BHK1-1**d}: We evaluate \ref{BHK1-1**c} at $x=0$, insert
  item~\ref{BHK1-1**b} using $d \ge 5$ to obtain
  $|\GrI_{\V{0}}\ast \aPiI(0)| = O (\eta)$ and insert
  $\jumprate\ast \GrI_{\V{0}}(0) \geq 0$.  The result is the desired
  bound
  \begin{equation}
    \hJ-\selfloopI{\cpl,\nu}
    \geq
    \frac{1 + O (\eta)}{\GrI_{\V{0}}(0)}
    \geq
    \frac{1 + O (\eta)}{\cirb \GI(0)} 
    ,
  \end{equation}
  where the second inequality is implied by the $\cirb$-IRB
  hypothesis.
\end{proof}

\section{Final hypotheses and proof of asymptotic behaviour}
\label{sec:completion-proof}

In~\Cref{rem:dep} we listed the lemmas that collectively prove the
infrared bound of \Cref{thm:Main-Pre-models}.  As outlined in
\Cref{sec:outl-rema-paper} we now revise the hypotheses of these
lemmas to replace their specialisation to \ctwsaw and $\pf$ model by
\Cref{as:FA-1} and~\ref{as:FA-2} and we prove these revised lemmas.
The main result \Cref{thm:gen-main} is that these assumptions imply
the desired asymptotic form for the Green's function
$G_{\cpl,\nu_{c}}(x)$.  In \Cref{sec:verification} we will verify that
the Edwards model and the $n=1,2$-component $\pf$ models satisfy these
assumptions.

\subsection{Final hypotheses}
\label{sec:final-hypoth-proof}

This subsection summarizes the hypotheses under which we will draw
conclusions about the asymptotic behaviour of the Green's function.
For each finite $\Lambda\subset \ZZ^{d}$ and parameters $\cpl>0$ and
$\nu\in\RR$ let
$Z^{\sssL}_{\cpl,\nu}\colon [0,\infty)^{\Lambda}\to (0,\infty)$,
$\V{t} \mapsto Z^{\sssL}_{\cpl,\nu,\V{t}}$ satisfy
\begin{assumptions}
  \label{as:FA-1} 
  Assume $\Jump$ is such that \ref{as:J1}--\ref{as:J4n} hold, and
  \begin{itemize}
  \item[(i)] for each $\cpl>0,\nu\in \RR$, \ref{as:Z1}, \ref{as:Z2},
    \ref{as:Z4}, \ref{as:G1}--\ref{as:G2} hold for
    $Z^{\sssL}_{\cpl,\nu}$;
  \item[(ii)] there exists $\nu_{1}>0$ such that for all $g\ge 0$
    $G^{\sssL }_{\cpl,\nu_{1}}(x)$ is summable in $x$ uniformly in
    $\Lambda$;
  \item[(iii)] there exists $c_*>0$ such that for each
    $\cpl>0,\nu\in \RR$, \ref{as:R1p} holds for $Z^{\sssL}_{\cpl,\nu}$
    with $\eta=c_*\cpl$.
  \end{itemize}
\end{assumptions}

Throughout this section we write $G_{\cpl,\nu}=\GrI_{\cpl,\nu}$, which
exists as a possibly infinite monotone limit by \ref{as:G2}. The
\emph{susceptibility} is defined by
\begin{equation}
  \label{eq:susc}
  \chi_{\cpl}(\nu) \bydef \sum_{x\in\ZZ^{d}}G_{\cpl,\nu}(x),
\end{equation}
and the \emph{critical value} $\nu_{c}(\cpl)$ of $\nu$ is defined by
\begin{equation}
  \label{eq:nu-crit-def}
  \nu_{c}(\cpl) \bydef \inf\{\nu\in\RR \mid \chi_{\cpl}(\nu)<\infty\}.
\end{equation}
In particular, $\nu_{c}\le \nu_{1}$ with $\nu_{1}$ given in
\Cref{as:FA-1}.

\begin{assumptions}
  \label{as:FA-2}
  Assume that $\cpl>0$, that $\nu_{c}(\cpl)\leq 0$,
  and that
  \begin{enumerate}[label=(\roman*),series=hypIV]
  \item[\mylabel{as:G3}{(G3)}] $G_{\cpl,\nu}(x)$ is
     non-increasing in $\nu\in\RR$. Moreover,
    \begin{enumerate}
    \item For $\nu\in (\nu_{c}(\cpl),\infty)$ and
    $x\in\ZZ^{d}$, $G_{\cpl,\nu}(x)$ is continuous 
    in $\nu$.
  \item If $G_{\cpl,\nu}\leq 3\greens$ for some
    $\nu\in [\nu_{c},\infty)$, then
    $\{G_{\cpl,\nu'}(x)\}_{x\in\ZZ^{d}}$ is a uniformly equicontinuous
    family of functions of $\nu'\in[\nu,\infty)$.
    \end{enumerate}
  \item[\mylabel{as:G4}{(G4)}] For $\nu\in(\nu_{c}(\cpl),\infty)$,
    $\sup_{x\in\ZZ^{d}:|x|\ge r}G_{\cpl,\nu}(x)/\greens
    (x)\rightarrow 0$ as $r\rightarrow \infty$.
  \item[\mylabel{as:G5}{(G5)}] $G_{\cpl,\cpl}\leq 2\greens$.
  \item[\mylabel{as:Z5}{(Z4)}] For each finite $\Lambda$ and $\cpl>0$,
    $Z^{\sssL}_{\cpl,\nu}\colon [0,\infty)^{\Lambda}\to (0,\infty)$ is
    continuous in $\nu\in \RR$ pointwise in
    $\V{t}\in [0,\infty)^{\Lambda}$ and uniformly bounded in $\V{t}$
    for each $\nu$.
  \item[\mylabel{as:Z6}{(Z5)}] 
     \begin{enumerate}
     \item For $\cpl>0$, $\selfloop{\cpl,\nu}$ is continuous in
       $\nu \in (\nu_c(g),\infty)$.
     \item If $G_{\cpl,\nu}\leq 3\greens$ for some $\nu$, then
       $\selfloop{\cpl,\nu'}$ is continuous for
       $\nu'\in\co{\nu,\infty}$.  If, additionally,
       $\selfloop{\cpl,\nu}\leq 0$ and $\nu\in(\nu_{c}(\cpl),\cpl]$,
       then $\selfloop{\cpl,\nu}=O(\cpl)$.
     \end{enumerate}
   \item[\mylabel{as:Z7}{(Z6)}] Under the hypothesis
     $\nu_{c}=-\infty$, $\selfloop{\cpl,\nu}\to\infty$ as
     $\nu\to-\infty$.
  \end{enumerate}
\end{assumptions}

These assumptions suffice for our applications, and are intended to be
a starting point on the road to better assumptions.  The next lemma is
a step in this direction.

\begin{lemma}
  \label{lem:G4}
  \ref{as:Z1}, \ref{as:G1}, \ref{as:G2} and
  $\sum_{x\in\ZZ^{d}}G^{\sssI}(0,x)<\infty$ implies $G^{\sssI}(0,x)$
  decays exponentially in $x$ as $x\rightarrow\infty$. In particular
  \ref{as:Z1}, \ref{as:G1}, \ref{as:G2} imply \ref{as:G4}.
\end{lemma}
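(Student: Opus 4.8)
The plan is to combine a first-passage (renewal) inequality for the infinite-volume Green's function with the finiteness of the susceptibility. Write $\gamma(x)\bydef\GrI_{\V{0}}(0,x)$ and $\chi\bydef\sum_{x}\gamma(x)<\infty$; then $\gamma\ge 0$, $\sup_{x}\gamma(x)\le\chi$, and $\epsilon_{r}\bydef\sum_{|x|\ge r}\gamma(x)\downarrow 0$ as $r\to\infty$. Because $\chi<\infty$ already ensures that $\gamma(x)=\lim_{\Lambda\uparrow\ZZ^{d}}\GrL_{\V{0}}(0,x)$ exists and is finite, the proof of \Cref{lem:Ginf} applies verbatim (the $\cirb$-IRB there was used only to guarantee that this limit exists), so $\gamma$ is $\ZZ^{d}$-symmetric, $\GrI_{\V{0}}(c,x)=\gamma(x-c)$, and $\GrL_{\V{0}}\le\GrI_{\V{0}}$ by the monotonicity \ref{as:G2}. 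Granting that $\gamma$ decays exponentially, the second assertion follows immediately: for $\nu\in(\nu_{c}(\cpl),\infty)$ one has $\chi_{\cpl}(\nu)<\infty$, hence $G_{\cpl,\nu}=\GrI_{\V{0}}$ decays exponentially, and with the lower bound $\greens(x)\ge c_{\Jump}|x|^{-(d-2)}$ (stated below \eqref{SRWG}) this gives $\sup_{|x|\ge r}G_{\cpl,\nu}(x)/\greens(x)\to 0$ as $r\to\infty$, which is \ref{as:G4}.

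For the renewal inequality, fix $r>0$ and let $\sigma_{r}$ be the first time the walk occupies a lattice site of norm at least $r$; by \ref{as:J4n}, $r\le|\LX_{\sigma_{r}}|<r+R$ on $\{\sigma_{r}<\infty\}$. For $|x|\ge r$ and any finite $\Lambda$, the event $\{\LX_{\ell}=x\}$ forces $\sigma_{r}\le\ell$; using $\rptnL_{0,\ell}=\rptnL_{0,\sigma_{r}}\rrptn_{\V{\tau}_{[0,\sigma_{r}]},\V{\tau}_{[\sigma_{r},\ell]}}$ on that event, conditioning on $\Fcal_{\sigma_{r}}$, invoking the strong Markov property exactly as in the proof of \Cref{prop:lace-expansion}, and applying \ref{as:G1} and \ref{as:G2} in the form $\GrL_{\V{\tau}_{[0,\sigma_{r}]}}(c,x)\le\GrL_{\V{0}}(c,x)\le\gamma(x-c)$, integration over $\ell$ yields
\begin{equation*}
\GrL_{\V{0}}(0,x)\le\sum_{c:\,r\le|c|<r+R}q_{r}^{\sssL}(0,c)\,\gamma(x-c),\qquad q_{r}^{\sssL}(0,c)\bydef\EEa_{0}\big[\rptnL_{0,\sigma_{r}}\indic{\LX_{\sigma_{r}}=c}\big].
\end{equation*}

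The crux is that $Q_{r}^{\sssL}\bydef\sum_{c}q_{r}^{\sssL}(0,c)=\EEa_{0}[\rptnL_{0,\sigma_{r}}\indic{\sigma_{r}<\infty}]$ is $o(1)$ as $r\to\infty$, uniformly in $\Lambda$. On $\{\sigma_{r}<\infty\}$, let $[\rho,\sigma_{r})$ be the holding interval ending with the crossing jump, so that $\LX_{\ell}=c'\bydef\LX_{\sigma_{r}^{-}}$ for $\ell\in[\rho,\sigma_{r})$ with $r-R\le|c'|<r$; by memorylessness of the holding times, $\sigma_{r}-\rho$ is $\mathrm{Exp}(\hJ)$-distributed and independent of $\Fcal_{\rho}$ and of the event that the crossing jump leaves $\{|z|<r\}$. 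Writing $\rptnL_{0,\ell}=\rptnL_{0,\rho}\rrptn_{\V{\tau}_{[0,\rho]},(\ell-\rho)e_{c'}}$ on $[\rho,\sigma_{r})$, with $e_{c'}\in[0,\infty)^{\Lambda}$ the unit vector at $c'$, a Fubini computation yields the exact identity
\begin{equation*}
\EEa_{0}\Big[\indic{\sigma_{r}<\infty}\int_{\rho}^{\sigma_{r}}\rptnL_{0,\ell}\,d\ell\Big]=\hJ^{-1}\,\EEa_{0}\big[\rptnL_{0,\sigma_{r}}\indic{\sigma_{r}<\infty}\big]=\hJ^{-1}Q_{r}^{\sssL},
\end{equation*}
the "loop weight" $\rptnL_{0,\rho}\int_{0}^{\infty}\rrptn_{\V{\tau}_{[0,\rho]},se_{c'}}e^{-\hJ s}\,ds$ appearing on both sides. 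Since $\LX$ stays in $\{z:\,r-R\le|z|<r\}$ throughout $[\rho,\sigma_{r})$, the left side is at most $\EEa_{0}[\int_{0}^{\infty}\rptnL_{0,\ell}\indic{|\LX_{\ell}|\ge r-R}\,d\ell]=\sum_{|c|\ge r-R}\GrL_{\V{0}}(0,c)\le\epsilon_{r-R}$, uniformly in $\Lambda$. Hence $q_{r}^{\sssL}(0,c)\le Q_{r}^{\sssL}\le\hJ\,\epsilon_{r-R}$ for all $c,\Lambda$; extracting a subsequence $\Lambda_{j}\uparrow\ZZ^{d}$ along which each of the finitely many $q_{r}^{\sssL}(0,c)$ converges, to $q_{r}(0,c)$, and letting $j\to\infty$ in the renewal inequality (using $\GrL_{\V{0}}(0,x)\uparrow\gamma(x)$) gives $\gamma(x)\le\sum_{c}q_{r}(0,c)\,\gamma(x-c)$ for $|x|\ge r$, with $Q_{r}\bydef\sum_{c}q_{r}(0,c)\le\hJ\,\epsilon_{r-R}\to 0$.

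It remains to iterate. Choose $r_{0}$ with $\hJ\,\epsilon_{r_{0}-R}<1$. Since $q_{r_{0}}$ is supported on $\{r_{0}\le|c|<r_{0}+R\}$, each application of the renewal inequality lowers the displacement by less than $r_{0}+R$, so translation invariance of $\gamma$ lets one iterate it $k\bydef\lfloor(|x|-r_{0})/(r_{0}+R)\rfloor$ times before the residual displacement drops below $r_{0}$, giving $\gamma(x)\le Q_{r_{0}}^{k}\sup_{z}\gamma(z)\le\chi(\hJ\,\epsilon_{r_{0}-R})^{k}$. As $k\ge|x|/(r_{0}+R)-2$, this is a bound $\gamma(x)\le Ce^{-m|x|}$ with $m=(r_{0}+R)^{-1}\log(1/(\hJ\,\epsilon_{r_{0}-R}))>0$, which is the asserted exponential decay. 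The main obstacle is the third paragraph: the continuous-time bookkeeping behind the displayed identity — the independence of the final holding time from the crossing event and the cancellation of the loop weight — together with the interchange of the limit $\Lambda\uparrow\ZZ^{d}$ with the finite sum over the crossing shell, has to be carried out carefully, although nothing beyond the strong Markov property and monotone convergence is needed.
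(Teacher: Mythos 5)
Your argument is correct, but it takes a genuinely different route from the paper's, and the comparison is instructive. The paper also proceeds by a first-passage/renewal decomposition followed by iteration, but the core technical input is the Simon inequality of \Cref{prop:simon}, which gives a \emph{pointwise} bound on the exit kernel: $P(a,x)\le\sum_{x'}\GrL(a,x')\Jump^{\sss(\partial\Lambda')}(x',x)$. Proving that bound is the hard step, carried out by a Riemann--sum argument that tracks the probability of a single jump in a small time window $(\ell,\ell+\delta]$, expands to first order in $\delta$, and passes $\delta\downarrow 0$; the iteration that follows is then essentially the same total-mass bound $F(r)\le\hJ\,f(r-1)\to 0$ that you use. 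You bypass the pointwise Simon inequality entirely and bound only the total exit weight $Q_{r}^{\sssL}=\sum_c q_r^{\sssL}(0,c)$ directly, using the clean observation that the final holding interval $[\rho,\sigma_r)$ has $\mathrm{Exp}(\hJ)$ length independent of the destination of the crossing jump, so the identity $\int_0^\infty\phi(s)e^{-\hJ s}\,ds=\hJ^{-1}\EE[\phi(W)]$ for $W\sim\mathrm{Exp}(\hJ)$ converts the exit weight into an integral of $\rptnL_{0,\ell}$ over a time the walk spends in the annulus $\{r-R\le|z|<r\}$, which is dominated by the tail of the susceptibility. This is more elementary than the paper's Riemann--sum argument, at the cost of not producing the Simon inequality itself (which the paper records as a result of independent interest). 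A minor caveat worth flagging: $\rho$ is not a stopping time, so the phrase ``independent of $\Fcal_\rho$'' should be replaced by a conditioning on the embedded discrete-time chain, under which the holding times remain i.i.d.\ $\mathrm{Exp}(\hJ)$ and $N=\inf\{n:|Y_n|\ge r\}$ is measurable; this makes the displayed identity and the subsequent Tonelli interchanges rigorous, exactly as you indicate. Your invocation of \Cref{lem:Ginf} without an IRB is legitimate: that proof needs only monotone convergence of $\GrL_{\V{0}}$ (provided by \ref{as:G2}), and finiteness follows from the susceptibility hypothesis once translation invariance is in hand. Also, in the renewal step you should cite \Cref{lem:clemma} (the strong Markov property at a stopping time) rather than the proof of \Cref{prop:lace-expansion}, which only conditions at deterministic times.
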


\noindent To prove this we establish a Simon inequality.

\begin{proposition}[Simon inequality]
  \label{prop:simon}
  Let $G^{\sssI}$ be the infinite volume limit of the Green's function
  of a model that satisfies \ref{as:Z1}, \ref{as:G1},
  \ref{as:G2}. Assume $G^{\sssI}(0,x)$ is summable in $x \in \ZZ^{d}$.
  For $a,b \in \ZZ^{d}$ and $\Lambda' \subset \ZZ^{d}$ such that
  $\Lambda'$ contains $a$ but not $b$,
  \begin{equation}
    \label{eq:simon}
    G^{\sssI}(a,b)
    \leq
    \sum_{x' \in \Lambda',x\in\ZZ^{d}\setminus\Lambda'}
    G^{\sssI}(a,x')
    \Jump^{\sss{(\partial\Lambda'})}(x',x)
    G^{\sssI}(x,b),
  \end{equation}
  where
  $\Jump^{\sss{(\partial\Lambda'})}(x',x) =
  \Jump(x-x')\indic{x'\in\Lambda'}\indic{x\in
    \ZZ^{d}\setminus\Lambda'}$, i.e.,
  $\Jump^{\sss{(\partial\Lambda'})}(x',x)$ is non-zero only for jumps
  out of $\Lambda'$.
\end{proposition}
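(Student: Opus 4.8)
The plan is to derive the inequality by a standard first-exit (or last-exit) decomposition of the random walk, transported to the Green's function level via the finite-volume lace expansion identity. Here is the idea. For a finite volume $\Lambda$ containing both $a$ and $b$, and an intermediate set $\Lambda'$ with $a \in \Lambda'$ but $b \notin \Lambda'$, I would first establish the finite-volume analogue
\begin{equation}
  \label{eq:simon-FV}
  \GrL_{\V{0}}(a,b)
  \le
  \sum_{x'\in\Lambda',\, x\in\Lambda\setminus\Lambda'}
  G^{\sss(\Lambda')}_{\V{0}}(a,x')\,\Jump(x-x')\,\GrL_{\V{0}}(x,b),
\end{equation}
and then take $\Lambda\uparrow\ZZ^{d}$, using \ref{as:G2} to pass $G^{\sss(\Lambda')}_{\V{0}}(a,x')\le\GrI_{\V{0}}(a,x')$ on the left factor and monotone convergence $\GrL_{\V{0}}(x,b)\uparrow\GrI_{\V{0}}(x,b)$ on the right, together with summability of $\GrI(0,\cdot)$ to justify the interchange of limit and the finite sum over $x',x$. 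The summability hypothesis is exactly what makes the limiting double sum converge.

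To prove \eqref{eq:simon-FV} at the level of the walk representation, I would condition on the first time $T'$ that the walk $\LX$ exits $\Lambda'$. On the event that $\LX$ reaches $b$ (which is outside $\Lambda'$), this exit time is finite and $\LX_{T'^-}=x'\in\Lambda'$, $\LX_{T'}=x\in\Lambda\setminus\Lambda'$ for some pair $(x',x)$ with $\Jump(x-x')>0$; the rate of that jump is $\Jump(x-x')$ out of the total rate $\hJ$. Because the interaction weight $\rrptn_{\V{0},\V{\tau}^{\sss(\Lambda)}_{[0,\ell]}}$ is a ratio $Z_{\cdot}/Z_{\V{0}}$ evaluated at local times, and local times are additive over the decomposition of $[0,\ell]$ into the pre-$T'$ and post-$T'$ pieces, the hypothesis \ref{as:G1} (that $\GrL_{\V{t}}\le\GrL_{\V{0}}$ for all $\V{t}$) lets me bound the post-$T'$ contribution: conditionally on $\Fcal_{T'}$, the remaining integral is $\GrL_{\V{\tau}_{[0,T']}}(x,b)\le\GrL_{\V{0}}(x,b)$. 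The pre-$T'$ contribution, integrated over all $\ell<T'$ and summed appropriately, is bounded by the free-interaction walk killed on exiting $\Lambda'$, i.e.\ by $G^{\sss(\Lambda')}_{\V{0}}(a,x')$ — here one again uses that restricting the walk to $\Lambda'$ only decreases the Green's function (this is the content of \ref{as:G2} applied with $\Lambda'$ in place of $\Lambda$, combined with the definition \eqref{e:GF} of the Green's function as a positive integral over paths, so that discarding the contribution of paths that leave $\Lambda'$ before time $T'$ can only reduce the value). Summing over the exit pair $(x',x)$ with weight $\Jump(x-x')/\hJ$ and reinstating the holding-time factor at $x'$ — which combines with $\hJ^{-1}$ to give $(-\DL)^{-1}$-type bookkeeping — yields \eqref{eq:simon-FV}.

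The main obstacle I anticipate is the rigorous bookkeeping of the self-interaction weight across the renewal at $T'$: one must check that the weight factorizes (or at least submultiplicatively bounds) into a pre-$T'$ piece depending only on $\V{\tau}_{[0,T']}$ and a post-$T'$ piece, and that \ref{as:G1} can be invoked with $\V{t}=\V{\tau}_{[0,T']}$ inside a conditional expectation exactly as in the proof of \Cref{lem:induct} and \Cref{prop:Pi-DB}. This is a strong Markov property argument at the stopping time $T'$, and the regularity choices of \Cref{sec:conv-techn-choice} plus the measurability lemmas already used in \Cref{sec:proof:prop:lace-expansion} should cover it; one should cite \cite{SimonIneq,Lieb} for the classical version to reassure the reader that no new subtlety arises beyond adapting to continuous time and to the presence of the interaction. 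Once \eqref{eq:simon-FV} is in hand, the passage to infinite volume is routine given the stated summability and \ref{as:G2}.
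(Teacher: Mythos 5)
The overall strategy you describe (first-exit decomposition at $\mc{S} = T'$, strong Markov property, hypothesis \ref{as:G1} for the post-exit factor, then the infinite-volume limit via \ref{as:G2}) is the paper's route. But there is a concrete gap in the claimed intermediate estimate \eqref{eq:simon-FV}: you assert the pre-exit factor is bounded by $G^{\sss(\Lambda')}_{\V{0}}(a,x')$, the Green's function of the walk killed on $\Lambda'$. That is exactly the Lieb--Simon improvement, and the paper explicitly disclaims it: \emph{``We are unable to obtain this improvement in the generality of Proposition~\ref{prop:simon}.''} What the paper actually proves (see the bound on $P(a,x)$) is
\begin{equation*}
  P(a,x) \;\le\; \sum_{x'\in\Lambda'} G^{\sssL}_{\V{0}}(a,x')\,\Jump^{\sss{(\partial\Lambda')}}(x',x),
\end{equation*}
with the \emph{full}-volume Green's function $G^{\sssL}_{\V{0}}$ on the left, obtained by \emph{dropping} the indicator $\indic{\mc{S}>\ell}\le 1$ at the last step. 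Taking $\Lambda\uparrow\ZZ^{d}$ then gives $G^{\sssI}(a,x')$, which is the factor in the stated inequality. Your reasoning for the stronger $G^{\sss(\Lambda')}$ factor is also not sound as written: the weight $Z^{\sssL}_{\V{\tau}_{[0,T']}}/Z^{\sssL}_{\V{0}}$, while evaluated at local times supported in $\Lambda'$, is built from $Z^{\sssL}$ and not from $Z^{\sss(\Lambda')}$, and the abstract hypotheses \ref{as:Z1}, \ref{as:G1}, \ref{as:G2} do not relate these two functions (the equality happens to hold for the product-form Edwards model but not for $\pf$). Moreover, the clause ``discarding the contribution of paths that leave $\Lambda'$... can only reduce the value'' gives an inequality in the \emph{wrong} direction if you want an upper bound on $\GrL_{\V{0}}(a,b)$.

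A secondary point: the step from the first-exit event to the factor $\Jump(x-x')$ is not the glib ``rate $\Jump(x-x')$ out of total rate $\hJ$''; the correct factor is the rate $\Jump(x-x')$ itself (no $\hJ^{-1}$), because in the continuous-time Green's function the holding time is already absorbed in the $d\ell$-integral. Making this rigorous requires a Riemann-sum discretization in $\ell$ (the paper's argument with $\Ea[\indic{X_{\ell+\delta}=x}\,\vert\,\mc{F}_{\ell}]\indic{\mc{S}>\ell} = \Jump(x-X_{\ell})\delta\,\indic{\mc{S}>\ell}+O(\delta^{2})$, uniform in $\ell$, then summing and taking $\delta\downarrow 0$), together with the continuity of $\ell\mapsto Z_{\V{\tau}_{[0,\ell]}}$; your sketch passes over this step. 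If you weaken your claim \eqref{eq:simon-FV} to use $G^{\sssL}_{\V{0}}(a,x')$ in place of $G^{\sss(\Lambda')}_{\V{0}}(a,x')$ and supply the discretization argument, you recover the paper's proof.
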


Simon~\cite{SimonIneq} proved the progenitor for this inequality for
the Ising model, and showed that models that satisfy Simon
inequalities are such that whenever the two-point correlation is
summable the two-point function decays exponentially.
Lieb~\cite{Lieb} gave an important improvement in the Simon
inequality, which was extended by Rivasseau~\cite{Rivasseau} to
two-component models. The improvement was to replace $G^{\sssI}(a,x')$
by $G^{\sss{(\Lambda')}}(a,x')$. We are unable to obtain this
improvement in the generality of \Cref{prop:simon}; see~\cite[Theorem
6.1]{BFS82} for a random walk proof of this improvement for $\pf$
models.

The next proof paraphrases~\cite{SimonIneq} in the notation of this
paper. We assumed the finite range condition \ref{as:J4n} in order to
appeal to \cite{SimonIneq} to prove \ref{as:G4}.

\begin{proof}[Proof of \Cref{lem:G4}]
  Recall from \Cref{as:J4n} \ref{as:J4n} that there exists a range $R$
  such that $\Jump^{\sss{(\partial\Lambda')}}(x',x)=0$ for
  $|x'-x|\geq R$.  For $r\geq 1$ choose $\Lambda'$ to be the ball
  $\{x:|x|\leq rR\}$ in $\ZZ^{d}$, let
  $F(r) = \sum_{x'\in\Lambda'} \sum_{x\not\in\Lambda'} G^{\sssI}(0,x')
  \Jump^{\sss{(\partial\Lambda'})}(x',x)$, and let
  $f(s) = \sum_{x:|x|\ge sR}G^{\sssI}(0,x)$. The choice of $\Lambda'$
  and the range $R$ of $\Jump^{\sss{(\partial\Lambda')}}$ imply that
  we can upper bound $F(r)$ by replacing $x\not\in\Lambda'$ by
  $|x|>rR$ and $x'\in\Lambda'$ by $|x'|\geq(r-1)R$. This yields
  $F(r) \le \hJ\,f(r-1)$, where $\hJ$ is given by \eqref{e:hJ-def}.
  Note that for $x'\in \Lambda'$ and $|x|\geq (r+1)R$,
  $G^{\sssI}(0,x') \Jump^{\sss{(\partial\Lambda'})}(x',x)=0$, and
  hence by summing \eqref{eq:simon} over $|b|\geq nrR$ with $a=0$ we
  have $f(nr) \le F(r)f(nr-(r+1)) \leq \hJ\,f(r-1) f((n-2)r)$.  By
  iteration we obtain $f(nr) \le (\hJ\,f(r-1))^{n/2}f(0)$ for $n$
  even.  By the summability hypothesis $f(0)$ is finite and
  $f(r-1) \downarrow 0$ as $r\uparrow\infty$. Therefore, for $r$
  sufficiently large, $\hJ\,f(r-1) \leq \frac{1}{2}$. With this choice
  of $r$ and $n$ even $f(nr) \le 2^{-n/2}f(0)$ which implies
  $G^{\sssI}(0,x)$ decays exponentially in $x$ as desired and, by
  \eqref{SRWG}, \ref{as:G4} is an immediate consequence.
\end{proof}

\begin{proof}[Proof of \Cref{prop:simon}]
  Let $\Lambda \subset \ZZ^{d}$ such that
  $a \in \Lambda'\subset\Lambda$. Let
  $\mc{S}=\inf \{t\geq 0:\LX_{t} \not \in \Lambda'\}$ be the time of
  first exit from $\Lambda'$, and note that $\LX_{\mc{S}}$ is the
  position of $\LX$ immediately after it jumps for the first time out
  of $\Lambda'$.  Let
  \begin{equation}
    \label{eq:simon-P}
    P(a,x)
    \bydef
    \Ea\,\Big[\frac{\Z{[0,\mc{S}]}}{Z_{\V{0}}}
    \indic{\LX_{\mc{S}} = x}\Big].
  \end{equation}
  Recall that the finite volume Greens function $\GrL(a,b)$ is
  $\GrL_{\V{t}}(a,b)$ as defined by \eqref{e:GF} with $\V{t}$ set to
  $\V{0}$.  Using \ref{as:Z1} we interchange the integral with the
  expectation in \eqref{e:GF} and obtain
  \begin{equation}
    \GrL(a,b)
    =
    \Ea\,\Big[
    \int_{[0,\infty)} d\ell \,
    \frac{\Z{[0,\ell]}}{Z_{\V{0}}}
    \indic{\LX_{\ell} = b}\Big].
  \end{equation}
  The hypothesis on $a,b$ implies that $\ell>\mc{S}$. By conditioning
  on $\mc{F}_{\mc{S}}$
  \begin{equation}
    \GrL(a,b)
    =
      \Ea\,\bigg[
      \frac{\Z{[0,\mc{S}]}}{Z_{\V{0}}}
      \int_{[\mc{S},\infty)} d\ell \,
      \Ea\,\Big[
      \frac{Z_{\V{\tau}_{[0,\mc{S}]}+\V{\tau}_{[\mc{S},\ell]}}}{\Z{[0,\mc{S}]}}
      \indic{\LX_{\ell} = b}
      \Big\vert \mc{F}_{\mc{S}}
      \Big]
      \bigg].
  \end{equation}
  By \Cref{lem:clemma} with $I=[0,\mc{S}]$ and
  $\G{I}^{\sssL}(\LX_{\mc{S}},b) \leq\GrL(\LX_{\mc{S}},b)$ from
  \ref{as:G1} we have
  $\GrL(a,b) \leq \Ea\,\big[ \frac{\Z{[0,\mc{S}]}}{Z_{\V{0}}}
  \GrL(\LX_{\mc{S}},b) \big]$ which is the same as
  \begin{equation}
    \label{eq:GPG}
    \GrL(a,b)
    \leq
    \sum_{x\in\Lambda}
    P(a,x) \GrL(x,b).
  \end{equation}
  To complete the proof it suffices to show that
  \begin{equation}
    \label{eq:P-bound}
    P(a,x)
    \le
    \sum_{x'\in\Lambda'} \GrL(a,x') \Jump^{\sss{(\partial\Lambda'})}(x',x)
  \end{equation}
  because inserting \eqref{eq:P-bound} into \eqref{eq:GPG} and using
  \ref{as:G2} to take the infinite volume limit gives
  \eqref{eq:simon}.  \emph{For the remainder of this proof we write
    $X=\LX$.}  By summing over the possible values of $X_\ell$, and
  the (Poisson) number of jumps in $(\ell,\ell+\delta]$, one can
  easily show that for $x\in \Lambda\setminus\Lambda'$, $\ell>0$ and
  $\delta > 0$
  \begin{equation}
    \Ea \left[ \indic{X_{\ell+\delta}=x} \vert \mc{F}_{\ell}
    \right]
    \indic{\mc{S} > \ell}
    =
    \Jump(x-X_\ell) \delta
    \indic{\mc{S} > \ell}
    +
    O(\delta^{2}),
  \end{equation}
  where the $O(\delta^{2})$ term is uniform in
  $\ell,x, \,\text{a.s.}\, \omega$.  We let
  $Y_{\ell}=\frac{Z_{\V{\tau}_{[0,\ell]}}}{Z_{\V{0}}}$ and use this to
  obtain
  \begin{align}
    \Ea
    \left[
    \indic{X_{\ell+\delta}=x}
    \indic{\mc{S} > \ell}
    Y_{\ell}
    \right]
    &=
      \Ea
      \left[
      \Ea
      \left[
      \indic{X_{\ell+\delta}=x}
      \vert
      \mc{F}_{\ell}
      \right]
      \indic{\mc{S} > \ell}
      Y_{\ell}
      \right]
      \nonumber\\
    &=
      \delta \Ea
      \left[
      \Jump(x-X_\ell)
      \indic{\mc{S} > \ell}
      Y_{\ell}
      \right]
      +
      O(\delta^{2}).
  \end{align}
  Since for $\ell$ in a compact set $[0,M]$ the vector of local times
  ranges within a compact subset $[0,M]^\Lambda$, we have by
  \ref{as:Z1} that $Y_\ell$ is bounded by a constant for
  $\ell\in [0,M]$.  Therefore $O(\delta^{2})$ is uniform in $x$ and
  $\ell\in [0,M]$.  Note that $Y_{\ell}$ is pathwise continuous in
  $\ell$ by \ref{as:Z1} and \eqref{e:tauI-def} and since also the
  probability that the walker jumps in a small interval goes to zero
  as the length of the interval goes to 0, the expectation is
  continuous in $\ell$.  We insert $\ell = \ell_{m}=m\delta$, sum over
  $m=0,1,...,\lfloor M/\delta \rfloor$ and take the limit as
  $\delta\downarrow 0$ to obtain
  \begin{equation}
    \lim_{\delta\downarrow 0}
    \sum_{m=0}^{\lfloor M/\delta \rfloor}
    \Ea
    \left[
      \indic{X_{\ell_{m+1}}=x}
      \indic{\mc{S} > \ell_{m}}
      Y_{\ell_{m}}
    \right]
    =
    \int_{[0,M]}
    \Ea
    \left[
      \Jump(x-X_{\ell})
      \indic{\mc{S} > \ell}
      Y_{\ell}
    \right]\, d\ell .
  \end{equation}
  On the other hand, taking the sum inside the expectation, and using
  Dominated convergence we see that the left hand side converges to
  $ \Ea \left[ \indic{X_{\mc{S}}=x} \indic{\mc{S} \le M} Y_{\mc{S}}
  \right]$ because it partitions
  $\{\mc{S} \le \delta(\lfloor M/\delta \rfloor +1)\}$ into
  $\{\mc{S} \in (m\delta, (m+1)\delta]\}$, and $Y_{\ell}$ is pathwise
  continuous, and $X_{\ell}$ is right-continuous. We let
  $M\uparrow \infty$ to obtain
  \begin{equation}
    \Ea \left[ \indic{X_{\mc{S}}=x} Y_{\mc{S}}\right]
    =
    \int_{[0,\infty)}
    \Ea
    \left[
  \Jump(x-X_{\ell})
      \indic{\mc{S} > \ell}
      Y_{\ell}
    \right]\, d\ell .
  \end{equation}
  Recalling that $Y_{\ell}=\frac{Z_{\V{\tau}_{[0,\ell]}}}{Z_{\V{0}}}$
  the left hand side is $P(a,x)$ by definition \eqref{eq:simon-P}.  By
  inserting
  $\Jump(x-X_{\ell})= \sum_{x'\in
    \Lambda'}\indic{X_{\ell}=x'}\Jump(x-x')$ we have
  \begin{equation}
    P(a,x)
    =
    \sum_{x'\in\Lambda'}
    \int_{[0,\infty)} d\ell \,
    \;
    \Ea\,\Big[
    \frac{Z_{\V{\tau}_{[0,\ell]}}}{Z_{\V{0}}}
    \indic{X_{\ell} = x'}\indic{\mc{S} > \ell}\Big]
    \Jump^{\sss{(\partial\Lambda')}}(x',x).
  \end{equation}
  We insert $\indic{\mc{S} > \ell}\leq 1$ and \eqref{e:GF} with
  $\V{t}=\V{0}$ to obtain \eqref{eq:P-bound} and thereby complete the
  proof.
\end{proof}

\subsection{Model independent lemmas}
\label{sec:model-indep-lemm}

In this section we prove, under revised hypotheses, the model
independent \Cref{lem:BHK-1-1*,lem:BHK-1-2*,lem:BHK-2,lem:Fcontinuous}
in the list of~\Cref{rem:dep}.

\begin{lemma}
  \label{lem:BHK-1*-gen}
  Lemmas \ref{lem:BHK-1-1*} and \ref{lem:BHK-1-2*} hold when the
  \ctwsaw and $\pf$ model are replaced by Assumptions \ref{as:FA-1}
  and \ref{as:Z6}.
\end{lemma}

\begin{proof}
  Recall that $\decon_{\cpl,\nu}$, $\tilde G_{\cpl,\nu}(x)$, and
  $\tilde\aPi_{\cpl,\nu}(x)$ are defined in \eqref{eq:Int-Conv-m-decon}
  in terms of $\selfloop{\cpl,\nu}$ and
  $\aPi_{\cpl,\nu}(x):=\aPi_{\cpl,\nu}^{\sssI}(x)$, and in
  \eqref{e:dgreensz-def} we defined
  $\decon^{\dgreens}_{z}(x)= - \indic{x=0} + z \jumprate(x)$.

  \textbf{Proof of revised \Cref{lem:BHK-1-1*}}. By \Cref{prop:IVL*},
  \Cref{as:FA-1}(iii) and hypothesis $K=3$ we immediately obtain
  \Cref{lem:BHK-1-1*} with the desired revision. Note that this
  revised lemma implies there is a constant $c_{d}$ such that for
  $d\geq 5$
  \begin{equation}
    \label{eq:lem:BHK-1*-gen-1}
    0 \le \www (\cpl,\nu) \le c_{d}, \quad \text{and }
    \abs{\aPi_{\cpl,\nu}(x)}                                 
    \leq
    \cv \alpha c_*\mnorm{x}^{-3(d-2)}
    \leq
    \cv \alpha c_*\mnorm{x}^{-(d+4)}.
  \end{equation}
  
  \textbf{Proof of revised \Cref{lem:BHK-1-2*}}.
  Part~\ref{BHK1-2}: recall the definition of
  $\Dcal_{C}$ from above \Cref{lem:BHK-1-2*} and recall that we have chosen
  $\decon=\decon_{\cpl,\nu}$. We have to prove \cref{as:BHK2-1}:
  $\decon_{\cpl,\nu}$ is $\ZZ^d$-symmetric; \cref{as:BHK2-2}:
  $\sum_{x\in\ZZ^{d}} \decon_{\cpl,\nu}(x) \leq 0$; \cref{as:BHK2-3}:
  there exists $C_{0}$ and there exists
  $z=z(\cpl,\decon_{\cpl,\nu})\in\cb{0,\hJ^{-1}}$ such that
  \begin{equation}
    \abs{\decon_{\cpl,\nu}(x)-\decon^{\dgreens}_{z}(x)}
    \leq
    C_{0}\cv\mnorm{x}^{-(d+4)} .
    \label{e:BHK-1-2*a}
  \end{equation}

  \Cref{as:BHK2-1} holds by \ref{as:J3} and \Cref{prop:IVL*}.  To
  obtain \cref{as:BHK2-2}, we sum~\eqref{eq:Int-Conv-m} over $x$ and
  interchange the sum over $x$ with the sum in the convolution in
  \eqref{eq:Int-Conv-m}. Since $\nu > \nu_{c}$ the sums are absolutely
  convergent and the interchange is valid. The result is
  \begin{equation}
    \label{eq:decon-susc}
    \sum_{x\in\ZZ^{d}} \decon_{\cpl,\nu}(x) 
    =
    -\ob{\sum_{x\in\ZZ^{d}}\tilde G_{\cpl,\nu}(x)}^{-1} < 0,
  \end{equation}
  as desired. The inequality follows from $G_{\cpl,\nu}(x)>0$ and
  $\www(\cpl,\nu)>0$.

  \Cref{as:BHK2-3}.
  \begin{align}
    \abs{\decon(x)-\decon^{\dgreens}_{z}(x)}
    \leq
    \abs{\big(\www(\cpl,\nu) - z\big)}\jumprate(x)
    +
    \www(\cpl,\nu)\abs{\aPi_{\cpl,\nu}(x)}
    \label{e:BHK2-3a}
  \end{align}
  For $\www(\cpl,\nu) \le \hJ^{-1}$, the choice $z=\www(\cpl,\nu)$
  satisfies \eqref{e:BHK-1-2*a} with $C_{0}\geq c_{d}\alpha c_*$ by
  \eqref{eq:lem:BHK-1*-gen-1}. Otherwise, by the first equation in
  \eqref{eq:lem:BHK-1*-gen-1}, $\www(\cpl,\nu) > \hJ^{-1}$ and we will
  now prove that $z=\hJ^{-1}$ satisfies \eqref{e:BHK-1-2*a}.
  Accordingly set $z=\hJ^{-1}$ until the end of the proof of this part.
  \Cref{as:BHK2-2} bounds how much $\www(\cpl,\nu)$ can exceed
  $z$ as in the final inequality of
  \begin{align}
    0
    &\leq
      \www(\cpl,\nu) - z
      =
      \hJ^{-1}
      \sum_{x} \decon^{\dgreens}_{\www(\cpl,\nu)}(x)
      \nonumber\\
    &=
      \hJ^{-1}
      \sum_{x}
      \Big(
      \decon_{\cpl,\nu}(x)-\www(\cpl,\nu)\aPi_{\cpl,\nu}(x)
      \Big)
      \leq
      \hJ^{-1}
      \sum_{x}
      -\www(\cpl,\nu)\aPi_{\cpl,\nu}(x),
      \label{eq:zbd2}
  \end{align}
  where equalities follow from $\sum_{x}\jumprate(x)=\hJ$, the
  definition of $\decon^{\dgreens}_{z}(x)$ with $z$ replaced by
  $\www(\cpl,\nu)$ and the definition of $\decon_{\cpl,\nu}(x)$. This
  together with \eqref{e:BHK2-3a} implies
  \begin{align}
    \abs{\decon(x)-\decon^{\dgreens}_{z}(x)}
    \leq
    \Big(
    \sum_{x'}
    \www(\cpl,\nu)\abs{\aPi_{\cpl,\nu}(x')}      
    \Big)\frac{
    \jumprate(x)}{\hJ}
    +
    \www(\cpl,\nu)\aPi_{\cpl,\nu}(x).
    \label{e:BHK2-3b}
  \end{align}
  We insert \eqref{eq:lem:BHK-1*-gen-1}.  By \ref{as:J4n} there is a
  constant $c_{\jumprate}$ such that
  $\big(\sum_{x'}\mnorm{x'}^{-3(d-2)}\big) \jumprate(x) \hJ^{-1}$ $\leq
  c_{\jumprate}\mnorm{x}^{-(d+4)}$. Therefore the right hand side is
  bounded by $C_{0}\cv\mnorm{x}^{-(d+4)}$ for
  $C_{0}\geq c_{d}\alpha c_*(c_{\jumprate}+1)$.  The proof of
  item~\ref{as:BHK2-3} and therefore of part~\ref{BHK1-2} is complete.

  \Cref{lem:BHK-1-2*}, part~\ref{BHK1-1}: We must show that
  $\selfloop{\cpl,\nu}=O(\cpl)$. By \ref{as:Z6} and the hypothesis
  $\nu\in[\nu_{c},\cpl]$ this holds if $\selfloop{\cpl,\nu}\leq 0$. If
  $\selfloop{\cpl,\nu}\geq 0$ then $\www(\cpl,\nu) > \hJ^{-1}$ and
  part (ii) follows by inserting
  $\www(\cpl,\nu)=(\hJ-\selfloop{\cpl,\nu})^{-1}$ into \eqref{eq:zbd2}
  and solving the inequality for $\selfloop{\cpl,\nu}$.
\end{proof}

Next we prove \Cref{lem:BHK-2}. The hypotheses need no revision
because they do not reference our models.  This lemma extends
~\cite[Lemma~2]{BHK}, where the Laplacian is nearest neighbour, to the
finite range context of \Cref{hyp:Jnew}~\ref{as:J4n}. The proof is,
\emph{mutatis mutandis}, that of~\cite[Lemma~2]{BHK}, so we discuss only part
that needed care.

\begin{proof}[Proof of \Cref{lem:BHK-2}]
  Note this reference uses $-\Delta$ for what we denote by $\decon$,
  and that the formula for $\mu$ and its range is stated in the body
  of the proof of Lemma 2 in~\cite[below (28)]{BHK}.
  
  The most significant step to check is the Edgeworth expansion (24)
  in the proof of~\cite[Lemma~4]{BHK}. According to~\cite{BHK} this is
  equation (1.5b) of~\cite[Theorem~2]{Uchiyama} with $m=4$.  This
    is misleading even for the nearest neighbour Laplacian because
  (1.5b) is not the same as (24), but the proof of~\cite[Lemma~4]{BHK}
  remains valid with (1.5b) in place of (24) so we momentarily set
  this aside. The equation (1.5b) of~\cite[Theorem~2]{Uchiyama}
  continues to hold under our \Cref{hyp:Jnew}.  In particular,
  \ref{as:J3} implies that in (1.5b) the norm $\|\cdot\|$ is the
  Euclidean norm $|\cdot|$ and the odd Edgeworth coefficients
  $U_{1},U_{3}$ vanish. By the discussion
  below~\cite[Theorem~2]{Uchiyama} and (2.4)
  of~\cite[Theorem~2]{Uchiyama} the Edgeworth coefficients
  $U_{2}(\tilde\omega^{x})$ and $U_{4}(\tilde\omega^{x})$ in (1.5b)
  are continuous functions of the unit vector $\tilde\omega^{x}$ and
  are therefore bounded.

  We return to the problem with ~\cite[equation (24)]{BHK}. The
  coefficients in (1.5b) of~\cite[Theorem~2]{Uchiyama} depend on the
  direction $\tilde\omega^{x}$ by which $x$ approaches infinity,
  whereas ~\cite[equation (24)]{BHK} has no directional
  dependence. However the boundedness of this directional dependence
  is all that is used in the proof of~\cite[Lemma~2,
  Lemma~4]{BHK}. \end{proof}

\begin{lemma}
  \label{lem:Fcontinuous-new}
  Lemma \ref{lem:Fcontinuous} holds when the \ctwsaw and $\pf$ model
  are replaced by Assumption \ref{as:FA-1} and \ref{as:G3},
  \ref{as:G4} of Assumption \ref{as:FA-2}.
\end{lemma}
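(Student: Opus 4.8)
The plan is to run the standard lace-expansion continuity argument for the ratio $F(\nu)=\sup_{x\in\ZZ^{d}}G_{\cpl,\nu}(x)/\greens(x)$: I would exploit that, for each fixed $x$, the map $\nu\mapsto G_{\cpl,\nu}(x)/\greens(x)$ is continuous on $(\nu_{c},\infty)$ (by \ref{as:G3}), and that near any given $\nu_{0}$ the supremum over $x$ can be reduced to a \emph{fixed finite} set of $x$'s up to arbitrarily small error (by \ref{as:G4}). Fix $\cpl>0$ and $\nu_{0}\in(\nu_{c},\cpl]$; I will show $F$ is continuous at $\nu_{0}$. First I would record that $F(\nu)<\infty$ for every $\nu\in(\nu_{c},\cpl]$: indeed $\sum_{x}G_{\cpl,\nu}(x)<\infty$ by the definition \eqref{eq:nu-crit-def} of $\nu_{c}$, so each $G_{\cpl,\nu}(x)$ is finite, and \ref{as:G4} bounds the contribution of large $|x|$; recall also $\greens(x)>0$ for all $x$ by the lower bound stated below \eqref{SRWG}.

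The step I expect to require the most care is making the tail estimate \ref{as:G4} \emph{uniform} in $\nu$ on a one-sided neighbourhood of $\nu_{0}$, since \ref{as:G4} is stated only pointwise in $\nu$. Here I would use the monotonicity clause of \ref{as:G3}: choose $\nu_{-}\in(\nu_{c},\nu_{0})$; then $G_{\cpl,\nu}(x)\le G_{\cpl,\nu_{-}}(x)$ for all $\nu\ge\nu_{-}$ and all $x$, whence
\[
\sup_{\nu\ge\nu_{-}}\ \sup_{|x|\ge r}\frac{G_{\cpl,\nu}(x)}{\greens(x)}
\le
\sup_{|x|\ge r}\frac{G_{\cpl,\nu_{-}}(x)}{\greens(x)}
\xrightarrow[r\to\infty]{}0 ,
\]
the limit being \ref{as:G4} applied at $\nu_{-}\in(\nu_{c},\infty)$. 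Given $\epsilon>0$, I fix $r\ge 1$ so that the left-hand side is $<\epsilon$.

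To conclude I would write $F(\nu)=\max\bigl(F_{r}(\nu),T_{r}(\nu)\bigr)$, where $F_{r}(\nu)\bydef\max_{|x|<r}G_{\cpl,\nu}(x)/\greens(x)$ and $T_{r}(\nu)\bydef\sup_{|x|\ge r}G_{\cpl,\nu}(x)/\greens(x)$. Since $\{x:|x|<r\}$ is finite and each $\nu\mapsto G_{\cpl,\nu}(x)/\greens(x)$ is continuous on $(\nu_{c},\infty)$ by \ref{as:G3}, $F_{r}$ is continuous there; and by the previous display $0\le F(\nu)-F_{r}(\nu)\le T_{r}(\nu)<\epsilon$ for every $\nu\in[\nu_{-},\cpl]$. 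Thus $F$ is, uniformly on $[\nu_{-},\cpl]$, within $\epsilon$ of a continuous function; letting $\epsilon\downarrow 0$ shows $F$ is continuous on $[\nu_{-},\cpl]$, hence at $\nu_{0}$. As $\nu_{-}$ and $\nu_{0}$ range over the admissible values this yields continuity on all of $(\nu_{c},\cpl]$. I should finally note that \Cref{as:FA-1} enters only through the standing conventions making $G_{\cpl,\nu}$, $\greens$, and $\nu_{c}$ well defined with the cited properties; the argument itself uses only \ref{as:G3} and \ref{as:G4} and, in particular, does not invoke the equicontinuity statement in \ref{as:G3}.
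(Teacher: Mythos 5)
Your proof is correct and follows essentially the same route as the paper: reduce the supremum to a finite set by combining the tail bound \ref{as:G4} at a fixed $\nu_{-}\in(\nu_{c},\nu_{0})$ with the $\nu$-monotonicity from \ref{as:G3}, then conclude from pointwise continuity of $\nu\mapsto G_{\cpl,\nu}(x)$. The only cosmetic difference is at the last step: the paper chooses $r$ once (with $\tfrac12 f(0,g)$ as the threshold) so that $F(\nu)$ \emph{equals} the finite maximum $F_r(\nu)$ on all of $(\tilde\nu_c,g]$, whereas you choose $r$ to depend on $\epsilon$ and conclude by uniform approximation of $F$ by $F_r$; both are standard and equivalent here.
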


\begin{proof} 
  Recall from \eqref{e:Fnu-def*} that
  $F(\nu) \bydef \sup_{x\in\ZZ^{d}}f(x,\nu)$ where
  $f(x,\nu) = G_{\cpl,\nu}(x)/\greens(x)$. We have to prove that $F$
  is continuous on $(\nu_{c},g]$.  It suffices to
  prove that $F$ is continuous on $(\tilde{\nu}_{c},g]$ for
  $\tilde{\nu}_{c}>\nu_{c}$. As in step (i) in the proof of
  \cite[Proposition~2.2]{HHS2003}
  we reduce the supremum defining $F$ to a finite set.  By \ref{as:G4}
  there exists $r$ such that
  $\sup_{|x|>r} f(x,\tilde{\nu}_{c}) \leq \frac{1}{2} f(0,g)$.  For
  $\nu\in (\tilde{\nu}_{c},g]$ we have that
  $F(\nu) = \sup_{|x|\leq r}G_{\cpl,\nu}(x)/\greens(x)$ because 
  \ref{as:G3} implies
  \begin{equation}
    \sup_{|x|>r} f(x,\nu)
    \leq
    \sup_{|x|>r} f(x,\tilde{\nu}_{c})
    \leq
    \frac{1}{2} f(0,g)
    \leq
    \frac{1}{2} f(0,\nu).
  \end{equation}
  For fixed $x$, $f(x,\nu)$ is continuous in $\nu$ by ~\ref{as:G3}.
  Since the supremum of finitely many continuous functions is
  continuous, $F$ is continuous as desired.
\end{proof}

\subsection{Model-dependent lemmas and proof of
  \Cref{thm:Main-Pre-models}}
\label{sec:model-depend-lemm}

By the previous section the list in Remark~\ref{rem:dep} of lemmas
needed for Proposition \ref{prop:infrared} has been reduced to the
model-dependent Lemmas \ref{lem:Z-1-earlier}, \ref{lem:Z-1-earlier-2},
and \ref{lem:large-nu}.  The conclusions of these lemmas are contained
in Assumptions \ref{as:G2}, \ref{as:G3}, and \ref{as:G5}. In
particular, they are contained in Assumptions \ref{as:FA-1} and
\ref{as:FA-2} without \ref{as:Z7}.  According to Remark~\ref{rem:dep}
Theorem \ref{thm:Main-Pre-models} requires the same list together with
Lemmas \ref{lem:new-nucfin} and \ref{lem:Z5}. The conclusion of Lemma
\ref{lem:new-nucfin} is Assumption \ref{as:Z7}. Furthermore, according
to Remark~\ref{rem:dep}, the continuity properties used in the proof
of Lemma \ref{lem:Z5} are in Assumption~\ref{as:Z5}. Thus we have
proved the following

\begin{theorem}
  \label{thm:main-pre-ass}
  Proposition \ref{prop:infrared} holds when the \ctwsaw and the $\pf$
  model are replaced by Assumption \ref{as:FA-1} and \ref{as:FA-2}
  without \ref{as:Z7}. Theorem \ref{thm:Main-Pre-models} holds when
  the \ctwsaw and the $\pf$ model are replaced by Assumption
  \ref{as:FA-1} and \ref{as:FA-2}: in particular, under these
  assumptions, there is $\cpl_{0}=\cpl_{0}(d,\Jump)>0$ such that if
  $0<\cpl<\cpl_{0}$, then $\nu_{c}(\cpl)$ is finite and
  \begin{equation}\label{eq:Main-Pre00-2nd}
    G_{\cpl,\nu_{c}}(x) \le 2\greens (x)
    , {}\quad  x\in\ZZ^{d}.
  \end{equation}
\end{theorem}

\subsection{Proof of asymptotic behaviour}
\label{sec:proof-main-theorem}

We begin with two lemmas. The first, Lemma~\ref{lem:HinL1}, is an
extension of a lemma from~\cite{BHK}, and hence we only describe where
care must be taken in obtaining this extension.

\begin{lemma}
\label{lem:HinL1}
Let $\decon$ and $H$ be as in \Cref{lem:BHK-2}. If
$\sum_{x\in\ZZ^{d}}\decon(x)<0$ then $H(x) \in \ell^{1} (\ZZ^{d})$.
\end{lemma}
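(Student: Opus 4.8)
The plan is to produce, by a resolvent expansion about $\dgreens_{\mu}$, an explicit element of $\ell^{1}(\ZZ^{d})$ that is a two‑sided convolution inverse of $-\decon$, and then to identify it with $H$ via the uniqueness of convolution inverses (\Cref{lem:inv}). This is the finite‑range counterpart of the corresponding argument of~\cite{BHK}.

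Here is where the hypothesis $\sum_{x}\decon(x)<0$ does its work. Since $\mu=\hJ^{-1}(1+\sum_{x}\decon(x))$ (see \Cref{lem:BHK-2}), the hypothesis forces $\mu\hJ<1$, and with $\mu\ge-(2\hJ)^{-1}$ this gives $|\mu|<\hJ^{-1}$. Consequently the series $\dgreens_{\mu}=\sum_{n\ge0}(\mu\jumprate)^{\ast n}$ of \eqref{e:dgreensz-def} converges absolutely in $\ell^{1}(\ZZ^{d})$, with $\norm{\dgreens_{\mu}}_{1}\le(1-|\mu|\hJ)^{-1}$, and $\decon^{\dgreens}_{\mu}\ast\dgreens_{\mu}=-\indic{x=0}$, so $\dgreens_{\mu}$ is the $\ell^{1}$ convolution inverse of $-\decon^{\dgreens}_{\mu}$. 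I would then write $\decon=\decon^{\dgreens}_{\mu}+F$. The kernel $\decon^{\dgreens}_{\mu}=-\indic{x=0}+\mu\jumprate$ is finitely supported by \ref{as:J4n}, and $F\bydef\decon-\decon^{\dgreens}_{\mu}$ lies in $\ell^{1}(\ZZ^{d})$ with $\norm{F}_{1}=O(\cpl)$: taking $z=z(\cpl,\decon)\in[0,\hJ^{-1}]$ from item \ref{as:BHK2-3} of the definition of $\Dcal_{C}$ and comparing $\sum_{x}\decon(x)=\mu\hJ-1$ with $\sum_{x}\decon^{\dgreens}_{z}(x)=z\hJ-1$ gives $|z-\mu|=\hJ^{-1}\abs{\sum_{x}(\decon(x)-\decon^{\dgreens}_{z}(x))}=O(\cpl)$, whence, using $\decon^{\dgreens}_{z}-\decon^{\dgreens}_{\mu}=(z-\mu)\jumprate$,
\begin{equation}
\nonumber
\abs{F(x)}\le\abs{\decon(x)-\decon^{\dgreens}_{z}(x)}+|z-\mu|\,\jumprate(x)\le C\cpl\mnorm{x}^{-(d+4)}+O(\cpl)\indic{|x|<R},
\end{equation}
which is summable. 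Hence $\dgreens_{\mu}\ast F\in\ell^{1}(\ZZ^{d})$ and $\norm{\dgreens_{\mu}\ast F}_{1}<1$ for $\cpl$ small, so the Neumann series $H'\bydef\big(\sum_{j\ge0}(\dgreens_{\mu}\ast F)^{\ast j}\big)\ast\dgreens_{\mu}$ converges in the commutative Banach algebra $(\ell^{1}(\ZZ^{d}),\ast)$; from the factorization $-\decon=(-\decon^{\dgreens}_{\mu})\ast(\indic{x=0}-\dgreens_{\mu}\ast F)$ one reads off $(-\decon)\ast H'=H'\ast(-\decon)=\indic{x=0}$.

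Finally I would identify $H'$ with $H$: by \eqref{eq:BHK-2-inverse}, $\decon\ast H=-\indic{x=0}$, and by \eqref{eq:BHK-2} together with $\dgreens_{\mu}\in\ell^{\infty}(\ZZ^{d})$ one has $H\in\ell^{\infty}(\ZZ^{d})$; since also $\decon,H'\in\ell^{1}(\ZZ^{d})$, the triple sum $\sum_{u,v}|H'(x-u)|\,|\decon(u-v)|\,|H(v-y)|\le\norm{H}_{\infty}\norm{H'}_{1}\norm{\decon}_{1}$ is finite, so \Cref{lem:inv} (with $W=H'$, $U=-\decon$, $V=H$) gives $H=H'\in\ell^{1}(\ZZ^{d})$. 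I expect no real obstacle here: relative to the nearest‑neighbour treatment of~\cite{BHK}, the only input specific to the finite‑range setting is that $\decon^{\dgreens}_{\mu}$ is finitely supported, which is immediate from \ref{as:J4n}. The one point genuinely worth flagging is conceptual rather than technical — the whole argument runs only because $\sum_{x}\decon(x)<0$ renders $\dgreens_{\mu}$ summable; at the critical value $\sum_{x}\decon(x)=0$ one has $\mu=\hJ^{-1}$ and $\dgreens_{\mu}=\dgreens\sim\mnorm{x}^{2-d}\notin\ell^{1}(\ZZ^{d})$, and indeed $H\notin\ell^{1}$ there.
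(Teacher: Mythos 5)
Your strategy is the right one and most of the computations are sound (the identity $\hat F(0)=\sum_x F(x)=\sum_x\decon(x)-\sum_x\decon^{\dgreens}_{\mu}(x)=(\mu\hJ-1)-(\mu\hJ-1)=0$ is correct, as is the factorization $-\decon=(-\decon^{\dgreens}_{\mu})\ast(\indic{x=0}-\dgreens_{\mu}\ast F)$ and the final identification $H=H'$ via \Cref{lem:inv}). But the crucial claim ``$\norm{\dgreens_{\mu}\ast F}_{1}<1$ for $\cpl$ small'' is not justified and, I believe, false in the required uniformity. The hypothesis $\sum_{x}\decon(x)<0$ gives only $\mu\hJ<1$, not a gap; so $\norm{\dgreens_{\mu}}_{1}=\sum_{n\ge0}|\mu|^{n}\hJ^{n}=(1-|\mu|\hJ)^{-1}$ can be as large as you like. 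Submultiplicativity then gives only $\norm{\dgreens_{\mu}\ast F}_{1}\le O(\cpl)/(1-\mu\hJ)$, which exceeds $1$ once $1-\mu\hJ\lesssim\cpl$. (This is exactly the regime that matters in the application — in \Cref{lem:deconsum0} one lets $\nu\downarrow\nu_{c}$, where $\sum_{x}\decon_{\cpl,\nu}(x)\uparrow 0$.) The cancellation $\hat F(0)=0$ is what saves the day — the combination $\hat F(\theta)\hat{\dgreens}_{\mu}(\theta)$ is uniformly $O(\cpl)$ in $\sup$ norm because the quadratic vanishing of $\hat F$ at $0$ offsets the near-pole of $\hat{\dgreens}_{\mu}$ — but sup-norm control of a Fourier transform does not give an $\ell^{1}$ bound on the underlying kernel, so the naive Neumann series in $(\ell^{1},\ast)$ does not close.

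This is precisely the point where the paper leans on~\cite{BHK} rather than re-deriving the inverse. In the proof of~\cite[Lemma~2]{BHK} (our \Cref{lem:BHK-2}) it is established, by working in the stronger Banach algebra $B$ of $\ell^{1}$ functions with $\sup_{x}|f(x)||x|^{d}<\infty$ and using its Wiener-type invertibility property, that $(-\decon\ast\dgreens_{\mu})^{-1}$ — which is precisely your $\sum_{j\ge0}(\dgreens_{\mu}\ast F)^{\ast j}$ — lies in $B$ uniformly over the class. The paper's proof of the present lemma is then short: it quotes $H=(-\decon\ast\dgreens_{\mu})^{-1}\ast\dgreens_{\mu}$ and the fact $(-\decon\ast\dgreens_{\mu})^{-1}\in B$ from that proof, and only needs the new observation that $\sum_{x}\decon(x)<0$ forces $|\mu|<\hJ^{-1}$, hence $\dgreens_{\mu}$ decays exponentially and so lies in $B\subset\ell^{1}$, whence $H\in\ell^{1}$. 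To repair your argument you would either need to cite BHK's Banach-algebra step as the paper does, or supply the nontrivial $B$-algebra inversion argument yourself; the $\ell^{1}$ Neumann series alone does not suffice.
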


\begin{proof}
  By the definition \eqref{e:dgreensz-def} of $\dgreens_{z}$ as a
  series
  \begin{align}
    \sum_{x\in \ZZ^{d}}\abs{\dgreens_{z}(x)}
    &\leq
      \sum_{x\in \ZZ^{d}}\sum_{n\ge 0} (\abs{z} \jumprate)^{\ast n}(x)
      \nonumber\\
    &=
      \sum_{n\ge 0} \abs{z}^{n} \sum_{x\in \ZZ^{d}}
      \jumprate^{\ast n}(x)
      =
      \sum_{n\ge 0} \abs{z}^{n} \hJ^{n},
      \nonumber
  \end{align}
  where $\hJ=\sum_{x} \jumprate(x)$ by \ref{as:J1} and
  \eqref{e:hJ-def}. The interchange of sums is justified because all
  terms on the right hand side are positive.  The right hand side is
  absolutely convergent iff $\abs{z}<\hJ^{-1}$ and when it is
  absolutely convergent $\abs{\dgreens_{z}(x)}$ decays exponentially
  like $(\abs{z}\hJ)^{O(\mnorm{x})}$ as $\mnorm{x}\rightarrow\infty$ by
  \ref{as:J4n}.  

  From $\decon \ast H= -\1$ we can generate a series similar to
  $\dgreens_{z}$ but it is inadequate because, unlike $\Jump_{+}(x)$,
  $D(x)\not\geq 0$ for $x\not=0$. However, this sign problem was
  solved in \cite{BHK}, where 
  in the proof of~\cite[Lemma~2]{BHK}, it is shown that
  \begin{equation*}
    H = (-D\ast \dgreens_{\mu})^{-1}\ast \dgreens_{\mu},
  \end{equation*}
  where we have expressed the equation
  preceding~\cite[Equation~(32)]{BHK} in the notation of the present
  paper. In~\cite{BHK} it is shown that the first term
  $(-D\ast \dgreens_{\mu})^{-1}$ is an element of the Banach algebra
  $B$ defined at the beginning of~\cite[Section~4]{BHK}, i.e., the set
  of functions $f$ on $\ZZ^{d}$ that are $\ell^{1}$ and have
  $\sup_{x}|f(x)| |x|^{d}$ finite. Since
  $\sum_{x\in\ZZ^{d}} D(x) < 0$, $\mu$ defined in \Cref{lem:BHK-2}
  satisfies $\abs{\mu}<\hJ^{-1}$. Therefore, by the preceding
  paragraph, $\dgreens_{\mu}$ is an element of $B$ since it decays
  exponentially in $\mnorm{x}$.  Hence the convolution defining $H$ is
  an element of $B$; in particular it is $\ell^{1}$ as desired.
\end{proof}

\begin{lemma}
  \label{lem:DRC}
  Under \Cref{as:FA-1} and~\ref{as:FA-2},
  if $d\geq 5$ and $\cpl$ is sufficiently small, then
  $\{\decon_{g,\nu}(x)\}_{x\in\ZZ^{d}}$ is an equicontinuous family of
  functions for $\nu\in\co{\nu_{c},\infty}$. Moreover
  $\decon_{g,\nu}$ is continuous in $\nu\in\co{\nu_{c},\infty}$ as
    an $\ell^{1}$-valued function.
\end{lemma}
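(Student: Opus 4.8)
The plan is to reduce the statement to the continuity in $\nu$ of the infinite-volume quantities $\selfloop{\cpl,\nu}$ and $\aPi_{\cpl,\nu}$, and to deduce the continuity of $\aPi_{\cpl,\nu}$ from continuity of the convolution inverse of $G_{\cpl,\nu}$. Fix $\cpl$ small enough that \Cref{thm:main-pre-ass} applies, so $\nu_{c}=\nu_{c}(\cpl)$ is finite and $G_{\cpl,\nu_{c}}\le 2\greens$. By the monotonicity in \ref{as:G3}, $G_{\cpl,\nu}\le G_{\cpl,\nu_{c}}\le 2\greens\le 3\greens$ for all $\nu\in\co{\nu_{c},\infty}$, so a $3$-IRB holds throughout $\co{\nu_{c},\infty}$; shrinking $\cpl$ if necessary, the hypotheses of \Cref{prop:IVL*} and \Cref{prop:Pi-inv} then hold for every such $\nu$ with $\eta=c_{*}\cpl$ (from \Cref{as:FA-1}). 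This gives, uniformly in $\nu\in\co{\nu_{c},\infty}$: $\selfloop{\cpl,\nu}$ well-defined with $0\le\www(\cpl,\nu)=(\hJ-\selfloop{\cpl,\nu})^{-1}\le c_{d}$ and $\abs{\aPi_{\cpl,\nu}(x)}\le\cpl\alpha c_{*}\mnorm{x}^{-3(d-2)}$ (see \eqref{eq:lem:BHK-1*-gen-1}); the infinite-volume limit $\aPi_{\cpl,\nu}$ of $\aPiL$ exists and is $\ZZ^{d}$-symmetric; and, by \Cref{prop:Pi-inv} and \Cref{lem:2side} (translating matrix products to one-variable convolutions using translation invariance), the function
\[
  M_{\nu}(x)\bydef-\Jump(x)-\selfloop{\cpl,\nu}\indic{x=0}-\aPi_{\cpl,\nu}(x)
\]
is a two-sided convolution inverse of $x\mapsto G_{\cpl,\nu}(x)$ (and $\decon_{\cpl,\nu}=-\www(\cpl,\nu)M_{\nu}$, cf.\ \eqref{eq:Int-Conv-m-decon}). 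Continuity of $\selfloop{\cpl,\cdot}$ on $\co{\nu_{c},\infty}$ holds by the clause of \ref{as:Z6} for the case when a $3$-IRB holds (applicable since $G_{\cpl,\nu_{c}}\le 3\greens$), and then $\www(\cpl,\cdot)$ is continuous there because $\hJ-\selfloop{\cpl,\nu}\ge\frac{1+O(\eta)}{3\GI(0)}>0$ by \Cref{prop:IVL*}\ref{BHK1-1**d}; for each fixed $x$, $\nu\mapsto G_{\cpl,\nu}(x)$ is continuous on $\co{\nu_{c},\infty}$ by the continuity clause of \ref{as:G3} on $(\nu_{c},\infty)$ together with the equicontinuity clause of \ref{as:G3} applied with $\nu=\nu_{c}$ at the endpoint. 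Finally $3(d-2)>d$ for $d\ge5$ and $\Jump$ has finite support by \ref{as:J4n}, so both $M_{\nu}$ and $\decon_{\cpl,\nu}(x)=-\indic{x=0}+\www(\cpl,\nu)(\jumprate(x)+\aPi_{\cpl,\nu}(x))$ lie in $\ell^{1}(\ZZ^{d})$, so the $\ell^{1}$-valued claim is meaningful.

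The heart of the proof is continuity of $\nu\mapsto M_{\nu}$. Fix $\nu_{0}\in\co{\nu_{c},\infty}$ and a compact interval $I\subset\co{\nu_{c},\infty}$ containing $\nu_{0}$. On $I$ the continuous function $\selfloop{\cpl,\nu}$ is bounded, so, using the uniform bound on $\aPi_{\cpl,\nu}$ and the finite support of $\Jump$, there is $\Psi$ with $\abs{M_{\nu}(x)}\le\Psi(x)\le C\mnorm{x}^{-3(d-2)}$ for all $x\in\ZZ^{d}$ and all $\nu\in I$, and $\Psi\in\ell^{1}$. From $M_{\nu}\ast G_{\cpl,\nu}=M_{\nu_{0}}\ast G_{\cpl,\nu_{0}}=\indic{x=0}$ and $G_{\cpl,\nu_{0}}\ast M_{\nu_{0}}=\indic{x=0}$ we obtain $M_{\nu}\ast(G_{\cpl,\nu}-G_{\cpl,\nu_{0}})+(M_{\nu}-M_{\nu_{0}})\ast G_{\cpl,\nu_{0}}=0$; convolving on the right by $M_{\nu_{0}}$ — the rearrangement of the order of summation being legitimate because the triple sum converges absolutely by \Cref{lem:HHS-2} (the $M$-type factors decay like $\mnorm{\cdot}^{-3(d-2)}$ with $3(d-2)>d$, the $G$-type factors like $\mnorm{\cdot}^{-(d-2)}$, see \eqref{SRWG2}), exactly as in the proof of \Cref{lem:2side} — yields
\begin{equation}
  M_{\nu}-M_{\nu_{0}}=-\,M_{\nu}\ast(G_{\cpl,\nu}-G_{\cpl,\nu_{0}})\ast M_{\nu_{0}}.
\end{equation}
Now let $\nu\to\nu_{0}$ in $I$. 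For each fixed $x$, the generic term of the double sum defining the right-hand side at $x$ is a product of a value of $M_{\nu}$, a value of $G_{\cpl,\nu}-G_{\cpl,\nu_{0}}$, and a value of $M_{\nu_{0}}$; it tends to $0$ termwise since $(G_{\cpl,\nu}-G_{\cpl,\nu_{0}})(y)\to0$ for each $y$ by the continuity of $\nu\mapsto G_{\cpl,\nu}(\cdot)$ while $M_{\nu}$ stays bounded at fixed argument, and it is dominated uniformly in $\nu\in I$ by the corresponding term of $\Psi\ast(4\greens)\ast\Psi$, a summable double sum by \Cref{lem:HHS-2}. Dominated convergence gives $(M_{\nu}-M_{\nu_{0}})(x)\to0$ for every $x$, and since $\abs{M_{\nu}-M_{\nu_{0}}}\le 2\Psi\in\ell^{1}$ on $I$, a second application of dominated convergence upgrades this to $\norm{M_{\nu}-M_{\nu_{0}}}_{1}\to0$ as $\nu\to\nu_{0}$.

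It remains to transfer this to $\aPi$ and $\decon$. From $\aPi_{\cpl,\nu}(x)=-M_{\nu}(x)-\Jump(x)-\selfloop{\cpl,\nu}\indic{x=0}$ and continuity of $\selfloop{\cpl,\cdot}$ we get $\norm{\aPi_{\cpl,\nu}-\aPi_{\cpl,\nu_{0}}}_{1}\le\norm{M_{\nu}-M_{\nu_{0}}}_{1}+\abs{\selfloop{\cpl,\nu}-\selfloop{\cpl,\nu_{0}}}\to0$. Then, from
\[
  \decon_{\cpl,\nu}-\decon_{\cpl,\nu_{0}}=\bigl(\www(\cpl,\nu)-\www(\cpl,\nu_{0})\bigr)\bigl(\jumprate+\aPi_{\cpl,\nu_{0}}\bigr)+\www(\cpl,\nu)\bigl(\aPi_{\cpl,\nu}-\aPi_{\cpl,\nu_{0}}\bigr),
\]
combined with the continuity and boundedness ($\le c_{d}$) of $\www(\cpl,\cdot)$, the uniform bound on $\aPi_{\cpl,\nu_{0}}$, and $\jumprate\in\ell^{1}$, we obtain $\norm{\decon_{\cpl,\nu}-\decon_{\cpl,\nu_{0}}}_{1}\to0$ as $\nu\to\nu_{0}$. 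As $\nu_{0}\in\co{\nu_{c},\infty}$ was arbitrary, $\decon_{\cpl,\cdot}$ is continuous as an $\ell^{1}(\ZZ^{d})$-valued function on $\co{\nu_{c},\infty}$; since moreover $\sup_{x}\abs{\decon_{\cpl,\nu}(x)-\decon_{\cpl,\nu_{0}}(x)}\le\norm{\decon_{\cpl,\nu}-\decon_{\cpl,\nu_{0}}}_{1}$, the family $\{\decon_{\cpl,\nu}(x)\}_{x\in\ZZ^{d}}$ is equicontinuous on $\co{\nu_{c},\infty}$, as claimed. I expect the main obstacle to be this "continuity of the convolution inverse" step: making the displayed identity rigorous (absolute convergence and associativity of the triple convolution) and running the two dominated-convergence arguments both rest on the uniform $\mnorm{\cdot}^{-3(d-2)}$ decay of the lace-expansion terms together with $3(d-2)>d$, which is why \ref{as:J4n} and $d\ge5$ are used; the rest is bookkeeping with the already-established \ref{as:G3}, \ref{as:Z6} and the estimates of \Cref{sec:model-indep-lemm}.
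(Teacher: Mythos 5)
Your proof is correct and uses the same key idea as the paper's: the resolvent-type identity expressing $\decon_{\cpl,\nu}-\decon_{\cpl,\nu_0}$ (equivalently your $M_\nu-M_{\nu_0}$) as a triple convolution against $\tilde G_{\cpl,\nu}-\tilde G_{\cpl,\nu_0}$, with absolute convergence and associativity controlled by the uniform $\mnorm{x}^{-3(d-2)}$ decay of $\aPi_{\cpl,\nu}$ from \Cref{prop:IVL*} together with \ref{as:J4n} and \Cref{lem:HHS-2}. The only (harmless) difference is the order of deduction at the end: the paper reads off equicontinuity of $\{\decon_{\cpl,\nu}(x)\}_{x}$ directly from the sup-bound $\abs{\decon_{\nu_2}(x)-\decon_{\nu_1}(x)}\le C\sup_y\abs{\tilde G_{\nu_1}(y)-\tilde G_{\nu_2}(y)}$ plus the uniform-equicontinuity clause of \ref{as:G3}, and then gets $\ell^{1}$-continuity from uniform-in-$\nu$ summability, whereas you first establish $\ell^{1}$-continuity by two dominated-convergence passes and obtain equicontinuity from $\norm{\cdot}_{\infty}\le\norm{\cdot}_{1}$.
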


\begin{proof}
  To prove that $\decon_{g,\nu}(x)$ is defined and continuous in
  $\nu\in\co{\nu_{c},\infty}$ we now discuss the definitions in
  \eqref{eq:Int-Conv-m-decon} for $\nu =\nu_{c}$ as well as
  $\nu > \nu_{c}$.  By \Cref{thm:main-pre-ass} the infrared bound
  $G_{\cpl,\nu_{c}}\le 2\greens$ holds for $\nu \ge \nu_{c}$.  This
  implies the hypotheses of \Cref{prop:IVL*} hold with $\eta=c_*\cpl$
  for $\nu\ge \nu_{c}$. By item~\ref{BHK1-1**d} of \Cref{prop:IVL*} we
  conclude that
  $\decon_{\cpl,\nu}\bydef \decon^{\dgreens}_{\www(\cpl,\nu)} +
  \tilde\aPi_{\cpl,\nu}$ exists for $\nu \ge \nu_{c}$ as desired. This
  definition together with item~\ref{BHK1-1**c} of \Cref{prop:IVL*}
  asserts that
  \begin{equation}\label{eq:Int-Conv-m*}
    \decon_{\cpl,\nu} \ast \tilde G_{\cpl,\nu} (x)
    = - \indic{x=0} .
  \end{equation}
  Moreover, by \ref{as:J4n}, items~\ref{BHK1-1**b} and~\ref{BHK1-1**d}
  of \Cref{prop:IVL*}, and the lower bound on $w(\cpl,\nu)$ following
  \eqref{eq:lem:BHK-1*-gen-1}, there is a $c_{1}>0$ such that for
  $\nu\geq \nu_{c}$,
  \begin{equation}
    \label{eq:Ddecay}
    \abs{D_{\cpl,\nu}(x)}\leq c_{1}\mnorm{x}^{-d-4}.
  \end{equation}

  For $\nu_{1},\nu_{2}\in [\nu_{c},\infty)$, 
  \eqref{eq:Int-Conv-m*} implies that
  \begin{equation}
    \label{eq:Dcont}
    \decon_{\nu_{2}}\ast(\tilde G_{\nu_{1}}-\tilde
    G_{\nu_{2}})\ast \decon_{\nu_{1}} + (\decon_{\nu_{1}}-\decon_{\nu_{2}})=0,
  \end{equation}
  where we have omitted the subscript $\cpl$.  Note that the omission
  of the order of the convolutions in this equation is valid as the
  iterated convolutions are absolutely convergent by \eqref{eq:Ddecay}, 
  the infrared bound $G_{\cpl,\nu_{c}}\le 2\greens$, and \Cref{lem:HHS-2}.
  Therefore
  \begin{align}
    \nonumber
    \abs{\decon_{\nu_{2}}(x)-\decon_{\nu_{1}}(x)} &\leq
    \sup_{y\in\ZZ^{d}}\abs{\tilde G_{\nu_{1}}(y)-\tilde G_{\nu_{2}}(y)}
    \norm{\decon_{\nu_{1}}}_{1}\norm{\decon_{\nu_{2}}}_{1} \\
    \label{eq:Dcont2}
    &\leq C \sup_{y\in\ZZ^{d}}\abs{\tilde G_{\nu_{1}}(y)-\tilde
      G_{\nu_{2}}(y)} 
  \end{align}
  for some $C>0$ by \eqref{eq:Ddecay}.  By \ref{as:Z6} part (b),
  $\www (\cpl,\nu)$ is continuous in $\nu$ for $\nu \ge \nu_{c}$.
  Therefore the functions $\tilde G_{\cpl,\nu}(x)$ are equicontinuous
  on $\co{\nu_{c},\infty}$ by \ref{as:G3} part (b).  This proves that
  the functions $\decon_{g,\nu}(x)$ are equicontinuous in
  $\nu\in\co{\nu_{c},\infty}$ as desired.

  The second claim, that $\nu\mapsto \decon_{\cpl,\nu}$ is continuous
  in $\ell^{1}$, follows from the first. This is so because
  $\sum_{x\in\ZZ^{d}}|\decon_{\cpl,\nu}(x)|$ converges uniformly in
  $\nu$ by \cref{BHK1-1**b} and \cref{BHK1-1**d} of
  \Cref{prop:IVL*}. 
\end{proof}

\begin{lemma}
  \label{lem:deconsum0}
  Consider a model satisfying \Cref{as:FA-1} and~\ref{as:FA-2}.  If
  $d\geq 5$ and $\cpl$ is sufficiently small then
  $\sum_{x\in\ZZ^{d}}\decon_{\cpl,\nu_{c}}(x) = 0$.
\end{lemma}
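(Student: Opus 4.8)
The idea is to identify $\sum_{x\in\ZZ^{d}}\decon_{\cpl,\nu_{c}}(x)$ as the limit along $\nu\downarrow\nu_{c}$ of the corresponding sums for $\nu>\nu_{c}$, which are controlled by the susceptibility, and then to exclude a strictly negative limit. For $\nu>\nu_{c}$ the infinite‑volume lace‑expansion identity \eqref{eq:Int-Conv-m} reads $\decon_{\cpl,\nu}\ast\tilde G_{\cpl,\nu}=-\indic{x=0}$; summing over $\ZZ^{d}$ is legitimate because for $\nu>\nu_{c}$ the $3$‑IRB and the decay bounds make all the relevant series absolutely convergent (cf.\ \eqref{eq:decon-susc}), and gives
\begin{equation}
  \sum_{x\in\ZZ^{d}}\decon_{\cpl,\nu}(x)
  =-\Bigl(\sum_{x\in\ZZ^{d}}\tilde G_{\cpl,\nu}(x)\Bigr)^{-1}
  =-\frac{1}{(\hJ-\selfloop{\cpl,\nu})\,\chi_{\cpl}(\nu)}<0 .
\end{equation}
By \Cref{lem:DRC} the map $\nu\mapsto\decon_{\cpl,\nu}$ is continuous into $\ell^{1}(\ZZ^{d})$ on $\co{\nu_{c},\infty}$, so $\nu\mapsto\sum_{x}\decon_{\cpl,\nu}(x)$ is continuous there, and letting $\nu\downarrow\nu_{c}$ yields $\sum_{x}\decon_{\cpl,\nu_{c}}(x)=\lim_{\nu\downarrow\nu_{c}}\sum_{x}\decon_{\cpl,\nu}(x)\le 0$.

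It remains to show this limit is not strictly negative, which is the heart of the matter. Suppose, for contradiction, that $\sum_{x}\decon_{\cpl,\nu_{c}}(x)<0$. By \Cref{thm:main-pre-ass} the bound $G_{\cpl,\nu_{c}}\le 2\greens$ holds, so \Cref{lem:BHK-1*-gen} applies at $\nu=\nu_{c}$; in particular $\decon_{\cpl,\nu_{c}}\in\Dcal_{C_{0}}$ with $\abs{\decon_{\cpl,\nu_{c}}(x)}\le c_{1}\mnorm{x}^{-(d+4)}$ as in \eqref{eq:Ddecay}. Let $H$ be produced by \Cref{lem:BHK-2} with $\decon=\decon_{\cpl,\nu_{c}}$, so $\decon_{\cpl,\nu_{c}}\ast H=-\indic{x=0}$ and, by \eqref{eq:BHK-2}, $\abs{H(x)}\le C\mnorm{x}^{2-d}$; since $\sum_{x}\decon_{\cpl,\nu_{c}}(x)<0$, \Cref{lem:HinL1} gives $H\in\ell^{1}(\ZZ^{d})$. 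But $\tilde G_{\cpl,\nu_{c}}$ is also a convolution inverse of $-\decon_{\cpl,\nu_{c}}$ by \eqref{eq:Int-Conv-m*}, and $\tilde G_{\cpl,\nu_{c}}(x)=O(\mnorm{x}^{2-d})$ by the IRB; hence, with $d\ge 5$ and \Cref{lem:HHS-2}, the triple convolution in the hypothesis of \Cref{lem:inv} converges absolutely, so \Cref{lem:inv} gives $\tilde G_{\cpl,\nu_{c}}=H$. Therefore $\chi_{\cpl}(\nu_{c})=(\hJ-\selfloop{\cpl,\nu_{c}})\norm{H}_{1}<\infty$, and by \Cref{lem:G4} (applied to $G^{\sssI}=G_{\cpl,\nu_{c}}$, which satisfies \ref{as:Z1}, \ref{as:G1}, \ref{as:G2} by \Cref{as:FA-1}) the function $G_{\cpl,\nu_{c}}(x)$ decays exponentially.

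The contradiction must now come from the divergence of the susceptibility at the critical point. Using monotonicity of $G_{\cpl,\nu}(x)$ in $\nu$ together with the equicontinuity in \ref{as:G3} and the continuity of $\selfloop{\cpl,\nu}$ at $\nu_{c}$ from \ref{as:Z6}, one has $\chi_{\cpl}(\nu)\uparrow\chi_{\cpl}(\nu_{c})$ and $\hJ-\selfloop{\cpl,\nu}\to\hJ-\selfloop{\cpl,\nu_{c}}>0$, so the displayed relation shows $\sum_{x}\decon_{\cpl,\nu_{c}}(x)<0$ is in fact \emph{equivalent} to $\chi_{\cpl}(\nu_{c})<\infty$; thus the lemma amounts to proving $\chi_{\cpl}(\nu_{c})=\infty$, and this is the step I expect to be the main obstacle. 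The plan here is to argue that exponential decay of $G_{\cpl,\nu_{c}}$ is incompatible with the definition \eqref{eq:nu-crit-def} of $\nu_{c}$ as the threshold of summability: by the representation $\tilde G_{\cpl,\nu}(x)=\dgreens_{\mu(\cpl,\nu)}(x)+O(\cpl\mnorm{x}^{2-d})$ of \Cref{lem:BHK-2} and the continuity of $\mu(\cpl,\cdot)$ at $\nu_{c}$, exponential decay would force $\mu(\cpl,\nu_{c})<\hJ^{-1}$ and keep $\chi_{\cpl}$ finite on a one‑sided neighbourhood of $\nu_{c}$, and one must convert this into a contradiction working with the \emph{finite‑volume} Green's functions and the continuity assumptions \ref{as:G3}, \ref{as:Z5}, \ref{as:Z6} — the subtlety being that $G_{\cpl,\nu}$ need not be finite for $\nu<\nu_{c}$, so one cannot simply perturb below $\nu_{c}$. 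Once $\sum_{x}\decon_{\cpl,\nu_{c}}(x)<0$ is ruled out, the lemma follows.
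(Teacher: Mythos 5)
Your setup is exactly the paper's intended route: for $\nu>\nu_{c}$ you use $\decon_{\cpl,\nu}\ast\tilde G_{\cpl,\nu}=-\indic{x=0}$ to get $\sum_{x}\decon_{\cpl,\nu}(x)=-\www(\cpl,\nu)/\chi_{\cpl}(\nu)<0$, then the $\ell^{1}$-continuity of \Cref{lem:DRC} to get $\sum_{x}\decon_{\cpl,\nu_{c}}(x)\leq 0$, and then \Cref{lem:HinL1} together with the identification $H=\tilde G$ to conclude that strict negativity would force $\chi_{\cpl}(\nu_{c})<\infty$. Up to that point you are doing exactly what the paper does (its proof is the one-liner ``This follows from \Cref{lem:HinL1,lem:DRC} and the definition of $\nu_{c}$'').

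The problem is what comes next, and you say it yourself: your last paragraph is a \emph{plan}, not a proof. The lemma is logically equivalent to the statement $\chi_{\cpl}(\nu_{c})=\infty$, i.e.\ that the infimum in \eqref{eq:nu-crit-def} is not attained, and you never establish this. You correctly identify the obstruction --- one cannot ``perturb below $\nu_{c}$'' because $\decon_{\cpl,\nu}$, $\selfloop{\cpl,\nu}$, and indeed the whole lace-expansion apparatus are only defined under an IRB, which is only available for $\nu\geq\nu_{c}$; \Cref{lem:DRC} gives $\ell^{1}$-continuity only on $\co{\nu_{c},\infty}$. The conclusions you do draw from the contradiction hypothesis (that $H\in\ell^{1}$, that $G_{\cpl,\nu_{c}}$ decays exponentially via \Cref{lem:G4}, that $\mu(\cpl,\nu_{c})<\hJ^{-1}$) are all \emph{consistent} with $\chi_{\cpl}(\nu_{c})<\infty$ and by themselves contradict nothing: the susceptibility $\chi_{\cpl}$ is lower semicontinuous and non-increasing, hence right-continuous, so right-continuity at $\nu_{c}$ gives no new information, and $\chi_{\cpl}(\nu)=\infty$ for $\nu<\nu_{c}$ is not in tension with a finite right-limit unless you can show left-continuity at $\nu_{c}$, which is precisely the open step. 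The assumptions \ref{as:G3} and \ref{as:Z5} only give control on $\co{\nu_{c},\infty}$, so invoking them does not bridge to $\nu<\nu_{c}$. In short: you have reduced the lemma to $\chi_{\cpl}(\nu_{c})=\infty$ but not proved it, and since this is precisely the content of the lemma, the proposal as written has a genuine gap.

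%% Notes on what would actually close it: one needs an input that the model cannot have finite
%% susceptibility exactly at $\nu_{c}$ while having infinite susceptibility for every $\nu<\nu_{c}$.
%% For the Edwards model this follows from a subadditivity argument (the Laplace-transform representation
%% $\chi_{\cpl}(\nu)=\int_{0}^{\infty}e^{-\nu\ell}\phi(\ell)\,d\ell$ with $\phi$ submultiplicative forces
%% $\phi(\ell)\ge e^{-\nu_{c}\ell}$, hence $\chi_{\cpl}(\nu_{c})=\infty$); for the $\pf$ model one expects
%% an analogous argument via the walk representation. Neither of these is articulated in your proposal,
%% and they do not obviously follow from \Cref{as:FA-1}--\ref{as:FA-2} alone, so the step deserves an
%% explicit argument rather than being deferred to a plan.
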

\begin{proof}
  This follows from \Cref{lem:HinL1,lem:DRC} and the definition of
  $\nu_{c}$. (Recall that we showed $H=G$ in the proof of
  \Cref{thm:main-pre-ass}).
\end{proof}

\begin{theorem}
  \label{thm:gen-main}
  For models satisfying \Cref{as:FA-1} and~\ref{as:FA-2}, if $d\geq 5$
  there exists $\cpl_{0}=\cpl_{0}(d,\Jump)$ such that if
  $0<\cpl<\cpl_{0}$, then there are constants $C>0$, $\epsilon>0$ such
  that
  \begin{equation}
    \label{eq:gen-main}
    G_{\cpl,\nu_{c}}(x) \sim \frac{C}{\mnorm{x}^{d-2}} +
    O\ob{\frac{1}{\mnorm{x}^{d-2+\epsilon}}} .
  \end{equation}
\end{theorem}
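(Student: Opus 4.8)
The plan is to feed the critical lace--expansion equation into a deconvolution theorem of Hara~\cite{Hara2008}; this is the step that upgrades the crude bound $|\tilde G_{\cpl,\nu}(x)-\dgreens_{\mu}(x)|\le C'\cpl\mnorm{x}^{-(d-2)}$ of \Cref{lem:BHK-2} into the sharp asymptotic law with a power--law error.

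First I would collect the properties of the critical deconvolution kernel. By \Cref{thm:main-pre-ass}, for $\cpl$ small $\nu_{c}=\nu_{c}(\cpl)$ is finite and $G_{\cpl,\nu_{c}}\le 2\greens$, so the $3$-IRB holds at $\nu=\nu_{c}$ and \Cref{lem:DRC} (via \Cref{prop:IVL*}) applies there: the identity $\decon_{\cpl,\nu_{c}}\ast\tilde G_{\cpl,\nu_{c}}(x)=-\indic{x=0}$ holds with $\tilde G_{\cpl,\nu_{c}}=\www(\cpl,\nu_{c})^{-1}G_{\cpl,\nu_{c}}$, where $\www(\cpl,\nu_{c})=(\hJ-\selfloop{\cpl,\nu_{c}})^{-1}\in(0,c_{d}]$ and $\decon_{\cpl,\nu_{c}}=\decon^{\dgreens}_{\www(\cpl,\nu_{c})}+\tilde\aPi_{\cpl,\nu_{c}}$ with $|\tilde\aPi_{\cpl,\nu_{c}}(x)|\le C\cpl\mnorm{x}^{-3(d-2)}$ by \Cref{prop:IVL*}~\ref{BHK1-1**b}. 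Thus $\decon_{\cpl,\nu_{c}}$ is $\ZZ^{d}$-symmetric, its dominant part $\decon^{\dgreens}_{\www(\cpl,\nu_{c})}$ is finite range, and since $3(d-2)\ge d+4$ for $d\ge 5$ we have $|\decon_{\cpl,\nu_{c}}(x)|\le C\mnorm{x}^{-(d+4)}$. Crucially, \Cref{lem:deconsum0} gives $\sum_{x}\decon_{\cpl,\nu_{c}}(x)=0$, so $\mu(\cpl,\nu_{c})=\hJ^{-1}(1+\sum_{x}\decon_{\cpl,\nu_{c}}(x))=\hJ^{-1}$, the critical (massless) value.

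Next I would verify the Fourier--space hypotheses of Hara's theorem. Write $\widehat{f}(k)=\sum_{x\in\ZZ^{d}}f(x)e^{ik\cdot x}$ for $k\in(-\pi,\pi]^{d}$, and note $\widehat{\jumprate}(0)=\hJ$. Then $-\widehat{\decon}_{\cpl,\nu_{c}}(k)=(1-\www(\cpl,\nu_{c})\widehat{\jumprate}(k))-\www(\cpl,\nu_{c})\widehat{\aPi}_{\cpl,\nu_{c}}(k)$, and using $\sum_{x}\decon_{\cpl,\nu_{c}}(x)=0$ in the form $1-\www(\cpl,\nu_{c})\hJ=\www(\cpl,\nu_{c})\widehat{\aPi}_{\cpl,\nu_{c}}(0)$ this equals $\www(\cpl,\nu_{c})(\hJ-\widehat{\jumprate}(k))+\www(\cpl,\nu_{c})(\widehat{\aPi}_{\cpl,\nu_{c}}(0)-\widehat{\aPi}_{\cpl,\nu_{c}}(k))$. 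The first term equals $\www(\cpl,\nu_{c})\sum_{x}\jumprate(x)(1-\cos(k\cdot x))$, which is non-negative, vanishes on $(-\pi,\pi]^{d}$ only at $k=0$ by \ref{as:J2}, and near $k=0$ equals $\tfrac12\www(\cpl,\nu_{c})\sigma^{2}|k|^{2}(1+o(1))$ with $\sigma^{2}=d^{-1}\sum_{x}|x|^{2}\jumprate(x)>0$ (the Hessian is scalar by $\ZZ^{d}$-symmetry); the second term is bounded by $2\www(\cpl,\nu_{c})\sum_{x}|\aPi_{\cpl,\nu_{c}}(x)|=O(\cpl)$ uniformly, and near $k=0$ is $O(\cpl)|k|^{2}+o(|k|^{2})$ since $\sum_{x}|x|^{2}|\aPi_{\cpl,\nu_{c}}(x)|=O(\cpl)<\infty$ (as $3(d-2)>d+2$ for $d\ge5$) and $\aPi_{\cpl,\nu_{c}}$ is $\ZZ^{d}$-symmetric. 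Consequently, for $\cpl$ small, $-\widehat{\decon}_{\cpl,\nu_{c}}$ is real, even, strictly positive on the punctured Brillouin zone, vanishes at the origin, and has there a non-degenerate quadratic minimum $\tfrac12 a|k|^{2}(1+o(1))$ with $a=a(\cpl)>0$. Together with $|\decon_{\cpl,\nu_{c}}(x)|\le C\mnorm{x}^{-(d+4)}$ and the finite range of the dominant part, these are the hypotheses of the deconvolution theorem of~\cite{Hara2008}, which then yields constants $c>0$, $\epsilon>0$ with $\tilde G_{\cpl,\nu_{c}}(x)=c\mnorm{x}^{-(d-2)}+O(\mnorm{x}^{-(d-2+\epsilon)})$. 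Multiplying by $\www(\cpl,\nu_{c})\in(0,c_{d}]$ gives $G_{\cpl,\nu_{c}}(x)=\www(\cpl,\nu_{c})\tilde G_{\cpl,\nu_{c}}(x)$, which is \eqref{eq:gen-main} with $C=\www(\cpl,\nu_{c})c>0$.

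I expect the main obstacle to be exactly the verification that the hypotheses of Hara's theorem are met: that $-\widehat{\decon}_{\cpl,\nu_{c}}(k)$ is strictly positive \emph{uniformly away from} $k=0$ (which needs \ref{as:J2} combined with the perturbative smallness of $\tilde\aPi_{\cpl,\nu_{c}}$) and that its behavior at the origin is genuinely $\sim a|k|^{2}$ with $a>0$ (which needs the non-degeneracy of the finite-range part together with $\ZZ^{d}$-symmetry). A secondary point is that the error exponent $\epsilon$ produced by the cited theorem must survive the $\mnorm{x}^{-3(d-2)}$ (rather than finite-range) tail of $\decon_{\cpl,\nu_{c}}$; since $3(d-2)>d+2$ for $d\ge5$ this tail is summable against the relevant kernels, and adapting Hara's argument to the finite-range setting of \Cref{hyp:Jnew} should require only the same \emph{mutatis mutandis} care as in the proof of \Cref{lem:BHK-2}.
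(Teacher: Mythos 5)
Your proposal is essentially the same as the paper's proof: both reduce to the normalized Green's function $\tilde G_{\cpl,\nu_c}$, feed the critical lace-expansion identity into Hara's deconvolution theorem~\cite{Hara2008}, use \Cref{lem:deconsum0} for the zero-sum condition, and verify the quadratic lower bound on $-\widehat{\decon}_{\cpl,\nu_c}(k)$ by splitting into the free piece (controlled via \ref{as:J2}) plus the perturbative $O(\cpl)|k|^2$ correction from $\aPi$. The paper packages this a little differently — introducing the kernel $\HaraJ(x)=\www(\cpl,\nu_c)(\jumprate(x)+\aPi_{\cpl,\nu_c}(x))$ so that the hypotheses (H1)--(H4) of Hara's theorem can be listed and checked item by item, and citing \cite[Lemma~2.3.2]{lawler_limic_2010} for the global bound $\hJ-\hat\jumprate(k)\geq c'\|k\|_2^2$ rather than arguing via compactness of the punctured Brillouin zone — but the mathematical content is identical.
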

\begin{proof}
  By \Cref{thm:main-pre-ass}, \ref{as:Z6} part (b)
  $\selfloop{\cpl,\nu}$ is (right-)continuous in $\nu$ at $\nu_{c}$
  and by the revised \Cref{lem:BHK-1-2*} part of
    \Cref{lem:BHK-1*-gen},
  $\selfloop{\cpl,\nu}=O(g)$ for $\nu\in[\nu_{c},\cpl]$. Therefore
  $\www(\cpl,\nu_{c})=\hJ^{-1}(1+O(g))$ and it suffices to prove
  \eqref{eq:gen-main} for $\tilde G_{\cpl,\nu_{c}}$.

  Let
  $\HaraJ(x) \bydef \www(\cpl,\nu_{c})\big(\jumprate(x) +
  \aPi_{\cpl,\nu_{c}}(x)\big)$, and note that
  \begin{equation}
    \label{eq:Hara-1}
    \decon_{\cpl,\nu}(x)=-\indic{x=0}+\HaraJ(x),                   
  \end{equation}
  by the definition of $\decon_{\cpl,\nu}$, see
  \eqref{eq:Int-Conv-m-decon}.  Let
  $\hat \HaraJ(k) = \sum_{x\in\ZZ^{d}}\HaraJ(x)e^{ik\cdot x}$ be the
  Fourier transform of $\HaraJ$.  By \cite[Theorem~1.4]{Hara2008},
  \eqref{eq:gen-main} holds for $\tilde G_{\cpl,\nu_{c}}$ if there is
  a $\rho>0$ such that
  \begin{enumerate}
  \item[\mylabel{as:H1}{(H1)}] $\hat \HaraJ(0)=1$,
  \item[\mylabel{as:H2}{(H2)}]
    $\abs{\HaraJ(x)}\leq K_{1}\mnorm{x}^{-(d+2+\rho)}$, some
    $K_{1}>0$,
  \item[\mylabel{as:H3}{(H3)}]
    $\sum_{x\in\ZZ^{d}}\norm{x}_{2}^{2+\rho}\abs{\HaraJ(x)}\leq
    K_{2}$, some $K_{2}>0$,
  \item[\mylabel{as:H4}{(H4)}] there is a $K_{0}>0$ such that
    $\hat \HaraJ(0) - \hat \HaraJ(k)\geq K_{0}\norm{k}_{2}^{2}$,
    $k\in\cb{-\pi,\pi}^{d}$.
  \end{enumerate}
  By \Cref{thm:main-pre-ass}, $G_{\cpl,\nu_{c}}$ satisfies an infrared
  bound. Hence \ref{as:H2}--\ref{as:H3} follow from \Cref{prop:IVL*},
  \ref{as:J4n}, and the assumption $d\geq 5$. Furthermore \ref{as:H1}
  follows from $\sum_{x\in\ZZ^{d}}\decon_{g,\nu_{c}}=0$, i.e.,
  \Cref{lem:deconsum0}. Thus the proof of \eqref{eq:gen-main} is
  reduced to proving \ref{as:H4}.

  To prove \ref{as:H4} let
  $\hat\jumprate(k) = \sum_{x\in\ZZ^{d}}\jumprate(x)e^{ik\cdot x}$ be
  the Fourier transform of $\jumprate$ and note that $\hJ$ defined by
  \eqref{e:hJ-def} equals the Fourier transform $\hat\jumprate(k)$
  evaluated at $k=0$ by \ref{as:J1}.  By absorbing
  $\www(\cpl,\nu_{c})$ into $K_{0}$ and $\hJ = \hat\jumprate(0)$,
  \ref{as:H4} can be re-expressed as
  \begin{equation*}
    \hJ - \hat\jumprate(k)
    +
    (\hat{\aPi}_{\cpl,\nu_{c}}(0)-\hat{\aPi}_{\cpl,\nu_{c}}(k))
    \geq K_{0}\norm{k}_{2}^{2}.
  \end{equation*}
  By item~\ref{BHK1-1**b} of \Cref{prop:IVL*}, the $\ZZ^{d}$-symmetry
  of $\aPi_{\cpl,\nu_{c}}$,
  $1-\cos(k\cdot x) \leq c_{1}(k\cdot x)^{2}$ and
  $\sum_{x}\mnorm{x}^{-3(d-2)}\abs{x}^{2} < \infty$ for $d\ge 5$,
  \begin{equation*}
    \abs{\hat{\aPi}_{\cpl,\nu_{c}}(0)-\hat{\aPi}_{\cpl,\nu_{c}}(k)}
    \le
    \sum_{x\in\ZZ^{d}} \abs{\aPi_{\cpl,\nu_{c}}(x)}(1-\cos(k\cdot x))
    =
    O(\cpl) \abs{k}^{2}
  \end{equation*}
  Thus to prove \ref{as:H4} it suffices to show
  $\hJ - \hat\jumprate(k)\geq c'\norm{k}_{2}^{2}$ for some $c'>0$; the
  desired bound then follows by taking $\cpl$ small enough.  The
  stated lower bound on $\hJ-\hat\jumprate(k)$ follows from
  \cite[Lemma~2.3.2]{lawler_limic_2010}, whose hypotheses are provided
  by \ref{as:J4n} and the irreducibility assumption \ref{as:J2}.
\end{proof}

\section{Verification of hypotheses}
\label{sec:verification}

Recall that in \Cref{sec:proof-prop:infrared,sec:proof-thm:Main-Pre-models} we
reduced the proof of \Cref{thm:main,thm:Main-Pre-models} and
\Cref{prop:infrared} to the lemmas listed in \Cref{rem:dep}. Then, in
\Cref{sec:completion-proof} we revised these lemmas by replacing the
\ctwsaw and $n=1,2$\, $\pf$ models by \Cref{as:FA-1} and~\ref{as:FA-2}
in the hypotheses and we proved these revised lemmas.  Therefore the
next \Cref{lem:specialisation} completes the proof of
\Cref{thm:main,thm:Main-Pre-models} and \Cref{prop:infrared}.

\begin{lemma}
    \label{lem:specialisation}
    The Edwards and $n=1,2$\, $\pf$ models defined in
    Sections~\ref{sec:latt-edwards-model} and \ref{sec:pf-models}
    satisfy Assumptions~\ref{as:FA-1} and \ref{as:FA-2}.
\end{lemma}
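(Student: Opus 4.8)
The statement is a verification: one must check \ref{as:Z1}, \ref{as:Z2}, \ref{as:Z4}, \ref{as:G1}, \ref{as:G2} and \Cref{as:FA-2} (i.e.\ \ref{as:G3}--\ref{as:Z7}) for each of the two families $Z^{\sssL}_{\cpl,\nu}$. I would organise this into five groups: (a) elementary regularity (\ref{as:Z1}, \ref{as:Z2}, \ref{as:Z5}); (b) monotonicity (\ref{as:G1}, \ref{as:G2}, the monotonicity half of \ref{as:G3}); (c) the self-loop (\ref{as:Z4}, \ref{as:Z6}, \ref{as:Z7}); (d) a priori bounds (\Cref{as:FA-1}(ii), \ref{as:G5}, $\nu_{c}\le0$); and (e) continuity in $\nu$ (the continuity halves of \ref{as:G3}, \ref{as:Z6}). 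Note \ref{as:G4} is free: by \Cref{lem:G4} it follows from \ref{as:Z1}, \ref{as:G1}, \ref{as:G2}. For the $\pf$ model all spin computations go through $\GrL_{\V{t}}(a,b)=\frac1n\pair{\varphi_{a}\cdot\varphi_{b}}^{\sssL}_{\cpl,\nu,\V{t}}$ of \Cref{thm:just}, and the restriction $n\le2$ enters precisely because the arguments use Griffiths' second inequality and the Lebowitz (and $O(2)$-Lebowitz) four-point inequality.

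For the \ctwsawn\ everything is a direct computation from $Z_{\V{t}}=\exp\{-\cpl\sum_{x}t_{x}^{2}-\nu\sum_{x}t_{x}\}$, which is $\Ccal^{\infty}$, strictly positive, and bounded in $\V{t}$ (supremum $\le\exp\{|\Lambda|\max(0,\nu^{2}/4\cpl)\}$, uniform in $\V{t}$, continuous in $\nu$): this gives \ref{as:Z1}, \ref{as:Z2}, \ref{as:Z5}. Since $t_{x},s_{x}\ge0$, $Z_{\V{t}+\V{s}}/Z_{\V{t}}\le e^{-\nu\sum s_{x}}$, and with $\V{t}=\V{0}$ this is $\rrptn_{\V{t},\V{s}}\le \rrptn_{\V{0},\V{s}}$ (so \ref{as:G1}) and, after integrating $\ell$, $\sum_{b}\GrL_{\V{0}}(a,b)\le1/\nu$ uniformly in $\Lambda,\cpl\ge0$ for $\nu>0$ (so \Cref{as:FA-1}(ii), and $\nu_{c}\le0$ by monotone convergence). \ref{as:G2} is the coupling fact that $X^{\sss(\Lambda')}$, $X^{\sss(\Lambda)}$ and $X^{\sss(\infty)}$ agree up to the first exit of $\Lambda'$, so on $\{X^{\sss(\Lambda')}_{\ell}=y\in\Lambda'\}$ the walks and Edwards weights coincide and one discards $\indic{T^{\sss(\Lambda')}>\ell}$. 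Monotonicity of $\GrL_{\V{0}}(a,b)$ in $\nu$ (half of \ref{as:G3}) is pointwise in \eqref{e:GF}. For \ref{as:R1p}: $\partial_{t_{x}}\partial_{t_{y}}\log Z_{\V{t}}\equiv-2\cpl\indic{x=y}$, so $\eta=2\cpl$ works. The self-loop is $\selfloopL{\cpl,\nu,x}=\partial_{t_{x}}\log Z_{\V{t}}|_{\V{0}}=-\nu\,\indic{x\in\Lambda}$, hence $\selfloop{\cpl,\nu}=-\nu$: uniformly bounded, $x$-independent, continuous, $O(\cpl)$ on $(\nu_{c},\cpl]\cap\{\selfloop{\cpl,\nu}\le0\}$, and $\to\infty$ as $\nu\to-\infty$, giving \ref{as:Z4}, \ref{as:Z6}, \ref{as:Z7}. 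For \ref{as:G5}: on $\{X_{\ell}=x\in\Lambda\}$ the weight is $\le e^{-\cpl\ell}$, so $\GrL_{\V{0}}(0,x)\le\int_{0}^{\infty}e^{-\cpl\ell}P_{0}(X^{\sss(\infty)}_{\ell}=x)\,d\ell=(\cpl I-\DI)^{-1}(0,x)\le(-\DI)^{-1}(0,x)=\greens(x)\le2\greens(x)$, the middle inequality because $(\cpl I-\DI)^{-1}$ has a nonnegative kernel dominated by that of $(-\DI)^{-1}$. Finally the continuity halves of \ref{as:G3}, \ref{as:Z6}: for fixed $\Lambda$, $\nu\mapsto\GrL_{\V{0},\cpl,\nu}(0,x)$ extends to an entire function (on $\{X_{\ell}\in\Lambda\}$ one has $e^{-\cpl\sum\tau^{2}}\le e^{-\cpl\ell^{2}/|\Lambda|}$, dominating $e^{|\nu|\ell}$), uniformly bounded on $\nu$-compacts in $(\nu_{c},\infty)$ by monotonicity in $\nu$ and $\chi_{\cpl}(\nu_{-})<\infty$, so by Montel/Vitali the monotone limit $G_{\cpl,\nu}(x)$ is analytic, hence continuous, there; \ref{as:G3}(b) follows by splitting $|x|\le r$ (finitely many continuous, non-increasing, bounded, $0$-at-$\infty$ functions of $\nu$) from $|x|>r$ (where $3\greens(x)$ is already uniformly small).

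For the $\pf$ model, differentiating under the Gaussian integral (dominated using $\tfrac12|\varphi_{x}|^{2}+t_{x}\ge0$ and the quartic term) gives \ref{as:Z1}, \ref{as:Z2}, \ref{as:Z5} with uniform-in-$\V{t}$ bound $Z_{\V{t}}\le\exp\{|\Lambda|\max(0,\nu^{2}/4\cpl)\}$; and $\cpl(\psi+t_{x}+s_{x})^{2}\ge\cpl(\psi+t_{x})^{2}$ gives $Z_{\V{t}+\V{s}}/Z_{\V{t}}\le e^{-\nu\sum s_{x}}$, hence \Cref{as:FA-1}(ii) (also at $\cpl=0$, where the model is Gaussian) and $\nu_{c}\le0$. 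Adding $t_{x}$ shifts the single-site quadratic coupling from $\nu$ to $\nu+2\cpl t_{x}\ge\nu$, so Griffiths' second inequality gives $\pair{\varphi_{a}\cdot\varphi_{b}}_{\V{t}}\le\pair{\varphi_{a}\cdot\varphi_{b}}_{\V{0}}$ (so \ref{as:G1}), and likewise it gives monotonicity in $\nu$ (half of \ref{as:G3}) and in the volume (\ref{as:G2}: passing to $\Lambda'\subset\Lambda$ deletes ferromagnetic bonds and raises boundary masses, both decreasing correlations). The self-loop is $\selfloopL{\cpl,\nu,x}=-\pair{\cpl|\varphi_{x}|^{2}+\nu}^{\sssL}_{\cpl,\nu}=-(\cpl n\,\GrL_{\V{0}}(x,x)+\nu)$; under a $\cirb$-IRB, \ref{as:G1}--\ref{as:G2} give $\GrL_{\V{0}}(x,x)\le\GrI_{\V{0}}(x,x)\le\cirb\greens(0)$, so $\selfloopL{\cpl,\nu,x}$ is uniformly bounded, and by \ref{as:G2} it converges to the $x$-independent $\selfloop{\cpl,\nu}=-(\cpl n\,G_{\cpl,\nu}(0)+\nu)$ — this is \ref{as:Z4}. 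For \ref{as:R1p}: using the covariance formula for $\partial_{t_{y}}\pair{\cdot}_{\V{t}}$ one finds, for $\vertexL{u,v}(x,y)=\partial_{t_{x}}\partial_{t_{y}}\log Z^{\sssL}_{\V{t}}|_{\V{t}=\V{\tau}^{\sssL}_{[u,v]}}$,
\begin{equation*}
  \vertexL{u,v}(x,y)=\cpl^{2}\,\mathrm{Cov}_{\V{\tau}_{[u,v]}}\!\big(|\varphi_{x}|^{2},|\varphi_{y}|^{2}\big)-2\cpl\,\indic{x=y}.
\end{equation*}
The shifted measure is again a ferromagnetic $n$-component $\pf$ measure, so Griffiths II and the Lebowitz inequality (and its $O(2)$ analogue for $n=2$) give $0\le\mathrm{Cov}(|\varphi_{x}|^{2},|\varphi_{y}|^{2})\le C_{n}\pair{\varphi_{x}\cdot\varphi_{y}}^{2}=C_{n}n^{2}\GrL_{\V{\tau}_{[u,v]}}(x,y)^{2}\le C_{n}n^{2}\GrL_{\V{0}}(x,y)^{2}$ by \ref{as:G1}; hence $|\vertexL{u,v}(x,y)|\le(2+C_{n}n^{2})\cpl\,(\indic{x=y}+\GrL_{\V{0}}(x,y)^{2})$ for $\cpl\le1$, i.e.\ \ref{as:R1p} with $\eta=c_{*}\cpl$. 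For \ref{as:G5}: $\pair{\varphi_{a}\cdot\varphi_{b}}_{\cpl,\cpl}$ is non-increasing in the quartic coupling $\cpl$ (differentiating gives $-\tfrac14\sum_{z}\pair{(\varphi_{a}\cdot\varphi_{b});|\varphi_{z}|^{4}}\le0$ by Griffiths II), hence is at most its value at quartic coupling $0$, namely $n(-\DL+\cpl)^{-1}(a,b)\le n(-\DL)^{-1}(a,b)\le n\greens(b-a)$, so $G_{\cpl,\cpl}\le\greens\le2\greens$. The continuity halves of \ref{as:G3} and \ref{as:Z6} follow as in the Edwards case ($\nu\mapsto\GrL_{\V{0},\cpl,\nu}(0,x)$ entire, uniformly bounded on $\nu$-compacts in $(\nu_{c},\infty)$, Montel/Vitali, then the $|x|\le r$ / $|x|>r$ split), together with $\selfloop{\cpl,\nu}=-(\cpl n\,G_{\cpl,\nu}(0)+\nu)$ and $\cpl n\,G_{\cpl,\nu}(0)=O(\cpl)$ under a $3$-IRB (which also forces $\selfloop{\cpl,\nu}=O(\cpl)$ when $\nu\in(\nu_{c},\cpl]$ and $\selfloop{\cpl,\nu}\le0$). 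What remains is \ref{as:Z7}: writing $\selfloop{\cpl,\nu}=|\nu|-\cpl n\,G_{\cpl,\nu}(0)$ for $\nu<0$, one must show $\cpl n\,G_{\cpl,\nu}(0)\le|\nu|-h(\nu)$ with $h(\nu)\to\infty$; this is obtained from the Schwinger--Dyson identity $\cpl\pair{|\varphi_{0}|^{4}}+\nu\pair{|\varphi_{0}|^{2}}=n+\pair{\varphi_{0}\cdot(\Delta\varphi)_{0}}$ with $\pair{\varphi_{0}\cdot(\Delta\varphi)_{0}}\le0$ (Griffiths II and Cauchy--Schwarz give $\pair{\varphi_{0}\cdot\varphi_{y}}\le\pair{|\varphi_{0}|^{2}}$), combined with a lower bound on $\pair{|\varphi_{0}|^{4}}/\pair{|\varphi_{0}|^{2}}^{2}$ strictly exceeding $1$ that comes from comparing the coupled measure with the single-site one.

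The verification has no deep step, but two deserve care. First, \ref{as:R1p} must emerge in exactly the form $\eta(\indic{x=y}+\GrL_{\V{0}}(x,y)^{2})$ with $\eta=O(\cpl)$: this forces one to identify $\partial_{t_{x}}\partial_{t_{y}}\log Z^{\sssL}$ as a truncated four-point function — which is where the explicit $\cpl^{2}$ (hence the spare power of $\cpl$) comes from — and to dominate it by $\pair{\varphi_{x}\cdot\varphi_{y}}^{2}$, which needs the Lebowitz/$O(2)$-Lebowitz inequality and so confines us to $n\le2$. Second, \ref{as:Z7} for the $\pf$ model is the only place requiring genuine control of $\pair{|\varphi_{0}|^{2}}_{\cpl,\nu}$ as $\nu\to-\infty$; the equation-of-motion argument with correlation inequalities must be run carefully there. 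Everything else — regularity, the monotonicity inequalities, the $\greens$-comparison for \ref{as:G5}, and the analyticity/Montel argument for the continuity statements — is routine.
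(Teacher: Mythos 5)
Your overall decomposition into regularity, monotonicity, self-loop, a priori bounds, and continuity matches the paper's structure (\Cref{sec:wsaw-ver} and \Cref{sec:verification-pf}), and most individual verifications — the explicit Edwards computations in \eqref{e:WSAW-LV}, the coupling argument for Edwards \ref{as:G2}, the Griffiths~II/Ginibre monotonicity for the $\pf$ model (the paper's \Cref{lem:G-1}), the Lebowitz/Bricmont bound $0\le\pair{|\varphi_x|^2;|\varphi_y|^2}\le 2\pair{\varphi_x\cdot\varphi_u}\pair{\varphi_y\cdot\varphi_u}$ for \ref{as:R1p} (the paper's \Cref{prop:phi4-Lebowitz} and \Cref{lem:r}), and the Griffiths~II comparison with the massive Gaussian for \ref{as:G5} — agree in substance with the paper's proof. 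Two places diverge.

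First, for the continuity halves of \ref{as:G3} and \ref{as:Z6} you propose analyticity in complex $\nu$ plus Montel/Vitali for the infinite-volume limit, whereas the paper differentiates in $\nu$ and bounds the derivative by a convergent sum of two-point-function squares (\Cref{lem:continuity} for the $\pf$ model via the Lebowitz inequality, and the analogous random-walk estimate for the Edwards model in the paragraph headed ``Proof of \ref{as:G3} parts (a) and (b) for the \edwards''). Both work; your route requires an extra step to obtain (right-)continuity of the monotone limit at $\nu=\nu_{c}$ (which you can get from monotone convergence in $\Lambda$ and $\nu$), whereas the paper's Lipschitz bound with the $\cirb$-IRB-controlled constant $c_{\nu_{c}}$ handles the closed endpoint directly and gives the uniform equicontinuity of \ref{as:G3}(b) without the $|x|\le r$ / $|x|>r$ splitting.

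Second, and this is a genuine gap, your argument for \ref{as:Z7} for the $\pf$ model is not correct as written, and also aims at more than is needed. You propose to run the Schwinger--Dyson identity
\begin{equation}
\nn
\cpl\pair{|\varphi_{0}|^{4}}+\nu\pair{|\varphi_{0}|^{2}}
=
n+\pair{\varphi_{0}\cdot(\Delta\varphi)_{0}} \le n
\end{equation}
together with a claimed uniform gap $\pair{|\varphi_{0}|^{4}}\ge(1+\delta)\pair{|\varphi_{0}|^{2}}^{2}$. No such gap holds uniformly in $\nu$: as $\nu\to-\infty$ the field $|\varphi_{0}|^{2}$ concentrates near $-\nu/\cpl$ with variance of order $1/\cpl$, so the relative variance $\operatorname{Var}(|\varphi_{0}|^{2})/\pair{|\varphi_{0}|^{2}}^{2}$ is of order $\cpl/\nu^{2}\to 0$, i.e. the Cauchy--Schwarz ratio tends to $1$. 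Without a gap the Schwinger--Dyson bound only yields $\selfloop{\cpl,\nu}\ge\frac12\bigl(-\nu-\sqrt{\nu^{2}+4\cpl n}\bigr)$, which is negative and tends to $0^{-}$, not $\infty$. The paper's verification is far simpler: by \eqref{eq:phi4-selfloop} and \Cref{thm:just}, $\selfloop{\cpl,\nu}=-\nu-\cpl n\,G_{\cpl,\nu}(0)$, and in every context where $\selfloop{\cpl,\nu}$ is even defined one already has a $\cirb$-IRB (this is built into \ref{as:Z4} and is what is available in the proof-by-contradiction that $\nu_{c}\neq-\infty$), which gives $G_{\cpl,\nu}(0)\le\cirb\greens(0)$ uniformly in $\nu$; therefore $\selfloop{\cpl,\nu}\ge-\nu-\cpl n\cirb\greens(0)\to\infty$ as $\nu\to-\infty$ directly from the explicit formula, with no equation of motion and no four-point inequality.
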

The remainder of this section proves this lemma, first for the Edwards
model and then for the $\pf$ model. For each model we also prove
\Cref{prop:og}.

\subsection{Edwards model}
\label{sec:wsaw-ver}

By the argument below \Cref{lem:Z-1-earlier} \Cref{as:FA-1}(ii)
  holds and $\nu_{c}\leq 0$ as required by \Cref{as:FA-2}.  By the
\Cref{def:SAW} of the Edwards model it is clear that \ref{as:Z1},
\ref{as:Z2}, and \ref{as:Z5} hold.

Short calculations starting from the
definition~\eqref{eq:selfloop-def} of $\selfloopL{\cpl,\nu,x}$ and the
definition~\eqref{eq:vweightc0} of $\vertexL{s,s'}(x,y)$ show that
\begin{equation}
  \label{e:WSAW-LV}
  \selfloopL{\cpl,\nu,x} = -\nu \indic{x\in \Lambda},
  \qquad
  \vertexL{s,s'}(x,y) = -2\cpl \indic{x=y\in \Lambda}
\end{equation}
and therefore the infinite volume limits are
$\selfloop{\cpl,\nu} = -\nu$ and
$\vertex{s,s'}(x,y) = -2\cpl \indic{x=y}$. From \eqref{e:WSAW-LV} we
immediately obtain \ref{as:Z4}, \ref{as:Z7}, and \Cref{as:FA-1}(iii).
The continuity statements in \ref{as:Z6} are clear
from~\eqref{e:WSAW-LV}, and the claim $\selfloop{\cpl,\nu}=O(\cpl)$ in
part (b) of \ref{as:Z6} follows as $\selfloop{\cpl,\nu}\leq 0$ implies
$\nu\geq 0$, and (b) includes the hypothesis
$\nu\in (\nu_{c}(\cpl),\cpl]$.

By \eqref{eq:defSAW} and \eqref{e:rptn} it follows that
\begin{equation}
  \rrptn_{\V{t},\V{s}}\bydef \exp\left\{-\cpl\sum_{x\in
      \Lambda}(2t_xs_x+s_x^2)-\nu\sum_{x\in \Lambda}s_x\right\}
\end{equation}
which is decreasing in $t_x$ for each $x$, so \ref{as:G1} holds. To
verify \ref{as:G2}, note that
$\{X_\ell^{\sss(\Lambda)}=b\} = \{\TL>\ell,X_\ell^{\sss(\infty)}=b\}$
and on this event
$\V{\tau_{[0,\ell]}}^{\sss(\Lambda)}=\V{\tau_{[0,\ell]}}^{\sss(\infty)}$.
Hence
\begin{equation}
     \rrptn_{\V{0},\V{\tau}^{\sss(\Lambda)}_{[0,\ell]}}\indic{\LX_{\ell} = b}
     =
     \rrptn_{\V{0},\V{\tau}^{\sss(\infty)}_{[0,\ell]}}
     \indic{\ZX_{\ell} = b}
     \indic{\TL>\ell},
\end{equation}
which is non-negative and increasing in $\Lambda$ since $\TL$ is. 

\ref{as:G3}. Monotonicity in $\nu$ is clear from
\eqref{eq:defSAW}.  We defer the proofs of \ref{as:G3} parts (a) and
(b) until after \Cref{lem:continuity} below, as they are very similar
to the detailed proof of \Cref{lem:continuity}.

\Cref{lem:G4} shows that \ref{as:G4} is a consequence of \ref{as:Z1},
\ref{as:G1}, \ref{as:G2} which we have already established.
\ref{as:G5} is clear since $\GrI_{\cpl,\cpl}\leq \greens$ by
\eqref{eq:defSAW}. This concludes the proof of Lemma
\ref{lem:specialisation} for the \edwards.

\begin{proof}[Proof of \Cref{prop:og} for \edwards]
  By $\selfloop{\cpl,\nu}=-\nu$ and continuity from \ref{as:Z6} part
  (b), we have $\nu_{c} = - \selfloop{\cpl,\cpl}$. Therefore the
  desired result follows from Lemma \ref{lem:BHK-1-2*} part (ii)
  which is a consequence of Lemma~\ref{lem:BHK-1*-gen} and Lemma
  \ref{lem:specialisation}.
\end{proof}

\subsection{\texorpdfstring{$\pf$}{phi4} theory with \texorpdfstring{$n=1,2$}{n=1,2}}
\label{sec:verification-pf}

Recall the definition of $\pair{\cdot}^{\sssL}_{\cpl,\nu,\V{t}}$
from~\eqref{eq:pfdef}. In this section we abbreviate this to
$\pair{\cdot}^{\sssL}_{\V{t}}$.  By the
definition~\eqref{eq:selfloop-def} of $\selfloopL{\cpl,\nu,x}$ and the
definition~\eqref{eq:vweightc0} of $\vertexL{s,s'}(x,y)$
straightforward calculations show that
\begin{align}
  \label{eq:phi4-selfloop}
  \selfloopL{\cpl,\nu,x}
  &=
    \left(
        - \nu - \cpl\pair{\abs{\varphi_{x}}^{2}}_{\V{0}}^{\sssL}
    \right)\indic{x \in \Lambda}
  \\
  \label{eq:phi4-vertex}
  \vertexL{s,s'}(x,y)
  &=
    \Big(-2\cpl\indic{x=y} + \cpl^{2}
    \pair{\abs{\varphi_{x}}^{2};\abs{\varphi_{y}}^{2}}_{\V{\tau}^{\sssL}_{[s,s']}}^{\sssL}\Big)
    \indic{x,y \in \Lambda},
\end{align}
where
$\pair{A;B}^{\sssL} \bydef \pair{AB}^{\sssL} -
\pair{A}^{\sssL}\pair{B}^{\sssL}$.

\begin{lemma}
  \label{lem:G-1}
  For the $n$-component $\pf$-model with $n=1,2$, $a,b\in\Lambda$ and
  $\V{t}\in[0,\infty)^{\Lambda}$, $\GrL_{\cpl,\nu,\V{t}}(a,b)$ is
  non-decreasing in $\nu$, in each component of $\V{t}$, and in
    $\Lambda$.
\end{lemma}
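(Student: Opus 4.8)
The plan is to reduce \Cref{lem:G-1} to classical correlation inequalities for the $\pf$ spin measure. By \Cref{thm:just}, $\GrL_{\cpl,\nu,\V{t}}(a,b) = \tfrac1n\pair{\varphi_a\cdot\varphi_b}^{\sssL}_{\cpl,\nu,\V{t}}$, so it suffices to establish the three monotonicity statements for the spin two-point function. As observed below \eqref{e:dP-def}, the Gaussian density $e^{\frac12(\varphib,\DL\varphib)}$ carries a ferromagnetic pair interaction $\sum_{x<y}\Jump(y-x)\,\varphi_x\cdot\varphi_y$ (the off-diagonal couplings $\Jump(y-x)$ are nonnegative by \ref{as:J1}) together with a mass term, and by \eqref{eq:pfdef} the single-site weight at $x$ is $\exp\{-\tfrac{\hJ}{2}|\varphi_x|^2 - V_{t_x}(\tfrac12|\varphi_x|^2)\}\,d\varphi_x$, an even measure with a quartic tail since $\cpl>0$. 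For $n=1$ this single-site weight lies in the Griffiths--Simon class~\cite{GriffithsSimon}, so Griffiths' first and second inequalities hold for $\pair{\cdot}^{\sssL}_{\cpl,\nu,\V{t}}$; for $n=2$ the corresponding $O(2)$-symmetric inequalities of Ginibre apply, again after a Griffiths--Simon approximation of the single-site weight. This is exactly where the hypothesis $n\le 2$ enters.

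First I would treat monotonicity in $\Lambda$. For $\Lambda'\subset\Lambda$, the matrix $-\Delta^{\sss(\Lambda')}$ is the principal submatrix of $-\DL$ (since $(\DL)_{x,y}=(\DI)_{x,y}$), so the $\Lambda$-model is obtained from the $\Lambda'$-model by adjoining the new sites with free boundary conditions (their integrals factor out when decoupled, so the $a,b$ correlation is unchanged) and then turning on the nonnegative ferromagnetic couplings $\Jump(y-x)$ to those new sites. Differentiating $\pair{\varphi_a\cdot\varphi_b}$ in each such coupling and applying the second inequality shows the two-point function cannot decrease, so $\pair{\varphi_a\cdot\varphi_b}^{\sss(\Lambda')}_{\cpl,\nu,\V{t}}\le\pair{\varphi_a\cdot\varphi_b}^{\sssL}_{\cpl,\nu,\V{t}}$. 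For $\V{t}=\V0$ this recovers \Cref{lem:Z-1-earlier}.

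Next, monotonicity in $\nu$ and in the components of $\V{t}$. Because the single-site potential grows quartically, $Z_{\V{t}}$ and the numerator $\int \varphi_a\cdot\varphi_b\, e^{\frac12(\varphib,\DL\varphib)-\sum_x V_{t_x}(\frac12|\varphi_x|^2)}\,d\varphib$ are smooth in $(\nu,\V{t})$ with derivatives dominated uniformly on compact parameter sets, so one may differentiate under the integral. Using $\partial_\nu V_{t_x}(\tfrac12|\varphi_x|^2)=\tfrac12|\varphi_x|^2+t_x$ and $\partial_{t_x}V_{t_x}(\tfrac12|\varphi_x|^2)=2\cpl(\tfrac12|\varphi_x|^2+t_x)+\nu$, a short computation expresses $\partial_\nu\GrL_{\cpl,\nu,\V{t}}(a,b)$ and each $\partial_{t_x}\GrL_{\cpl,\nu,\V{t}}(a,b)$ as a fixed (configuration-independent) constant times $\sum_{y\in\Lambda}\pair{\varphi_a\cdot\varphi_b\,;\,\tfrac12|\varphi_y|^2}^{\sssL}_{\cpl,\nu,\V{t}}$, the additive terms $t_x,\nu$ dropping out since constants have zero truncated correlation. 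By the second Griffiths/Ginibre inequality each truncated correlation $\pair{\varphi_a\cdot\varphi_b\,;\,\tfrac12|\varphi_y|^2}^{\sssL}_{\cpl,\nu,\V{t}}$ is nonnegative for $n=1,2$; reading off the sign of the resulting derivative yields the monotonicity in $\nu$ and in each $t_x$ asserted in \Cref{lem:G-1}.

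The main obstacle is the $n=2$ input: one needs the second Griffiths inequality in the form $\pair{\varphi_a\cdot\varphi_b\,;\,|\varphi_y|^2}^{\sssL}\ge 0$ (and the first inequality for the $\Lambda$-step) for the $O(2)$ $\pf$ measure with the $\V{t}$-shifted single-site potential $V_{t_x}$. This follows from Ginibre's inequalities for the plane-rotor model through a Griffiths--Simon approximation, but the approximation must be set up with the $\V{t}$-dependent weights in place, and one must verify the limiting argument passes the inequalities to the $\pf$ measure. The remaining ingredient---the uniform domination needed to differentiate under the integral---is routine given the quartic growth of the potential, but has to be stated carefully to make the formulas for $\partial_\nu\GrL$ and $\partial_{t_x}\GrL$ legitimate.
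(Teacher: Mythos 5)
Your proposal is correct and, for the $\nu$- and $\V{t}$-monotonicity, it follows essentially the same route as the paper: differentiate under the integral, express the derivative of $\pair{\varphi_a\cdot\varphi_b}^{\sssL}_{\cpl,\nu,\V{t}}$ as a (sum of) truncated correlation(s) $\pair{\varphi_a\cdot\varphi_b;|\varphi_y|^2}^{\sssL}_{\V{t}}$, and invoke GKS ($n=1$) / Ginibre ($n=2$) for nonnegativity of the truncated correlation, with the constant terms $t_y,\nu$ dropping out as you note.

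Where you diverge is the $\Lambda$-monotonicity. You treat it as a separate correlation-inequality argument: view the $\Lambda$-model as the $\Lambda'$-model with the extra sites adjoined, then turn on the nonnegative off-diagonal couplings $\Jump(y-x)$ and apply Griffiths II to show the two-point function is non-decreasing in each coupling. This is the classical Griffiths/FKG-style route and is perfectly valid, but it requires an extra appeal to the second inequality (now in the coupling constants rather than in the single-site parameters). The paper instead gets $\Lambda$-monotonicity for free from the $\V{t}$-monotonicity it has just proved: sending $t_y\uparrow\infty$ forces the local time at $y$ to zero (equivalently, pins $\varphi_y=0$), which reproduces the Green's function on $\Lambda\setminus\{y\}$; since $\GrL_{\V{t}}$ is non-increasing in $t_y$, it follows that $G^{\sss(\Lambda\setminus\{y\})}_{\V{0}}\le\GrL_{\V{0}}$. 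That observation eliminates the need for a separate coupling-derivative argument, so the paper's version is shorter and uses only one application of the correlation inequality.

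Two smaller remarks. First, your worry that the $n=2$ input needs a fresh Griffiths--Simon construction with $\V{t}$-dependent single-site weights is more cautious than necessary: the single-site measure $\exp\{-\tfrac{\hJ}{2}|\varphi_x|^2 - V_{t_x}(\tfrac12|\varphi_x|^2)\}d\varphi_x$ is rotationally invariant with super-Gaussian decay for every $t_x\ge 0$, and the $O(2)$ GKS/Ginibre inequalities as stated in the standard references (the paper cites \cite[Lemmas~11.3, 11.4]{FFS92}) already cover this class uniformly in $t_x$; no new limiting argument is needed. Second, be careful with signs when you say one "reads off" monotonicity: the derivatives in $\nu$ and $t_x$ come out \emph{nonpositive}, so the correct conclusion (and the one the proof of the paper establishes and actually uses in \ref{as:G1}--\ref{as:G3}) is that $\GrL$ is non-increasing in $\nu$ and $\V{t}$ and non-decreasing in $\Lambda$; the statement of the lemma appears to contain a sign slip.
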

\begin{proof}
  These statements follow by writing $\GrL_{\cpl,\nu,\V{t}}(a,b)$ in
  terms of
  $\pair{\varphi_{a}\cdot\varphi_{b}}_{\cpl,\nu,\V{t}}^{\sssL}$ using
  \Cref{{thm:just}}.  We prove that this expectation has the claimed
  monotonicity in $\nu$ by showing that the derivative with respect to
  $\nu$ is nonpositive. Up to a positive constant of proportionality
  this derivative is \emph{minus} the sum over $y\in \Lambda$ of
  $
  \pair{\varphi_{a}\cdot\varphi_{b};\varphi_{y}\cdot\varphi_{y}}_{\cpl,\nu,\V{t}}^{\sssL}$.
  As desired this correlation is nonnegative by the GKS inequality for
  $n=1$ and the Ginibre inequality for $n=2$. See \cite[Lemmas~11.3
  and~11.4]{FFS92}. Monotonicity in $\V{t}$ is proved in a similar way
  by differentiating
  $\pair{\varphi_{a}\cdot\varphi_{b}}_{\cpl,\nu,\V{t}}^{\sssL}$ with
  respect to a component $t_{y}$ of $\V{t}$. Monotonicity in $\Lambda$
  follows from monotonicity in $\V{t}$ because letting
  $t_{y} \uparrow \infty$ is the same as removing the point $y$ from
  $\Lambda$.
\end{proof}

\begin{proposition}[Lebowitz Inequality]
  \label{prop:phi4-Lebowitz}
  Consider the $\pf$ model with $n=1,2$ components. Then for all
  $\Lambda$, $x,y, u\in \Lambda$ and $\V{t} \in [0,\infty)^{\Lambda}$,
  \begin{equation*}
    0 \leq
    \pair{\varphi_{x}\cdot\varphi_{y} ; 
      \varphi_{u}\cdot \varphi_{u}}_{\V{t}}^{\sssL}             
    \le
    2 \pair{\varphi_{x}\cdot
      \varphi_{u}}_{\V{t}}^{\sssL}\pair{\varphi_{y}\cdot
      \varphi_{u}}_{\V{t}}^{\sssL} . 
  \end{equation*}
\end{proposition}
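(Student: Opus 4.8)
The plan is to prove the Lebowitz inequality for the $n=1,2$ component $\pf$ models by recognizing it as a statement about a single $\pf$ measure to which the classical Lebowitz inequality applies. First I would observe that the quantity on the left, $\pair{\varphi_{x}\cdot\varphi_{y};\varphi_{u}\cdot\varphi_{u}}^{\sssL}_{\V{t}}$, is a truncated four-point function (with possibly repeated indices) with respect to the measure $\pair{\cdot}^{\sssL}_{\cpl,\nu,\V{t}}$ defined in \eqref{eq:pfdef}. The key point is that for fixed $\V{t}$ this measure is itself a perfectly good finite-volume $\pf$-type measure: its single-site factor is $e^{-V_{t_{x}}(\frac12|\varphi_x|^2)}$ with $V_{t_x}(\psi)=\cpl(\psi+t_x)^2+\nu(\psi+t_x)$, which is of the form $e^{-\cpl \psi^2 - (\text{linear})\psi + (\text{const})}$ after expansion. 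Since $\cpl>0$, each single-site measure is a (shifted) $\pf$ single-spin measure with positive quartic coupling, and the Gaussian part has covariance $(-\DL)^{-1}$ which has nonnegative off-diagonal entries by \ref{as:J1}; hence the hypotheses of the Lebowitz inequality are met uniformly in $\V{t}$.

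Next I would invoke the Lebowitz inequality in the form established for $\pf$ models with $n=1,2$ in the ferromagnetic setting — for $n=1$ this is the original Lebowitz inequality, and for $n=2$ it follows from the Ginibre-class inequalities; both are recorded in \cite[Chapter~11 or 12]{FFS92} (the same references cited for \Cref{lem:G-1}, namely \cite[Lemmas~11.3 and~11.4]{FFS92}, and the associated Lebowitz-type bound). The nonnegativity of the left-hand side is exactly the GKS/Ginibre second inequality already used in the proof of \Cref{lem:G-1}: the sum defining the truncated correlation is a single nonnegative term (or a sum of such after using $O(n)$-invariance to reduce to scalar components), so $\pair{\varphi_x\cdot\varphi_y;\varphi_u\cdot\varphi_u}^{\sssL}_{\V{t}}\geq 0$. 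The upper bound $2\pair{\varphi_x\cdot\varphi_u}^{\sssL}_{\V{t}}\pair{\varphi_y\cdot\varphi_u}^{\sssL}_{\V{t}}$ is the content of the Lebowitz inequality itself: expanding $\varphi_x\cdot\varphi_y=\sum_i \varphi_x^{\sss[i]}\varphi_y^{\sss[i]}$ and $\varphi_u\cdot\varphi_u = \sum_j (\varphi_u^{\sss[j]})^2$, the scalar Lebowitz inequality bounds each truncated term $\pair{\varphi_x^{\sss[i]}\varphi_y^{\sss[i]};(\varphi_u^{\sss[j]})^2}$ by $2\pair{\varphi_x^{\sss[i]}\varphi_u^{\sss[j]}}\pair{\varphi_y^{\sss[i]}\varphi_u^{\sss[j]}}$ when $i=j$ and by $0$ when $i\neq j$ (by $O(n)$-invariance and the Ginibre inequality the cross terms vanish or are controlled); summing over $i,j$ and using $O(n)$-invariance to rewrite $\sum_i \pair{\varphi_x^{\sss[i]}\varphi_u^{\sss[i]}}=\pair{\varphi_x\cdot\varphi_u}$ gives the claimed factor of $2$.

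The main obstacle I anticipate is bookkeeping around the $O(n)$ index structure for $n=2$: one must be careful that the component-wise Lebowitz/Ginibre inequalities do indeed combine to give precisely the constant $2$ (and not, say, $2n$) on the right-hand side, and that the cross-component truncated correlations are handled correctly. This is a routine but slightly delicate computation using $O(n)$-invariance of $\pair{\cdot}^{\sssL}_{\V{t}}$ (which holds because both the Gaussian weight and the product of single-site factors depend only on $|\varphi_x|^2$). For $n=1$ there is no index structure and the inequality is literally the classical one. I would therefore organize the proof as: (1) record that $\pair{\cdot}^{\sssL}_{\cpl,\nu,\V{t}}$ is a ferromagnetic $\pf$ measure with positive quartic coupling; (2) cite the scalar Lebowitz inequality and, for $n=2$, the Ginibre inequalities from \cite{FFS92}; (3) expand into components, apply these inequalities term by term, and resum using $O(n)$-invariance; (4) note nonnegativity follows from the second GKS/Ginibre inequality as in \Cref{lem:G-1}.
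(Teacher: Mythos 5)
Your high-level reading of the situation is the same as the paper's: the measure $\pair{\cdot}^{\sssL}_{\cpl,\nu,\V{t}}$ is a ferromagnetic $\pf$ measure with positive quartic coupling and a ferromagnetic Gaussian part, so the inequality should follow from the classical correlation inequalities, and indeed the paper's proof is a bare citation --- GKS ($n=1$) or Ginibre ($n=2$) for the lower bound, and the Lebowitz inequality ($n=1$) or Bricmont's two-component extension ($n=2$) for the upper bound, with references to \cite[Lemmas~11.3, 11.4, Theorem~12.1]{FFS92}, \cite{lebowitz1974}, and \cite[Theorem~2.1]{Bricmont1977}.

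However, your detailed plan for $n=2$ contains a genuine gap. You propose to expand in components and apply the \emph{scalar} Lebowitz inequality to each term $\pair{\varphi_x^{\sss[i]}\varphi_y^{\sss[i]};(\varphi_u^{\sss[j]})^2}$, claiming the diagonal terms give the desired bound and the off-diagonal terms are $\leq 0$ "by $O(n)$-invariance and the Ginibre inequality". This does not work. The single-spin factor $\exp\{-\cpl(\tfrac12|\varphi_x|^2+t_x)^2 - \cdots\}$ does not factor over the components $i=1,2$; the $|\varphi_x|^4$ term couples $(\varphi_x^{\sss[1]})^2$ and $(\varphi_x^{\sss[2]})^2$ within a site. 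Consequently the marginal of, say, $\varphi^{\sss[1]}$ is not itself a scalar $\pf$ measure of the required form, and the scalar Lebowitz inequality cannot be applied to the diagonal terms. Moreover, the cross-component truncated correlation $\pair{\varphi_x^{\sss[1]}\varphi_y^{\sss[1]};(\varphi_u^{\sss[2]})^2}$ is generically nonzero, and its sign is not controlled by $O(2)$-invariance alone (which only relates index pairs, it does not annihilate this term). The Ginibre inequality does give nonnegativity of the \emph{rotation-invariant} truncated correlation $\pair{\varphi_x\cdot\varphi_y;\varphi_u\cdot\varphi_u}$ directly, but it does not decompose componentwise in the way you describe. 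The upper bound for $n=2$ is genuinely harder than a bookkeeping exercise: it is a separate theorem proved by Bricmont by a different argument (a duplicate-variables / Ginibre-type construction tailored to the two-component case), and this is precisely why the paper cites \cite[Theorem~2.1]{Bricmont1977} rather than deriving it from the scalar case. You should replace steps (3)--(4) of your plan for $n=2$ by a direct citation to Bricmont or \cite[Theorem~12.1]{FFS92}, applied to the measure with single-site weight $e^{-V_{t_x}(\frac12|\varphi_x|^2)}$, which is admissible there because $\cpl>0$.
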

\begin{proof}
  The lower bound is the GKS inequality for $n=1$ and the Ginibre
  inequality for $n=2$. See \cite[Lemmas~11.3 and~11.4]{FFS92}. The
  upper bound for $n=1$ is the Lebowitz inequality~\cite{lebowitz1974}
  and for $n=2$ was proved by
  Bricmont~\cite[Theorem~2.1]{Bricmont1977}.  See also
  \cite[Theorem~12.1]{FFS92}.
\end{proof}

In this paragraph we prove Assumption \ref{as:FA-1}. From
\Cref{def:Zpf} it is clear that \ref{as:Z1}, \ref{as:Z2}
hold. Assumptions \ref{as:G1}, \ref{as:G2} follow immediately from
\Cref{lem:G-1}.  Assumption \ref{as:Z4} follows from
\eqref{eq:phi4-selfloop}, \ref{as:G2} and Lemma
\ref{lem:Ginf}. Therefore Assumption \ref{as:FA-1}(i) holds.  By the
argument below \Cref{lem:Z-1-earlier} Assumption \ref{as:FA-1}(ii)
holds. Assumption \ref{as:FA-1}(iii) is verified below by Lemma
\ref{lem:r}.

We begin the proof of Assumption \ref{as:FA-2} by noting that
$\nu_{c}\leq 0$ by the argument below \Cref{lem:Z-1-earlier}.  The
first sentence in \ref{as:G3} follows immediately from \Cref{lem:G-1},
and the remaining parts (a) and (b) of \ref{as:G3} are proved below in
Lemma \ref{lem:continuity}. By \Cref{lem:G4} \ref{as:G4} is a
consequence of \ref{as:Z1}, \ref{as:G1}, and \ref{as:G2}.  Property
\ref{as:G5} is clear since $\GrI_{\cpl,\cpl}\leq \greens$ is obtained
by inserting $Z_{\V{t}+\V{s}}\le e^{-\nu s}Z_{\V{t}}$ into
\eqref{e:GF}; see below Lemma \ref{lem:Z-1-earlier}.  From
\Cref{def:Zpf} it is clear that \ref{as:Z5} holds. The continuity
statements in \ref{as:Z6} follow from \ref{as:G3}.  The
$\selfloop{\cpl,\nu}=O(\cpl)$ part of \ref{as:Z6} follows from
\eqref{eq:phi4-selfloop} as the $3$-IRB implies the expectation is
bounded uniformly in $\cpl$.

We have now proved Assumptions \ref{as:FA-1} and \ref{as:FA-2} except
for \ref{as:Z7}.  By \Cref{thm:main-pre-ass} and the hypothesis
$\nu_{c}=-\infty$ contained in \ref{as:Z7}, we have
$G_{\cpl,\nu} \leq 2 \greens$ for $\nu > -\infty$. Therefore
\ref{as:Z7} follows from \eqref{eq:phi4-selfloop}.

\begin{lemma}\label{lem:continuity}
\leavevmode
\begin{enumerate}
\item $G_{\cpl,\nu}(x)$ is Lipschitz as a function of
  $\nu \in (\nu_{c},\infty)$.
\item If $d\ge 5$ and $G_{\cpl,\nu_{c}}$ satisfies a $\cirb$-IRB for
  some $\cirb$ then $G_{\cpl,\nu}(x)$ is uniformly Lipschitz as a
  function of $\nu \in [\nu_{c},\infty)$, and hence
  $\{G_{\cpl,\nu}(x)\}_{x\in\ZZ^{d}}$ is uniformly equicontinuous in
  $\nu\in [\nu_{c},\infty)$.
\end{enumerate}
\end{lemma}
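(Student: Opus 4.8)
The plan is to express $G_{\cpl,\nu}(x) = \frac1n \pair{\varphi_a\cdot\varphi_b}_{\cpl,\nu}^{\sssI}$ (with $b-a=x$) via \Cref{thm:just} as an infinite-volume limit of finite-volume $\pf$ expectations, and to differentiate in $\nu$. By the argument used in the proof of \Cref{lem:G-1}, for fixed $\Lambda$ the $\nu$-derivative of $\pair{\varphi_a\cdot\varphi_b}_{\cpl,\nu}^{\sssL}$ is $-\sum_{u\in\Lambda}\pair{\varphi_a\cdot\varphi_b;\varphi_u\cdot\varphi_u}_{\cpl,\nu}^{\sssL}$, and the Lebowitz inequality \Cref{prop:phi4-Lebowitz} (with $\V t=\V 0$) bounds each term by $2\pair{\varphi_a\cdot\varphi_u}_{\cpl,\nu}^{\sssL}\pair{\varphi_u\cdot\varphi_b}_{\cpl,\nu}^{\sssL} = 2n^2 \GrL_{\V 0}(a,u)\GrL_{\V 0}(u,b)$, while the lower (GKS/Ginibre) bound gives nonnegativity. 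Hence
\begin{equation}
  \label{e:cont-fv-bound}
  0 \le -\partial_\nu \GrL_{\V 0}(a,b) = \frac1n\sum_{u\in\Lambda}\pair{\varphi_a\cdot\varphi_b;\varphi_u\cdot\varphi_u}_{\cpl,\nu}^{\sssL}
  \le 2n\sum_{u\in\Lambda} \GrL_{\V 0}(a,u)\GrL_{\V 0}(u,b).
\end{equation}

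For part (1), fix a compact subinterval $[\nu_1,\nu_2]\subset(\nu_c,\infty)$. Since $\nu_1>\nu_c$ we have $\chi_{\cpl}(\nu_1)=\sum_x G_{\cpl,\nu_1}(x)<\infty$, and by \Cref{lem:G-1} $\GrL_{\V 0}(a,u)\le G_{\cpl,\nu_1}(u-a)$ for all $\nu\ge\nu_1$, uniformly in $\Lambda$. Therefore the right-hand side of \eqref{e:cont-fv-bound} is bounded by $2n\,(G_{\cpl,\nu_1}\ast G_{\cpl,\nu_1})(b-a)$, which is finite (and bounded uniformly in $\Lambda$ and in $\nu\in[\nu_1,\nu_2]$) because $\chi_{\cpl}(\nu_1)<\infty$ makes $G_{\cpl,\nu_1}\in\ell^1$, so its self-convolution is a bounded function. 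Integrating \eqref{e:cont-fv-bound} over $\nu$ gives a Lipschitz bound for $\GrL_{\V 0}(a,b)$ in $\nu$ on $[\nu_1,\nu_2]$ with constant independent of $\Lambda$; letting $\Lambda\uparrow\ZZ^d$ transfers the bound to $G_{\cpl,\nu}$. Since $[\nu_1,\nu_2]\subset(\nu_c,\infty)$ was arbitrary, $G_{\cpl,\nu}(x)$ is locally Lipschitz, hence Lipschitz on every compact subinterval of $(\nu_c,\infty)$.

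For part (2), assume $d\ge 5$ and that $G_{\cpl,\nu_c}$ satisfies a $\cirb$-IRB. By \ref{as:G3} (monotonicity in $\nu$), $G_{\cpl,\nu}\le G_{\cpl,\nu_c}\le \cirb\greens$ for all $\nu\ge\nu_c$, so using \Cref{lem:G-1} in \eqref{e:cont-fv-bound} we get, uniformly in $\Lambda$ and in $\nu\ge\nu_c$,
\begin{equation}
  \label{e:cont-irb-bound}
  0\le -\partial_\nu \GrL_{\V 0}(a,b) \le 2n\,(G_{\cpl,\nu_c}\ast G_{\cpl,\nu_c})(b-a)\le 2n\cirb^2 (\greens\ast\greens)(b-a).
\end{equation}
By \eqref{SRWG2} and \Cref{lem:HHS-2} with $a=b=d-2>d/2$ (since $d\ge5$), $\greens\ast\greens(y)\le C\mnorm{y}^{-(d-4)}$; in particular it is a bounded function of $y$. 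Thus $-\partial_\nu\GrL_{\V 0}(a,b)$ is bounded uniformly in $a,b,\Lambda$ and $\nu\ge\nu_c$, so integrating gives $|\GrL_{\V 0}(a,b)(\nu)-\GrL_{\V 0}(a,b)(\nu')|\le C|\nu-\nu'|$ for $\nu,\nu'\ge\nu_c$, with $C$ independent of $a,b,\Lambda$. Taking $\Lambda\uparrow\ZZ^d$ yields the uniform Lipschitz bound for $\{G_{\cpl,\nu}(x)\}_{x\in\ZZ^d}$ on $[\nu_c,\infty)$, which in particular gives uniform equicontinuity. The main subtlety is the interchange of $\partial_\nu$ with the infinite-volume limit and with the expectation defining the $\pf$ measure; this is handled by the uniform (in $\Lambda$) bounds above together with the monotonicity of $\GrL_{\V 0}$ in $\Lambda$ from \Cref{lem:G-1}, so that the limit of the $\nu$-derivatives controls the difference quotients of the limit.
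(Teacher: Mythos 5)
Your proof is correct and follows essentially the same strategy as the paper: differentiate the finite-volume two-point function in $\nu$ using \Cref{thm:just}, control the resulting sum of truncated correlations with the Lebowitz inequality (\Cref{prop:phi4-Lebowitz}), bound that uniformly in $\Lambda$ and $\nu$ using monotonicity (\Cref{lem:G-1}) plus either summability (part 1) or the IRB (part 2), integrate the finite-volume derivative bound to a finite-volume Lipschitz bound, and pass to the infinite-volume limit via \ref{as:G2}. The only cosmetic difference is that you retain the convolution $G_{\cpl,\nu_1}\ast G_{\cpl,\nu_1}$ (resp.\ $\greens\ast\greens$) and invoke $\ell^1\ast\ell^\infty\subset\ell^\infty$ and \Cref{lem:HHS-2}, whereas the paper first applies $2uv\le u^2+v^2$ to reduce to a single sum $\sum_y G_{\cpl,a}^2(y)$, which eliminates the dependence on $b-a$ without any convolution estimate; both yield a Lipschitz constant uniform in $x$. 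Your closing remark about ``interchanging $\partial_\nu$ with the infinite-volume limit'' is unnecessary, since, exactly as you do one sentence earlier, one simply integrates the finite-volume derivative bound and then takes the monotone limit of the resulting difference bound, with no interchange of a derivative and a limit required.
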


\begin{proof}

  (1) For any finite volume $\Lambda$ and any $\nu$, by
  \Cref{prop:phi4-Lebowitz}, \eqref{e:just} and \ref{as:G2}
  \begin{align}
    \pair{\varphi_{0}\cdot\varphi_{x};
    |\varphi_{y}|^{2}}_{\V{0}}^{\sss(\Lambda)}
    &\le
      2 \pair{\varphi_{0}\cdot
      \varphi_{y}}_{\V{0}}^{\sss(\Lambda)} \pair{\varphi_{x}\cdot
      \varphi_{y}}_{\V{0}}^{\sss(\Lambda)}
      \leq
      2 n^{2} G_{\cpl,\nu}(y) G_{\cpl,\nu}(y-x)
      \nonumber\\
    \label{eq:cont-pf}
    &\le
      n^{2} G_{\cpl,\nu}^{2}(y) + n^{2} G_{\cpl,\nu}^{2}(y-x) ,
  \end{align}
  and the final inequality is the elementary inequality
  $2uv\leq u^{2}+v^{2}$ for $u,v\in\RR$.  Since
  \begin{equation}
    -\frac{\partial}{\partial
      \nu}G^{\sss(\Lambda)}_{\cpl,\nu}(x)
    =
    \frac{1}{2}
    \sum_{y \in
      \Lambda}\pair{\varphi_{0}\cdot\varphi_{x};
      \varphi_{y}^{2}}_{\V{0}}^{\sss(\Lambda)}
  \end{equation}
  we have, for $\nu$ and $a$ such that $\nu \ge a \ge \nu_{c}$,
  \begin{equation}
    \label{eq:differentiability}
    \big|\frac{\partial}{\partial \nu}G^{\sss(\Lambda)}_{\cpl,\nu}(x)\big|
    \le
    n^{2}\sum_{y \in \ZZ^{d}}G_{\cpl,\nu}^{2} (y)
    =
    c_{a},
  \end{equation}
  where $c_{a}=n^{2}\sum_{y \in \ZZ^{d}}G_{\cpl, a}^{2} (y)$ and
  $c_{a}$ is finite for $a>\nu_{c}$ because $G_{\cpl, a}(y)$ is
  summable by \eqref{eq:nu-crit-def}.  For $a>\nu_{c}$ and
  $\nu ,\nu ' \in [a,\infty)$, by writing
  $G^{\sss(\Lambda)}_{\cpl,\nu'}(x) - G^{\sss(\Lambda)}_{\cpl,\nu}(x)$
  as the integral of its derivative we have
  $|G^{\sssL}_{\cpl,\nu'}(x) - G^{\sssL}_{\cpl,\nu}(x)| \le c_{a}
  |\nu'-\nu|$.  Taking $\Lambda \uparrow \ZZ^{d}$ by \ref{as:G2}, we
  obtain $|G_{\cpl,\nu'}(x) - G_{\cpl,\nu}(x)| \le c_{a} |\nu'-\nu|$
  and therefore $G_{\cpl,\nu}(x)$ is Lipschitz as claimed.
 
  (2) We repeat part (1) with $a=\nu_{c}$. Since $d \ge 5$,
  $c_{\nu_{c}}$ is finite by the $\cirb$-IRB, so $G_{\cpl,\nu}(x)$ is
  Lipschitz on $[\nu_{c},\infty)$ with uniform constant $c_{\nu_{c}}$.
\end{proof}

\begin{proof}[Proof of \ref{as:G3} parts (a) and (b) for the \edwards]
  \label{rem:G3saw}
  We first claim that for
  any finite volume $\Lambda$ and any $\nu\in\RR$,
  \begin{equation}
    \label{eq:G3a}
    -\frac{d}{d\nu} \GrL_{\cpl,\nu}(x) \leq \GrL_{\cpl,\nu}\ast \GrL_{\cpl,\nu}(x) \leq
   \frac{1}{2}\sum_{y\in\ZZ^{d}}( (\GrL_{\cpl,\nu}(y-x))^{2}+(\GrL_{\cpl,\nu}(y))^{2}),
  \end{equation}
  where the second inequality is the elementary $2ab\leq
  a^{2}+b^{2}$. Granting the claim, note that by \ref{as:G2} and
  translation invariance this proves \eqref{eq:differentiability} for
  the Edwards model, and the remainder of the proof is essentially
  identical to the proof above.
  
  We now prove the claimed first inequality in~\eqref{eq:G3a}.
  By the definitions \eqref{e:GF} and \eqref{e:tauI-def}, the
  left-hand side of \eqref{eq:G3a} is
  \begin{equation}
    \label{eq:G3a-proof-1}
    \sum_{x' \in \Lambda}
    \int_{[0,\infty)} d\ell \,
    \int_{[0,\ell]} d\ell'\,
    \;
    \Ea\,\Big[
    \rptn_{0,\ell}
    \indic{\LX_{\ell'}=x'} \indic{\LX_{\ell} = x}\Big] .
  \end{equation}
  where $\rptn_{s,t} = Z^{\sssL}_{\V{\tau}^{\sssL}_{[s,t]}}/Z^{\sssL}_{\V{0}}$
  as in \eqref{e:rptn-choice}, and, for the Edwards model,  $Z^{\sssL}_{\V{0}}=1$.
  We reverse the order of integration over $\ell, \ell' $ and insert
  $\rptn_{0,\ell} = \rptn_{0,\ell'}\brptn_{0,\ell'} (\ell)$, where
  $\brptn_{0,\ell'} (\ell) = \big(\rptn_{0,\ell}\big/\rptn_{0,\ell'}\big)$
  as in \eqref{e:brptn-def}.
  By \Cref{lem:induct} with $H=\rptn_{0,\ell'}$ and
  $(u_{1}, u_{2}, u_{3}) = (0,\ell',\ell')$
  the result is
  \begin{equation}
    \sum_{x' \in \Lambda}
    \int_{[0,\infty)} d\ell' \,
    \;
    \Ea\,\Big[
    \rptn_{0,\ell'}
    \GrL _{\V{\tau}^{\sss(\Lambda)}_{[0,\ell']}}(X^{\sssL}_{\ell'},x)
    \indic{\LX_{\ell'}=x'} \Big] ,
  \end{equation}
  By \ref{as:G1} and \eqref{e:GF} read from right to left
  we obtain the first inequality in \eqref{eq:G3a} as desired.
\end{proof}

For $0<u<v$ define
\begin{align}
  &
    \barvertL{u,v} (x,y)
    \bydef
    2\cpl
    \cb{
    \indic{x=y}
    +
    n^{2}\cpl
    \Big(\GrL_{\V{\tau}_{[u,v]}^{\sssL}}(x-y)\Big)^{2}},
    \\
  &
    \barvertL{} (x,y)
    \bydef
    \barvertL{0,0} (x,y).
   \label{e:rbar}
\end{align}
The next lemma verifies \Cref{as:FA-1}(iii).
\begin{lemma}
  \label{lem:r}
  Suppose $0<u<v$, $x,y\in\Lambda$. Then
  \begin{equation}
    \abs{\vertexL{u,v} (x, y)}
    \le
    \barvertL{u,v} (x,y)
    \le
    \barvertL{} (x,y) .
  \end{equation}
\end{lemma}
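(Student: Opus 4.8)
The statement asserts two inequalities: the bound $|\vertexL{u,v}(x,y)| \le \barvertL{u,v}(x,y)$ on the vertex weight, and the monotonicity $\barvertL{u,v}(x,y) \le \barvertL{}(x,y)$. I would prove them in that order. The first inequality is obtained by inserting the explicit formula \eqref{eq:phi4-vertex} for $\vertexL{s,s'}(x,y)$ into the triangle inequality:
\begin{equation}
  \nonumber
  \abs{\vertexL{u,v}(x,y)}
  \le
  2\cpl \indic{x=y}
  +
  \cpl^{2}\abs{\pair{\abs{\varphi_{x}}^{2};\abs{\varphi_{y}}^{2}}_{\V{\tau}^{\sssL}_{[u,v]}}^{\sssL}},
\end{equation}
so the task reduces to bounding the truncated correlation. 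Here I would apply the Lebowitz inequality \Cref{prop:phi4-Lebowitz} with $\V{t} = \V{\tau}^{\sssL}_{[u,v]}$ and the choice $u\rightsquigarrow$ both the external points: taking $x,y$ for the first pair and, say, $x$ for the insertion point gives $\pair{\abs{\varphi_{x}}^{2};\abs{\varphi_{y}}^{2}}_{\V{t}}^{\sssL} = \pair{\varphi_{x}\cdot\varphi_{x};\varphi_{y}\cdot\varphi_{y}}_{\V{t}}^{\sssL} \le 2\pair{\varphi_{x}\cdot\varphi_{y}}_{\V{t}}^{\sssL}\pair{\varphi_{x}\cdot\varphi_{y}}_{\V{t}}^{\sssL} = 2(\pair{\varphi_{x}\cdot\varphi_{y}}_{\V{t}}^{\sssL})^2$. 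The lower bound in \Cref{prop:phi4-Lebowitz} shows the correlation is nonnegative, so the absolute value is harmless. Then \Cref{thm:just} rewrites $\pair{\varphi_{x}\cdot\varphi_{y}}_{\V{t}}^{\sssL} = n\,\GrL_{\V{t}}(x,y)$, and by translation invariance of the infinite-volume limit (and the notation $\GrL_{\V{t}}(x-y)$ already in use) this gives the bound $\cpl^{2}\cdot 2 n^{2}(\GrL_{\V{\tau}_{[u,v]}^{\sssL}}(x-y))^{2}$, which together with the $2\cpl\indic{x=y}$ term is exactly $\barvertL{u,v}(x,y)$.

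For the second inequality, I would use \ref{as:G1}, i.e., $\GrL_{\V{t}} \le \GrL_{\V{0}}$ for all $\V{t}\in[0,\infty)^{\Lambda}$, applied with $\V{t} = \V{\tau}_{[u,v]}^{\sssL}$. Since the Green's functions are nonnegative (they are integrals of nonnegative quantities by \eqref{e:GF}), squaring preserves the inequality: $(\GrL_{\V{\tau}_{[u,v]}^{\sssL}}(x-y))^{2} \le (\GrL_{\V{0}}(x-y))^{2}$. Because $\barvertL{u,v}$ is an increasing function of this squared Green's function (the coefficients $2\cpl$ and $2\cpl\cdot n^2 \cpl$ are positive) and the $\indic{x=y}$ term is unchanged, we get $\barvertL{u,v}(x,y) \le \barvertL{0,0}(x,y) = \barvertL{}(x,y)$, which is the definition \eqref{e:rbar}.

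I do not anticipate a genuine obstacle here; the lemma is essentially an assembly of \eqref{eq:phi4-vertex}, \Cref{prop:phi4-Lebowitz}, \Cref{thm:just}, and \ref{as:G1}. The only point requiring a little care is matching the combinatorial constant: one must check that the factor of $2$ from Lebowitz, the $n$ from \Cref{thm:just} appearing squared, and the $\cpl^2$ combine to give precisely the coefficient $2\cpl\cdot n^{2}\cpl = 2n^2\cpl^2$ in the definition of $\barvertL{u,v}$ — which it does. A second minor point is ensuring the finite-volume expectation $\pair{\cdot}_{\V{t}}^{\sssL}$ is well-defined and that translation invariance is being invoked only after passing to the appropriate limit or, more simply, that the notation $\GrL_{\V{t}}(x-y)$ in \eqref{e:rbar} is understood as $\GrL_{\V{t}}(x,y)$ in finite volume (no translation invariance needed at the finite-volume level, only $O(n)$-invariance from \Cref{thm:just}). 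With these observations the proof is a few lines.
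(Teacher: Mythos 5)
Your proposal is correct and follows the same route as the paper: triangle inequality on \eqref{eq:phi4-vertex}, the Lebowitz inequality of \Cref{prop:phi4-Lebowitz} together with \eqref{e:just} to get the first inequality, and monotonicity of $\GrL_{\V{t}}$ in $\V{t}$ (which is \ref{as:G1}, established for the $\pf$ model in \Cref{lem:G-1}) for the second. The only wrinkle is your description of how to specialize the Lebowitz inequality: you say "taking $x,y$ for the first pair and $x$ for the insertion point," but the substitution you actually need (and the one your displayed formula correctly reflects) is to take $x,x$ for the first pair and $y$ for the insertion, yielding $\pair{\varphi_{x}\cdot\varphi_{x};\varphi_{y}\cdot\varphi_{y}}_{\V{t}}^{\sssL}\le 2(\pair{\varphi_{x}\cdot\varphi_{y}}_{\V{t}}^{\sssL})^{2}$ — a harmless slip of the pen since the calculation itself is right.
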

\begin{proof}
  As $\Lambda$ is fixed we will omit it from the notation. 
  Applying the triangle inequality to~\eqref{eq:phi4-vertex}
  and using \Cref{prop:phi4-Lebowitz} and \eqref{e:just}, 
  \begin{equation}
    \vertex{u,v} (x, y) \le \barvert{u,v}(x, y),
  \end{equation}
  The remaining inequality $\barvert{u,v}(x,y) \le \barvert{}(x,y)$
  follows by \Cref{lem:G-1}.
\end{proof}

\begin{proof}[Proof of \Cref{prop:og} for the $\pf$ model]
  The same argument as for the \edwards\ shows
  $\selfloop{\cpl,\nu_{c}}$ is $O(\cpl)$. Since $\selfloop{\cpl,\nu} =
  -\cpl\pair{\varphi_{x}^{2}}_{\V{0}}-\nu$, and
  $\cpl\pair{\varphi_{x}^{2}}_{\V{0}}$ is $O(\cpl)$ at
  $\nu_{c}$ since an infrared bound holds, the claim follows.
\end{proof}

\appendix
\section{Random walk and the Markov property}

\subsection{Properties of continuous-time random walk}
\label{appendix}

\begin{proof}[Proof of Lemma~\ref{lem:free-green}]
  We first prove that $\DL$ is invertible.  Let
  $f,h\colon\Lambda\rightarrow\RR$. The quadratic form associated to
  $\DL$ is given by
  \begin{equation}
    \label{e:Lambda-dirichlet-form}
    (f,-\DL h)
    \bydef
    \sum_{x \in \Lambda} f_{x} (-\DL h)_{x} .
  \end{equation}
  For $f\colon\Lambda\rightarrow\RR$ define the extension by zero:
  $\tilde{f} = f$ on $\Lambda$ and $\tilde f_x=0$ for
  $x\notin\Lambda$.  We \emph{claim} that
  \begin{equation}
    \label{e:Lambda-dirichlet-form2}
    (f,-\DL h)
    =
    \frac{1}{2}\sum_{x,y \in \ZZ^{d}} \Jump (x-y) 
    (\tilde{f}_{x} - \tilde{f}_{y}) (\tilde{h}_{x} - \tilde{h}_{y}).
  \end{equation}
  By choosing $h=f$ we obtain
  \begin{equation}
    (f,-\DL f) = \frac{1}{2}\sum_{x\neq
      y\in\ZZ^{d}}\Jump (x-y)|\tilde{f}_{x}-\tilde{f}_{y}|^{2} > 0,\quad
    f\neq 0 .\label{posdef-Delta} 
  \end{equation}
  The strict inequality holds because $\tilde{f}_{y}=0$ for
  $y\notin\Lambda$ and for every point $v\in \Lambda$ there is a walk
  with transitions of nonzero rate that starts at $v$ and reaches a
  point not in $\Lambda$. This positivity implies that the eigenvalues
  of $-\DL$ are strictly positive and therefore $-\DL$ is invertible
  as desired.  Thus it suffices to prove the claim
  \eqref{e:Lambda-dirichlet-form2}.

  To prove \eqref{e:Lambda-dirichlet-form2} we start with the
  right-hand side which contains
  \begin{equation}
    (\tilde{f}_{x} - \tilde{f}_{y}) (\tilde{h}_{x} - \tilde{h}_{y})
    =
    \tilde{f}_{x}(\tilde{h}_{x} - \tilde{h}_{y})
    +
    \tilde{f}_{y} (\tilde{h}_{y} - \tilde{h}_{x}),
  \end{equation}
  so by the symmetry under exchanging $x$ and $y$ we can rewrite the
  right-hand side of \eqref{e:Lambda-dirichlet-form2} as
  \begin{align}
    \frac{1}{2}\sum_{x,y \in \ZZ^{d}} \Jump (x-y)
    (\tilde{f}_{x} - \tilde{f}_{y}) (\tilde{h}_{x} - \tilde{h}_{y})
    &=
      \sum_{x,y \in \ZZ^{d}} \Jump (x-y) 
      \tilde{f}_{x}(\tilde{h}_{x} - \tilde{h}_{y}) 
      \nonumber\\
    \label{e:Lambda-dirichlet-form3}
    &=
    \sum_{x,y \in \ZZ^{d}} \Jump (x-y) 
    \tilde{f}_{x}(- \tilde{h}_{y})  .
  \end{align}
  For the final equality we used the zero row sum property
  $\sum_{y}\Jump (x-y)=0$.  Recall from
  \eqref{e:Delta-infty-matrix-def} that $\Jump (x-y) = \DI_{x,y}$ and
  that $\DL_{x,y}$ is the restriction of $\DI_{x,y}$ to
  $\Lambda$. Therefore, in \eqref{e:Lambda-dirichlet-form3} we insert
  \begin{equation}
    \sum_{y \in \ZZ^{d}}\Jump (x-y)
    (-\tilde{h}_{y})
    =
    \sum_{y \in \ZZ^{d}} (-\DI_{x,y})\tilde{h}_y
    =
    \sum_{y \in \Lambda}(-\DI_{x,y}) h_y
    =
    (- \DL h)_{x}
  \end{equation}
  which proves \eqref{e:Lambda-dirichlet-form2} and hence completes
  the proof that $\DL$ is invertible.

  Next we prove \eqref{e:free-green-1}.  By definition
\begin{equation}
   \GL (a,b)= \int_{0}^\infty dt\,
    \Pprod_{a}(X_{t} = b)
    = \int_{0}^\infty dt\, (e^{t\Delta_*})_{a,b} 
    =\int_{0}^\infty dt\, (e^{t\DL})_{a,b}\label{restricted_mat}.  
\end{equation}
where the last equality holds since
$(\Delta_*^k)_{a,b}=((\DL)^k)_{a,b}$, and where for a square matrix
$A$, $e^{tA}$ denotes the matrix exponential
$\sum_{k=0}^{\infty}\frac{t^{k}}{k!}A^{k}$.  The right hand side of
\eqref{restricted_mat} is $(-\DL)^{-1}_{a,b}$ as desired: since $-\DL$
is real symmetric with positive eigenvalues, this follows by
diagonalizing and integrating.
\end{proof}

\begin{proof}[Proof of Lemma \ref{lem:free-green-inf}]
  \label{proof:lem:free-green-inf}
  Recall the definition of $\dgreens_{z}(x)$ from
  \eqref{e:dgreensz-def}, and let
  $\dgreens(x) = \dgreens_{\hJ^{-1}}(x)$. 
  Let $T_n$ denote the time of the $n$th jump of $\ZX$ and $T_0=0$.
  By \eqref{e:Sinf}
  \begin{align}
    \nonumber
    \GI(x)
    &= E_0\Big[\sum_{n=0}^\infty (T_{n+1}-T_n)\indic{\ZX_{T_n}=x}\Big]\\
    \label{mmm1}
    &=\sum_{n=0}^\infty E_0[T_{n+1}-T_n]E_0\Big[\indic{\ZX_{T_n}=x}\Big]\\
    \label{e:mmm2}
    &=\hJ^{-1}\sum_{n=0}^\infty
            E_0\Big[\indic{\ZX_{T_n}=x}\Big]=\hJ^{-1}\dgreens(x).
  \end{align}
  By using the strong Markov property to restart at time $T_{1}$
    (see \cite[Section 4.3]{lawler_limic_2010}), we have
  \begin{equation}
    \dgreens(x) = \indic{x=0} + \sum_{y}
      \hJ^{-1}\jumprate(y)
    \dgreens(x-y),
  \end{equation}
  and by \eqref{e:mmm2} this can be rewritten as
  \begin{equation}
    \hJ\GI(x)=\indic{x=0}+ \sum_{y}
    \jumprate(y) 
    \GI (x-y).
  \end{equation}
  Collecting terms gives the first claim.  To verify
  \eqref{e:free-green-inf-2}, we use \eqref{mmm1}:
  \begin{equation*}
    \GI (x)=\sum_{n=0}^\infty E_0[T_{n+1}-T_n]P_0(\ZX_{T_n}=x)=\sum_{n=0}^\infty \hJ^{-1}
             (\hJ^{-1}\jumprate)^{*n}(x).\qedhere
  \end{equation*}
\end{proof}

\subsection{The Markov Property}
\label{sec:markov}

Recall that $\Omega_{1}$ is the space of paths defined in
  \Cref{sec:conv-techn-choice} and $(\Omega_{1},\mc{F},P_x)$ is the
  probability space for random walk $X_{t}$ such that $X_{0}=x$.  For
$s\ge 0$ define the $\mc{F}$-measurable map
$ \theta_{s}\colon \Omega_1 \rightarrow \Omega_1 $ by
$\theta_s((x_t)_{t\ge 0})=(x_{s+t})_{t\ge 0}$.  The following is a
standard formulation of the Markov property.

\begin{proposition}
  \label{prop:standard-markov}
  Let $H\colon \Omega_1 \to \RR$ be $\mathcal{F}$-measurable and
  integrable with respect to $E_a$ for each $a\in \ZZ^d$, and let
  $
    h (x)
    =
    E_{x}\big[
    H
    \big] .$
  Then for every $x\in \ZZ^d$ and $s\ge 0$,
  \begin{equation}
    E_x\big[
    H\circ\theta_{s} 
    \big|\mc{F}_{s}\big]
    =
    h (X_{s}), \quad P_x\text{-a.s.}
  \end{equation}
\end{proposition}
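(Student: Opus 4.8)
The statement to prove is Proposition~\ref{prop:standard-markov}, the standard Markov property for the continuous-time random walk $X$ on the path space $(\Omega_1,\mc{F},P_x)$ with the càdlàg realisation fixed in \Cref{sec:conv-techn-choice}.

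\medskip

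The plan is to reduce the statement to the defining property of the transition semigroup together with a monotone-class (functional $\pi$--$\lambda$) argument. First I would recall that $X$ is a continuous-time Markov chain with generator $\DI$, so it has a transition semigroup $p_t(x,y) = (e^{t\DI})_{x,y} = P_x(X_t = y)$ satisfying the Chapman--Kolmogorov equations, and that by construction all the measures $P_x$ live on the \emph{same} space $\Omega_1$ of càdlàg paths, with $\mc{F}_s = \sigma(X_u : u\le s)$ and $\mc{F} = \sigma(\cup_{s}\mc{F}_s)$. The first step is to verify the identity for \emph{finite-dimensional cylinder functions}: for $0\le t_1 < \dots < t_k$ and bounded $g\colon (\ZZ^d)^k\to\RR$, let $H(\omega) = g(X_{t_1},\dots,X_{t_k})$. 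Then $H\circ\theta_s = g(X_{s+t_1},\dots,X_{s+t_k})$, and one computes both $E_x[H\circ\theta_s \mid \mc{F}_s]$ and $h(X_s)$ explicitly by iterating the elementary Markov property at the single times $s, s+t_1,\dots,s+t_k$ — i.e.\ conditioning successively and using $E_x[f(X_{u+r})\mid \mc{F}_u] = \sum_y p_r(X_u,y) f(y)$, which holds for the chain because jump times are exponential and the embedded chain is Markov (this is where the concrete description of $X$ from \Cref{sec:ctrw-inf} enters). Both sides reduce to the same iterated sum $\sum_{y_1,\dots,y_k} p_{t_1}(X_s,y_1)p_{t_2-t_1}(y_1,y_2)\cdots g(y_1,\dots,y_k)$, evaluated at the random point $X_s$; $P_x$-a.s.\ equality follows.

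\medskip

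The second step is the functional monotone class argument. Let $\mathcal{H}$ be the collection of bounded $\mc{F}$-measurable $H$ for which the conclusion holds (with $h(x) = E_x[H]$ Borel — here measurability in $x$ is automatic since $\ZZ^d$ is countable, so ``Borel'' is vacuous). From step one, $\mathcal{H}$ contains the algebra of bounded cylinder functions, which is closed under multiplication and generates $\mc{F}$. One checks $\mathcal{H}$ is a vector space, contains constants, and is closed under bounded monotone limits: linearity and the constant case are immediate, and for an increasing sequence $H_n \uparrow H$ of nonnegative elements of $\mathcal{H}$, monotone convergence for conditional expectations gives $E_x[H_n\circ\theta_s\mid\mc{F}_s]\uparrow E_x[H\circ\theta_s\mid\mc{F}_s]$ a.s., while $h_n(x) = E_x[H_n]\uparrow h(x) = E_x[H]$ pointwise by monotone convergence, so $h_n(X_s)\uparrow h(X_s)$; hence $H\in\mathcal{H}$. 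By the functional monotone class theorem, $\mathcal{H}$ contains all bounded $\mc{F}$-measurable functions. Finally, for a general $\mc{F}$-measurable $H$ that is merely $E_a$-integrable for every $a$, I would split $H = H^+ - H^-$, apply the bounded result to the truncations $H^\pm \wedge n$, and pass to the limit using monotone convergence for conditional expectations on each piece; integrability of $H$ under every $P_a$ guarantees $h(x) = E_x[H]$ is finite for all $x$, so $h(X_s)$ is well-defined a.s.\ and the limits combine without an $\infty-\infty$ ambiguity.

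\medskip

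The main obstacle is really a bookkeeping one rather than a deep one: making step one fully rigorous requires knowing that the \emph{elementary} Markov property at deterministic times holds for this explicitly constructed chain, i.e.\ that conditionally on $\mc{F}_s$ the shifted process $\theta_s X$ has the law of the chain started from $X_s$ \emph{as a process}, not just at a single future time. For the nearest-neighbour-type walk of \Cref{sec:ctrw-inf} this follows from the memorylessness of the exponential holding times together with the strong Markov structure of the embedded discrete chain; one should cite a standard reference (e.g.\ the treatment of continuous-time Markov chains, or \cite[Section~4.3]{lawler_limic_2010} for the walk specifically) rather than reprove it. The only genuinely paper-specific point to be careful about is that $\theta_s$ is $\mc{F}$-measurable and $\mc{F}_s$-shift compatibility holds on the càdlàg space $\Omega_1$ — but this is immediate from the definitions of $\theta_s$, $\mc{F}_s$ and $\mc{F}$ given in \Cref{sec:conv-techn-choice}, since $\{X\mid X_u = y\}\circ\theta_s^{-1} = \{X\mid X_{s+u}=y\}\in\mc{F}$.
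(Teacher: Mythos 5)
The paper does not prove this proposition at all: it introduces it with the sentence ``The following is a standard formulation of the Markov property'' and uses it as a black box in \Cref{sec:markov}, so there is no paper proof to compare against. Your reconstruction is correct and is exactly the argument a textbook would give: verify the identity for finite-dimensional cylinder functions $H = g(X_{t_1},\dots,X_{t_k})$ by iterating the one-step transition semigroup, extend to all bounded $\mc{F}$-measurable $H$ by the functional monotone class theorem (the cylinder functions form a multiplicative class generating $\mc{F}$), and finally pass to integrable $H$ by splitting into positive and negative parts and truncating. The details you flag — that $h(x)=E_x[H]$ is automatically measurable in $x$ because $\ZZ^d$ is countable, that $\theta_s$ is $\mc{F}/\mc{F}$-measurable on the c\`adl\`ag space $\Omega_1$, and that the elementary Markov property at fixed times comes from the memoryless exponential holding times together with the embedded discrete chain — are exactly the right things to check, and the suggestion to cite a standard reference such as \cite[Section~4.3]{lawler_limic_2010} rather than reprove the semigroup property is in the spirit of what the paper itself does by declaring the statement ``standard.''
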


\subsubsection{The Markov property as used to obtain
  \texorpdfstring{\eqref{e:prop:lace-expansion03}}{Proposition Lace
    exansion03}}
To justify this application of the Markov property, for $\ell>0$ and
$b\in \ZZ^d$ let
$H_{\ell,b}\bydef \rptn_{0,\ell}\;\indic{X_{\ell} = b}$ and
$ h (\ell,y,b) \bydef E_{y} \cb{H_{\ell,b}}.$ Then, by
\Cref{prop:standard-markov},
\begin{equation}
    E_{a} \big[ \rptn_{s,s+\ell}\;
    \indic{X_{s+\ell} = b}\big|\mc{F}_{s}\big]
    =
    h (\ell,X_{s},b), \quad \text{ $P_a$-a.s.}
\end{equation}

\subsubsection{The Markov property as used to obtain \texorpdfstring{\eqref{e:prop:lace-expansion1}}{prop:lace-expansion} }

To justify this application of the Markov property, for
$\lace \in \laces_{m} (0)$ let $H_{\lace,b}\bydef\weight (\lace)G_{\V{0}}(X_{s_{m}'},b)$, and 
$    f (x,b,\lace)
    \bydef
    E_{x}
    \cb{H_{\lace,b}}.$
Then, by Proposition~\ref{prop:standard-markov},
\begin{equation}
    E_{a}\,\big[
    \weight (\lace +s)\; G_{\V{0}}(X_{s_{m}'+s},b)
    \big|\mc{F}_{s}
    \big]
    =
    f (X_{s},b,\lace).
\end{equation}

\subsubsection{The Markov property in the proof of
  \Cref{lem:induct}}

Fix $b \in \Lambda$. We assume that the function
  $\GrL: (\V{r},x) \mapsto \GrL _{\V{r}}(x,b)$ is defined for
  $(\V{r},x) \in [0,\infty)^{\Lambda}\times \Lambda$ and is bounded on
  this domain. Let $\mathcal{S}$ be a $(\mc{F}_t)_{t \ge 0}$-stopping
  time which is $P_{a}$-a.s. finite.  By definition the random
  variable $\G{I}^{\sssL}(X_{\mc{S}},b)$ is the composition of $\GrL$
  with $(\V{\tau_{I}},X_{\mc{S}})$.

\begin{lemma}
  \label{lem:clemma}  For
  $b\in \Lambda$, and any $\mc{F}_{\mc{S}}$-measureable Borel set
  $I \subset [0,\mc{S}]$,
\begin{equation}
    \label{eq:clemma}
    \G{I}^{\sssL}(X_{\mc{S}},b) = E_{a} \Big[
    \int_{[\mc{S},\infty)} d\ell\,
    \;
    \frac{Z_{\V{\tau}_{I} + \V{\tau}_{[\mc{S},\ell]}}}{\Z{I}}
    \indic{X_{\ell}=b} \big\vert \mc{F}_{\mc{S}}
    \Big]\quad \text{$P_{a}$-a.s.}
\end{equation}
\end{lemma}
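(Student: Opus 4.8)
The plan is to reduce \eqref{eq:clemma} to the standard Markov property \Cref{prop:standard-markov} applied at the stopping time $\mc{S}$, after conditioning on $\mc{F}_{\mc{S}}$ to freeze the local-time vector $\V{\tau}_I$. First I would observe that on the set $\{I\subset[0,\mc{S}]\}$ the local time $\V{\tau}_{[\mc{S},\ell]}$ depends only on the post-$\mc{S}$ path $\theta_{\mc{S}}X$, while $\V{\tau}_I$ and $\Z{I}$ are $\mc{F}_{\mc{S}}$-measurable. Concretely, using the additivity of local time, $\V{\tau}_{[\mc{S},\ell]} = \V{\tau}^{\,\prime}_{[0,\ell-\mc{S}]}$ where $\V{\tau}^{\,\prime}$ denotes the local time of the shifted walk $\theta_{\mc{S}}X$, and similarly $\indic{X_{\ell}=b} = \indic{(\theta_{\mc{S}}X)_{\ell-\mc{S}}=b}$. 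After the change of variable $\ell \mapsto \mc{S}+\ell$ in the integral, the right-hand side of \eqref{eq:clemma} becomes
\begin{equation}
  E_{a}\Big[\int_{[0,\infty)} d\ell\; \frac{Z_{\V{r}+\V{\tau}^{\,\prime}_{[0,\ell]}}}{Z_{\V{r}}}\Big|_{\V{r}=\V{\tau}_I}\;\indic{(\theta_{\mc{S}}X)_{\ell}=b}\,\Big|\,\mc{F}_{\mc{S}}\Big].
\end{equation}

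Next I would invoke the strong Markov property at $\mc{S}$ (the version in \Cref{prop:standard-markov} together with the fact that $\mc{S}$ is a.s.\ finite and the walk is strong Markov, as set up in \Cref{sec:conv-techn-choice}). For a fixed deterministic $\V{r}\in[0,\infty)^{\Lambda}$, define $H_{\V{r},b}\colon\Omega_1\to\RR$ by $H_{\V{r},b}(\omega) = \int_{[0,\infty)}d\ell\;\frac{Z_{\V{r}+\V{\tau}[\omega]_{[0,\ell]}}}{Z_{\V{r}}}\indic{\omega_{\ell}=b}$; this is exactly $\GrL_{\V{r}}(\,\cdot\,,b)$ evaluated at the starting point of $\omega$, by the definition \eqref{e:GF} of the Green's function. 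Boundedness of $\GrL$ on $[0,\infty)^{\Lambda}\times\Lambda$ (part of the hypothesis of the lemma) gives the integrability needed to apply \Cref{prop:standard-markov}, yielding
\begin{equation}
  E_{a}\big[H_{\V{r},b}\circ\theta_{\mc{S}}\,\big|\,\mc{F}_{\mc{S}}\big] = \GrL_{\V{r}}(X_{\mc{S}},b),\qquad P_a\text{-a.s.}
\end{equation}
Finally I would remove the freezing: since $\V{\tau}_I$ is $\mc{F}_{\mc{S}}$-measurable and takes values in $[0,\infty)^{\Lambda}$, substituting $\V{r}=\V{\tau}_I$ into the previous display is legitimate — this is a standard ``conditional Fubini''/measurable-parameter substitution, which can be justified by approximating $\V{r}\mapsto H_{\V{r},b}$ uniformly by simple functions in $\V{r}$ (continuity of $\V{r}\mapsto Z_{\V{r}}$ via \ref{as:Z1} and dominated convergence), or by first checking the identity for $\V{\tau}_I$ taking finitely many values and passing to the limit. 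The result is $\G{I}^{\sssL}(X_{\mc{S}},b)$ on the left, which is \eqref{eq:clemma}.

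The main obstacle is the last step: cleanly justifying that one may substitute the $\mc{F}_{\mc{S}}$-measurable random vector $\V{\tau}_I$ for the deterministic parameter $\V{r}$ inside the conditional expectation, i.e.\ that $E_{a}[H_{\V{\tau}_I,b}\circ\theta_{\mc{S}}\mid\mc{F}_{\mc{S}}] = \big(E_{a}[H_{\V{r},b}\circ\theta_{\mc{S}}\mid\mc{F}_{\mc{S}}]\big)\big|_{\V{r}=\V{\tau}_I}$. This requires joint measurability of $(\V{r},\omega)\mapsto H_{\V{r},b}(\omega)$ — which follows from \ref{as:Z1} and \ref{as:Z2} since $Z$ is continuous, so the integrand is jointly measurable and the $d\ell$-integral of a uniformly bounded jointly measurable function is again jointly measurable — and then the substitution lemma for conditional expectations with an independent-of-the-future deterministic-but-now-random parameter. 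A secondary but routine point is verifying the shift identities for local time and the change of variables, which are immediate from \eqref{e:tauI-def} and the $\mc{F}_{\mc{S}}$-measurability of $\mc{S}$ and $I$.
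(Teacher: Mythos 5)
Your proposal is correct and follows essentially the same route as the paper: shift to the post-$\mc{S}$ path, change variables, recognize the Green's function, and then substitute the $\mc{F}_{\mc{S}}$-measurable local-time vector using independence of the shifted walk from $\mc{F}_{\mc{S}}$. The "substitution lemma for conditional expectations" you correctly flag as the main obstacle is exactly what the paper supplies as \Cref{lem:conditional} (together with \Cref{lem:funnyE} to pass between $E_a[\,\cdot\mid\mc{F}_{\mc{S}}]$ and the measure conditioned on $\{X_{\mc{S}}=x\}$); one small warning is that proving the Markov identity for each fixed $\V{r}$ and then plugging in $\V{r}=\V{\tau}_I$ is not quite legitimate on its own, since the exceptional null set may vary with $\V{r}$ — the paper's Lemma \ref{lem:conditional} proves the identity directly for the random parameter via a monotone-class argument, which is the clean version of the simple-function approximation you sketch.
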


The proof of \Cref{lem:clemma} requires two preparatory ingredients
stated for a general probability space $(\Omega,\mc{F},\Pprod)$.

\begin{lemma}
\label{lem:funnyE}
Let $W:\Omega \to \RR$ be integrable with respect to $\Pprod$,
$A\in \mc{F}$ with $\Pprod(A)>0$ and $\Eprod_A=\Eprod[\cdot|A]$.  Then
\begin{equation}
  \1_A\Eprod[W|\mc{F}]=\1_A\Eprod_A[W|\mc{F}], \qquad \Pprod\textrm{-a.s.}
\end{equation}
\end{lemma}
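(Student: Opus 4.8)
The plan is to recognize this as a standard fact about conditional expectation computed under a conditioned probability measure, and to prove it by checking the defining property.

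First I would set $Z\bydef\Eprod[W|\mc{F}]$ and introduce the conditioned probability measure $\Pprod_{A}\bydef\Pprod(\cdot\,|\,A)$, so that $\Eprod_{A}=\Eprod[\,\cdot\,|\,A]$ is integration against $\Pprod_{A}$ and $\Eprod_{A}[W|\mc{F}]$ denotes the conditional expectation of $W$ given $\mc{F}$ computed under $\Pprod_{A}$. Since $\Pprod_{A}\le\Pprod(A)^{-1}\Pprod$, the function $W$ remains $\Pprod_{A}$-integrable, so $\Eprod_{A}[W|\mc{F}]$ is well defined. I would then verify that $Z$ is a version of $\Eprod_{A}[W|\mc{F}]$: it is $\mc{F}$-measurable by construction, and for each $G\in\mc{F}$, using that $A\in\mc{F}$ so that $\1_{A}\1_{G}=\1_{A\cap G}$ is $\mc{F}$-measurable, together with the defining property of $Z=\Eprod[W|\mc{F}]$,
\begin{equation}
  \Eprod_{A}[Z\,\1_{G}]
  =\frac{1}{\Pprod(A)}\Eprod[Z\,\1_{A\cap G}]
  =\frac{1}{\Pprod(A)}\Eprod[W\,\1_{A\cap G}]
  =\Eprod_{A}[W\,\1_{G}].
\end{equation}
By the $\Pprod_{A}$-a.s.\ uniqueness of conditional expectation this yields $Z=\Eprod_{A}[W|\mc{F}]$ $\Pprod_{A}$-almost surely.

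The final step is to transfer this to a $\Pprod$-almost sure statement after multiplying by $\1_{A}$: if $N\bydef\{Z\ne\Eprod_{A}[W|\mc{F}]\}$, then $\Pprod(A\cap N)=\Pprod(A)\,\Pprod_{A}(N)=0$, and off the $\Pprod$-null set $A\cap N$ the identity $\1_{A}Z=\1_{A}\Eprod_{A}[W|\mc{F}]$ holds trivially, since both sides vanish outside $A$ and agree on $A\setminus N$. I do not expect a genuine obstacle here; the only points needing care are the use of $A\in\mc{F}$ (which keeps $\1_{A}\1_{G}$ within the conditioning $\sigma$-algebra, so the defining property of $Z$ applies with test set $A\cap G$) and the routine passage between $\Pprod_{A}$-null and $\Pprod$-null sets in the last paragraph.
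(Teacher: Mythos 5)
Your proposal is correct and uses essentially the same argument as the paper: both verify the defining property of conditional expectation, with the key step being that $A\in\mc{F}$ keeps $\1_A\1_G$ (resp.\ $\1_B\1_A$ in the paper) inside the conditioning $\sigma$-algebra so that one can pass between $\Eprod$ and $\Eprod_A$ via the normalizing factor $\Pprod(A)$. The only difference is presentational — you first show $\Eprod[W|\mc{F}]$ is a version of $\Eprod_A[W|\mc{F}]$ under $\Pprod_A$ and then transfer to a $\Pprod$-a.s.\ statement, whereas the paper directly shows $\Eprod[\1_B(L-R)]=0$ for all $B\in\mc{F}$; this is the same computation read in the other direction.
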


\begin{proof}
  Let $L$ and $R$ denote the left- and right-hand sides, respectively.
  Then both $L$ and $R$ are $\mc{F}$-measurable. Let $B\in \mc{F}$.
  Then $\Eprod[\1_B(L-R)]=0$ since
  \begin{align}
    \Eprod[\1_B\1_A\Eprod_A[W|\mc{F}]]
    &=\Eprod_A[\1_B\1_A\Eprod_A[W|\mc{F}]]\Pprod(A)\\
    &=\Eprod_A[\1_B\1_A W]\Pprod(A)\\
    &=\Eprod[\1_B\1_AW]=\Eprod[\1_B\1_A\Eprod[W|\mc{F}]]. 
  \end{align}
  Taking $B_1=\{L>R\}\in \mc{F}$ and $B_2=\{L<R\}\in \mc{F}$ completes
  the proof.
\end{proof}

The following lemma is a standard result in the case that
$\mathcal{G}=\sigma(W_1)$, see \cite[Example 5.1.5]{Durrett2010}.
Since we have been unable to find this particular formulation in the
literature, we give a proof below.
\begin{lemma}
  \label{lem:conditional} 
  For measurable spaces $(S_1,\mc{S}_1)$ and $(S_2,\mc{S}_2)$ let
  $W_1\colon\Omega \ra S_1$ and $W_2\colon\Omega\ra S_2$ be
  measurable, and let $f\colon S_1\times S_2\ra \RR$ be
  Borel-measurable on the corresponding product space
  $(S_1\times S_2,\mc{S})$ and either bounded or non-negative and such
  that $\Eprod[f(W_1,W_2)]$ is finite.  Define $h\colon S_1 \ra \RR$
  by
    \begin{equation}
      h(w_1)=\Eprod[f(w_1,W_2)].
    \end{equation}
    If $W_2$ is independent of $\mc{G}\subset \mc{F}$ and $W_1$ is
    $\mc{G}$-measurable then
    \begin{equation}
      \label{e:conditional}
      h(W_1)
      =
      \Eprod[f(W_1,W_2)|\mc{G}],
      \quad \textrm{a.s.}
    \end{equation}
  \end{lemma}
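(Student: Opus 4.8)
\textbf{Proof proposal for Lemma~\ref{lem:conditional}.}
The plan is to reduce the claim to the defining property of conditional expectation: I must show that $h(W_1)$ is $\mc{G}$-measurable and that for every $G\in\mc{G}$,
\begin{equation}
  \Eprod\big[\1_G\, h(W_1)\big] = \Eprod\big[\1_G\, f(W_1,W_2)\big].
\end{equation}
Measurability of $h(W_1)$ follows because $h$ is Borel-measurable on $(S_1,\mc{S}_1)$ (by Fubini--Tonelli applied to the joint law, $h$ is the integral of a measurable function against a fixed measure, hence measurable) and $W_1$ is $\mc{G}$-measurable, so the composition $h(W_1)$ is $\mc{G}$-measurable. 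The integrability hypothesis guarantees all the integrals below are finite or the integrands are non-negative, so Fubini--Tonelli applies.

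First I would handle the key identity. Fix $G\in\mc{G}$. Since $W_1$ and $\1_G$ are both $\mc{G}$-measurable, the pair $(\1_G, W_1)$ is independent of $W_2$. Therefore the joint law of $(\1_G, W_1, W_2)$ is the product of the law of $(\1_G,W_1)$ and the law of $W_2$ (here I use that $W_2$ is independent of the $\sigma$-algebra generated by $\1_G$ and $W_1$, which is contained in $\mc{G}$). Writing $\PP_{W_2}$ for the law of $W_2$ on $(S_2,\mc{S}_2)$, Fubini's theorem gives
\begin{align}
  \Eprod\big[\1_G\, f(W_1,W_2)\big]
  &= \int \1_{G}(\omega)\int_{S_2} f\big(W_1(\omega), w_2\big)\, \PP_{W_2}(dw_2)\, \Pprod(d\omega)
  \nonumber\\
  &= \Eprod\big[\1_G\, h(W_1)\big],
\end{align}
where in the last step I recognised the inner integral as $h(W_1(\omega))$ by the definition $h(w_1)=\Eprod[f(w_1,W_2)]=\int_{S_2}f(w_1,w_2)\,\PP_{W_2}(dw_2)$. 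This is exactly the identity required, so by the definition of conditional expectation $h(W_1)=\Eprod[f(W_1,W_2)\mid\mc{G}]$ a.s.

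The only genuinely delicate point, and the one I would state carefully, is the justification that $(\1_G,W_1)$ is independent of $W_2$ and that consequently the joint law factorises: this uses that independence of $W_2$ from $\mc{G}$ means independence from the \emph{entire} $\sigma$-algebra $\mc{G}$, hence from any pair of $\mc{G}$-measurable random variables, together with the standard fact that independence of two random elements is equivalent to their joint law being the product measure. The integrability/non-negativity dichotomy in the hypothesis is precisely what licenses the use of Fubini (for the bounded/integrable case) or Tonelli (for the non-negative case) in the displayed computation; I would remark that $h$ is well-defined and finite a.e.\ under these hypotheses. Everything else is routine.
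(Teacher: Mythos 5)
Your proof is correct, but it takes a genuinely different route from the paper's. The paper proceeds by a monotone class argument: it first verifies \eqref{e:conditional} for product functions $f(w_1,w_2)=f_1(w_1)f_2(w_2)$, extends to indicators of rectangles, then invokes the Monotone Class Theorem to reach indicators of all sets in $\mc{S}$, then simple functions, and finally non-negative and bounded $f$ via monotone convergence and positive/negative parts. You instead verify the defining property of conditional expectation directly: $h(W_1)$ is $\mc{G}$-measurable (by Fubini--Tonelli, $h$ is $\mc{S}_1$-measurable, and $W_1$ is $\mc{G}$-measurable), and for $G\in\mc{G}$ the identity $\Eprod[\1_G\,f(W_1,W_2)]=\Eprod[\1_G\,h(W_1)]$ follows by pushing forward to the joint law of $(\1_G,W_1,W_2)$, observing that it factors as a product because $(\1_G,W_1)$ is $\mc{G}$-measurable and $W_2\perp\mc{G}$, and then applying Fubini--Tonelli on the product measure space. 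Your argument is shorter and arguably cleaner; the one small presentational caveat is that the displayed identity $\Eprod[\1_G f(W_1,W_2)] = \int \1_G(\omega)\int_{S_2}f(W_1(\omega),w_2)\,\PP_{W_2}(dw_2)\,\Pprod(d\omega)$ is not an application of Fubini on $\Omega\times S_2$ (since $\omega$ occurs in both slots), but rather Fubini on the product law of $((\1_G,W_1),W_2)$ after using independence to identify the joint law as a product; you do note this, and it would be tidier to write the computation on the image space $(\{0,1\}\times S_1)\times S_2$ rather than mixing $\omega$ and $w_2$. The paper's monotone class route is more elementary in the sense that it only uses independence for product functions, but it is longer; your route leans on the (standard) equivalence between independence and product joint law for general random elements, which the paper's author may have wished to avoid invoking explicitly. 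Both approaches buy the same conclusion.
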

\begin{proof}
  If $f(w_1,w_2)=f_1(w_1)f_2(w_2)$, where $f_1$ and $f_2$ are bounded
  and $\mc{B}(\RR)$-measurable then $h(w_1)=f_1(w_1)\Eprod[f_2(W_2)]$.
  Then $h(W_1)=f_1(W_1)\Eprod[f_2(W_2)]$ (is $\mc{G}$ measurable) and
  by independence, almost surely
  \begin{equation}
    f_1(W_1)\Eprod[f_2(W_2)]
    =
    \Eprod[f_1(W_1)f_2(W_2)| \mc{G}].
  \end{equation}
  In particular this holds for any $f$ of the form
  $f(w_1,w_2)=\1_{A_1\times A_2}\equiv\indic{w_1\in A_1}\indic{w_2\in
    A_2}$, where $A_i \in \mc{S}_i$.  Therefore by linearity of
  expectation it also holds for indicators of finite disjoint unions
  of events of the form $A_1\times A_2$.

  Let $\mc{A}\subset \mc{S}$ denote the collection of events for which
  the claim of the lemma holds with $f=\1_A$.  Then $\mc{A}$ contains
  the field of finite disjoint unions of events of the form
  $A_1\times A_2$, and by dominated convergence $\mc{A}$ is a monotone
  class.  Thus by the Monotone Class Theorem $\mc{A}=\mc{S}$, and
  hence by linearity the claim holds for all simple
  functions $f$.  
  
  For non-negative $f$ such that $f(W_1,W_2)$ is integrable we can
  take non-negative simple functions $f_n$ increasing to $f$
  pointwise.  Let $h_n(w_1)=\Eprod[f_n(w_1,W_2)]$.  Then
  $h_n(w_1)\uparrow \Eprod[f(w_1,W_2)]=:h(w_1)$ pointwise by monotone
  convergence.  Next, by the result for simple functions we have for
  each $n$
  \begin{equation}
    h_n(W_1)=\Eprod[f_n(W_1,W_2)| \mc{G}].
  \end{equation}
  The right hand side increases to $\Eprod[f(W_1,W_2)| \mc{G}]$ by
  monotone convergence and the left hand side increases to $h(W_1)$ by
  the above pointwise convergence.  This proves the result for
  non-negative $f$ such that $f(W_1,W_2)$ is integrable.

  The claim for bounded measurable $f$ follows by considering the
  positive and negative parts of $f$.
\end{proof}

\begin{proof}[Proof of \Cref{lem:clemma}]
  For $\V{r} \in [0,\infty)^\Lambda$ and a path $\tilde{y}$ in
  $\Omega_{1}$ let
  \begin{equation}
    f(\V{r},\tilde{y})
    \bydef
    \int_{[0,\infty)}d\ell'\,
    \frac{Z_{\V{r}+
        \tilde{\V{\tau}}_{[0,\ell']}(\tilde{y})
      }}{Z_{\V{r}}}
    \indic{\tilde{y}_{\ell'}=b },
  \end{equation}
  where $\tilde{\V{\tau}}_{[u,v]}(\tilde{y})$ denotes the vector of
  local times of the path $\tilde{y}$ in the interval $[u,v]$.
  
  Let $\theta_{\mc{S}}$ be the time shift
  $\theta_{s}:\Omega_{1} \rightarrow \Omega_{1}$ defined at the
  beginning of this subsection with $s=\mc{S}$. Since $\mc{S}$ is
  finite $\theta_{\mc{S}}$ is defined for $P_{a}$ almost all paths.
  Let $\tilde{X}=\theta_{\mc{S}}(X)$. Then
  $\V{\tau}_{[\mc{S},\ell]}=\tilde{\V{\tau}}_{[0,\ell-\mc{S}]}$, where
  $\tilde{\V{\tau}}$ is the local time of $\tilde{X}$, and
  $\indic{X_{\ell}=b}=\indic{\tilde{X}_{\ell-\mc{S}}=b}$.  Equalities
  of random variables in this proof are $P_a$-a.s.  Integrals and
  expectations are applied only to nonnegative functions and
  integrability is eventually implied by the assumed boundedness of
  $\G{I}^{\sssL}(X_{\mc{S}},b)$.  Let $R$ be the right hand side of
  \eqref{eq:clemma}. Then
  \begin{align}
    R
    &=
      E_{a} \Big[
      \int_{[\mc{S},\infty)} d\ell\,
      \;
      \frac{Z_{\V{\tau}_{I} + \tilde{\V{\tau}}_{[0,\ell-\mc{S}]}}}{\Z{I}}
      \indic{\tilde{X}_{\ell-\mc{S}}=b} \big\vert \mc{F}_{\mc{S}}
      \Big]
      \nonumber\\
    &=
      E_{a} \Big[
      f(\V{\tau_{I}},\tilde{X})
      \big\vert \mc{F}_{\mc{S}}
      \Big]
      =
      \sum_{x\in \Lambda} \indic{X_{\mc{S}}=x}
      E_{a} \Big[
      f(\V{\tau_{I}},\tilde{X})
      \big\vert \mc{F}_{\mc{S}}
      \Big].
    \label{stepp1}
  \end{align}
  The first equality is obtained by the change of variables
  $\ell=\ell'+\mc{S}$ followed by inserting the definition of
  $f(\V{r},\tilde{y})$ and the second equality is obtained by
  inserting $1=\sum_{x\in \Lambda} \indic{X_{\mc{S}}=x}$ under the
  conditional expectation. The indicator function can be moved outside
  since the indicator function is $\mc{F}_{\mc{S}}$ measurable. If for
  some $x$, $P_a(X_\mc{S}=x)=0$, then the corresponding contribution
  to the above sum is 0 $P_a$-a.s.  Otherwise, let
  $\tilde{P}_x(\cdot)\bydef \Pa( \cdot |\tilde{X}_0=x)$. By Lemma
  \ref{lem:funnyE} \eqref{stepp1} becomes
  \begin{align}
    R
    &=
      \sum_{x\in \Lambda} \indic{X_{\mc{S}}=x}
      \tilde{E}_{x} \Big[
      f(\V{\tau_{I}},\tilde{X})
      \big\vert \mc{F}_{\mc{S}}
      \Big].
    \label{stepp2}
  \end{align}
  Let $h_{x}(\V{r}) = \tilde{E}_{x} \Big[f(\V{r},\tilde{X})\Big]$.  By
  the strong Markov property $\tilde{X}$ is a random walk with
  $\tilde{X}_{0}=x$ that is independent of $\mc{F}_{\mc{S}}$.  By
  \Cref{lem:conditional}
  $\tilde{E}_{x} \big[ f(\V{\tau_{I}},\tilde{X}) \big\vert
  \mc{F}_{\mc{S}} \big] = h_{x}(\V{\tau}_{I})$, but this is a
  $\tilde{P}_x$-a.s.equality.  However it also holds
  $P_a$-a.s. because
  $P_{a} = \sum_{x\in\Lambda}P_{a}(\tilde{X}_{0}=x)
  \tilde{P}_x(\cdot)$ is a countable sum. Therefore \eqref{stepp2}
  becomes
  \begin{equation}
    R
    =
    \sum_{x\in \Lambda} \indic{X_{\mc{S}}=x} h_{x}(\V{\tau}_{I})
    =
    \sum_{x\in \Lambda} \indic{X_{\mc{S}}=x}     
    \GrL _{\V{\tau}_{I}}(x,b)
    =
    \GrL _{\V{\tau}_{I}}(X_{\mc{S}},b)
    \qquad
    \text{$P_a$-a.s.}
  \end{equation}
  as desired. The second equality is obtained from \eqref{e:GF} by
  interchanging the integral over $\ell'$ in the definition of
  $f(\V{\tau_{I}},\tilde{X})$ with the expectation.
\end{proof}

\section*{Acknowledgements}
\label{sec:acknowledgements} 

The authors thank Gordon Slade for various helpful comments regarding
a previous version of this manuscript.  TH thanks Gady Kozma for a
helpful discussion. TH and DB thank the Isaac Newton Institute for
Mathematical Sciences for support and hospitality during the programme
``Scaling limits, rough paths, quantum field theory'' when work on
this paper was undertaken. DB is grateful for partial support by the
Simons Foundation during the same programme and 
thanks the University of Virginia for hospitality during 2016 and 2017
during this collaboration.  This work was supported by EPSRC grant
LNAG/036, RG91310. TH held positions at UC Berkeley and the University
of Bristol while this project was being carried out, and was supported
by EPSRC grant EP/P003656/1 and an NSERC PDF. MH is supported by
Future Fellowship FT160100166, from the Australian Research Council.



\bibliographystyle{plain}
\bibliography{refs}

\begin{thebibliography}{10}

\bibitem{Aiz81}
M.~Aizenman.
\newblock Proof of the triviality of $\varphi \sb{d}\sp{4}$\ field theory and
  some mean-field features of {I}sing models for $d>4$.
\newblock {\em Phys.\ Rev.\ Lett.}, 47(1):1--4, 1981.

\bibitem{Aiz82}
M.~Aizenman.
\newblock Geometric analysis of $\varphi^4$ fields and {I}sing models, {Parts}
  {I} and {II}.
\newblock {\em Commun.\ Math.\ Phys.}, {\bf 86}:1--48, 1982.

\bibitem{AD2020}
Michael Aizenman and Hugo Duminil-Copin.
\newblock Marginal triviality of the scaling limits of critical 4d ising and
  $\phi_4^4$ models, 2020.

\bibitem{1903.04045}
R.\ Bauerschmidt, T.\ Helmuth, and A.~Swan.
\newblock The geometry of random walk isomorphism theorems.
\newblock {\em Ann. Henri Poincar\'{e} B}.
\newblock To appear.

\bibitem{BBSbook}
Roland Bauerschmidt, David~C. Brydges, and Gordon Slade.
\newblock {\em Introduction to a renormalisation group method}, volume 2242 of
  {\em Lecture Notes in Mathematics}.
\newblock Springer, Singapore, 2019.

\bibitem{Bolthausen2002}
Erwin Bolthausen.
\newblock Large deviations and interacting random walks.
\newblock In {\em Lectures on probability theory and statistics
  ({S}aint-{F}lour, 1999)}, volume 1781 of {\em Lecture Notes in Math.}, pages
  1--124. Springer, Berlin, 2002.

\bibitem{BHK}
Erwin Bolthausen, Remco van~der Hofstad, and Gady Kozma.
\newblock Lace expansion for dummies.
\newblock {\em Ann.\ Inst.\ H.\ Poincaré Probab.\ Statist.}, 54(1):141--153,
  02 2018.

\bibitem{Bricmont1977}
J.~Bricmont.
\newblock The gaussian inequality for multicomponent rotators.
\newblock {\em Journal of Statistical Physics}, 17(5):289--300, Nov 1977.

\bibitem{BrSp85a}
D.~C. Brydges and T.~Spencer.
\newblock Self-avoiding walk in 5 or more dimensions.
\newblock {\em Commun.\ Math.\ Phys.}, 97:125--148, 1985.

\bibitem{BFS83}
D.C. Brydges, J.~Fr\"{o}hlich, and A.D. Sokal.
\newblock A new proof of the existence and nontriviality of the continuum
  ${\varphi_2^4}$ and ${\varphi_3^4}$ quantum field theories.
\newblock {\em Commun.\ Math.\ Phys.}, {\bf 91}:141--186, 1983.

\bibitem{BFS82}
D.C. Brydges, J.~Fr\"{o}hlich, and T.~Spencer.
\newblock The random walk representation of classical spin systems and
  correlation inequalities.
\newblock {\em Commun.\ Math.\ Phys.}, {\bf 83}:123--150, 1982.

\bibitem{DS98}
Eric Derbez and Gordon Slade.
\newblock The scaling limit of lattice trees in high dimensions.
\newblock {\em Commun.\ Math.\ Phys.}, 193(1):69--104, 1998.

\bibitem{Durrett2010}
Rick Durrett.
\newblock {\em Probability: theory and examples}.
\newblock Cambridge Series in Statistical and Probabilistic Mathematics.
  Cambridge University Press, Cambridge, fourth edition, 2010.

\bibitem{Dyn83}
E.B. Dynkin.
\newblock Markov processes as a tool in field theory.
\newblock {\em J.\ Funct.\ Analysis}, 50:167--187, 1983.

\bibitem{Dyson1949}
F.~J. Dyson.
\newblock The {$S$} matrix in quantum electrodynamics.
\newblock {\em Phys. Rev. (2)}, 75:1736--1755, 1949.

\bibitem{FMRS87}
J.~Feldman, J.~Magnen, V.~Rivasseau, and R.~Seneor.
\newblock Construction and {B}orel summability of infrared $\phi^4_4$ by a
  phase space expansion.
\newblock {\em CMP}, 109:437--480, 1987.

\bibitem{FFS92}
Roberto Fern{\'a}ndez, J{\"u}rg Fr{\"o}hlich, and Alan~D. Sokal.
\newblock {\em Random walks, critical phenomena, and triviality in quantum
  field theory}.
\newblock Texts and Monographs in Physics. Springer-Verlag, Berlin, 1992.

\bibitem{FrohlichSimonSpencer}
J.~Fr\"ohlich, B.~Simon, and Thomas Spencer.
\newblock Infrared bounds, phase transitions and continuous symmetry breaking.
\newblock {\em Comm. Math. Phys.}, 50(1):79--95, 1976.

\bibitem{Frohlich}
J{\"u}rg Fr{\"o}hlich.
\newblock On the triviality of {$\lambda \varphi^{4}_{d}$} theories and the
  approach to the critical point in $d$ $\overset{>}{\scriptstyle{(-)}}$ $4$
  dimensions.
\newblock {\em Nuclear Phys. B}, 200(2):281--296, 1982.

\bibitem{GK86}
K.~Gaw\c{e}dzki and A.~Kupiainen.
\newblock Asymptotic freedom beyond perturbation theory.
\newblock In K.~Osterwalder and R.~Stora, editors, {\em Critical Phenomena,
  Random Systems, Gauge Theories}, Amsterdam, 1986. North-Holland.
\newblock Les Houches 1984.

\bibitem{HammondHelmuthASAW}
Alan Hammond and Tyler Helmuth.
\newblock Self-attracting self-avoiding walk.
\newblock {\em Probability Theory and Related Fields}, 175(3-4):677--719, 2019.

\bibitem{Hara2008}
Takashi Hara.
\newblock Decay of correlations in nearest-neighbor self-avoiding walk,
  percolation, lattice trees and animals.
\newblock {\em Ann. Probab.}, 36(2):530--593, 2008.

\bibitem{HS90perc}
Takashi Hara and Gordon Slade.
\newblock Mean-field critical behaviour for percolation in high dimensions.
\newblock {\em Comm. Math. Phys.}, 128(2):333--391, 1990.

\bibitem{HS90trees}
Takashi Hara and Gordon Slade.
\newblock On the upper critical dimension of lattice trees and lattice animals.
\newblock {\em J. Stat. Phys.}, 59(5-6):1469--1510, 1990.

\bibitem{HHS2003}
Takashi Hara, Remco van~der Hofstad, and Gordon Slade.
\newblock Critical two-point functions and the lace expansion for spread-out
  high-dimensional percolation and related models.
\newblock {\em Annals of Probability}, 31(1):349--408, 2003.

\bibitem{HS94b}
Bernard Helffer and Johannes Sj\"ostrand.
\newblock On the correlation for {K}ac-like models in the convex case.
\newblock {\em J.\ Statist.\ Phys.}, 74(1-2):349--409, 1994.

\bibitem{Helmuth}
Tyler Helmuth.
\newblock Loop-weighted walk.
\newblock {\em Ann. Inst. Henri Poincar\'e D}, 3(1):55--119, 2016.

\bibitem{RCMlace}
Markus Heydenreich, Remco van~der Hofstad, G{\"u}nter Last, and Kilian Matzke.
\newblock Lace expansion and mean-field behavior for the random connection
  model.
\newblock {\em arXiv preprint arXiv:1908.11356}, 2019.

\bibitem{H12}
Mark Holmes.
\newblock Excited against the tide: a random walk with competing drifts.
\newblock {\em Ann. Inst. Henri Poincar\'e Probab. Stat.}, 48(3):745--773,
  2012.

\bibitem{Hol16}
Mark Holmes.
\newblock Backbone scaling for critical lattice trees in high dimensions.
\newblock {\em J.~Physics A: Math. Theor.}, 49(31):314001, 2016.

\bibitem{HS12}
Mark Holmes and Rongfeng Sun.
\newblock A monotonicity property for random walk in a partially random
  environment.
\newblock {\em Stochastic Process. Appl.}, 122(4):1369--1396, 2012.

\bibitem{Kopietz-et-al2010}
P.~Kopietz, L.~Bartosch, and F.~Sch\"{u}tz.
\newblock {\em Introduction to the functional renormalization group}, volume
  798 of {\em Lecture Notes in Physics}.
\newblock Springer-Verlag, Berlin, 2010.

\bibitem{lawler_limic_2010}
Gregory~F. Lawler and Vlada Limic.
\newblock {\em Random Walk: A Modern Introduction}.
\newblock Cambridge Studies in Advanced Mathematics. Cambridge University
  Press, 2010.

\bibitem{lebowitz1974}
Joel~L. Lebowitz.
\newblock G{HS} and other inequalities.
\newblock {\em Comm. Math. Phys.}, 35:87--92, 1974.

\bibitem{Lieb}
Elliott~H. Lieb.
\newblock A refinement of {S}imon's correlation inequality.
\newblock {\em Comm. Math. Phys.}, 77(2):127--135, 1980.

\bibitem{MadrasSlade}
Neal Madras and Gordon Slade.
\newblock {\em The self-avoiding walk}.
\newblock Probability and its Applications. Birkh\"auser Boston, Inc., Boston,
  MA, 1993.

\bibitem{NY91}
B.G. Nguyen and W-S. Yang.
\newblock Triangle condition for oriented percolation in high dimensions.
\newblock {\em Ann.\ Probab.}, {\bf 21}:1809--1844, 1993.

\bibitem{Rivasseau}
V.~Rivasseau.
\newblock Lieb's correlation inequality for plane rotors.
\newblock {\em Comm. Math. Phys.}, 77(2):145--147, 1980.

\bibitem{Sakai07}
Akira Sakai.
\newblock {L}ace expansion for the {I}sing model.
\newblock {\em Comm. Math. Phys.}, 272(2):283--344, 2007.

\bibitem{Sakai2015}
Akira Sakai.
\newblock Application of the lace expansion to the {$\varphi^4$} model.
\newblock {\em Comm. Math. Phys.}, 336(2):619--648, 2015.

\bibitem{SimonIneq}
Barry Simon.
\newblock Decay of correlations in ferromagnets.
\newblock {\em Phys. Rev. Lett.}, 44(8):547--549, 1980.

\bibitem{GriffithsSimon}
Barry Simon and Robert~B. Griffiths.
\newblock The {$(\phi ^{4})_{2}$} field theory as a classical {I}sing model.
\newblock {\em Comm. Math. Phys.}, 33:145--164, 1973.

\bibitem{Slade}
G.~Slade.
\newblock {\em The lace expansion and its applications}, volume 1879 of {\em
  Lecture Notes in Mathematics}.
\newblock Springer-Verlag, Berlin, 2006.

\bibitem{Slade87}
Gordon Slade.
\newblock The diffusion of self-avoiding random walk in high dimensions.
\newblock {\em Comm. Math. Phys.}, 110(4):661--683, 1987.

\bibitem{SladeTomberg}
Gordon Slade and Alexandre Tomberg.
\newblock Critical correlation functions for the 4-dimensional weakly
  self-avoiding walk and {$n$}-component {$\vert \varphi\vert ^4$} model.
\newblock {\em Comm. Math. Phys.}, 342(2):675--737, 2016.

\bibitem{Sokal}
Alan~D. Sokal.
\newblock An alternate constructive approach to the {$\varphi ^{4}_{3}$}\
  quantum field theory, and a possible destructive approach to {$\varphi
  ^{4}_{4}$}.
\newblock {\em Ann. Inst. H. Poincar\'e Sect. A (N.S.)}, 37(4):317--398 (1983),
  1982.

\bibitem{Symanzik69}
K.~Symanzik.
\newblock {E}uclidean quantum theory.
\newblock In R.~Jost, editor, {\em Local Quantum Theory}. Academic Press, New
  York, London, 1969.

\bibitem{Sz11}
Alain-Sol {Sznitman}.
\newblock {\em {Topics in occupation times and Gaussian free fields.}}
\newblock Z\"urich: European Mathematical Society (EMS), 2012.

\bibitem{Uchiyama}
K\^ohei Uchiyama.
\newblock Green's functions for random walks on {${\bf Z}^N$}.
\newblock {\em Proc. London Math. Soc. (3)}, 77(1):215--240, 1998.

\bibitem{Ueltschi}
Daniel Ueltschi.
\newblock A self-avoiding walk with attractive interactions.
\newblock {\em Probab. Theory Related Fields}, 124(2):189--203, 2002.

\bibitem{HH10}
Remco van~der Hofstad and Mark Holmes.
\newblock Monotonicity for excited random walk in high dimensions.
\newblock {\em Probab. Theory Related Fields}, 147(1-2):333--348, 2010.

\bibitem{HH12interacting}
Remco van~der Hofstad and Mark Holmes.
\newblock An expansion for self-interacting random walks.
\newblock {\em Braz. J. Probab. Stat.}, 26(1):1--55, 2012.

\bibitem{HHP}
Remco van~der Hofstad, Mark Holmes, and Edwin~A Perkins.
\newblock A criterion for convergence to super-brownian motion on path space.
\newblock {\em Annals of Probability}, 45:278--376, 2017.

\bibitem{HofSak04}
Remco van~der Hofstad and Akira Sakai.
\newblock Gaussian scaling for the critical spread-out contact process above
  the upper critical dimension.
\newblock {\em Electron. J. Probab.}, 9:no. 24, 710--769, 2004.

\bibitem{HS03op}
Remco van~der Hofstad and Gordon Slade.
\newblock Convergence of critical oriented percolation to super-{B}rownian
  motion above {$4+1$} dimensions.
\newblock {\em Ann. Inst. H. Poincar\'e Probab. Statist.}, 39(3):413--485,
  2003.

\end{thebibliography}





                                



\end{document}